\title{Residue currents of cohesive modules and the generalized Poincar\'{e}-Lelong formula on complex manifolds}
\author{ Zhaoting Wei\\ Texas A\&M University-Commerce\\ zhaoting.wei@tamuc.edu}
\date{}
\begin{document}
\newcommand{\End}{\text{End}}
\newcommand{\HBC}{H_\text{BC}}
\newcommand{\chBC}{\text{ch}_\text{BC}}
\newcommand{\Hom}{\text{Hom}}
\newcommand{\ad}{\text{ad}}
\newcommand{\id}{\text{id}}
\newcommand{\str}{\text{Tr}_{\text{s}}}
\newcommand{\pr}{\text{pr}}
\newcommand{\Ad}{\text{Ad}}
\newcommand{\Vect}{\text{Vect}}
\newcommand{\CoAd}{\text{CoAd}}
\newcommand{\coad}{\text{coad}}
\newcommand{\Pol}{\text{Pol}}
\newcommand{\Cl}{\text{Cl}}
\newcommand{\As}{\text{As}}
\newcommand{\ch}{\text{ch}}
\newcommand{\Lied}{\text{L}}
\newcommand{\Ext}{\text{Ext}}
\newcommand{\ZZ}{\mathbb{Z}/2\mathbb{Z}}
\newcommand{\bHom}{\mathbb{H}\text{om}}
\newcommand{\bCone}{\underline{\mathbb{C}\text{one}}}
\newcommand{\im}{\text{im }}
\newcommand{\Acyc}{\text{Acyc}}
\newcommand{\Cinfty}{C^{\infty}}
\newcommand{\gb}{\text{gb}}
\newcommand{\diff}{\text{d}}
\newcommand{\Diff}{\text{Diff}}
\newcommand{\dpar}{\partial}
\newcommand{\dbar}{\overline{\partial}}
\newcommand{\pprime}{\prime\prime}
\newcommand{\coh}{\text{coh}}
\newcommand{\KS}{\text{KS}}
\newcommand{\ku}{\text{ku}}
\newcommand{\R}{\mathbb{R}}
\newcommand{\C}{\mathbb{C}}
\newcommand{\codim}{\text{codim}}
\newcommand{\supp}{\text{supp}}

\newtheorem{thm}{Theorem}[section]
\newtheorem{lemma}[thm]{Lemma}
\newtheorem{prop}[thm]{Proposition}
\newtheorem{coro}[thm]{Corollary}
\newtheorem{defi}{Definition}[section]
\newtheorem{eg}{Example}[section]
\newtheorem{rmk}{Remark}[section]

\numberwithin{equation}{section}

\maketitle

\begin{abstract}
Cohesive module provides a tool to study  coherent sheaves on complex manifolds by global analytic methods. In this paper we develop the theory of residue currents for cohesive modules on complex manifolds. In particular we prove that they have the duality principle and satisfy the comparison formula. As an application, we prove a generalized version of the Poincar\'{e}-Lelong formula for cohesive modules, which applies to coherent sheaves without globally defined locally free resolutions.

Key words: cohesive modules, residue currents, superconnections, Poincar\'{e}-Lelong formula

Mathematics Subject Classification 2020: 32C30, 32A27, 14F08, 32J25
\end{abstract}

\section{Introduction}
Let $X$ be a complex manifold, and let
\begin{equation}\label{eq:bundle-complex}
	0 \longrightarrow E^{-N} \longrightarrow
	\dots \longrightarrow E^{-1} \longrightarrow
	E^0 \longrightarrow 0
\end{equation}
be a generically exact complex of holomorphic vector bundles over $X$. In \cite{andersson2007residue}, Andersson and Wulcan  constructed an $(\End E)$-valued current $R^E$, which is called the \emph{residue current} associated with the complex $E^\bullet$. The main result that they proved is the \emph{duality principle}, which claims that if the corresponding complex of locally free sheaves is exact at each level $r < 0$, then $R^E$ has the property that a holomorphic section $\phi$ of $E^0$ belongs to $\im \left(E^{-1} \to E^0 \right)$ if and only if $R^E \phi = 0$.

Andersson's and Wulcan's construction is a generalization of the residue current of  a holomorphic function in \cite{herrera1971residues} and the Coleff--Herrera current of a tuple of holomorphic functions in \cite{herrera1978courants}. These development has led to many results in commutative algebra and complex geometry.  Suppose that \eqref{eq:bundle-complex}, as a complex of locally free $\mathcal{O}_X$-modules, is a locally free resolution of a coherent $\mathcal{O}_X$-module $\mathfrak{F}$. The current $R^E$ is then considered as a current representation of the sheaf $\mathfrak{F}$.  In \cite{larkang2018residue} and \cite{larkang2021residue}, L\"{a}rk\"{a}ng and Wulcan proved that if $\mathfrak{F}$ has pure codimension $p\geq 1$, then  we have
\begin{equation}\label{eq: Poincare-Lelong in Larkang2 introduction}
\frac{1}{(2\pi i)^p p!}\text{Tr}(\nabla^{E^{\bullet}}(v_{-1})\ldots \nabla^{E^{\bullet}}(v_{-p}) R^E_{0\to -p})=[\mathfrak{F}]
\end{equation}
where $R^E_{0\to -p}$ is the component of $R^E$ that maps $E^0$ to $E^{-p}$, and $[\mathfrak{F}]$ is the \emph{cycle} associated to $\mathfrak{F}$.  Notice that \eqref{eq: Poincare-Lelong in Larkang2 introduction} is a generalization of the classical \emph{Poincar\'{e}-Lelong formula}
\begin{equation}
\frac{1}{2\pi i}\dbar\dpar \log|f|^2=[Z_f].
\end{equation}

Given a coherent $\mathcal{O}_X$-module $\mathfrak{F}$ on a complex manifold $X$, although locally free resolution of $\mathfrak{F}$ always exists \emph{locally}, it may not exist \emph{globally}. See \cite[Corollary~A.5]{voisin2002counterexample} for an example of a coherent $\mathcal{O}_X$-module which does not admit a globally defined locally free resolution. Thus unless one is restricted to the setting where global resolutions of locally free sheaves always exist, e.g. $X$ is a projective manifold, it is not always possible to use the residue current introduced in \cite{andersson2007residue} to study the global properties of $\mathfrak{F}$.

In \cite{block2010duality} Block introduced the concept of \emph{cohesive modules}.  For a complex manifold $X$, a cohesive module $\mathcal{E}$ on $X$ consists of a cochain complex of $C^{\infty}$ vector bundles $E^{\bullet}$ together with a flat $\dbar$-superconnection $A^{E^{\bullet}\prime\prime}$. Cohesive modules on $X$ form a dg-category $B(X)$.  Block proved in \cite{block2010duality} that if $X$ is compact, then $B(X)$ gives a dg-enhancement of $D^b_{\text{coh}}(X)$, the bounded derived category of coherent sheaves on $X$.  Later \cite{chuang2021maurer} generalized the result in \cite{block2010duality} to the case that $X$ is non-compact with a slightly more restricted definition of coherent sheaves. According to \cite{block2010duality} and \cite{chuang2021maurer}, a coherent sheaf $\mathfrak{F}$ on a complex manifold always admits a globally defined \emph{cohesive resolution}. See Section \ref{section: cohesive modules} for a quick review of cohesive modules and results in \cite{block2010duality} and \cite{chuang2021maurer}.

Block's result makes it possible to apply global analytic method to the study of coherent sheaves on general non-projective complex manifolds. For one application see \cite{bismut2023coherent}, in which Bismut, Shen, and the author proved the Riemann-Roch-Grothendieck theorem for coherent sheaves on complex manifolds, by bringing together Block's result, local index theory, and hypoelliptic operators. 

In the current paper we construct and study the residue current $R^{\mathcal{E}}$ of a cohesive module $\mathcal{E}$. We show that the residue current of a cohesive module has duality principle as expected. See Theorem  \ref{thm: duality principle sheaf case} for details.

One of the advantages of the dg-category of cohesive modules $B(X)$ over the derived category $D^b_{\text{coh}}(X)$ is that any quasi-isomorphism in $B(X)$ has a homotopy inverse. In this paper we give a \emph{comparison formula} for residue currents of cohesive modules, which gives the compatibility of residue currents with morphisms between cohesive modules. In particular we show that, under homotopy invertible morphisms, residue currents are invariant modulo coboundary elements. See Corollary \ref{coro: homotopic comparison formula} for details.

As an application, we prove the generalized \emph{Poincar\'{e}-Lelong formula} in the framework of cohesive modules. In more details, let $\mathfrak{F}$ be a coherent sheaf of pure codimension $p\geq 1$ and $\mathcal{E}$ be a cohesive resolution of $\mathfrak{F}$, then we  have the following equality of currents:
\begin{equation}\label{eq: Poincare-Lelong for cohesive modules 1 introduction}
\frac{1}{(2\pi i)^p p!}\str((\nabla^{E^{\bullet}}(v_0))^p R^{\mathcal{E}})=[\mathfrak{F}],
\end{equation} 
where $\str$ denotes the \emph{supertrace}.
Here we do not assume the global existence of locally free resolutions, hence the result applies to general complex manifolds, projective or not. See Theorem \ref{thm: Poincare-Lelong for cohesive modules} and Corollary \ref{coro: Poincare-Lelong non-pure} for details.

This paper is organized as follows: 
In Section \ref{section: cohesive modules} we review cohesive modules on complex manifolds. In Section \ref{section: pseudomeromorphic currents} we review pseudomeromorphic and almost semimeromorphic currents on complex manifolds. In Section \ref{section: residue currents of cohesive modules} we define residue currents for cohesive modules and study their initial properties. In Section \ref{section: vanishing of residue current} we study the vanishing property of residue currents, which leads to the duality principle as in Theorem  \ref{thm: duality principle sheaf case}. In Section \ref{section: comparison formula} we give the comparison formula of residue currents under morphisms between cohesive modules. Finally in Section \ref{section: Poincare-Lelong} we give and prove the generalized Poincar\'{e}-Lelong formula in Theorem \ref{thm: Poincare-Lelong for cohesive modules} and Corollary \ref{coro: Poincare-Lelong non-pure}.

\subsection*{Related works}
\emph{Twisting cochain}, which was introduced by Toledo and Tong in \cite{toledo1978duality}, is another approach to the global study of coherent sheaves on non-projective complex manifolds. Actually a twisting cochain consists of \v{C}ech style higher structures, while a cohesive module consists of Dolbeault style higher structures. In \cite{johansson2021explicit} and \cite{Johansson2023residue},  Johansson and L\"{a}rk\"{a}ng developed the theory of  residue currents for twisting cochains. In \cite{Johansson2023residue}  Johansson also proved the duality principle and comparison formula for residue currents of twisting cochains. A large part of the current paper can be considered as a parallel work to \cite{johansson2021explicit} and \cite{Johansson2023residue} and much of the inspirations come from there. Although  the residue currents defined in the current paper and those defined in \cite{johansson2021explicit} and \cite{Johansson2023residue} apparently live in different spaces, we expect deep relationship between them.

We also notice that in \cite{han2024characteristic} Han introduced  characteristic currents on cohesive modules. Notice that for complexes of holomorphic vector bundles, residue currents and characteristic currents are closed related as shown in \cite{larkang2022chern}. It will be interesting to find similar relation between the constructions in  \cite{han2024characteristic} and in this paper.

\section*{Acknowledgment}
The author wants to Zhizhang Xie and Jinmin Wang for very inspiring discussions. He also wants to thank Richard L\"{a}rk\"{a}ng for kindly answering questions on residue currents for twisting cochains.

\section{A review of cohesive modules on complex manifolds}\label{section: cohesive modules}
\subsection{The definition of cohesive modules}\label{subsection: cohesive modules}
We first fix some notations. Let $X$ be a  complex manifold of dimension $n$. Let $TX$ and $\overline{TX}$ be the holomorphic and antiholomorphic tangent bundle. Let  $T_{\mathbb{R}}X$ be the corresponding real tangent bundle and $T_{\mathbb{C}}X=T_{\mathbb{R}}X\otimes_{\mathbb{R}}\mathbb{C}$ be its complexification. We have the decomposition $T_{\mathbb{C}}X=TX\bigoplus \overline{TX}$. Let $\Omega^{p,q}_X$ be the sheaf of smooth $(p,q)$-forms on $X$.

The concept of cohesive modules is introduced by Block in \cite{block2010duality}. 

\begin{defi}\label{defi: cohesive module}
Let $X$ be a complex manifold. A \emph{cohesive module} on $X$ is a bounded, finite rank, $\mathbb{Z}$-graded, $C^{\infty}$-vector bundle $E^{\bullet}$ on $X$ together with a superconnection with total degree $1$
$$
A^{E^{\bullet}\prime\prime}: \wedge^{\bullet}\overline{T^{*}X}  \times  E^{\bullet}\to \wedge^{\bullet}\overline{T^{*}X}  \times E^{\bullet}
$$
such that $A^{E^{\bullet}\prime\prime}\circ A^{E^{\bullet}\prime\prime}=0$.

In more details, $A^{E^{\bullet}\prime\prime}$ decomposes into
\begin{equation}\label{eq: decomposition of anti super conn}
A^{E^{\bullet}\prime\prime}=v_0+\nabla^{E^{\bullet}\prime\prime}+v_2+\ldots
\end{equation}
where 
$$
\nabla^{E^{\bullet}\prime\prime}: E^{\bullet}\to \overline{T^{*}X}  \times E^{\bullet}
$$
 is a  $\dbar$-connection, and for $i\neq 1$
\begin{equation}\label{eq: cohesive module}
v_i\in C^{\infty}(X,\wedge^{i}\overline{T^{*}X}  \hat{\otimes}  \End^{1-i}(E^{\bullet}))
\end{equation}
is  $C^{\infty}(X)$-linear. Here $\hat{\otimes} $ denotes the graded tensor product. The equation  $A^{E^{\bullet}\prime\prime}\circ A^{E^{\bullet}\prime\prime}=0$ decomposes into
\begin{equation}\label{eq: A flat decomposed}
\begin{split}
&v_0^2=0,\\
&\nabla^{E^{\bullet}\prime\prime}(v_0)=0,\\
&(\nabla^{E^{\bullet}\prime\prime})^2+[v_0,v_2]=0,\\
&\ldots
\end{split}
\end{equation}

Cohesive modules on $X$ forms a dg-category  denoted by $B(X)$. In more details, let $\mathcal{E}=(E^{\bullet}, A^{E^{\bullet}\prime\prime})$ and $\mathcal{F}=(F^{\bullet}, A^{F^{\bullet}\prime\prime})$ be two cohesive modules on $X$ where
$$
A^{E^{\bullet}\prime\prime}=v_0+\nabla^{E^{\bullet}\prime\prime}+v_2+\ldots
$$ 
and
$$
A^{F^{\bullet}\prime\prime}=u_0+\nabla^{F^{\bullet}\prime\prime}+u_2+\ldots
$$ 
A morphism $\phi:\mathcal{E}\to \mathcal{F}$ of degree $k$ is given by
\begin{equation}\label{eq: decomposition of morphism}
\phi=\phi_0+\phi_1+\ldots
\end{equation}
where
$$
\phi_i\in C^{\infty}(X,\wedge^{i}\overline{T^{*}X}  \hat{\otimes}  \Hom^{k-i}( E^{\bullet},F^{\bullet}))
$$
is   $C^{\infty}(X)$-linear.

For 
$$
\phi=\alpha \hat{\otimes} u\in  C^{\infty}(X,\wedge^{i}\overline{T^{*}X}  \hat{\otimes}  \Hom^{k-i}( E^{\bullet},F^{\bullet}))$$ 
and 
$$
\psi=\beta\hat{\otimes} v\in C^{\infty}(X,\wedge^{j}\overline{T^{*}X}  \hat{\otimes}  \Hom^{l-j}( F^{\bullet},G^{\bullet})),
$$ 
their composition $\psi\phi$ is defined as
\begin{equation}\label{eq: composition of morphisms in B(X)}
\psi \phi:=(-1)^{(l-j)i}\beta\alpha\hat{\otimes} vu\in C^{\infty}(X,\wedge^{i+j}\overline{T^{*}X}  \hat{\otimes}  \Hom^{k+l-i-j}( E^{\bullet},G^{\bullet}))
\end{equation}

 The differential of $\phi$ is given by 
\begin{equation}\label{eq: differential in B(X)}
D^{\mathcal{E},\mathcal{F}}\phi=A^{F^{\bullet}\prime\prime}\phi-(-1)^k\phi A^{E^{\bullet}\prime\prime}.
\end{equation}
More explicitly, the $l$th component  of $d\phi$ is 
$$
(D^{\mathcal{E},\mathcal{F}}\phi)_l\in C^{\infty}(X,\wedge^{l}\overline{T^{*}X}  \hat{\otimes}  \Hom^{k-l+1}( E^{\bullet},F^{\bullet}))
$$
which is given by
\begin{equation}\label{eq: differential in B(X) degree l}
(D^{\mathcal{E},\mathcal{F}}\phi)_l=\sum_{i\neq 1}\big(u_i\phi_{l-i}-(-1)^k\phi_{l-i}v_i\big)+\nabla^{F^{\bullet}\prime\prime}\phi_{l-1}-(-1)^k\phi_{l-1}\nabla^{E^{\bullet}\prime\prime}.
\end{equation}
\end{defi}

\begin{rmk}
In \cite{bismut2023coherent} cohesive modules are called \emph{antiholomorphic superconnections}.
\end{rmk}

We can define mapping cones and shift in $B(X)$. For a degree zero closed map $\phi: \mathcal{E}\to \mathcal{F}$ where $\mathcal{E}= (E^{\bullet}, A^{E^{\bullet}\prime\prime})$ and $ \mathcal{F}=(F^{\bullet}, A^{F^{\bullet}\prime\prime})$, its mapping cone $(C^{\bullet}, A^{C^{\bullet}\prime\prime}_{\phi})$ is defined by
\begin{equation}
C^{n}=E^{n+1}\bigoplus F^n
\end{equation}
and
\begin{equation}
A^{C^{\bullet}\prime\prime}=\begin{bmatrix}A^{E^{\bullet}\prime\prime}&0\\ \phi(-1)^{\deg(\cdot)}& A^{F^{\bullet}\prime\prime}\end{bmatrix}.
\end{equation}
The shift of $\mathcal{E}$ is $\mathcal{E}[1]$ where
\begin{equation}
E[1]^{n}=E^{n+1}
\end{equation}
and
$$
A^{E^{\bullet}\prime\prime}[1]=A^{E^{\bullet}\prime\prime}(-1)^{\deg(\cdot)}.
$$
It is clear that they give $B(X)$ a pre-triangulated structure hence its homotopy category $\underline{B}(X)$ is a triangulated category.

For later purpose, we recall the following definition

\begin{defi}\label{defi: gauge and homotopy equivalence}
A degree $0$ closed morphism $\phi$ between cohesive modules $\mathcal{E}$  and $\mathcal{F}$  is called a \emph{gauge equivalence} if it admits an inverse in $B(X)$, i.e. if there exists a degree $0$ closed morphism $\psi$  from $\mathcal{F}$ to $\mathcal{E}$ such that $\psi\circ \phi=\id_{\mathcal{E}}$ and $\phi\circ \psi=\id_{\mathcal{F}}$.

A degree $0$ closed morphism  $\phi$ is   called a \emph{homotopy equivalence} if it induces an isomorphism in the homotopy category $\underline{B}(X)$. 
\end{defi}

We will need the  following results.

\begin{prop}\label{prop: gauge and homotopy equiv degree 0}
A degree $0$ closed morphism $\phi$ between cohesive modules $\mathcal{E}= (E^{\bullet}, A^{E^{\bullet}\prime\prime})$ and $\mathcal{F}=(F^{\bullet}, A^{F^{\bullet}\prime\prime})$  is  a gauge equivalence if and only if its degree $0$ component $\phi^0:  (E^{\bullet},v_0)\to (F^{\bullet},u_0)$ is invertible at each degree. It is a homotopy equivalence if and only if $\phi^0$ is a quasi-isomorphism of cochain complexes.
\end{prop}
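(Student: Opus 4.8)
The plan is to handle both biconditionals at once by first noting that passing to form-degree zero is functorial. Reading off the form-degree-$0$ parts of the composition law \eqref{eq: composition of morphisms in B(X)}, of the differential \eqref{eq: differential in B(X) degree l}, and of $\id_{\mathcal{E}}$, one gets $(\psi\circ\phi)_0=\psi_0\circ\phi_0$, $(\id_{\mathcal{E}})_0=\id_{E^{\bullet}}$, and, for a degree $0$ closed $\phi$, $(D^{\mathcal{E},\mathcal{F}}\phi)_0=u_0\phi_0-\phi_0 v_0=0$, while for a degree $-1$ morphism $h$ one has $(D^{\mathcal{E},\mathcal{F}}h)_0=u_0 h_0+h_0 v_0$. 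Hence $\mathcal{E}=(E^{\bullet},A^{E^{\bullet}\prime\prime})\mapsto (E^{\bullet},v_0)$, $\phi\mapsto\phi_0$, is a functor from $B(X)$ to the category of bounded cochain complexes of $C^{\infty}$ vector bundles which carries null-homotopies to chain homotopies, so it descends to a functor from $\underline{B}(X)$ to the homotopy category of such complexes. This yields the two ``only if'' directions simultaneously: if $\phi$ is a gauge equivalence with inverse $\psi$, then $\psi_0\circ\phi_0=\id_{E^{\bullet}}$ and $\phi_0\circ\psi_0=\id_{F^{\bullet}}$, so $\phi_0$ is a bundle isomorphism in each degree; if $\phi$ is a homotopy equivalence, then $\phi_0$ becomes an isomorphism in the homotopy category of complexes, i.e. a chain homotopy equivalence, hence in particular a quasi-isomorphism.

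For the converse of the gauge-equivalence statement, assume $\phi_0$ is invertible in each degree. Then $\phi=\phi_0\circ(\id_{E^{\bullet}}+\eta)$ with $\eta:=\phi_0^{-1}\circ(\phi_1+\phi_2+\cdots)$ an endomorphism of $E^{\bullet}$ of strictly positive form-degree; since $\dim_{\mathbb{C}}X=n$ we have $\eta^{n+1}=0$, so $\id_{E^{\bullet}}+\eta$ is invertible with inverse $\sum_{k=0}^{n}(-\eta)^{k}$, and therefore $\phi$ admits a two-sided inverse $\psi$ as a (not necessarily closed) morphism of $C^{\infty}$ vector bundles-with-forms. To see that $\psi$ is closed, recall that $D$ is a graded derivation with respect to composition (immediate from \eqref{eq: differential in B(X)}); applying $D$ to $\phi\circ\psi=\id_{\mathcal{F}}$ and using $D\phi=0$ gives $\phi\circ(D\psi)=0$, and composing on the left with $\psi$ and using $\psi\circ\phi=\id_{\mathcal{E}}$ yields $D\psi=0$. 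Thus $\psi$ is a degree $0$ closed inverse of $\phi$, so $\phi$ is a gauge equivalence.

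For the converse of the homotopy-equivalence statement, assume $\phi_0$ is a quasi-isomorphism and form the mapping cone $\mathcal{C}_{\phi}$ in $B(X)$. The form-degree-$0$ part of $A^{C^{\bullet}\prime\prime}_{\phi}$ is, up to the usual sign conventions, the mapping cone differential of $\phi_0$, so $\mathcal{C}_{\phi}$ with this differential is a bounded acyclic complex of $C^{\infty}$ vector bundles; since section spaces of $C^{\infty}$ vector bundles are finitely generated projective $C^{\infty}(X)$-modules (Serre--Swan), a bounded acyclic complex of them is contractible, with a $C^{\infty}(X)$-linear, i.e. bundle-endomorphism, contracting homotopy $s_0$. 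The main obstacle is then to promote $s_0$ to a genuine null-homotopy of $\id_{\mathcal{C}_{\phi}}$ in $B(X)$. Working in the cochain complex $(\Hom^{\bullet}(\mathcal{C}_{\phi},\mathcal{C}_{\phi}),D)$, write $D=\partial_0+p$, where $\partial_0(\psi)=v_0^{\mathcal{C}_{\phi}}\psi-(-1)^{\deg\psi}\psi v_0^{\mathcal{C}_{\phi}}$ (extended $\wedge^{\bullet}\overline{T^{*}X}$-linearly) and $p$ raises form-degree by at least one. The homotopy $s_0$ makes $(\Hom^{\bullet}(\mathcal{C}_{\phi},\mathcal{C}_{\phi}),\partial_0)$ contractible by a form-degree preserving contracting homotopy, whose composite with $p$ is nilpotent of order $\le n+1$; the homological perturbation lemma then produces a contracting homotopy for $(\Hom^{\bullet}(\mathcal{C}_{\phi},\mathcal{C}_{\phi}),D)$, so the cocycle $\id_{\mathcal{C}_{\phi}}$ is $D$-exact and $\mathcal{C}_{\phi}\cong 0$ in $\underline{B}(X)$. (Equivalently, one builds the null-homotopy $s=s_0+s_1+\cdots$ by induction on form-degree, solvability at each stage following from contractibility of the form-degree-$0$ endomorphism complex together with $(A^{C^{\bullet}\prime\prime}_{\phi})^2=0$, as in \cite{block2010duality}.) Finally, in the triangulated category $\underline{B}(X)$ the distinguished triangle $\mathcal{E}\xrightarrow{\phi}\mathcal{F}\to\mathcal{C}_{\phi}\to\mathcal{E}[1]$ together with $\mathcal{C}_{\phi}\cong 0$ forces $\phi$ to be an isomorphism, i.e. a homotopy equivalence.
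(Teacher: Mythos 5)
Your proof is correct. The paper itself gives no argument: it declares the gauge-equivalence biconditional obvious and defers the homotopy-equivalence biconditional to \cite[Proposition 2.9]{block2010duality} or \cite[Proposition 6.4.1]{bismut2023coherent}, so what you have written is in effect a reconstruction of the cited argument. The forward directions via the form-degree-zero functor, the gauge inverse via the nilpotent correction $\eta$ (this is the ``obvious'' direction), and the mapping-cone/contracting-homotopy/homological-perturbation scheme for the homotopy-equivalence converse are all sound and match what the references do; the final step, that a morphism with zero cone in a triangulated category is an isomorphism, is standard. Two points worth making explicit: for noncompact $X$ (which the paper allows, cf.\ Theorem~\ref{thm: equiv of cats noncompact}) you are invoking a noncompact Serre--Swan correspondence, which is valid here because the bundles are of bounded finite rank over a paracompact manifold and hence summands of trivial bundles; and ``quasi-isomorphism of cochain complexes'' should be read as quasi-isomorphism of the complexes of sheaves of $C^\infty$-sections, so that softness of those sheaves transfers acyclicity of the cone to the global-section level, which is exactly what the projectivity argument requires.
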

\begin{proof}
The first claim is obvious. The second claim is proved in \cite[Proposition 2.9]{block2010duality} or \cite[Proposition 6.4.1]{bismut2023coherent}.
\end{proof}

\subsection{Pull-backs of cohesive modules}\label{subsection: pull backs}
Let $f: X\to Y$ be a holomorphic map between complex manifolds.

\begin{lemma}\label{lemma: pull-back of coherent is coherent}
 Let  $\mathcal{E}$ be a bounded complexes of $\mathcal{O}_Y$-modules with globally bounded coherent cohomologies. Then
\begin{equation}
f^*\mathcal{E}:=f^{-1}\mathcal{E}\otimes_{f^{-1}\mathcal{O}_Y}\mathcal{O}_X
\end{equation}
is a bounded complexes of $\mathcal{O}_X$-modules with globally bounded coherent cohomologies. 
\end{lemma}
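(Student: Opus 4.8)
The plan is to reduce the statement to two separate, well-known facts and then combine them. First, I would recall that coherence is a local property and that holomorphic maps are, locally in both source and target, cut out by holomorphic coordinates; so it suffices to prove the coherence of the cohomology sheaves of $f^*\mathcal{E}$ on each member of an open cover of $X$, and for this we may assume $X$ and $Y$ are (polydisc) domains and that $f$ factors as a composition of a closed immersion (the graph $X \hookrightarrow X\times Y$) followed by the projection $X\times Y \to Y$. The projection case is essentially a flat base change, and the closed-immersion case is the statement that the restriction of a coherent sheaf to a submanifold is coherent. The "globally bounded" part of the hypothesis is needed only to ensure that the cohomology sheaves are coherent \emph{uniformly}, i.e. lie in the restricted category of coherent sheaves used in \cite{chuang2021maurer}; I would carry the bound through each reduction step verbatim, since none of the operations above increases the range of nonzero cohomology.

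The key steps, in order, are: (1) Replace $\mathcal{E}$ by a complex whose terms are $\mathcal{O}_Y$-flat without changing $f^*\mathcal{E}$ in the derived sense — locally one can take a bounded-above complex of free modules quasi-isomorphic to $\mathcal{E}$, but since we only need coherence of cohomology it is cleaner to work directly with the hyper-Tor spectral sequence $E_2^{p,q} = \mathcal{T}\!or_{-p}^{f^{-1}\mathcal{O}_Y}\!\big(\mathcal{H}^q(f^{-1}\mathcal{E}),\mathcal{O}_X\big) \Rightarrow \mathcal{H}^{p+q}(f^*\mathcal{E})$. (2) Observe that $f^{-1}$ is exact, so $\mathcal{H}^q(f^{-1}\mathcal{E}) = f^{-1}\mathcal{H}^q(\mathcal{E})$, which is a sheaf of $f^{-1}\mathcal{O}_Y$-modules whose stalks are stalks of coherent $\mathcal{O}_Y$-modules, hence Noetherian modules over Noetherian local rings. (3) Invoke the fact that $\mathcal{O}_{X,x}$ is a Noetherian local $\mathcal{O}_{Y,f(x)}$-algebra which is \emph{flat} whenever $f$ is a submersion, and in general has finite Tor-dimension over $\mathcal{O}_{Y,f(x)}$ (Oka coherence plus regularity of the analytic local rings gives finite global dimension); this makes each $E_2^{p,q}$ a finitely generated $\mathcal{O}_{X,x}$-module, hence the associated sheaf is coherent by Oka's theorem, and the spectral sequence — bounded in both directions by the global bound on $\mathcal{E}$ and the finite Tor-dimension — shows $\mathcal{H}^{p+q}(f^*\mathcal{E})$ is coherent with the same global bound (shifted by the Tor-dimension, which is finite). (4) Conclude.

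The main obstacle is step (3): controlling $\mathcal{T}\!or_\bullet^{f^{-1}\mathcal{O}_Y}(-,\mathcal{O}_X)$ for a general holomorphic $f$, where $\mathcal{O}_X$ need not be flat over $f^{-1}\mathcal{O}_Y$. The clean way around it is the graph factorization mentioned above: for the projection $p\colon X\times Y\to Y$ one has $\mathcal{O}_{X\times Y}$ flat over $p^{-1}\mathcal{O}_Y$, so no higher Tor appears and coherence of $p^*\mathcal{H}^q(\mathcal{E})$ is immediate from Oka's theorem applied to a local free presentation; for the closed immersion $\iota\colon X\hookrightarrow X\times Y$ one checks that $\iota^{-1}\mathcal{O}_{X\times Y} \to \mathcal{O}_X$ is the quotient by a regular sequence (local coordinates transverse to $X$), so the Koszul complex computes the Tor sheaves, each of which is a coherent $\mathcal{O}_X$-module, and the Tor-dimension is bounded by $\operatorname{codim}(X,X\times Y)=\dim Y$ — finite and independent of the point. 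Composing the two bounded-amplitude conclusions gives the lemma. I would remark that this is precisely the argument that the bounded derived category $D^b_{\mathrm{coh}}$ is stable under $L f^*$, adapted so as to keep track of the \emph{global} boundedness required by the definition of cohesive modules in \cite{chuang2021maurer}.
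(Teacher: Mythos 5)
The paper's own proof is two lines: coherence is delegated to Grauert--Remmert \S 1.2.6 (the fact that $f^*$ of a coherent $\mathcal{O}_Y$-module is a coherent $\mathcal{O}_X$-module), and global boundedness is observed to be immediate because $f^*$ carries the local presentations $\mathcal{O}_Y^N$ to $\mathcal{O}_X^N$. Your proof is a genuinely different route: instead of citing a black-box coherence result, you give a complete argument via graph factorization, the hyper-Tor spectral sequence, flatness of the projection, and a Koszul resolution for the closed embedding. This is, as you say yourself, essentially the standard proof that $Lf^*$ preserves $D^b_{\coh}$; the paper simply appeals to a reference and does not unpack any of this.

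One caveat worth flagging. The lemma as stated is about the \emph{naive} pullback $f^*\mathcal{E} = f^{-1}\mathcal{E}\otimes_{f^{-1}\mathcal{O}_Y}\mathcal{O}_X$, whereas the hyper-Tor spectral sequence you write down converges to the cohomology of the \emph{derived} pullback $Lf^*\mathcal{E}$. These coincide only under a flatness (or $K$-flatness) hypothesis on the terms of $\mathcal{E}$, which the lemma does not state; if the terms are not flat, the abutment of your spectral sequence is simply not $\mathcal{H}^\bullet(f^*\mathcal{E})$ and your argument does not close. The paper's two-line proof arguably suffers from the same imprecision (the Grauert--Remmert citation gives termwise coherence of the pullback only when the terms of $\mathcal{E}$ are themselves coherent, which is again not among the hypotheses). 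In the only place the lemma is actually used — feeding into Lemma \ref{lemma: derived pull-back flat modules} and Proposition \ref{prop: pull-back is the derived pull-back} — the complexes $\underline{F}_Y(\mathcal{E})$ coming from cohesive modules have flat terms, so $f^* = Lf^*$ there and either proof applies. But if you present your argument you should say explicitly that you are proving the statement for $Lf^*$, or insert a flatness hypothesis; as written it proves a (well-known, and arguably the right) statement that is not literally the one in the lemma.
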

\begin{proof}
The coherence is given by \cite[Section 1.2.6]{grauert1984coherent}. The global boundedness is clear from the definition and the fact that $f^*\mathcal{O}_Y^N=\mathcal{O}_X^N$.
\end{proof}

Hence we can define
the left derived functor 
\begin{equation}\label{eq: left derived pull-back functor}
Lf^*: D^{\gb}_{\coh}(Y)\to D^{\gb}_{\coh}(X).
\end{equation}

\begin{lemma}\label{lemma: derived pull-back flat modules}
If $\mathcal{E}\in D^{\gb}_{\coh}(Y)$ is a bounded complex of flat $\mathcal{O}_Y$-modules, then we have
\begin{equation}\label{eq: derived pull-back of flat modules}
Lf^*\mathcal{E}=f^*\mathcal{E}.
\end{equation}
\end{lemma}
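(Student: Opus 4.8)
The plan is to show that $\mathcal{E}^\bullet$, regarded as a complex of $\mathcal{O}_Y$-modules, is already adapted to computing $Lf^*$, so that no replacement by a resolution changes anything. Recall that $Lf^*\mathcal{E}$ is computed by choosing a quasi-isomorphism $\mathcal{P}^\bullet \xrightarrow{\ \sim\ }\mathcal{E}^\bullet$ from a bounded-above complex $\mathcal{P}^\bullet$ of flat $\mathcal{O}_Y$-modules — such a flat (indeed free) resolution always exists on the ringed space $(Y,\mathcal{O}_Y)$ by the standard construction with sheaves of the form $\bigoplus j_{U!}\mathcal{O}_U$ — and then setting $Lf^*\mathcal{E}:=f^*\mathcal{P}^\bullet$. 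The canonical map $f^*\mathcal{P}^\bullet\to f^*\mathcal{E}^\bullet$ represents the natural morphism $Lf^*\mathcal{E}\to f^*\mathcal{E}$ in $D^{\gb}_{\coh}(X)$ (the target being well defined by Lemma \ref{lemma: pull-back of coherent is coherent}), and the assertion of the lemma is exactly that this morphism is an isomorphism.

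First I would form the cone $\mathcal{Q}^\bullet:=\underline{\mathbb{C}\text{one}}(\mathcal{P}^\bullet\to\mathcal{E}^\bullet)$, whose terms are the finite direct sums $\mathcal{P}^{i+1}\oplus\mathcal{E}^i$. Since finite direct sums of flat $\mathcal{O}_Y$-modules are flat, and $\mathcal{E}^\bullet$ is by hypothesis a bounded complex of flats while $\mathcal{P}^\bullet$ is a bounded-above complex of flats, $\mathcal{Q}^\bullet$ is a bounded-above complex of flat $\mathcal{O}_Y$-modules; moreover it is acyclic because $\mathcal{P}^\bullet\to\mathcal{E}^\bullet$ is a quasi-isomorphism. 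Since $f^*$ commutes with cones and direct sums, $f^*\mathcal{Q}^\bullet=\underline{\mathbb{C}\text{one}}(f^*\mathcal{P}^\bullet\to f^*\mathcal{E}^\bullet)$. Thus the lemma reduces to the following purely homological statement: \emph{if $\mathcal{Q}^\bullet$ is an acyclic bounded-above complex of flat $\mathcal{O}_Y$-modules, then $f^*\mathcal{Q}^\bullet$ is acyclic}; granting it, acyclicity of $f^*\mathcal{Q}^\bullet$ forces $f^*\mathcal{P}^\bullet\to f^*\mathcal{E}^\bullet$ to be a quasi-isomorphism, which is what we want.

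For that homological statement I would argue by dimension shifting from the top. Writing $\mathcal{B}^i:=\im(\mathcal{Q}^{i-1}\to\mathcal{Q}^i)$, acyclicity gives short exact sequences $0\to\mathcal{B}^i\to\mathcal{Q}^i\to\mathcal{B}^{i+1}\to 0$, and an induction starting from the top nonzero degree (where $\mathcal{B}^i=\mathcal{Q}^{i}$ is flat, then using that the kernel of a surjection onto a flat module from a flat module is flat) shows that every syzygy sheaf $\mathcal{B}^i$ is flat; this is checked on stalks over the local rings $\mathcal{O}_{Y,y}$. Because each quotient $\mathcal{B}^{i+1}$ is flat, applying the exact functor $f^{-1}$ and then $(-)\otimes_{f^{-1}\mathcal{O}_Y}\mathcal{O}_X$ keeps each of these sequences exact (no $\mathrm{Tor}_1$ obstruction appears), and splicing them back together shows $f^*\mathcal{Q}^\bullet$ is acyclic. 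Equivalently, and perhaps more transparently, one may reason entirely on stalks: at $x\in X$ with $y=f(x)$ the stalk of $Lf^*\mathcal{E}$ is $\mathcal{E}^\bullet_y\otimes^{L}_{\mathcal{O}_{Y,y}}\mathcal{O}_{X,x}$, and since $\mathcal{E}^\bullet_y$ is a bounded complex of flat $\mathcal{O}_{Y,y}$-modules it already computes this derived tensor product, so it agrees with $\mathcal{E}^\bullet_y\otimes_{\mathcal{O}_{Y,y}}\mathcal{O}_{X,x}=(f^*\mathcal{E})^\bullet_x$. I do not expect a genuine obstacle here: the heart of the matter is the classical fact that a bounded(-above) complex of flat modules is $K$-flat, so the only points requiring care are the passage from module-level flatness and $\mathrm{Tor}$-vanishing to the sheaf level, the existence of flat resolutions on $(Y,\mathcal{O}_Y)$, and the bookkeeping needed to stay inside $D^{\gb}_{\coh}$, all of which are routine.
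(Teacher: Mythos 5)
Your proof is correct and amounts to the same underlying mechanism as the paper's, but unpacked rather than cited. The paper's proof is a two-line appeal to the Stacks Project: bounded complexes of flat modules are $K$-flat (Tag 064K), and $K$-flat complexes compute $Lf^*$ (Tag 06YJ). Your argument — resolve $\mathcal{E}^\bullet$ by a bounded-above flat $\mathcal{P}^\bullet$, pass to the cone $\mathcal{Q}^\bullet$, show by dimension-shifting from the top that the syzygies $\mathcal{B}^i$ of the acyclic bounded-above flat complex $\mathcal{Q}^\bullet$ are themselves flat, and splice — is exactly the standard proof behind Tag 064K. One small presentational note: the acyclicity of $f^*\mathcal{Q}^\bullet$ via the spliced sequences deserves the extra half-sentence you implicitly supply, namely that $\iota^{i+1}\otimes\mathrm{id}: \mathcal{B}^{i+1}\otimes M\to \mathcal{Q}^{i+1}\otimes M$ remains injective because $\mathcal{B}^{i+2}$ is also flat; you cover this with the stalkwise $\mathrm{Tor}$ remark, so nothing is missing. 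The only difference in substance is that you give a self-contained homological argument where the paper just cites; that makes your version longer but more transparent, and both stay within the well-trodden ``bounded complex of flats is adapted to $\otimes^L$'' circle of ideas.
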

\begin{proof}
By \cite[\href{https://stacks.math.columbia.edu/tag/064K}{Tag 064K}]{stacks-project}, any bounded complex of flat modules is K-flat. Then the lemma is a consequence of \cite[\href{https://stacks.math.columbia.edu/tag/06YJ}{Tag 06YJ}]{stacks-project}.
\end{proof}

We can also define the pull-backs of cohesive modules. Notice that $f^*$ maps $T^{*}Y$ to $T^{*}X$, hence $\wedge\overline{T^{*}X}$ is a $\wedge \overline{f^*T^{*}Y}$-module.

\begin{defi}\label{defi: pull-back of cohesive modules}
Let $\mathcal{E}=(E^{\bullet}, A^{E^{\bullet}\prime\prime})\in B(Y)$ be a cohesive module on $Y$. We define its pull-back $f^*_b\mathcal{E}$ to be 
$$
(f^*E^{\bullet}, f^*A^{E^{\bullet}\prime\prime})
$$
where $f^*E^{\bullet}$ is the pull-back graded vector bundle and $f^*A^{E^{\bullet}\prime\prime}$ is the pull-back superconnection. In more details, if
$$
A^{E^{\bullet}\prime\prime}=v_0+\nabla^{E^{\bullet}\prime\prime}+v_2+\ldots
$$ 
is the decomposition in \eqref{eq: decomposition of anti super conn}. Then 
\begin{equation}\label{eq: decomposition of the pull-back connection}
f^*A^{E^{\bullet}\prime\prime}=f^*v_0+f^*\nabla^{E^{\bullet}\prime\prime}+f^*v_2+\ldots
\end{equation}
where $f^*\nabla^{E^{\bullet}\prime\prime}$ is the pull-back connection on $f^*E^{\bullet}$, and $f^*v_i$ is the pull-back form valued in $\wedge^{i}\overline{T^{*}X}  \hat{\otimes}  \End^{1-i}(E^{\bullet})$.

If $\phi: \mathcal{E}\to \mathcal{F}$ is a morphism, then we have the pull-back morphism $f^*_b\phi: f^*_b\mathcal{E}\to f^*_b\mathcal{F}$ defined by pulling back each component of $\phi$.
\end{defi}

In particular, if $i: X\hookrightarrow Y$ is an open or closed embedding, then we denote $i^*_b\mathcal{E}$ by $\mathcal{E}|_X$.

It is easy to see that $f^*_b$ defines a dg-functor $B(Y)\to B(X)$ hence we get the functor $\underline{f^*_b}: \underline{B}(Y)\to \underline{B}(X)$.

The following proposition implies that a cohesive module is locally the same as a cochain complex of holomorphic vector bundles.

\begin{prop}\label{prop: gauge equivalence to flat}
 For a cohesive module $\mathcal{E}= (E^{\bullet}, A^{E^{\bullet}\prime\prime})$ on $X$. For any $x\in X$, there exists an open neighborhood $V$ of $x$ and a flat $\dbar$-connection $ \overline{\nabla}^{E^{\bullet}|_V\prime\prime}$ on $E^{\bullet}|_V$ such that
\begin{enumerate}
\item $ \overline{\nabla}^{E^{\bullet}|_V\prime\prime}(v_0)=0$, i.e. $(E^{\bullet}|_V, v_0+ \overline{\nabla}^{E^{\bullet}|_V\prime\prime})$ is a cohesive module on $V$ with $v_i=0$ for all $i\geq 2$;
\item There exists a gauge equivalence $J: (E^{\bullet}, A^{E^{\bullet}\prime\prime})|_V\overset{\sim}{\to} (E^{\bullet}|_V, v_0+ \overline{\nabla}^{E^{\bullet}|_V\prime\prime})$.
\end{enumerate}
\end{prop}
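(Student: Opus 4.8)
The plan is to argue locally and build the gauge equivalence $J$ as a composition of ``unipotent'' gauge transformations that successively strip off the higher components $v_2,v_3,\dots$ of $A^{E^{\bullet}\pprime}$; the process terminates because on $X$ all forms have degree at most $n=\dim X$, so every power series that arises is a polynomial.

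First I would shrink to a polydisc $V$ around $x$ on which every $C^{\infty}$ bundle $E^{j}|_{V}$ is trivial and the $\dbar$-Poincar\'{e} lemma holds in the form we need (vanishing of Dolbeault cohomology in positive degree for trivial bundles, after possibly shrinking $V$). Fixing $C^{\infty}$ trivializations, write $\nabla^{E^{\bullet}\pprime}|_{V}=\dbar_{0}+\theta$ with $\dbar_{0}$ the trivial $\dbar$-operator and $\theta\in C^{\infty}(V,\overline{T^{*}X}\hat{\otimes}\End^{0}(E^{\bullet}))$, so that $A^{E^{\bullet}\pprime}|_{V}=\dbar_{0}+\mathbf{v}$, where $\mathbf{v}=v_{0}+\theta+v_{2}+\cdots$ is a Maurer--Cartan element of the differential graded Lie algebra $\mathfrak{g}=\bigl(C^{\infty}(V,\wedge^{\bullet}\overline{T^{*}X}\hat{\otimes}\End(E^{\bullet})),\dbar_{0},[\cdot,\cdot]\bigr)$. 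Two remarks frame the rest. First, by Proposition~\ref{prop: gauge and homotopy equiv degree 0}, any degree-$0$ closed morphism of the shape $J=\id+(\text{terms of positive form degree})$ is automatically a gauge equivalence, and conjugating $A^{E^{\bullet}\pprime}$ by such a $J$ acts on $\mathbf{v}$ by a gauge transformation of $\mathfrak{g}$ that fixes the $0$-form part $v_{0}$. Second, once $v_{i}=0$ for all $i\ge 2$, the relations in \eqref{eq: A flat decomposed} collapse to $(\overline{\nabla}^{E^{\bullet}|_{V}\pprime})^{2}=0$ and $\overline{\nabla}^{E^{\bullet}|_{V}\pprime}(v_{0})=0$; so it suffices to produce $J$ killing $v_{2},v_{3},\dots$, and then to read off $\overline{\nabla}^{E^{\bullet}|_{V}\pprime}$ as the form-degree-$\le 1$ part of $J\circ A^{E^{\bullet}\pprime}|_{V}\circ J^{-1}$.

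The core is an induction on form degree. Suppose a gauge transformation, absorbed into $J$, has already arranged $v_{2}=\cdots=v_{m-1}=0$ for some $m\ge 3$ (the initial step $m=2$ is analogous but needs separate care, since there $\dbar_{0}\theta+\theta^{2}$ need not yet vanish), and let $w_{m}\in C^{\infty}(V,\wedge^{m}\overline{T^{*}X}\hat{\otimes}\End^{1-m}(E^{\bullet}))$ be the current form-degree-$m$ component. Reading off the form-degree $m$ and $m+1$ pieces of $A^{E^{\bullet}\pprime}\circ A^{E^{\bullet}\pprime}=0$ and using that $v_{i}=0$ for $2\le i\le m-1$, one finds $[v_{0},w_{m}]=0$ and that $(\dbar_{0}+\ad\theta)(w_{m})$ lies in the image of $\ad(v_{0})$. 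On the other hand, a gauge transformation by $\exp(\xi_{m})$ with $\xi_{m}$ of form degree $m$ shifts $w_{m}$ by $\pm[v_{0},\xi_{m}]$ and leaves all lower form degrees untouched, while one by $\exp(\xi_{m-1})$ with $\xi_{m-1}$ of form degree $m-1$ and $\ad(v_{0})$-closed shifts $w_{m}$ by $\pm(\dbar_{0}+\ad\theta)(\xi_{m-1})$ and, because $[v_{0},\xi_{m-1}]=0$, does not disturb the vanishing already achieved in form degrees $\le m-1$. So the inductive step reduces to solving $w_{m}=[v_{0},\xi_{m}]+(\dbar_{0}+\ad\theta)(\xi_{m-1})$ with $\xi_{m-1}$ being $\ad(v_{0})$-closed (a degree count shows $\xi_{m}$ and $\xi_{m-1}$ automatically have the total degree $0$ required of gauge parameters). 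Since $\nabla^{E^{\bullet}\pprime}(v_{0})=0$ makes $\dbar_{0}+\ad\theta$ commute with $\ad v_{0}$, and (for $m\ge 3$) $\dbar_{0}\theta+\theta^{2}=0$ makes $(\dbar_{0}+\ad\theta)^{2}=0$, the class of $w_{m}$ is a positive-degree Dolbeault-type class valued in the $\ad(v_{0})$-cohomology, and the $\dbar$-Poincar\'{e} lemma on $V$ supplies precisely such a decomposition. Composing the resulting gauge transformations over $m=2,\dots,n$ yields $J$.

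The step I expect to be the main obstacle is exactly the cohomological input invoked at the end of the induction: controlling the $\ad(v_{0})$-cohomology of $\End^{\bullet}(E^{\bullet})$ on $V$ --- whose fibrewise dimension may jump --- well enough that the $\dbar$-Poincar\'{e} lemma applies, i.e.\ that the surviving form-degree-$m$ obstruction really is annihilated by the combination of $\ad(v_{0})$- and $(\dbar_{0}+\ad\theta)$-coboundaries at our disposal. I would organize this through the spectral sequence of the double complex $\bigl(C^{\infty}(V,\wedge^{p}\overline{T^{*}X}\hat{\otimes}\End^{q}(E^{\bullet})),\dbar_{0}+\ad\theta,\ad v_{0}\bigr)$: since $v_{0}$ is holomorphic for the holomorphic structure $\dbar_{0}+\ad\theta$ on $\End^{\bullet}(E^{\bullet})$, the first page concentrates in form degree $0$ by the $\dbar$-Poincar\'{e} lemma, and the (suitably interpreted) degeneration of this spectral sequence on a small enough polydisc --- which ultimately reflects the coherence of the cohomology sheaves of the holomorphic complex $(E^{\bullet},\nabla^{E^{\bullet}\pprime},v_{0})$ and the finiteness of free resolutions on $V$ --- is what makes the induction close. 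The remaining points --- sign bookkeeping and the finitely many correction terms in the exponentials, the base case $m=2$, and checking that the assembled $J$ is a genuine morphism in $B(V)$ --- are routine.
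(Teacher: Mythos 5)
The paper's ``proof'' of this proposition is only a pointer to Block~\cite[Lemma 4.5]{block2010duality} and Bismut--Shen--Wei~\cite[Theorem 5.2.1]{bismut2023coherent}, so there is no in-paper argument to compare against; you are essentially being asked to supply a proof from scratch. Your strategy --- trivialize on a polydisc, view $A^{E^\bullet\pprime}=\dbar_0+\mathbf{v}$ as a Maurer--Cartan element, and peel off $v_2,v_3,\dots$ one form-degree at a time by unipotent gauge transformations with the $\dbar$-Poincar\'e lemma as the analytic input --- is the right kind of argument and is close in spirit to what those references do. But the sketch has a genuine gap at precisely the step you yourself flag as the ``main obstacle,'' and it mishandles the base case.

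The inductive reduction you state, namely that one can always solve $w_m=[v_0,\xi_m]+(\dbar_0+\ad\theta)(\xi_{m-1})$ with $\xi_{m-1}$ being $\ad(v_0)$-closed, is the assertion that the class of $w_m$ already dies on the $E_2$-page of the spectral sequence of your double complex filtered by form degree (the one whose $E_1$-page is the $\ad(v_0)$-cohomology of $\wedge^\bullet\overline{T^*V}\hat\otimes\End(E^\bullet)$). What the Maurer--Cartan relations plus $\dbar$-Poincar\'e actually give you is weaker: that $[w_m]$ supports no outgoing differential, and that the abutment vanishes in positive form degree (and the latter comes from the \emph{other} spectral sequence, filtered by endomorphism degree, which is the one $\dbar$-Poincar\'e makes degenerate). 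Neither fact forces $[w_m]\in\mathrm{im}\,d_1$; in general $[w_m]$ is only killed by some $d_r$ with $r\ge 2$, and unwinding that needs a full zig-zag of gauge parameters $\xi_{m-r},\dots,\xi_{m}$ with compatibility conditions (possibly including a form-degree-$0$ piece commuting with $v_0$), not just $\xi_{m-1}$ and $\xi_m$. The root of the difficulty is exactly the one you name --- $\ad(v_0)$-cohomology is not a holomorphic vector bundle because the rank of $v_0$ jumps --- but citing degeneration of the other spectral sequence does not give you the page-$2$ vanishing you need. Separately, the base case $m=2$ is first called ``analogous but needs separate care'' and then, in the closing sentence, ``routine''; it is neither. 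As long as $v_2\neq 0$ one has $\dbar_0\theta+\theta^2=-[v_0,v_2]\neq 0$, so $\dbar_0+\ad\theta+\ad v_0$ is not a differential and the entire DGLA/double-complex framework that drives the $m\ge 3$ steps simply does not exist yet. The $m=2$ step is where the flat $\overline{\nabla}^{E^\bullet|_V\pprime}$ is actually produced, it is nonlinear (the exponential contributes quadratic terms in $\xi_1$ at form degree $2$), and it is the structurally distinct part of the proof; the proposal uses its conclusion to set up the induction but never supplies it.
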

\begin{proof}
See \cite[Lemma 4.5]{block2010duality} or \cite[Theorem 5.2.1]{bismut2023coherent}.
\end{proof}

\begin{rmk}
Notice that the gauge equivalence $J$ in Proposition \ref{prop: gauge equivalence to flat} does not change the map $v_0: E^{\bullet}\to E^{\bullet+1}$.
\end{rmk}

\subsection{Coherent sheaves and an equivalent of categories}\label{subsection: cohesive and coherent}
 Cohesive modules are closely related to coherent sheaves on $X$. Let $\mathcal{O}_X$ be the sheaf of holomorphic functions. We call a sheaf of $\mathcal{O}_X$-modules $\mathfrak{E}$ \emph{coherent} if it satisfies the following two conditions
\begin{enumerate}
\item $\mathfrak{E}$ is of finite type over $\mathcal{O}_X$, that is, every point in $X$ has an open neighborhood $U$ in $X$ such that there is a surjective morphism $\mathcal {O}_{X}^{n}|_{U}\twoheadrightarrow \mathcal {F}|_{U}$ for some natural number $n$;
\item for \emph{any} open set $U\subseteq X$, \emph{any} natural number $n$, and \emph{any} morphism $\varphi :\mathcal {O}_{X}^{n}|_{U}\to \mathcal {F}|_{U}$ of $\mathcal {O}_{X}$-modules, the kernel of $\varphi$  is of finite type.
\end{enumerate}

Let $D^b_{\coh}(X)$ be the derived category of bounded complexes of $\mathcal{O}_X$-modules with coherent cohomologies.

\begin{thm}\label{thm: equiv of cats}[\cite[Theorem 4.3]{block2010duality}, \cite[Theorem 6.5.1]{bismut2023coherent}]
If $X$ is a compact complex manifold, then  there exists an equivalence $\underline{F}_X: \underline{B}(X)\overset{\sim}{\to}D^b_{\coh}(X)$ as triangulated categories. Here $\underline{B}(X)$ is the homotopy category of $B(X)$.
\end{thm}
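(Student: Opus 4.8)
Here is the proof strategy I would follow.

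The plan is to construct the functor at the cochain level, verify it is well defined, exact, and lands in $D^b_{\coh}(X)$, and then prove it is fully faithful and essentially surjective; since $\underline{B}(X)$ already inverts every quasi-isomorphism (Proposition~\ref{prop: gauge and homotopy equiv degree 0}), such a functor is automatically a triangulated equivalence. I would send a cohesive module $\mathcal{E}=(E^{\bullet},A^{E^{\bullet}\prime\prime})$ to the total complex of the double complex of sheaves $\Omega^{0,q}_X(E^p)$ of smooth $(0,q)$-forms with values in $E^p$, equipped with the degree-one differential $\mathcal{D}$ induced by $A^{E^{\bullet}\prime\prime}$; flatness~\eqref{eq: A flat decomposed} gives $\mathcal{D}^2=0$. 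Since each $v_i$ is $C^\infty_X$-linear and $\nabla^{E^{\bullet}\prime\prime}$ satisfies the $\dbar$-Leibniz rule, $\mathcal{D}$ commutes with multiplication by holomorphic functions, so the cohomology sheaves of $(\Omega^{0,\bullet}_X(E^{\bullet}),\mathcal{D})$ are $\mathcal{O}_X$-modules; I set $\underline{F}_X(\mathcal{E})$ to be this complex of sheaves. A morphism in $B(X)$ induces a morphism of such sheaf complexes and a homotopy induces a cochain homotopy, so $\underline{F}_X$ descends to $\underline{B}(X)$; it is exact because applying $\Omega^{0,\bullet}_X(-)$ to the explicit mapping cone and shift of $B(X)$ reproduces, up to signs, the mapping cone and shift of sheaf complexes.

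To see $\underline{F}_X(\mathcal{E})\in D^b_{\coh}(X)$ I would invoke Proposition~\ref{prop: gauge equivalence to flat}: near any point there is an open $V$ and a gauge equivalence from $\mathcal{E}|_V$ to $(E^{\bullet}|_V, v_0+\overline{\nabla}^{E^{\bullet}|_V\prime\prime})$ with $\overline{\nabla}^{E^{\bullet}|_V\prime\prime}$ flat, i.e. to a bounded complex $\mathcal{E}^\bullet_V$ of holomorphic vector bundles with differential $v_0$. For such a complex the Dolbeault--Grothendieck lemma identifies $(\Omega^{0,\bullet}_X(E^{\bullet})|_V,\mathcal{D})$ with a termwise resolution of $\mathcal{E}^\bullet_V$, so its cohomology sheaves coincide with those of $\mathcal{E}^\bullet_V$ and are coherent by Oka's theorem; they are bounded since $E^{\bullet}$ is bounded and $q\le n$. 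As a gauge equivalence becomes a quasi-isomorphism after applying $\Omega^{0,\bullet}_X(-)$, coherence and boundedness hold for $\underline{F}_X(\mathcal{E})$ globally.

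For full faithfulness, fix $\mathcal{E},\mathcal{F}\in B(X)$ and let $\mathscr{H}^\bullet$ be the complex of sheaves whose degree-$\ell$ term is $\bigoplus_i \Omega^{0,i}_X\,\hat{\otimes}\,\Hom^{\ell-i}(E^{\bullet},F^{\bullet})$ with the differential induced by~\eqref{eq: differential in B(X) degree l}; its global sections are exactly the Hom-complex $(\Hom^\bullet_{B(X)}(\mathcal{E},\mathcal{F}),D^{\mathcal{E},\mathcal{F}})$, so $\Hom_{\underline{B}(X)}(\mathcal{E},\mathcal{F}[k])=H^k\Gamma(X,\mathscr{H}^\bullet)$. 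The terms of $\mathscr{H}^\bullet$ are $C^\infty_X$-modules, hence soft, so this equals $\mathbb{H}^k(X,\mathscr{H}^\bullet)$. Gauging $\mathcal{E}$ and $\mathcal{F}$ locally to complexes of holomorphic vector bundles and using the Dolbeault resolution of $\mathcal{O}_X$, one identifies $\mathscr{H}^\bullet$ locally with $R\mathcal{H}om_{\mathcal{O}_X}(\underline{F}_X\mathcal{E},\underline{F}_X\mathcal{F})$ --- no further resolution is needed, the local models being bounded complexes of locally free sheaves --- and a standard, if slightly delicate, local-to-global argument upgrades this to an identification of $\mathbb{H}^k(X,\mathscr{H}^\bullet)$ with $\Hom_{D^b_{\coh}(X)}(\underline{F}_X\mathcal{E},\underline{F}_X\mathcal{F}[k])$. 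Taking $k=0$ gives full faithfulness, and the match for all $k$ records compatibility with shifts.

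The remaining step, essential surjectivity, is where compactness enters and is the main obstacle. Given an object of $D^b_{\coh}(X)$ I would choose a \emph{finite} open cover $\{U_a\}$ on each piece of which it is represented by a bounded complex of holomorphic vector bundles, patch these local models with a partition of unity into a single finite-rank $\mathbb{Z}$-graded $C^\infty$ bundle $E^{\bullet}$ with $v_0$ restricting to the local differentials, and then solve the Maurer--Cartan system~\eqref{eq: A flat decomposed} degree by degree: the higher components $v_i$, $i\ge 2$, are built inductively so as to annihilate the successive obstructions coming from the \v{C}ech mismatch of the gluing data, each obstruction living in a \v{C}ech--Dolbeault bicomplex that is acyclic in the relevant range --- here one uses crucially that the $\dbar$-resolution terminates, so finite cohesive resolutions exist even when no finite holomorphic one does. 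The cohesive module $\mathcal{E}$ thus produced satisfies $\underline{F}_X(\mathcal{E})\cong$ the given object. This inductive gluing --- a Dolbeault counterpart of Toledo--Tong's twisting cochains \cite{toledo1978duality} --- is the technical heart of the theorem, and is carried out in detail in \cite{block2010duality} and \cite{bismut2023coherent}, to which I refer for the argument.
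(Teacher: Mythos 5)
The paper does not prove this theorem itself; it is stated with citations to \cite[Theorem 4.3]{block2010duality} and \cite[Theorem 6.5.1]{bismut2023coherent}, and the only in-paper content is the description of the functor $\underline{F}_X$ via \eqref{eq: sheafify of cohesive module} together with Propositions \ref{prop: sheafify has coherent cohomologies} and \ref{prop: sheafify morphisms}. Your sketch is consistent with that: the construction of $\underline{F}_X(\mathcal{E})$ as the $\mathcal{D}=A^{E^{\bullet}\prime\prime}$-complex of sheaves $\Omega^{0,\bullet}_X(E^{\bullet})$, the use of Proposition~\ref{prop: gauge equivalence to flat} plus Dolbeault--Grothendieck/Oka to land in $D^b_{\coh}(X)$, and the reduction of $\Hom_{\underline{B}(X)}$ to hypercohomology of a soft Hom-complex are the ingredients that Block and Bismut--Shen--Wei actually use. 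Two small cautions: the appeal to Proposition~\ref{prop: gauge and homotopy equiv degree 0} in your opening sentence is not by itself a substitute for verifying fully faithful plus essentially surjective (you do go on to verify both, so this is only a presentational looseness); and the essential-surjectivity step --- "patch local models with a partition of unity and solve Maurer--Cartan" --- compresses the genuinely delicate part, since the local holomorphic resolutions have varying ranks and must first be made comparable (finitely many chain-homotopy-equivalent local models, uniformly bounded) before the inductive construction of the $v_i$ can start. You acknowledge this and refer to the original sources, which is the right call; just be aware that in \cite{block2010duality} the gluing is organized slightly differently (closer to a perfect-complex argument), whereas the explicit Maurer--Cartan induction you describe is more in the spirit of \cite{chuang2021maurer}. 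Since the paper supplies no proof to compare against, there is no discrepancy to flag; your outline is a reasonable reconstruction of the cited argument.
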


In \cite{chuang2021maurer} the result of Theorem \ref{thm: equiv of cats} is generalized to noncompact complex manifold. Recall that a coherent sheaf $\mathfrak{E}$ is called \emph{globally bounded} if  there exists an open covering $U_i$ of $X$ and  integers $a<b$ and $N>0$ such that on each $U_i$ there exists a bounded complex of finitely generated locally free $\mathcal{O}_X$-modules $\mathcal{S}^{\bullet}_i$ which is concentrated in degrees $[a, b]$ and each $\mathcal{S}^j_i$ has rank $\leq N$, together with  a quasi-isomorphism $\mathcal{S}^{\bullet}_i\to \mathfrak{E}^{\bullet}|_{U_i}$.

Let $D^{\gb}_{\coh}(X)$ be the full subcategory of $D^b_{\coh}(X)$ whose objects are bounded complexes of $\mathcal{O}_X$-modules with globally bounded coherent cohomologies. When $X$ is compact, it is clear that $D^{\gb}_{\coh}(X)$ coincides with $D^b_{\coh}(X)$. Moreover if $X$ is compact and $\mathfrak{F}\in D^b_{\coh}(X)$, then for any open subset $V\subset X$, it is clear that the restriction $\mathfrak{F}|_V$ is in $D^{\gb}_{\coh}(V)$. 

\begin{rmk}
In this paper when we talk about complexes of sheaves with coherent cohomologies, we always assume it is globally bounded.
\end{rmk}

\begin{thm}[\cite{chuang2021maurer} Theorem 8.3]\label{thm: equiv of cats noncompact}
If $X$ is a  complex manifold, then there exists an equivalence  $\underline{F}_X: \underline{B}(X)\overset{\sim}{\to}D^{\gb}_{\coh}(X)$ as triangulated categories.
\end{thm}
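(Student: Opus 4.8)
The plan is to realize the equivalence through an explicit chain-level functor $F_X$, descend it to $\underline{B}(X)$, verify that it is triangulated, and then establish full faithfulness by a hypercohomology computation and essential surjectivity by gluing local free resolutions; the last step is the heart of the matter. Given a cohesive module $\mathcal{E}=(E^{\bullet},A^{E^{\bullet}\prime\prime})$, I form the bounded complex of $\mathcal{O}_X$-modules
\[
F_X(\mathcal{E})^k:=\bigoplus_{i+j=k}\Omega^{0,j}_X(E^i),
\]
where $\Omega^{0,j}_X(E^i)$ denotes the sheaf of smooth $E^i$-valued $(0,j)$-forms, with total differential built from the components $v_0,\nabla^{E^{\bullet}\prime\prime},v_2,\dots$ of \eqref{eq: decomposition of anti super conn} with the usual Koszul signs. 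This differential is $\mathcal{O}_X$-linear, since $\dbar$ annihilates holomorphic functions and each $v_i$ is $C^{\infty}$-linear, so the terms of $F_X(\mathcal{E})$ are fine, hence $\Gamma$-acyclic, $\mathcal{O}_X$-modules; moreover $v_i\in C^{\infty}(X,\wedge^i\overline{T^{*}X}\hat{\otimes}\End^{1-i}(E^{\bullet}))$ with $E^{\bullet}$ bounded, so only finitely many $v_i$ are nonzero and no convergence issue arises in the noncompact setting. By Proposition \ref{prop: gauge equivalence to flat}, each point has a neighbourhood $V$ on which $\mathcal{E}|_V$ is gauge equivalent to a complex of holomorphic vector bundles $(E^{\bullet}|_V,v_0+\overline{\nabla}^{E^{\bullet}|_V\prime\prime})$; the Dolbeault--Grothendieck lemma then identifies $F_X(\mathcal{E})|_V$ with the Dolbeault resolution of the holomorphic complex $\mathcal{O}_V(E^{\bullet}|_V)$, a bounded complex of coherent $\mathcal{O}_V$-modules. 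Hence the cohomology sheaves $\mathcal{H}^{\bullet}(F_X(\mathcal{E}))$ are coherent, and a uniform local bound on the ranks of the $E^i$ forces them to be globally bounded, so $\underline{F}_X(\mathcal{E}):=F_X(\mathcal{E})\in D^{\gb}_{\coh}(X)$. Functoriality is immediate from \eqref{eq: composition of morphisms in B(X)}, homotopic morphisms induce chain-homotopic maps of sheaf complexes (so by Proposition \ref{prop: gauge and homotopy equiv degree 0} homotopy equivalences become quasi-isomorphisms), and the mapping cone in $B(X)$ maps to the mapping cone of sheaf complexes; thus $\underline{F}_X\colon\underline{B}(X)\to D^{\gb}_{\coh}(X)$ is a well-defined triangulated functor.

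For full faithfulness, note that for cohesive modules $\mathcal{E},\mathcal{F}$ the Hom-complex of $B(X)$ with differential \eqref{eq: differential in B(X)} is the complex of global sections of a sheaf $\mathscr{H}om(\mathcal{E},\mathcal{F})$ of DG-modules over the fine sheaf of dg-algebras $\Omega^{0,\bullet}_X$, namely $\mathscr{H}om(\mathcal{E},\mathcal{F})^k=\bigoplus_{i+j=k}\Omega^{0,j}_X(\Hom^i(E^{\bullet},F^{\bullet}))$ with differential $D^{\mathcal{E},\mathcal{F}}$; since its terms are fine, $H^{\bullet}\big(\Gamma(X,\mathscr{H}om(\mathcal{E},\mathcal{F}))\big)=\mathbb{H}^{\bullet}\big(X,\mathscr{H}om(\mathcal{E},\mathcal{F})\big)$. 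On a chart $V$ where $\mathcal{E}$ and $\mathcal{F}$ become complexes of holomorphic vector bundles $G^{\bullet},H^{\bullet}$, one identifies $\mathscr{H}om(\mathcal{E},\mathcal{F})|_V$ with the Dolbeault resolution of $\mathscr{H}om^{\bullet}_{\mathcal{O}_V}(G^{\bullet},H^{\bullet})$, and since $G^{\bullet}$ is a bounded complex of locally free sheaves this represents $\mathbb{R}\mathscr{H}om_{\mathcal{O}_V}(\underline{F}_X\mathcal{E},\underline{F}_X\mathcal{F})$; hence $\mathscr{H}om(\mathcal{E},\mathcal{F})$ represents $\mathbb{R}\mathscr{H}om_{\mathcal{O}_X}(\underline{F}_X\mathcal{E},\underline{F}_X\mathcal{F})$, with cohomology sheaves the $\Ext$-sheaves $\mathscr{E}xt^{\bullet}_{\mathcal{O}_X}(\underline{F}_X\mathcal{E},\underline{F}_X\mathcal{F})$. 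Applying $\mathbb{R}\Gamma$ and taking degree zero gives $\Hom_{\underline{B}(X)}(\mathcal{E},\mathcal{F})=H^{0}\big(\Gamma(X,\mathscr{H}om(\mathcal{E},\mathcal{F}))\big)\cong\Hom_{D^{\gb}_{\coh}(X)}(\underline{F}_X\mathcal{E},\underline{F}_X\mathcal{F})$, and these identifications respect composition; this proves full faithfulness.

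Essential surjectivity is the hard part and the only place where the argument genuinely goes beyond Block's compact theorem. Given $\mathfrak{F}^{\bullet}\in D^{\gb}_{\coh}(X)$, one must build a cohesive module mapping to it under $\underline{F}_X$. I would take a locally finite open cover $\{U_i\}$ on which $\mathfrak{F}^{\bullet}$ admits finite locally free resolutions of amplitude and rank bounded uniformly --- which is precisely what global boundedness provides --- then choose homotopy equivalences between these resolutions on double overlaps together with higher coherence homotopies on the multiple overlaps, assembling a Toledo--Tong twisting cochain \cite{toledo1978duality}, and finally transport this \v{C}ech-style datum to a Dolbeault-style one by a subordinate partition of unity, producing a flat $\dbar$-superconnection $A^{E^{\bullet}\prime\prime}$ on a bounded, finite-rank graded bundle $E^{\bullet}$. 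Equivalently, in the language of \cite{chuang2021maurer}, cohesive modules are the perfect twisted complexes over the Dolbeault dg-algebra of $X$, and one invokes the homotopy-descent statement there that such twisted complexes model $D^{\gb}_{\coh}(X)$. The delicate points are keeping $E^{\bullet}$ of finite rank and bounded amplitude (again, this is exactly where global boundedness is used), verifying $A^{E^{\bullet}\prime\prime}\circ A^{E^{\bullet}\prime\prime}=0$ from the twisting-cochain Maurer--Cartan identities, and checking that $\underline{F}_X$ of the result recovers $\mathfrak{F}^{\bullet}$, the last because the local quasi-isomorphisms constructed above glue along the very data used to build the superconnection. I expect this rank/amplitude/Maurer--Cartan bookkeeping, now over a locally finite rather than finite cover, to be the main obstacle; full faithfulness and the triangulated-functor properties reduce, via the local structure theorem (Proposition \ref{prop: gauge equivalence to flat}) and the softness of Dolbeault-type sheaves, to standard facts about coherent sheaves, $\Ext$-sheaves, and hypercohomology.
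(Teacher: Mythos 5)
The paper does not prove this theorem---it is cited verbatim as \cite[Theorem 8.3]{chuang2021maurer}, with the only additional input being the remark that the cited statement is phrased for globally bounded perfect complexes rather than $D^{\gb}_{\coh}(X)$ and that these agree for nonsingular $X$. So there is no in-paper proof for your sketch to be compared against; what follows assesses your argument on its own terms.

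Your outline is broadly in line with the known strategy: sheafify $\mathcal{E}$ via its Dolbeault-style complex, use Proposition~\ref{prop: gauge equivalence to flat} to get coherence and global boundedness locally, prove full faithfulness by computing $\mathbb{R}\mathscr{H}om$ through the fine Hom-sheaf complex, and prove essential surjectivity via a twisting-cochain-to-superconnection construction. The full faithfulness argument and the triangulated-functor bookkeeping look sound. The genuine gap---one you yourself signal with ``I would'' and ``I expect''---is essential surjectivity, and it is not a small one. Passing from a Toledo--Tong twisting cochain (higher \v{C}ech cocycles $a^k$ over a locally finite cover) to a flat $\dbar$-superconnection $A^{E^{\bullet}\prime\prime}$ by a partition of unity is not a formal device: one has to manufacture all of the $v_i$'s in \eqref{eq: decomposition of anti super conn}, verify the full cascade of Maurer--Cartan identities in \eqref{eq: A flat decomposed}, keep the resulting graded bundle $E^{\bullet}$ of uniformly finite rank and bounded amplitude from the locally finite cover (this is exactly the role of global boundedness), and then check that $\underline{F}_X$ of the output is quasi-isomorphic to the input complex. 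That chain of constructions is the technical core of Block's compact proof and of the noncompact extension in \cite{chuang2021maurer}; your proposal asserts it but does not carry it out. A second, smaller omission: you aim directly at $D^{\gb}_{\coh}(X)$, whereas the cited theorem (as the paper's own remark notes) is stated for the derived category of globally bounded perfect complexes; the translation between the two---each object of $D^{\gb}_{\coh}(X)$ is a globally bounded perfect complex on nonsingular $X$, and conversely---needs to be stated and justified, and your sketch never addresses it.
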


For an object $\mathfrak{F}\in D^{\gb}_{\coh}(X)$, if $\mathcal{E}\in B(X)$ is a cohesive module such that  $\underline{F}_X(\mathcal{E})$ is quasi-isomorphic to $\mathfrak{F}$, then we call $\mathcal{E}$ a \emph{cohesive resolution} of $\mathfrak{F}$. In particular we can talk about cohesive resolutions of a single coherent sheaf, considered as a complex of sheaves concentrated in degree $0$. Theorem \ref{thm: equiv of cats noncompact} implies that cohesive resolutions always exist.
 
For later applications we give the construction of the functor $\underline{F}_X$ here. For a cohesive module $\mathcal{E}= (E^{\bullet}, A^{E^{\bullet}\prime\prime})$, we define $\underline{F}_X(\mathcal{E})$ to be the cochain complex $(\mathfrak{E}^{\bullet},d)$, where the sheaf $\mathfrak{E}^{\bullet}$ is given by
\begin{equation}\label{eq: sheafify of cohesive module}
\mathfrak{E}^n(U):=\bigoplus_{p+q=n}\Gamma(X, \Omega^{0,p}\hat{\otimes} E^q)
\end{equation}
and $d: \mathfrak{E}^n\to \mathfrak{E}^{n+1}$ is exactly $A^{E^{\bullet}\prime\prime}$. 

The following results are part of Theorem \ref{thm: equiv of cats noncompact}. We state them for later for the convenience of later applications.
\begin{prop}\label{prop: sheafify has coherent cohomologies}
The cochain complex cochain complex $(\mathfrak{E}^{\bullet},d)$ above has (globally bounded) coherent cohomologies.
\end{prop}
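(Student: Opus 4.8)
The plan is to reduce the global statement to a local one, using Proposition~\ref{prop: gauge equivalence to flat} to replace a general cohesive module by one with vanishing higher $v_i$ terms, i.e.\ a genuine cochain complex of holomorphic vector bundles, where the classical theory applies. First I would observe that coherence is a local property: given the cochain complex $(\mathfrak{E}^\bullet, d)$ of sheaves defined by \eqref{eq: sheafify of cohesive module}, I only need to show that each cohomology sheaf $\mathcal{H}^n(\mathfrak{E}^\bullet)$ is of finite type and has finite-type kernels of maps from free modules, and both conditions may be checked in an arbitrarily small neighborhood of each point $x \in X$.

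Next, fix $x \in X$ and apply Proposition~\ref{prop: gauge equivalence to flat}: on a suitable open neighborhood $V$ of $x$ there is a gauge equivalence $J$ between $\mathcal{E}|_V$ and $(E^\bullet|_V, v_0 + \overline{\nabla}^{E^\bullet|_V\prime\prime})$, where the latter has $v_i = 0$ for $i \geq 2$ and $\overline{\nabla}^{E^\bullet|_V\prime\prime}$ is a flat $\dbar$-connection compatible with $v_0$. Since $J$ is a gauge equivalence (in particular a degree-$0$ closed isomorphism in $B(V)$), the induced map on the associated complexes of sheaves $(\mathfrak{E}^\bullet, d)|_V$ is an isomorphism of complexes of $\mathcal{O}_V$-modules, hence induces isomorphisms on cohomology sheaves. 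So it suffices to prove coherence of the cohomology of the sheaf complex attached to $(E^\bullet|_V, v_0 + \overline{\nabla}^{E^\bullet|_V\prime\prime})$.

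For that case, the flat $\dbar$-connection $\overline{\nabla}^{E^\bullet|_V\prime\prime}$ is, after possibly shrinking $V$, gauge-equivalent to a trivial one, so $(E^j|_V, \overline{\nabla}^{E^\bullet|_V\prime\prime})$ has as its sheaf of $\dbar$-closed sections a locally free $\mathcal{O}_V$-module $\mathcal{S}^j$, and the condition $\overline{\nabla}^{E^\bullet|_V\prime\prime}(v_0) = 0$ together with $v_0^2 = 0$ means $v_0$ descends to an $\mathcal{O}_V$-linear differential making $(\mathcal{S}^\bullet, v_0)$ a bounded complex of finitely generated locally free $\mathcal{O}_V$-modules. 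The Dolbeault-type resolution then gives that $\mathfrak{E}^\bullet|_V$ is quasi-isomorphic to $\mathcal{S}^\bullet$ (this is essentially the fine-resolution argument underlying Theorem~\ref{thm: equiv of cats noncompact}); since $\mathcal{O}_V$ is a coherent sheaf of rings (Oka's theorem) and coherence is stable under taking kernels, cokernels, and cohomology of bounded complexes of coherent modules, each $\mathcal{H}^n(\mathcal{S}^\bullet)$ is coherent, hence so is $\mathcal{H}^n(\mathfrak{E}^\bullet)|_V$. Global boundedness is immediate from the uniform bound on the degrees and ranks of $E^\bullet$.

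**The main obstacle** I anticipate is the passage from the smooth-bundle complex $(E^\bullet|_V, v_0 + \overline{\nabla}^{E^\bullet|_V\prime\prime})$ to its sheaf of holomorphic sections and the verification that the Dolbeault complex \eqref{eq: sheafify of cohesive module} computes the hypercohomology of $(\mathcal{S}^\bullet, v_0)$ — that is, that $\Omega^{0,\bullet}\hat\otimes E^\bullet$ with the total differential $A^{E^\bullet\prime\prime}$ is a fine resolution of $\mathcal{S}^\bullet$ in the derived category. This requires the local $\dbar$-Poincar\'e lemma applied degreewise and a spectral-sequence (or total-complex) argument to assemble the pieces, and care is needed because $v_0$ and $\nabla^{E^\bullet\prime\prime}$ do not strictly commute but only satisfy \eqref{eq: A flat decomposed}. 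Fortunately this is exactly the content already established in \cite{block2010duality} and \cite{chuang2021maurer}, so I would cite those rather than reprove it, and the genuinely new content of the proof is only the local reduction via Proposition~\ref{prop: gauge equivalence to flat} plus the observation that coherence descends through it.
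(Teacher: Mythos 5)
Your proposal is correct and follows exactly the same route as the paper, which simply cites Proposition~\ref{prop: gauge equivalence to flat} as a one-line justification. You have filled in the implicit details — locality of coherence, the degreewise sheaf isomorphism induced by the gauge equivalence (Proposition~\ref{prop: sheafify morphisms}), the reduction to a bounded complex of holomorphic vector bundles, and global boundedness from the uniform rank and degree bounds — all of which are the intended content behind the paper's terse reference.
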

\begin{proof}
It is a direct consequence of Proposition \ref{prop: gauge equivalence to flat}.
\end{proof}

\begin{prop}\label{prop: sheafify morphisms}
Any closed degree $0$ morphism $\phi: \mathcal{E}_1\to \mathcal{E}_2$ induces a cochain map 
$$
\underline{F}_X(\phi): (\mathfrak{E}^{\bullet}_1,d)\to (\mathfrak{E}^{\bullet}_2,d).
$$
 Moreover if $\phi$ is a homotopy equivalence, then $\underline{F}_X(\phi)$ is homotopic invertible. If $\phi$ is a gauge equivalence, then on each degree $k$, the map 
$$\underline{F}_X(\phi): \mathfrak{E}^k_1\to  \mathfrak{E}^k_2$$ is an isomorphism.
\end{prop}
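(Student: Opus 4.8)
The plan is to exhibit $\underline{F}_X$ as a dg-functor from $B(X)$ to the dg-category of complexes of sheaves on $X$, and then read off the three assertions formally.

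First I would write out the map on morphisms. For a degree $0$ closed morphism $\phi=\phi_0+\phi_1+\cdots$ with $\phi_i\in C^{\infty}(X,\wedge^{i}\overline{T^{*}X}\hat{\otimes}\Hom^{-i}(E_1^{\bullet},E_2^{\bullet}))$, each $\phi_i$ acts on a local section of $\Omega^{0,p}\hat{\otimes}E_1^{q}$ by the evident $\wedge\overline{T^{*}X}$-module action, producing a section of $\Omega^{0,p+i}\hat{\otimes}E_2^{q-i}$; since $\phi$ has total degree $0$ this assembles into a morphism of sheaves $\underline{F}_X(\phi)\colon\mathfrak{E}_1^{n}\to\mathfrak{E}_2^{n}$ for every $n$, compatible with restrictions. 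The one step needing genuine care is to verify that, under this action, the composition law \eqref{eq: composition of morphisms in B(X)} of $B(X)$ coincides with honest composition of operators on $\bigoplus_{p,q}\Omega^{0,p}\hat{\otimes}E^{q}$: this is the standard Koszul-sign bookkeeping, and it yields $\underline{F}_X(\psi\phi)=\underline{F}_X(\psi)\circ\underline{F}_X(\phi)$, $\underline{F}_X(\id)=\id$, and (taking $\phi$ to be a piece of $A^{E^{\bullet}\prime\prime}$ itself) the identification of $A^{E^{\bullet}\prime\prime}$ with the differential $d$ on $\mathfrak{E}^{\bullet}$.

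Granting this, the cochain-map claim is immediate: because $\deg\phi=0$, the closedness $D^{\mathcal{E}_1,\mathcal{E}_2}\phi=A^{E_2^{\bullet}\prime\prime}\phi-\phi A^{E_1^{\bullet}\prime\prime}=0$ translates verbatim into $d\circ\underline{F}_X(\phi)=\underline{F}_X(\phi)\circ d$. For the homotopy statement, note that $\underline{F}_X$ likewise sends a degree $-1$ morphism $h$ to a degree $-1$ map of graded sheaves, and, using \eqref{eq: differential in B(X)} with $k=-1$, satisfies $\underline{F}_X(D^{\mathcal{E}_1,\mathcal{E}_2}h)=d\,\underline{F}_X(h)+\underline{F}_X(h)\,d$. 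If $\phi$ is a homotopy equivalence, choose a degree $0$ closed $\psi$ and degree $-1$ morphisms $h_1,h_2$ with $\psi\phi-\id=D h_1$ and $\phi\psi-\id=D h_2$; applying $\underline{F}_X$ gives $\underline{F}_X(\psi)\underline{F}_X(\phi)-\id=d\,\underline{F}_X(h_1)+\underline{F}_X(h_1)\,d$ and the symmetric identity, so $\underline{F}_X(\phi)$ is homotopic invertible.

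Finally, if $\phi$ is a gauge equivalence there is $\psi$ with $\psi\phi=\id$ and $\phi\psi=\id$ strictly in $B(X)$; functoriality gives $\underline{F}_X(\psi)\circ\underline{F}_X(\phi)=\id$ and $\underline{F}_X(\phi)\circ\underline{F}_X(\psi)=\id$, so $\underline{F}_X(\phi)\colon\mathfrak{E}_1^{k}\to\mathfrak{E}_2^{k}$ is an isomorphism in each degree $k$. (One can instead argue directly: with respect to the finite filtration of $\mathfrak{E}^{\bullet}$ by form-degree $p$, the operator $\underline{F}_X(\phi)$ is block-triangular with diagonal part $\phi_0$, which is fiberwise invertible in each degree by Proposition \ref{prop: gauge and homotopy equiv degree 0}, and off-diagonal part $\phi_1+\phi_2+\cdots$ nilpotent, hence $\underline{F}_X(\phi)$ is invertible.) The only nontrivial point in the whole argument is the sign bookkeeping of the first paragraph; all the rest is formal consequence of functoriality.
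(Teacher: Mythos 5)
Your argument is correct and is precisely the "direct consequence of the definition" that the paper's one-line proof gestures at: you unwind the action of $\phi_i$ on $\Omega^{0,p}\hat{\otimes}E^q$, translate closedness into the cochain-map identity, apply $\underline{F}_X$ to the homotopy data from Definition \ref{defi: gauge and homotopy equivalence}, and for the gauge case invoke functoriality (or the equivalent filtration-by-form-degree argument). This is the same route the paper intends; you have just written out the details.
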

\begin{proof}
It is a direct consequence of the definition.
\end{proof}

\begin{rmk}
In \cite[Theorem 8.3]{chuang2021maurer}, the result is stated for the derived category of globally bounded perfect complexes instead of $D^{\gb}_{\coh}(X)$. Nevertheless it is easy to see that these two categories are equivalent for nonsingular $X$.
\end{rmk}

For later applications we want to explicitly state the  following results, which are implied in Theorem \ref{thm: equiv of cats} and \ref{thm: equiv of cats noncompact}.

\begin{coro}\label{coro: quasi-isom is homotopy equivalence}
Any quasi-isomorphism in $D^{\gb}_{\coh}(X)$ is induced by a homotopy equivalence in $B(X)$.
\end{coro}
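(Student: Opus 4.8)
The plan is to read the statement off from the equivalence of triangulated categories $\underline{F}_X\colon \underline{B}(X) \overset{\sim}{\longrightarrow} D^{\gb}_{\coh}(X)$ of Theorem \ref{thm: equiv of cats noncompact}, together with the characterization of homotopy equivalences in Proposition \ref{prop: gauge and homotopy equiv degree 0}. So let $f\colon \mathfrak{F}_1 \to \mathfrak{F}_2$ be a quasi-isomorphism in $D^{\gb}_{\coh}(X)$, i.e.\ an isomorphism in that category, and pick cohesive resolutions $\mathcal{E}_1,\mathcal{E}_2 \in B(X)$ together with isomorphisms $\psi_i \colon \underline{F}_X(\mathcal{E}_i) \overset{\sim}{\to} \mathfrak{F}_i$ in $D^{\gb}_{\coh}(X)$ (these exist because $\underline{F}_X$ is essentially surjective). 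The first step would be to form the isomorphism $g := \psi_2^{-1}\circ f \circ \psi_1 \colon \underline{F}_X(\mathcal{E}_1) \overset{\sim}{\to} \underline{F}_X(\mathcal{E}_2)$ in $D^{\gb}_{\coh}(X)$.

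Next, since $\underline{F}_X$ is fully faithful, I would transport $g$ to the unique morphism $\bar{\phi} \in \Hom_{\underline{B}(X)}(\mathcal{E}_1,\mathcal{E}_2)$ with $\underline{F}_X(\bar{\phi}) = g$; because an equivalence of categories reflects isomorphisms, $\bar{\phi}$ is an isomorphism in the homotopy category $\underline{B}(X)$. Since $\underline{B}(X)$ is obtained from $B(X)$ by replacing each $\Hom$-complex with its degree-$0$ cohomology, $\bar{\phi}$ is represented by some closed degree-$0$ morphism $\phi \colon \mathcal{E}_1 \to \mathcal{E}_2$ of cohesive modules, and the invertibility of $[\phi]=\bar\phi$ in $\underline{B}(X)$ is, by Definition \ref{defi: gauge and homotopy equivalence}, exactly the statement that $\phi$ is a homotopy equivalence. (Alternatively, one can check directly that the degree-$0$ component $\phi^0\colon (E_1^\bullet,v_0)\to(E_2^\bullet,u_0)$ is a quasi-isomorphism of complexes of $C^\infty$ vector bundles and then invoke Proposition \ref{prop: gauge and homotopy equiv degree 0}.) Unwinding the definition of $g$, we get $\underline{F}_X(\phi)=g$, so that $\phi$ induces $f$ after the identifications $\psi_i$; Proposition \ref{prop: sheafify morphisms} then supplies the converse sanity check that a homotopy equivalence of cohesive modules does induce a homotopy-invertible, hence quasi-isomorphic, map of the associated complexes.

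Since Theorem \ref{thm: equiv of cats noncompact} already encodes the hard content, I do not expect a genuine obstacle. The one thing requiring care is the phrase "is induced by": one should make the bookkeeping explicit, fixing the quasi-isomorphisms $\psi_i$ identifying $\underline{F}_X(\mathcal{E}_i)$ (built from the Dolbeault-type complex \eqref{eq: sheafify of cohesive module}) with the $\mathfrak{F}_i$, and verifying that the square relating $\phi$, $f$, and the $\psi_i$ commutes in $D^{\gb}_{\coh}(X)$. A second, routine point is that every morphism of the homotopy category $\underline{B}(X)$ is genuinely represented by a closed degree-$0$ morphism of $B(X)$, which is what converts the abstract isomorphism $\bar\phi$ into an honest map of cohesive modules; both points are immediate from the constructions recalled in this section.
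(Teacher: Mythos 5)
Your argument is correct and is exactly the intended unwinding: the paper does not spell out a proof, only remarking that the corollary is ``implied'' by Theorems \ref{thm: equiv of cats} and \ref{thm: equiv of cats noncompact}, and your deduction — use essential surjectivity to pick cohesive resolutions, full faithfulness plus reflection of isomorphisms to get an isomorphism in $\underline{B}(X)$, represent it by a closed degree-$0$ morphism, and identify that with a homotopy equivalence via Definition \ref{defi: gauge and homotopy equivalence} — is the standard and only reasonable route. The bookkeeping you flag about the quasi-isomorphisms $\psi_i$ is the right point of care and you handle it correctly.
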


\begin{coro}\label{coro: Extension and morphism in B(X)}
For $\mathfrak{S}\in D^{\gb}_{\coh}(X)$ and  $\mathcal{E}\in B(X)$ such that $\underline{F}_X(\mathcal{E})\simeq \mathfrak{S}$, we have
\begin{equation}
\Hom_{D^{\gb}_{\coh}(X)}(\mathfrak{S},\mathfrak{S}[i])\cong \Hom_{\underline{B}(X)}(\mathcal{E},\mathcal{E}[i])\text{, for any } i.
\end{equation}
In particular if $\mathfrak{S}$ is a single globally bounded coherent sheaf, then
\begin{equation}
\Ext^i_X(\mathfrak{S},\mathfrak{S})\cong \Hom_{\underline{B}(X)}(\mathcal{E},\mathcal{E}[i])\text{, for any } i\geq 0.
\end{equation}
\end{coro}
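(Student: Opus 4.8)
The plan is to deduce both statements directly from Theorem \ref{thm: equiv of cats noncompact}, which supplies the triangulated equivalence $\underline{F}_X\colon \underline{B}(X)\overset{\sim}{\to}D^{\gb}_{\coh}(X)$. First I would invoke the general categorical fact that an equivalence of categories induces bijections on all morphism sets. Applied to $\underline{F}_X$, this gives for any two cohesive modules $\mathcal{E}$ and $\mathcal{E}'$ an isomorphism
\[
\Hom_{\underline{B}(X)}(\mathcal{E}, \mathcal{E}') \cong \Hom_{D^{\gb}_{\coh}(X)}\big(\underline{F}_X(\mathcal{E}), \underline{F}_X(\mathcal{E}')\big).
\]
Since $\underline{F}_X$ is a functor of triangulated categories, it commutes with the translation functor, so $\underline{F}_X(\mathcal{E}[i]) \cong \underline{F}_X(\mathcal{E})[i]$. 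Taking $\mathcal{E}' = \mathcal{E}[i]$ and using the hypothesis $\underline{F}_X(\mathcal{E}) \simeq \mathfrak{S}$ (hence $\underline{F}_X(\mathcal{E})[i]\simeq \mathfrak{S}[i]$) yields
\[
\Hom_{\underline{B}(X)}(\mathcal{E}, \mathcal{E}[i]) \cong \Hom_{D^{\gb}_{\coh}(X)}(\mathfrak{S}, \mathfrak{S}[i]),
\]
which is the first assertion.

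For the second assertion I would identify the right-hand side with the usual Ext groups. The full subcategory $D^{\gb}_{\coh}(X) \subset D^b_{\coh}(X)$ has the same morphism sets as $D^b_{\coh}(X)$, and the natural functor $D^b_{\coh}(X) \to D(\mathcal{O}_X\text{-Mod})$ into the derived category of all $\mathcal{O}_X$-modules is fully faithful onto the subcategory of complexes with coherent cohomology. Since the abelian category of $\mathcal{O}_X$-modules has enough injectives, for a single coherent sheaf $\mathfrak{S}$ viewed as a complex concentrated in degree $0$ one has the standard identification $\Hom_{D(\mathcal{O}_X\text{-Mod})}(\mathfrak{S}, \mathfrak{S}[i]) \cong \Ext^i_X(\mathfrak{S}, \mathfrak{S})$, and these groups vanish for $i < 0$. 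Combining this with the first assertion gives $\Ext^i_X(\mathfrak{S}, \mathfrak{S}) \cong \Hom_{\underline{B}(X)}(\mathcal{E}, \mathcal{E}[i])$ for all $i \geq 0$, as claimed. (The restriction to $i\geq 0$ is only because $\Ext^{<0}$ is not meaningful; for $i<0$ the first assertion still holds, both sides being $\Hom$ into a shift.)

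There is no serious obstacle here: essentially all the content is in Theorem \ref{thm: equiv of cats noncompact}, which is quoted, plus the fact that $\underline{F}_X$ is triangulated and therefore shift-compatible. The only point requiring a little care is the comparison between $\Hom$ in the bounded derived category $D^b_{\coh}(X)$ and the $\Ext$ groups computed in $D(\mathcal{O}_X\text{-Mod})$, i.e. the full faithfulness of the embedding $D^b_{\coh}(X)\hookrightarrow D(\mathcal{O}_X\text{-Mod})$; this is a standard fact about the derived category of a ringed space and could simply be cited.
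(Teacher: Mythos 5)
Your proof is correct and follows exactly the route the paper intends: the paper states this corollary without proof as a direct consequence of the triangulated equivalence of Theorem \ref{thm: equiv of cats noncompact} (and \ref{thm: equiv of cats}), which is precisely what you spell out via the bijection on Hom-sets, shift-compatibility of $\underline{F}_X$, and the standard identification $\Hom_{D^{\gb}_{\coh}(X)}(\mathfrak{S},\mathfrak{S}[i])\cong\Ext^i_X(\mathfrak{S},\mathfrak{S})$ for a single coherent sheaf.
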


Recall  that  we have the pull-back dg-functor $f^*_b: B(Y)\to B(X)$ and the induced functor $\underline{f^*_b}: \underline{B}(Y)\to \underline{B}(X)$. We have the following result.

\begin{prop}\label{prop: pull-back is the derived pull-back}
Under the equivalence of categories in Theorem \ref{thm: equiv of cats} and Theorem \ref{thm: equiv of cats noncompact}, $\underline{f^*_b}: \underline{B}(Y)\to \underline{B}(X)$ is compatible with the left derived pull-back functor $Lf^*: D^{\gb}_{\coh}(Y)\to D^{\gb}_{\coh}(X)$.
\end{prop}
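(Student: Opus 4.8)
The plan is to check compatibility on the two levels at which both functors are described concretely. First I would recall that the left derived pull-back $Lf^*$ on $D^{\gb}_{\coh}(Y)$ can be computed by resolving an object by a bounded complex of flat $\mathcal{O}_Y$-modules and applying $f^*$ naively, by Lemma \ref{lemma: derived pull-back flat modules}. On the cohesive side, the functor $\underline{F}_X$ is given explicitly by \eqref{eq: sheafify of cohesive module}: it sends $\mathcal{E}=(E^\bullet, A^{E^\bullet\prime\prime})$ to the complex of sheaves $\mathfrak{E}^n(U)=\bigoplus_{p+q=n}\Gamma(U,\Omega^{0,p}\hat\otimes E^q)$ with differential $A^{E^\bullet\prime\prime}$. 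So I would compute both $\underline{F}_X(f^*_b\mathcal{E})$ and $Lf^*\underline{F}_Y(\mathcal{E})$ and produce a natural quasi-isomorphism between them.

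The key observation is that the Dolbeault-type sheaf $\mathfrak{E}^\bullet$ attached to a cohesive module is already, in a suitable sense, $f^{-1}$-flat or at least $f$-acyclic for the relevant base change: the sheaves $\Omega^{0,p}_X$ are fine $C^\infty$-modules, and $\Omega^{0,p}_Y \hat\otimes E^q$ is a module over the soft sheaf $C^\infty_Y$, so it is flat (indeed acyclic) for the purposes of computing $Lf^*$ along the $C^\infty$ structure map. Concretely I would use that $f^*(\Omega^{0,p}_Y\hat\otimes E^q)$ (pull-back as a $C^\infty$-sheaf) maps naturally to $\Omega^{0,p}_X\hat\otimes f^*E^q$ via the differential of $f$, and that $f^*A^{E^\bullet\prime\prime}$ as defined in Definition \ref{defi: pull-back of cohesive modules} is precisely the differential making $\underline{F}_X(f^*_b\mathcal{E})$ the image of $\underline{F}_Y(\mathcal{E})$ under this pull-back. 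In other words, the diagram of explicit constructions commutes on the nose up to the natural comparison map $f^*\Omega^{0,\bullet}_Y\to\Omega^{0,\bullet}_X$, and the point is to show this comparison map induces an isomorphism after forming the total complex with $E^\bullet$ and passing to cohomology.

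To organize the proof I would reduce to the local situation via Proposition \ref{prop: gauge equivalence to flat}: locally on $Y$, $\mathcal{E}$ is gauge equivalent to a genuine complex of holomorphic vector bundles $(E^\bullet, v_0 + \overline\nabla^{E^\bullet\prime\prime})$ with $\overline\nabla^{E^\bullet\prime\prime}$ flat, i.e. to the Dolbeault resolution of a bounded complex of holomorphic vector bundles $\mathcal{H}^\bullet$. For such a complex the statement is classical: $f^*$ of the holomorphic complex computes $Lf^*$ because locally free sheaves are flat, and the Dolbeault resolution commutes with holomorphic pull-back. Since gauge equivalences are preserved by $f^*_b$ (it is a dg-functor, so $f^*_b J$ is again a gauge equivalence, by Proposition \ref{prop: sheafify morphisms} this descends to an isomorphism in the derived category), and since $\underline{F}$ is functorial, the local isomorphisms patch to a natural isomorphism $\underline{F}_X\circ \underline{f^*_b}\cong Lf^*\circ \underline{F}_Y$. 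Naturality in $\mathcal{E}$ follows because every step — the gauge equivalence of Proposition \ref{prop: gauge equivalence to flat}, the comparison map $f^*\Omega^{0,\bullet}_Y\to\Omega^{0,\bullet}_X$, and the identification $\underline{F}_X(f^*_b\mathcal{E}) = f^*\underline{F}_Y(\mathcal{E})$ at the level of the explicit complexes — is natural.

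The main obstacle I expect is the bookkeeping around $Lf^*$ itself: the sheaves $\mathfrak{E}^\bullet$ are modules over $C^\infty_Y$, not over $\mathcal{O}_Y$, so one must be careful about which flatness is being used and in which derived category the comparison takes place. The honest way around this is to never compute $Lf^*$ on the soft resolution directly, but rather to transport the question, via the gauge equivalence of Proposition \ref{prop: gauge equivalence to flat}, to a bounded complex of holomorphic vector bundles where $Lf^* = f^*$ holds on the nose by Lemma \ref{lemma: derived pull-back flat modules}, and then invoke that $f^*$ of a holomorphic complex of vector bundles, Dolbeault-resolved, is canonically the Dolbeault resolution of the $f^*$ complex — essentially the statement that the Dolbeault complex is natural for holomorphic maps. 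With that reduction in place the remaining verifications are routine naturality checks.
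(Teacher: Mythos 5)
Your first instinct — that the result should follow because $\underline{F}_Y(\mathcal{E})$ is already a complex of flat $\mathcal{O}_Y$-modules, so that $Lf^*$ is just naive $f^*$ by Lemma~\ref{lemma: derived pull-back flat modules}, and then the comparison with $\underline{F}_X(f^*_b\mathcal{E})$ follows from Definition~\ref{defi: pull-back of cohesive modules} — is exactly the paper's proof. The flatness worry you raise is unfounded: each $\mathfrak{E}^n=\bigoplus_{p+q=n}\Omega^{0,p}_Y\hat{\otimes}E^q$ is a locally free $C^\infty_Y$-module, and $C^\infty_Y$ is a flat $\mathcal{O}_Y$-module (Malgrange's theorem), so $\mathfrak{E}^n$ is indeed $\mathcal{O}_Y$-flat. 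There is no tension between ``module over $C^\infty_Y$'' and ``flat over $\mathcal{O}_Y$''; the former implies the latter precisely via Malgrange. Had you committed to this observation, you would have had the paper's argument.

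The detour you instead advocate — never touch the soft resolution, but transport everything through the local gauge equivalence of Proposition~\ref{prop: gauge equivalence to flat} — has a genuine gap. You produce, on each small $V\subset Y$, a quasi-isomorphism $\underline{F}_{f^{-1}V}(f^*_b\mathcal{E}|_V)\simeq Lf^*\underline{F}_V(\mathcal{E}|_V)$, but these depend on the choices of gauge equivalence $J$, and on overlaps two such choices agree only up to a nontrivial gauge transformation (and hence only up to homotopy after sheafifying). You assert that ``the local isomorphisms patch to a natural isomorphism'' and that ``the remaining verifications are routine naturality checks,'' but you have not exhibited a single globally defined comparison map whose local quasi-isomorphism property is what you are checking. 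The cleanest fix is exactly to go back to the approach you abandoned: the map $f^*\underline{F}_Y(\mathcal{E})\to\underline{F}_X(f^*_b\mathcal{E})$ induced by $f^*\Omega^{0,p}_Y\to\Omega^{0,p}_X$ and $f^*E^q$ is defined globally and canonically, and once you know $\underline{F}_Y(\mathcal{E})$ is $\mathcal{O}_Y$-flat the source computes $Lf^*$, so you only need to verify that this one fixed map is a quasi-isomorphism, which is where the local gauge equivalence is legitimately used (it trivializes both sides simultaneously because $J$ does not change the underlying $C^\infty$ bundles).
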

\begin{proof}
The proof is the same as that of \cite[Proposition 6.6]{bismut2023coherent}: We can check that for any $\mathcal{E}\in \underline{B}(Y)$, its image $\underline{F}_Y(\mathcal{E})\in  D^{\gb}_{\coh}(Y)$ is a bounded complex of flat $\mathcal{O}_Y$-modules. Then the proposition is a consequence of Lemma \ref{lemma: derived pull-back flat modules} and Definition \ref{defi: pull-back of cohesive modules}. Notice that we do not need $X$ or $Y$ to be compact.
\end{proof}

\subsection{Currents and cohesive modules}\label{subsection: currents and cohesive modules}
For the definition  of currents on complex manifolds, see \cite[Chapter 3, Section 1]{griffiths1994principles}. Let $\mathcal{D}^{p,q}_X$ denote the sheaf of $(p,q)$-currents on $X$. There is a natural embedding $\Omega^{p,q}_X\hookrightarrow \mathcal{D}^{p,q}_X$.

Let $\mathcal{E}= (E^{\bullet}, A^{E^{\bullet}\prime\prime})$ be a cohesive module on $X$. Recall we have
$$
A^{E^{\bullet}\prime\prime}=v_0+\nabla^{E^{\bullet}\prime\prime}+v_2+\ldots
$$
It is clear that $\nabla^{E^{\bullet}\prime\prime}$ induces a map
\begin{equation}
\nabla^{E^{\bullet}\prime\prime}: \mathcal{D}^{p,q}_X\otimes E^{\bullet}\to \mathcal{D}^{p,q+1}_X\otimes E^{\bullet}
\end{equation}
and for $i\neq 1$, $v_i$ induces a map
\begin{equation}
\nabla^{E^{\bullet}\prime\prime}: \mathcal{D}^{p,q}_X\otimes E^{\bullet}\to \mathcal{D}^{p,q+1-i}_X\otimes E^{\bullet+i}.
\end{equation}
Similar to the construction in Section \ref{subsection: cohesive and coherent}, we can define a cochain complex of sheaves $\tilde{F}_X(\mathcal{E})=(\tilde{\mathfrak{E}}^{\bullet},d)$ where
\begin{equation}\label{eq: defi of tilde F}
\tilde{\mathfrak{E}}^n(U)=\bigoplus_{p+q=n}\Gamma(U, \mathcal{D}^{0,p}_X\otimes E^q)
\end{equation}
and $d: \tilde{\mathfrak{E}}^n\to \tilde{\mathfrak{E}}^{n+1}$ is exactly $A^{E^{\bullet}\prime\prime}$. It is clear that $(\tilde{\mathfrak{E}}^{\bullet},d)$ is also a cochain complex of sheaves of $\mathcal{O}_X$-modules. Moreover the embedding $\Omega^{p,q}_X\hookrightarrow \mathcal{D}^{p,q}_X$ induces a cochain map $i: (\mathfrak{E}^{\bullet},d)\to (\tilde{\mathfrak{E}}^{\bullet},d)$.

\begin{prop}\label{prop: form to current is a quasi-isom}
The above  cochain map $i: (\mathfrak{E}^{\bullet},d)\to (\tilde{\mathfrak{E}}^{\bullet},d)$ is a quasi-isomorphism of cochain complexes of sheaves of $\mathcal{O}_X$-modules.
\end{prop}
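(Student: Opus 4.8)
The plan is to reduce the global statement to a local one and then invoke the classical fact that the Dolbeault complex of currents resolves the sheaf of holomorphic functions. Since the question of whether a cochain map of complexes of sheaves is a quasi-isomorphism is local on $X$, I would fix a point $x\in X$ and pass to a small neighborhood $V$ as furnished by Proposition \ref{prop: gauge equivalence to flat}. There, after applying the gauge equivalence $J$, the cohesive module becomes $(E^\bullet|_V, v_0+\overline{\nabla}^{E^\bullet|_V\prime\prime})$ with $v_i=0$ for $i\geq 2$; by Proposition \ref{prop: sheafify morphisms} the gauge equivalence $J$ induces degreewise isomorphisms on the sheaf level, and the same construction applied with currents in place of smooth forms gives degreewise isomorphisms of $(\tilde{\mathfrak{E}}^\bullet,d)$, compatibly with $i$. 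So it suffices to prove the statement when $A^{E^\bullet\prime\prime}=v_0+\overline{\nabla}^{E^\bullet\prime\prime}$ with $\overline{\nabla}^{E^\bullet\prime\prime}$ a genuine flat $\dbar$-connection and $\overline{\nabla}^{E^\bullet\prime\prime}(v_0)=0$, i.e.\ when $E^\bullet$ is honestly a bounded complex of holomorphic vector bundles with holomorphic differential $v_0$.

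In that situation both $(\mathfrak{E}^\bullet,d)$ and $(\tilde{\mathfrak{E}}^\bullet,d)$ carry a natural filtration (or bicomplex structure) by the holomorphic degree $q$ of $E^q$, with associated graded the direct sum over $q$ of the shifted Dolbeault complexes $(\Omega^{0,\bullet}\hat\otimes E^q, \overline{\nabla}^{E^\bullet\prime\prime})$ and $(\mathcal{D}^{0,\bullet}_X\otimes E^q, \overline{\nabla}^{E^\bullet\prime\prime})$ respectively. The key step is then: the inclusion of the smooth Dolbeault complex of a holomorphic vector bundle $E^q$ into its current Dolbeault complex is a quasi-isomorphism of complexes of sheaves — both are fine resolutions of the holomorphic sheaf $\mathcal{O}(E^q)$ (the $\dbar$-Poincaré lemma for currents being the content of e.g.\ \cite[Chapter 3]{griffiths1994principles}), and the inclusion is compatible with the augmentation to $\mathcal{O}(E^q)$. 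Hence $i$ induces an isomorphism on the associated graded of the two filtrations.

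To conclude, I would run the standard spectral-sequence (or filtered-quasi-isomorphism) argument: a morphism of bounded filtered complexes of sheaves that is a quasi-isomorphism on each graded piece is itself a quasi-isomorphism. The filtration here is finite because $E^\bullet$ is bounded, so no convergence subtleties arise; one simply compares the two first-page spectral sequences and applies the comparison theorem. The only mildly delicate point — the main obstacle, such as it is — is bookkeeping: checking that the gauge equivalence and the filtration by holomorphic degree are genuinely compatible with the inclusion $i$ (in particular that $J$ in Proposition \ref{prop: gauge equivalence to flat} acts identically on the current-valued version and preserves $v_0$, which is noted in the Remark following that proposition), and that the augmentations $\mathfrak{E}^\bullet\to$ (holomorphic complex) and $\tilde{\mathfrak{E}}^\bullet\to$ (holomorphic complex) commute with $i$. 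Once this is in place, the result is immediate from the $\dbar$-Poincaré lemma for currents.
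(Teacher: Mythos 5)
Your proposal is correct and follows essentially the same route as the paper: localize, apply the gauge equivalence from Proposition \ref{prop: gauge equivalence to flat} to reduce to a genuine bounded complex of holomorphic vector bundles, and then invoke the $\dbar$-Poincar\'{e} lemma for currents. The only difference is that you spell out the final step (filtration by holomorphic degree, Dolbeault complexes as associated graded, comparison of spectral sequences), which the paper compresses into ``an easy consequence of standard results in complex geometry.''
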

\begin{proof}
The claim is local so it is sufficient to prove the proposition on a small open subset $V\subset X$. By Proposition \ref{prop: gauge equivalence to flat}, for $V$ sufficiently small, we have a gauge equivalence $J: (E^{\bullet}, A^{E^{\bullet}\prime\prime})|_V\overset{\sim}{\to} (E^{\bullet}|_V, v_0+\nabla^{E^{\bullet}|_V\prime\prime})$, which induces  automorphisms
$$
J: \mathfrak{E}^n\overset{\sim}{\to} \mathfrak{E}^n
\text{ and } \tilde{J}: \tilde{\mathfrak{E}}^n|_V\overset{\sim}{\to} \tilde{\mathfrak{E}}^n|_V
$$
for each $n$. See Proposition \ref{prop: sheafify morphisms}. Let $\hat{d}$ denote the cochain map $\mathfrak{E}^n|_V\to \mathfrak{E}^{n+1}|_V$ and $\tilde{\mathfrak{E}}^n|_V\to \tilde{\mathfrak{E}}^{n+1}|_V$ induced by $v_0+\nabla^{E^{\bullet}|_V\prime\prime}$. We thus obtain  degreewise isomorphisms
\begin{equation}
J: (\mathfrak{E}^{\bullet}|_V,d)\to (\mathfrak{E}^{\bullet}|_V,\hat{d})
\text{ and }\tilde{J}: (\tilde{\mathfrak{E}}^{\bullet}|_V,d)\to (\tilde{\mathfrak{E}}^{\bullet}|_V,\hat{d})
\end{equation}
which are compatible with the embedding $i: \mathfrak{E}^{\bullet}|_V\to \tilde{\mathfrak{E}}^{\bullet}|_V$. Therefore it is sufficient to prove that
$$
i: (\mathfrak{E}^{\bullet}|_V,\hat{d})\to (\tilde{\mathfrak{E}}^{\bullet}|_V,\hat{d})
$$
is a quasi-isomorphism. Now $\nabla^{E^{\bullet}|_V\prime\prime}$ gives $E^{\bullet}|_V$ a structure of holomorphic vector bundle on $V$, so $(\mathfrak{E}^{\bullet}|_V,\hat{d})$ is the Dolbeault complex associated to a bounded cochain complex of holomorphic vector bundles. The claim is an easy consequence of standard results  in complex geometry as in \cite[Chapter 3, Section 1]{griffiths1994principles}.
\end{proof}

\begin{coro}\label{coro: global regularity section}
For $x\in \Gamma(X, \mathfrak{E}^n)$, if there exists $\tilde{y}\in \Gamma(X,  \tilde{\mathfrak{E}}^{n-1})$ such that $d(\tilde{y})=x$, then there exists $y\in \Gamma(X,\mathfrak{E}^{n-1})$ such that $d(y)=x$.
\end{coro}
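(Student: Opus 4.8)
The plan is to derive Corollary~\ref{coro: global regularity section} as an immediate consequence of Proposition~\ref{prop: form to current is a quasi-isom}, using the long exact sequence in sheaf cohomology. The point is that a degreewise quasi-isomorphism of complexes of sheaves need not induce an isomorphism on the cohomology of global sections; one must pass through hypercohomology (or, more elementarily, use that $i$ is a quasi-isomorphism together with an injectivity/surjectivity argument on the level of the cokernel complex). So the first thing I would do is make precise which statement I am actually using: Proposition~\ref{prop: form to current is a quasi-isom} gives that $i\colon (\mathfrak{E}^\bullet,d)\to(\tilde{\mathfrak{E}}^\bullet,d)$ is a quasi-isomorphism of complexes of sheaves, i.e.\ it induces isomorphisms on all cohomology \emph{sheaves} $\mathcal{H}^k$.

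Next I would set up the cone. Let $\mathfrak{C}^\bullet$ denote the mapping cone of $i$, so that there is a short exact sequence of complexes of sheaves $0\to\tilde{\mathfrak{E}}^\bullet\to\mathfrak{C}^\bullet\to\mathfrak{E}^\bullet[1]\to 0$, and the fact that $i$ is a quasi-isomorphism says exactly that $\mathfrak{C}^\bullet$ is acyclic as a complex of sheaves, i.e.\ all its cohomology sheaves vanish. The key analytic input I would invoke is that the sheaves involved are acyclic for the global sections functor in the relevant range: the sheaves $\mathcal{D}^{0,p}_X\otimes E^q$ are soft (they are $C^\infty(X)$-modules, hence fine), so the complex $(\tilde{\mathfrak{E}}^\bullet,d)$ computes the hypercohomology of $\tilde{\mathfrak{E}}^\bullet$, and since $i$ is a quasi-isomorphism, $H^k(\Gamma(X,\tilde{\mathfrak{E}}^\bullet))\cong \mathbb{H}^k(X,\mathfrak{E}^\bullet)$. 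Now the statement to prove is the following: given $x\in\Gamma(X,\mathfrak{E}^n)$ with $dx=0$ (note $x=d\tilde y$ forces $x$ to be $d$-closed) whose class in $H^n(\Gamma(X,\tilde{\mathfrak{E}}^\bullet))$ vanishes, its class in $H^n(\Gamma(X,\mathfrak{E}^\bullet))$ also vanishes. But wait --- this is \emph{not} automatic, because $(\mathfrak{E}^\bullet,d)$ need not compute $\mathbb{H}^\bullet(X,\mathfrak{E}^\bullet)$: the sheaves $\Omega^{0,p}\hat\otimes E^q$ are also fine (they are $C^\infty$-modules), so in fact $(\mathfrak{E}^\bullet,d)$ \emph{does} compute $\mathbb{H}^\bullet(X,\mathfrak{E}^\bullet)$ as well. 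This is the crucial observation: both $\Gamma(X,\mathfrak{E}^\bullet)$ and $\Gamma(X,\tilde{\mathfrak{E}}^\bullet)$ compute the same hypercohomology $\mathbb{H}^\bullet(X,\mathfrak{E}^\bullet)$, and the map $i$ realizes this identification.

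So the argument is: the sheaves $\Omega^{0,p}_X\hat\otimes E^q$ and $\mathcal{D}^{0,p}_X\otimes E^q$ are all fine sheaves (being modules over the fine sheaf of ring $C^\infty_X$ of smooth functions, using a partition of unity), hence $\Gamma(X,-)$-acyclic. Therefore the natural maps $H^k(\Gamma(X,\mathfrak{E}^\bullet))\to\mathbb{H}^k(X,\mathfrak{E}^\bullet)$ and $H^k(\Gamma(X,\tilde{\mathfrak{E}}^\bullet))\to\mathbb{H}^k(X,\tilde{\mathfrak{E}}^\bullet)$ are isomorphisms, and by Proposition~\ref{prop: form to current is a quasi-isom} the right-hand sides are identified compatibly with $i$. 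Chasing this diagram: the class $[x]\in H^n(\Gamma(X,\mathfrak{E}^\bullet))$ maps under $i$ to $[x]\in H^n(\Gamma(X,\tilde{\mathfrak{E}}^\bullet))$, which is zero by hypothesis (since $x=d\tilde y$); since the composite $H^n(\Gamma(X,\mathfrak{E}^\bullet))\xrightarrow{i} H^n(\Gamma(X,\tilde{\mathfrak{E}}^\bullet))$ is an isomorphism, we conclude $[x]=0$ in $H^n(\Gamma(X,\mathfrak{E}^\bullet))$, i.e.\ there is $y\in\Gamma(X,\mathfrak{E}^{n-1})$ with $dy=x$. The main obstacle --- really the only subtle point --- is justifying the fineness/softness of these sheaves and hence that the global-sections complexes genuinely compute hypercohomology; I would handle this by noting that a sheaf of modules over the soft sheaf of rings $C^\infty_X$ is soft, and soft sheaves on a paracompact space are acyclic. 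Everything else is a formal diagram chase, and I would keep it brief.
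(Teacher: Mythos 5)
Your argument is correct and is precisely the reasoning behind the paper's one-line proof, which simply cites Proposition~\ref{prop: form to current is a quasi-isom} together with the softness of $\mathfrak{E}^n$ and $\tilde{\mathfrak{E}}^n$; you have spelled out the standard hypercohomology diagram chase that these two facts license. The mapping-cone detour is unnecessary since you abandon it in favor of the direct argument, but the final argument is exactly what the paper intends.
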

\begin{proof}
It is a direct consequence of Proposition \ref{prop: form to current is a quasi-isom} and the fact that both $\mathfrak{E}^n$ and $ \tilde{\mathfrak{E}}^n$ are soft sheaves for each $n$.
\end{proof}

\section{Pseudomeromorphic and almost semimeromorphic currents}\label{section: pseudomeromorphic currents}
\label{section:pseudo}
In this section we review pseudomeromorphic and almost semimeromorphic currents following  \cite{andersson2010decomposition} and \cite{andersson2018direct}.

\subsection{Scalar valued currents}
Let $s$ be a holomorphic section of a Hermitian holomorphic line bundle $L$ over $X$.
The \emph{principal value current} $[1/s]$ can be defined as 
$$
[1/s] := \lim_{\epsilon \to 0} \chi(|s|^2/\epsilon)\frac{1}{s},
$$
where $\chi : \R \to \R$ is a smooth cut-off function, i.e., $\chi(t) = 0$ in a neighborhood of zero and $\chi(t) = 1$ when $|t| \gg 1$.
A current is \emph{semimeromorphic} if it is of the form $[\omega/s] := \omega[1/s]$, where $\omega$ is a smooth form with values in $L$.

Recall that a modification is a proper surjective holomorphic  map $\pi: X' \to X$ where $X$ and $X'$ are complex spaces, such that there exists a nowhere dense analytic subset $E\subset X$ such that
$$
\pi|_{X'\backslash \pi^{-1}(E)}: X'\backslash \pi^{-1}(E)\to X\backslash E
$$
is a biholomorphic isomorphism.

\begin{defi}\label{defi: almost semimeromorphic current}
A current $a$ is \emph{almost semimeromorphic} on $X$, written $a \in \text{ASM}(X)$, if there is a modification $\pi : X' \to X$ such that
$$a=\pi_*(\omega/s),$$ where $\omega/s$ is a semimeromorphic current in $X'$.

A current $a$ is \emph{locally} almost semimeromorphic on $X$, written $a \in \text{LASM}(X)$, if ther e is an open cover $\{U_i\}$ of $X$ such that $a|_{U_i}\in \text{ASM}(U_i)$ for each $U_i$.

For $a \in \text{LASM}(X)$, the \emph{Zariski-singular support} of $a$, denoted by ZSS$(a)$ is the smallest analytic subset of $X$ where $a$ is not smooth. ZSS$(a)$ has positive codimension in $X$.
\end{defi}

\begin{rmk}
ZSS$(a)$ is not the \emph{support} of $a$. The latter is defined for general currents.
\end{rmk}

\begin{prop}\label{prop: almost semimeromorphic currents form an algebra}
 (Locally) almost semimeromorphic currents on $X$ form a graded commutative algebra over
smooth forms. The class of (locally) almost semimeromorphic currents on $X$ is closed under $\partial$.
\end{prop}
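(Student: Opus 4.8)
The plan is to reduce every assertion to the behaviour of semimeromorphic currents on a single common modification, where products can be computed monomially.

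First I would dispose of the module structure. A smooth form $\alpha$ is almost semimeromorphic, being $\text{id}_*(\alpha/1)$. If $a=\pi_*(\omega/s)\in\text{ASM}(X)$ and $\alpha$ is smooth, the projection formula gives $\alpha\wedge a=\pi_*(\pi^*\alpha\wedge\omega/s)$, and $\pi^*\alpha\wedge\omega$ is again a smooth $L$-valued form on $X'$, so $\alpha\wedge a\in\text{ASM}(X)$; graded commutativity of this action is inherited from that of smooth forms. Next comes the product, which is the heart of the matter. Given finitely many currents $a_j=\pi_{j*}(\omega_j/s_j)$, $j=1,\dots,k$, I would invoke Hironaka resolution to produce one modification $\pi\colon X'\to X$ factoring as $\pi=\pi_j\circ p_j$ with each $p_j$ a modification and with the union of the zero divisors of the $p_j^*s_j$ a simple normal crossings divisor. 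Using that pullback of a semimeromorphic current along a modification is again semimeromorphic (locally $p_j^*[1/s_j]=[1/p_j^*s_j]$) and that pushforward of currents is functorial under composition, one gets $a_j=\pi_*(\tilde\omega_j/\tilde s_j)$ on the \emph{same} $X'$, with $\tilde s_1\cdots\tilde s_k$ having normal crossings. On $X'$ I would then declare $(\tilde\omega_1/\tilde s_1)\wedge\cdots\wedge(\tilde\omega_k/\tilde s_k):=(\tilde\omega_1\wedge\cdots\wedge\tilde\omega_k)\big/(\tilde s_1\cdots\tilde s_k)$, which is a well-defined semimeromorphic current because in local coordinates adapted to the normal crossings configuration each $[1/\tilde s_j]$ is a smooth form times a principal value in a single monomial, and such principal values multiply as a tensor product; then set $a_1\wedge\cdots\wedge a_k:=\pi_*\big((\tilde\omega_1\wedge\cdots\wedge\tilde\omega_k)/(\tilde s_1\cdots\tilde s_k)\big)$.

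With the construction in hand I would verify the axioms. For well-definedness, any two admissible modifications are dominated by a third, and on it the monomial product pulls back to the product of the pullbacks, so the two pushforwards coincide. $\C$-bilinearity is immediate; associativity holds because on a common modification both $(a_1a_2)a_3$ and $a_1(a_2a_3)$ are represented by $(\tilde\omega_1\wedge\tilde\omega_2\wedge\tilde\omega_3)/(\tilde s_1\tilde s_2\tilde s_3)$; graded commutativity follows from graded commutativity of $\wedge$ on the smooth $\tilde\omega_j$ together with commutativity of multiplication of the holomorphic $\tilde s_j$. Taking one factor with trivial pole recovers the smooth-form module structure, so $\text{ASM}(X)$ is a graded commutative algebra over smooth forms. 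For closure under $\partial$: since $\pi$ is holomorphic, $\pi^*$ commutes with $\partial$, hence so does $\pi_*$, so it suffices to treat a semimeromorphic $\omega/s$ on $X'$. By Leibniz $\partial(\omega/s)=(\partial\omega)/s\pm\omega\wedge\partial[1/s]$, and locally writing $s$ as a unit times a monomial $z_1^{m_1}\cdots z_r^{m_r}$ one has $\partial[1/z_j^{m}]=-m\,[1/z_j^{m+1}]\,dz_j$, which is semimeromorphic; hence $\partial[1/s]$, and therefore $\partial(\omega/s)$, is semimeromorphic, so $\partial a=\pi_*\partial(\omega/s)\in\text{ASM}(X)$. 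Finally, since $\text{LASM}$ is defined by an open cover, all of the above localizes: products of locally almost semimeromorphic currents are defined by patching the local $\text{ASM}$ products, which agree on overlaps by the uniqueness just established, and $\partial$ preserves $\text{LASM}$ because it does so on each member of the cover.

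I expect the main obstacle to be the well-definedness of the product: one must check that the monomial product of semimeromorphic currents behaves functorially under further modifications, so that the definition is independent of the chosen resolution, and that $[1/(s_1\cdots s_k)]$ in normal-crossings coordinates genuinely computes the iterated product $[1/s_1]\wedge\cdots\wedge[1/s_k]$. Everything else is formal bookkeeping once the common modification and the normal crossings normalization are in place; detailed arguments for these facts are in \cite{andersson2018direct} (see also \cite{andersson2010decomposition}).
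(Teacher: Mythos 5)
Your proposal is essentially a correct reconstruction of the argument in Andersson--Wulcan that the paper simply cites (the paper's ``proof'' consists solely of the reference to \cite[Section 4.1 and Proposition 4.16]{andersson2018direct}, so there is nothing in the paper to compare against beyond that pointer). Your sketch -- module structure via the projection formula, products via a common Hironaka modification with simple normal crossings, associativity and graded commutativity read off from the monomial representatives, closure under $\partial$ by Leibniz and $\partial[1/z^m]=-m[1/z^{m+1}]\,dz$, and localization for the LASM case -- is the content of that reference.

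One small remark on emphasis: in the cited source the product of an ASM current $a$ with a pseudomeromorphic current $T$ is \emph{defined} by the regularization $a\wedge T:=\lim_{\epsilon\to 0}\chi_\epsilon\,a\wedge T$, where $\chi_\epsilon$ cuts off $\mathrm{ZSS}(a)$; the passage to a common modification with normal crossings is then used to \emph{prove} that for $T=b\in\mathrm{ASM}(X)$ this limit is again ASM and that the product is associative and graded commutative. You instead take the common-modification representative as the definition and then establish independence of choices. These two formulations agree because both representatives have the SEP and coincide off an analytic set of positive codimension, so Lemma~\ref{lemma: uniqueness of LASM extension} (or its ASM version) forces them to be equal -- it would be worth saying this explicitly, since otherwise a reader might worry whether your product matches the one used elsewhere in the paper (e.g.\ in the constructions of $U^{\mathcal{E}}$ and $M^\phi$). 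With that clarification, the proof is complete and consistent with the paper's conventions.
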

\begin{proof}
For the almost semimeromorphic case see \cite[Section 4.1 and Proposition 4.16]{andersson2018direct}.  The locally almost semimeromorphic case follows immediately.
\end{proof}

In general LASM$(X)$ is not closed under $\dbar$. Actually we have the following more general concept: For an open subset $U\subset \C^N$ with coordinates $(t_1,\ldots, t_N)$, we have 
\begin{equation}\label{eq: elementary current}
\tau:=\dbar\Big[\frac{1}{t_{i_1}^{a_{i_1}}}\Big]\wedge \ldots \wedge \dbar\Big[\frac{1}{t_{i_q}^{a_{i_q}}}\Big]\wedge\Big[\frac{1}{t_{i_{q+1}}^{a_{i_{q+1}}}}\Big]\wedge\ldots \wedge\Big[\frac{1}{t_{i_{q+k}}^{a_{i_{q+k}}}}\Big]\wedge \alpha
\end{equation}
where $a_{i_1},\ldots, a_{i_{q+k}}\geq 1$ and $\alpha$ is a $C^{\infty}$-form on $U$ with compact support. According to \cite[Section 2]{andersson2018direct}, $\tau$ is a well-defined current. 
It $\tau$ is a current on a complex manifold $X$, we call $\tau$ an \emph{elementary current} if there exists a local chart $\{U_{\sigma}\}$ of $X$ such that $\tau$ is of the form of \eqref{eq: elementary current} when restricted to each $U_{\sigma}$. 

\begin{defi}[\cite{andersson2010decomposition} Section 2]\label{defi: pseudomeromorphic current}
Let $X$ be a complex manifold (or more generally, a complex analytic space). A current $T$ on $X$ is said to be a \emph{pseudomeromorphic current} if it can be written as a locally finite sum
\begin{equation}
T=\sum \Pi_* \tau_l
\end{equation}
where $\tau_l$ is an elementary current on some complex manifold $\tilde{X}_r$ and $\Pi=\Pi_1\circ \ldots \circ \Pi_r$ is a composition of resolutions of singularities
$$
\Pi_1: \tilde{X}_1\to X_1\subset X,\ldots, \Pi_r: \tilde{X}_r\to X_r\subset \tilde{X}_{r-1}.
$$
We denote the set of pseudomeromorphic currents on $X$ by PM$(X)$.
\end{defi}

Locally almost semimeromorphic currents are special cases of  pseudomeromorphic currents.

\begin{prop}\label{prop: pseudomeromorphic currents closed under multiplication}
The class of pseudomeromorphic currents is closed under multiplication with smooth forms and under $\partial$ and $\dbar$.  Moreover, a locally almost semimeromorphic current can act on a pseudomeromorphic current from both sides.
\end{prop}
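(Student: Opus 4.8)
The plan is to reduce every assertion to a statement about elementary currents, via Definition \ref{defi: pseudomeromorphic current} together with two elementary facts about the proper holomorphic maps that occur: pushforward commutes with $\partial$ and $\dbar$ (these maps being holomorphic and proper), and the projection formula $\pi_*(\pi^*\omega \wedge \tau) = \omega \wedge \pi_*\tau$ holds for $\omega$ smooth. Write a given $T \in \mathrm{PM}(X)$ as a locally finite sum $T = \sum_l \Pi_*\tau_l$, where each $\tau_l$ is an elementary current of the form \eqref{eq: elementary current} on some $\tilde{X}_r$ and $\Pi$ is a composition of resolutions. Then it suffices to show that each operation sends an elementary current to a \emph{finite} sum of elementary currents (local finiteness of the resulting sum for $T$ is then automatic).

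For multiplication by a smooth form $\omega$: the projection formula gives $\omega \wedge \Pi_*\tau_l = \Pi_*(\Pi^*\omega \wedge \tau_l)$, and $\Pi^*\omega$ is again smooth. In local coordinates $\Pi^*\omega$ can be moved past each factor $[1/t_i^{a_i}]$ and $\dbar[1/t_i^{a_i}]$ at the cost of a sign, so $\Pi^*\omega \wedge \tau_l$ is of the form \eqref{eq: elementary current} with new smooth factor $\pm\,\alpha \wedge \Pi^*\omega$, which on any relatively compact chart may be taken with compact support; hence $\omega \wedge T \in \mathrm{PM}(X)$, and symmetrically on the right. For $\partial$ and $\dbar$: since $\Pi_*$ commutes with both, $\partial T = \sum_l \Pi_*(\partial\tau_l)$ and $\dbar T = \sum_l \Pi_*(\dbar\tau_l)$, and one applies the Leibniz rule to \eqref{eq: elementary current}. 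For $\dbar$ one uses $\dbar\dbar[1/t_i^{a_i}] = 0$, that $\dbar[1/t_i^{a_i}]$ is already an admissible factor, and that $\dbar\alpha$ is smooth; so $\dbar\tau_l$ is a finite sum of elementary currents. For $\partial$ one uses the standard one-variable identities $\partial[1/t_i^{a_i}] = -a_i\,dt_i \wedge [1/t_i^{a_i+1}]$ and $\partial\dbar[1/t_i^{a_i}] = a_i\,dt_i \wedge \dbar[1/t_i^{a_i+1}]$, absorbing the smooth factor $dt_i$ into $\alpha$; so $\partial\tau_l$ is again a finite sum of elementary currents. This proves the first sentence.

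For the second sentence the assertion is local, so we may take $a$ almost semimeromorphic, $a = p_*(\omega \wedge [1/s])$ for a modification $p: X' \to X$ and a semimeromorphic current $\omega \wedge [1/s]$ on $X'$ (Definition \ref{defi: almost semimeromorphic current}), and $T = \sum_l \Pi_*\tau_l$ as above. I would pass to a modification $\sigma: Z \to X$ dominating $X'$ and all the $\tilde{X}_r$, chosen so that the pullback of $s$ is locally a monomial times a non-vanishing factor and so that the elementary representatives of $T$ lift to $Z$; the product there is that of a monomial semimeromorphic current with an elementary current, which — after additional blow-ups separating the coordinate hyperplanes that occur — is again a pushforward of an elementary current, while the contributions in which the relevant singularity sets are in general position are controlled by the algebra structure of Proposition \ref{prop: almost semimeromorphic currents form an algebra}. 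Pushing the result down by $\sigma$ defines $a \wedge T$, and $T \wedge a$ is treated identically. The step I expect to be the main obstacle, and the real content of the statement, is well-definedness: pseudomeromorphic currents do not pull back under arbitrary holomorphic maps, so one must verify that the construction is independent of the choices of modifications and representatives and that it agrees with the regularized limit $\lim_{\epsilon\to 0}\chi(|s|^2/\epsilon)\,a \wedge T$. I would invoke \cite{andersson2010decomposition} and \cite{andersson2018direct} for this rather than reprove it.
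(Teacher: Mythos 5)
The paper gives no proof here: it simply cites Andersson--Wulcan (\cite{andersson2018direct}, Sections 2.1 and 4.2), this being a review statement. Your proposal goes further and unpacks the first sentence by hand. That part --- projection formula $\Pi_*(\Pi^*\omega\wedge\tau)=\omega\wedge\Pi_*\tau$, the fact that pushforward along a proper holomorphic map commutes with $\partial$ and $\dbar$, the Leibniz rule on elementary currents, and the one-variable identities $\partial[1/t^a]=-a\,dt\wedge[1/t^{a+1}]$, $\dbar\dbar[1/t^a]=0$ --- is a correct sketch of what those references contain, and is a genuinely different presentation from the paper's bare citation. For the second sentence you ultimately defer to the same literature, which is exactly what the paper does, so you are in the same position at the end.

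One concrete slip in your sketch of the second part is worth flagging. In Definition \ref{defi: pseudomeromorphic current}, the composite $\Pi=\Pi_1\circ\cdots\circ\Pi_r$ is a tower of resolutions of singularities of \emph{subvarieties}: $\Pi_1:\tilde X_1\to X_1\subset X$, then $\Pi_2:\tilde X_2\to X_2\subset\tilde X_1$, and so on, where the $X_j$ may have strictly smaller dimension. Thus $\tilde X_r$ typically has lower dimension than $X$ and $\Pi$ maps onto a proper subvariety. The phrase ``a modification $\sigma:Z\to X$ dominating $X'$ and all the $\tilde X_r$'' therefore does not type-check: a modification $Z\to X$ is surjective and generically biholomorphic, hence equidimensional with $X$, and cannot factor through a lower-dimensional $\tilde X_r$. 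The actual Andersson--Wulcan construction of $a\wedge T$ works term by term: given $T=\sum_l\Pi_*\tau_l$, one resolves $a$ over $X$ so that it becomes smooth times a monomial, pulls the relevant data back along each $\Pi$ separately and multiplies \emph{on} each $\tilde X_r$, then establishes independence of the choices and identifies the result with the regularized limit $\lim_{\epsilon\to 0}\chi_\epsilon a\wedge T$. You correctly identify well-definedness as the real content and punt on it to the citations, so the slip does not sink the argument, but the geometric picture you wrote down does not match Definition \ref{defi: pseudomeromorphic current} and would need to be replaced by the fiberwise construction just described if you wanted to make the sketch honest.
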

\begin{proof}
See \cite[Section 2.1 and Section 4.2]{andersson2018direct}. Notice that although \cite[Section 4.2]{andersson2018direct} only discusses left action, we can define right action in the same way.
\end{proof}

Let $Z\subset X$ be an analytic subvariety. Integration along $Z$ gives a current on $X$ which we denote by $[Z]$. In particular if $Z$ has \emph{pure} codimension $p$ in $X$, i.e. every irreducible component of $Z$ has the same codimension $p$, then $[Z]$ is a $(p,p)$-current on $X$

\begin{prop}\label{prop: [Z] is pseudomeromorphic}
Let $Z\subset X$ be an analytic subvariety. Then the current $[Z]$ is a pseudomeromorphic current on $X$.
\end{prop}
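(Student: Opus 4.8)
The plan is to reduce to the smooth case via Hironaka's resolution of singularities. First I would write $Z=\bigcup_j Z_j$ as the locally finite union of its irreducible components, so that $[Z]=\sum_j [Z_j]$ is a locally finite sum. Since by Definition \ref{defi: pseudomeromorphic current} the class $\mathrm{PM}(X)$ is, by construction, closed under locally finite sums, it suffices to treat a single irreducible subvariety $Z$, say of codimension $p$, with smooth locus $Z_{\mathrm{reg}}$ and singular locus $Z_{\mathrm{sing}}$, the latter a nowhere dense analytic subset of $Z$.

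Next I would fix a resolution of singularities $\pi:\tilde Z\to Z$, i.e.\ a proper modification with $\tilde Z$ a connected complex manifold of dimension $n-p$ which restricts to a biholomorphism over $Z_{\mathrm{reg}}$; composing with the inclusion $Z\hookrightarrow X$ we regard $\pi$ as an admissible composition of resolutions in the sense of Definition \ref{defi: pseudomeromorphic current}. Let $1_{\tilde Z}$ denote the trivial $(0,0)$-current on $\tilde Z$, i.e.\ integration over $\tilde Z$. The key claim is that $[Z]=\pi_*(1_{\tilde Z})$ as currents on $X$. To see this, note that both currents vanish on test forms not of bidegree $(n-p,n-p)$, while for a compactly supported smooth $(n-p,n-p)$-form $\psi$ on $X$ the pullback $\pi^*\psi$ is smooth and compactly supported on $\tilde Z$ (by properness), so
$$
\langle \pi_*(1_{\tilde Z}),\psi\rangle=\int_{\tilde Z}\pi^*\psi=\int_{\tilde Z\setminus\pi^{-1}(Z_{\mathrm{sing}})}\pi^*\psi=\int_{Z_{\mathrm{reg}}}\psi=\langle [Z],\psi\rangle,
$$
where the second equality holds because $\pi^{-1}(Z_{\mathrm{sing}})$ is a proper analytic subset of $\tilde Z$, hence of measure zero, the third is the change-of-variables formula for the biholomorphism $\pi:\tilde Z\setminus\pi^{-1}(Z_{\mathrm{sing}})\to Z_{\mathrm{reg}}$, and the last is the definition of the current of integration $[Z]$ (whose existence, i.e.\ local integrability of $\psi$ over $Z_{\mathrm{reg}}$, is the classical theorem of Lelong that we take for granted here).

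Finally, $1_{\tilde Z}$ is a smooth current on the manifold $\tilde Z$: choosing a locally finite partition of unity $\{\rho_i\}$ subordinate to a cover of $\tilde Z$ by coordinate charts, we have $1_{\tilde Z}=\sum_i\rho_i$, a locally finite sum in which each $\rho_i$ is a smooth, compactly supported form, hence an elementary current of the form \eqref{eq: elementary current} in the degenerate case $q=k=0$. Therefore $[Z]=\sum_i\pi_*(\rho_i)$ displays $[Z]$ exactly as the locally finite sum of pushforwards of elementary currents demanded by Definition \ref{defi: pseudomeromorphic current}, so $[Z]\in\mathrm{PM}(X)$. The only substantive input is Hironaka's theorem; the remaining step that deserves care is the bookkeeping, namely verifying that a resolution of $Z$ (built from a sequence of blow-ups along smooth centres) genuinely fits the template $\Pi=\Pi_1\circ\cdots\circ\Pi_r$ of Definition \ref{defi: pseudomeromorphic current}, which is routine.
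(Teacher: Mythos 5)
Your proof is correct but follows a genuinely different route from the paper's. The paper's proof is a one-line reference to \cite[Theorem 1.1]{andersson2005residues}, where the point is that the Lelong current $[Z]$ can locally be expressed through residue currents (of Coleff--Herrera/Bochner--Martinelli type), and such residue currents are pseudomeromorphic essentially by the way they are constructed as principal-value limits. Your argument instead exhibits $[Z]$ directly as a pushforward: after splitting over irreducible components (locally finite, so compatible with Definition \ref{defi: pseudomeromorphic current}), you pick a Hironaka resolution $\pi : \tilde Z \to Z$, show that $[Z] = \pi_*(1_{\tilde Z})$ by the change-of-variables identity over $Z_{\mathrm{reg}}$, and then decompose $1_{\tilde Z}$ by a partition of unity into compactly supported smooth pieces, which are elementary currents in the degenerate case $q = k = 0$ of \eqref{eq: elementary current}; the local finiteness of $\sum_i \pi_*(\rho_i)$ on $X$ then follows from properness of $\pi$. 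Both arguments are valid. Yours is more elementary and self-contained (it needs only Hironaka, Lelong's integrability theorem, and bookkeeping), whereas the paper's version is shorter but leans on the residue-current machinery of the cited reference. One point worth being explicit about: your reading of Definition \ref{defi: pseudomeromorphic current} takes $X_1 \subset X$ to be an analytic subvariety (so $\Pi_1 : \tilde X_1 \to X_1 \subset X$ resolves that subvariety and the composition includes the closed inclusion $X_1 \hookrightarrow X$). That is indeed what the paper's phrasing of the definition permits, but it is slightly looser than the stricter formulations in \cite{andersson2010decomposition} and \cite{andersson2018direct}, which build pseudomeromorphic currents from modifications, open inclusions and simple projections over $X$ itself and handle subvarieties recursively through PM currents on reduced analytic spaces; under that stricter reading you would need to invoke that recursive structure rather than appeal directly to the template $\Pi = \Pi_1 \circ \cdots \circ \Pi_r$.
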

\begin{proof}
It is actually implied by the local computation as in \cite[Theorem 1.1]{andersson2005residues}.
\end{proof}

One important property of pseudomeromorphic currents is that they satisfy the following \emph{dimension principle}.

\begin{prop} [\cite{andersson2010decomposition} Corollary 2.4]\label{prop:dimPrinciple}
Let $T$ be a pseudomeromorphic $(*,q)$-current on $X$ with support on a subvariety $Z$.
If $\codim Z \geq q+1$, then $T = 0$.
\end{prop}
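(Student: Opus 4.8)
The plan is to combine the support principle for pseudomeromorphic currents with an inductive stratification of $Z$, reducing everything to a local coordinate computation. Since pseudomeromorphy, bidegree, and the support condition are all local, it suffices to prove that $T$ vanishes on a neighbourhood of every point of $X$; near a point outside $\supp T\subseteq Z$ there is nothing to do. The main external input I would invoke is the \emph{support principle} of Andersson--Wulcan (see \cite{andersson2010decomposition}): if $S$ is pseudomeromorphic on an open set $U$ and $h\in\mathcal O(U)$ vanishes on $\supp S$, then $\bar h\,S=0$. From this I also get $d\bar h\wedge S=0$ using only Proposition~\ref{prop: pseudomeromorphic currents closed under multiplication}: $\dbar S$ is again pseudomeromorphic and $\supp\dbar S\subseteq\supp S$, so $\bar h\,\dbar S=0$; applying $\dbar$ to $\bar h\,S=0$ yields $\dbar\bar h\wedge S=0$, and since $h$ is holomorphic $\dpar\bar h=0$, whence $d\bar h\wedge S=\dbar\bar h\wedge S=0$.

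I would then induct on $\dim Z$, the case $Z=\emptyset$ being immediate. Let $Z_{\text{sing}}$ be the singular locus of $Z$; it is an analytic subset with $\dim Z_{\text{sing}}<\dim Z$, hence $\codim Z_{\text{sing}}\geq\codim Z\geq q+1$. On $X\setminus Z_{\text{sing}}$ the set $Z_{\text{reg}}=Z\setminus Z_{\text{sing}}$ is a complex submanifold, of codimension $\geq q+1$ at each of its points. Fix $x\in Z_{\text{reg}}$ and choose holomorphic coordinates $(z_1,\dots,z_n)$ near $x$ with $Z_{\text{reg}}=\{z_1=\cdots=z_k=0\}$ and $k\geq q+1$ there. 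Each of $z_1,\dots,z_{q+1}$ then vanishes on $\supp T$ near $x$, so the support principle gives $d\bar z_i\wedge T=0$ near $x$ for $i=1,\dots,q+1$. Writing $T=\sum_{I,J}T_{IJ}\,dz_I\wedge d\bar z_J$ locally with $|J|=q$, the relation $d\bar z_i\wedge T=0$ forces $T_{IJ}=0$ whenever $i\notin J$; since $|J|=q<q+1$, every multi-index $J$ of length $q$ omits some index in $\{1,\dots,q+1\}$, so all $T_{IJ}$ vanish and $T=0$ near $x$. As this holds near every point of $Z_{\text{reg}}$, and trivially off $Z$, we conclude $T=0$ on $X\setminus Z_{\text{sing}}$, i.e. $\supp T\subseteq Z_{\text{sing}}$; since $\codim Z_{\text{sing}}\geq q+1$ and $\dim Z_{\text{sing}}<\dim Z$, the inductive hypothesis gives $T=0$.

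The hard part is really the support principle $\bar h\,S=0$, which is not formal: because of possible cancellations it cannot be read off term by term from a representation $S=\sum\Pi_*\tau_l$ of $S$ as a locally finite sum of pushforwards of elementary currents, and the standard argument instead refines the resolutions in such a representation so that $\{h=0\}$ pulls back to a normal-crossings divisor and $S$ becomes an explicit elementary current, reducing the claim to a one-variable computation with principal-value currents --- this I would cite from \cite{andersson2010decomposition} rather than reprove. Everything else is routine: the stratification $Z=Z_{\text{reg}}\sqcup Z_{\text{sing}}$ which lets one work with smooth hypersurfaces, and the elementary multilinear-algebra step that a $(0,q)$-form killed by wedging with $q+1$ pointwise-independent $(0,1)$-forms must vanish. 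Note that the bidegree hypothesis $(*,q)$ together with $\codim Z\geq q+1$ enters precisely through the inequality $|J|=q<q+1$.
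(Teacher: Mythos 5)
The paper does not prove this; it simply cites \cite{andersson2010decomposition}, Corollary~2.4. Your reconstruction from the support principle is correct and is essentially the argument used in that reference: derive $d\bar h\wedge T=0$ for holomorphic $h$ vanishing on $Z$, strip off $Z_{\mathrm{sing}}$ by induction on dimension, and on the regular locus use that a $(*,q)$-current annihilated by wedging with $q+1$ pointwise-independent $(0,1)$-forms must vanish. The one thing you could tighten is that Andersson--Wulcan's support/vanishing result already yields $d\bar h\wedge T=0$ directly, so the intermediate derivation via $\dbar(\bar h T)=0$ and Proposition~\ref{prop: pseudomeromorphic currents closed under multiplication}, while correct, is not strictly needed.
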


Given a pseudomeromorphic current $T$ and an analytic subset $Z$, as in \cite[Section 2]{andersson2010decomposition}, the restriction of $T$ to $X\backslash Z$ has an extension to $X$ in the following way: Let $\chi$
be a cut-off function as above. For a  local chart $U$ of $X$,  let  $F$ be a section of a holomorphic Hermitian vector bundle such that $Z\cap U = \{ F = 0 \}$. We define 
\begin{equation}\label{eq: chi_epsilon}
\chi_{\epsilon}:=\chi(|F|^2/\epsilon)
\end{equation}
and then
\begin{equation}\label{eq: defi of 1X/Z}
(\mathbf{1}_{X \backslash Z} T)|_U := \lim_{\epsilon \to 0} \chi(|F|^2/\epsilon) T|_U.
\end{equation}
It is clear that 
\begin{equation}\label{eq: 1X/Z|X/Z}
(\mathbf{1}_{X \backslash Z} T)|_{X \backslash Z}=T|_{X \backslash Z}.
\end{equation}

 By \cite[Lemma 2.6]{andersson2018direct}, the $(\mathbf{1}_{X \backslash Z} T)|_U$'s glue together to a pseudomeromorphic current $\mathbf{1}_{X \backslash Z} T$ on $X$. It is clear that we have
\begin{equation}\label{eq: restriction commutes with smooth forms}
\mathbf{1}_{X \backslash Z} (\alpha\wedge T)=\alpha\wedge \mathbf{1}_{X \backslash Z} T
\end{equation}
for any $C^{\infty}$-form $\alpha$.

\begin{defi}\label{defi: SEP}
A pseudomeromorphic current $T$ on $X$ is said to have the \emph{standard extension property} (SEP) if $\mathbf{1}_{X \backslash Z} T = T$
for any analytic subset $Z$ of positive codimension.
\end{defi}

\begin{prop}\label{prop: LASM has SEP}
Any $a\in \text{LASM}(X)$ has SEP.
\end{prop}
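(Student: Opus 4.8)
The plan is to peel off structure step by step until the claim reduces to a single application of the dimension principle (Proposition~\ref{prop:dimPrinciple}). First, since the operation $T\mapsto \mathbf{1}_{X\setminus Z}T$ is defined chart-by-chart and the local pieces glue together (\cite[Lemma~2.6]{andersson2018direct}), the SEP is a local condition; hence it suffices to treat $a\in\text{ASM}(X)$. So I would write $a=\pi_*(\omega/s)$ for a modification $\pi\colon X'\to X$ and a semimeromorphic current $\omega/s=\omega[1/s]$ on $X'$, with $\omega$ smooth and $s$ a holomorphic section of a Hermitian line bundle on $X'$. Fix an analytic subset $Z\subset X$ of positive codimension and put $V:=\pi^{-1}(Z)$; because $\pi$ is generically biholomorphic, $V$ is an analytic subset of $X'$ of positive codimension.

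The next step is to check that $\mathbf{1}_{X\setminus Z}$ commutes with the proper pushforward $\pi_*$. On a chart where $Z=\{F=0\}$ for a section $F$ of a Hermitian bundle, the cut-off $\chi(|F|^2/\epsilon)$ is smooth and pulls back to $\chi(|\pi^*F|^2/\epsilon)$, which cuts out $V$; by the projection formula for proper pushforwards one gets $\chi(|F|^2/\epsilon)\,\pi_*(\omega[1/s])=\pi_*\!\big(\chi(|\pi^*F|^2/\epsilon)\,\omega[1/s]\big)$, and letting $\epsilon\to 0$, using weak continuity of $\pi_*$, yields $\mathbf{1}_{X\setminus Z}a=\pi_*\!\big(\mathbf{1}_{X'\setminus V}(\omega[1/s])\big)$. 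By \eqref{eq: restriction commutes with smooth forms} this equals $\pi_*\!\big(\omega\wedge \mathbf{1}_{X'\setminus V}[1/s]\big)$. Thus everything reduces to showing that the scalar principal value current $[1/s]$ has the SEP on $X'$, i.e.\ $\mathbf{1}_{X'\setminus V}[1/s]=[1/s]$.

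For this last point I would argue purely by degree: $[1/s]$ has bidegree $(0,0)$, and $\mathbf{1}_{X'\setminus V}[1/s]$ is pseudomeromorphic (Proposition~\ref{prop: pseudomeromorphic currents closed under multiplication}) of the same bidegree. By \eqref{eq: 1X/Z|X/Z} the difference $[1/s]-\mathbf{1}_{X'\setminus V}[1/s]$ vanishes on $X'\setminus V$, so it is a pseudomeromorphic $(*,0)$-current supported on $V$; since $\codim V\geq 1$, the dimension principle (Proposition~\ref{prop:dimPrinciple}) forces this difference to be zero. Hence $\mathbf{1}_{X'\setminus V}[1/s]=[1/s]$ and therefore $\mathbf{1}_{X\setminus Z}a=\pi_*(\omega[1/s])=a$, which is the SEP. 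The one place that requires genuine care is the commutation of the $\mathbf{1}$-operation with $\pi_*$ — one must make sure that $\pi^*F$ still cuts out $V$ and that the Hermitian data defining the cut-offs downstairs and upstairs can be taken compatibly — whereas the final degree/dimension argument is essentially formal once the bidegree observation is in place.
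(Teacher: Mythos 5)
Your argument is correct and is exactly the intended expansion of the paper's terse proof, which cites precisely the three ingredients you use: the structure $a=\pi_*(\omega/s)$ from Definition~\ref{defi: almost semimeromorphic current}, the compatibility of $\mathbf{1}_{X\setminus Z}$ with multiplication by smooth forms \eqref{eq: restriction commutes with smooth forms}, and the dimension principle of Proposition~\ref{prop:dimPrinciple} applied to the $(0,0)$-current $[1/s]$. The commutation of $\mathbf{1}_{X\setminus Z}$ with $\pi_*$ that you correctly flag as the delicate step is indeed the implicit ingredient, and your justification via the projection formula and weak continuity of proper pushforward is the standard one.
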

\begin{proof}
It follows from Definition \ref{defi: almost semimeromorphic current}, Proposition \ref{prop:dimPrinciple}, and \eqref{eq: restriction commutes with smooth forms}.
\end{proof}

\begin{defi}\label{defi: LASM extension}
Let $Z\subset X$ be an analytic subset of codimension $\geq 1$.  For $\alpha$ a smooth form on $X \backslash Z$, we say $\alpha$ has a LASM extension to $X$, if there exists an $a\in \text{LASM}(X)$ such that $a|_{X\backslash Z}=\alpha$.
\end{defi}

\begin{lemma}\label{lemma: uniqueness of LASM extension}
Let $Z\subset X$ be an analytic subset of codimension $\geq 1$.  If $\alpha$ is a smooth form on $X \backslash Z$, and $\alpha$ has
an extension as a locally almost semimeromorphic current $a$ on $X$, then such extension is unique.
\end{lemma}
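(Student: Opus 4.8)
The plan is to prove uniqueness by reducing to the dimension principle (Proposition~\ref{prop:dimPrinciple}). Suppose $a$ and $b$ are both locally almost semimeromorphic currents on $X$ with $a|_{X\setminus Z} = \alpha = b|_{X\setminus Z}$. Set $T := a - b$. By Proposition~\ref{prop: almost semimeromorphic currents form an algebra} (or rather the fact that $\text{LASM}(X)$ is closed under sums, which is immediate from the local definition), $T \in \text{LASM}(X)$; in particular $T \in \text{PM}(X)$. By construction $T|_{X\setminus Z} = 0$, so $\supp T \subseteq Z$, which has codimension $\geq 1$.

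At this point one is tempted to invoke Proposition~\ref{prop:dimPrinciple} directly, but that requires $T$ to be a $(*,q)$-current with $\codim Z \geq q+1$, and a priori $T$ could have a component of bidegree $(*,0)$ whose support lies in $Z$ of codimension exactly $1$, which the dimension principle does not kill. The correct tool is instead the SEP. By Proposition~\ref{prop: LASM has SEP}, every locally almost semimeromorphic current has the standard extension property, so $\mathbf{1}_{X\setminus Z} T = T$. On the other hand, by \eqref{eq: defi of 1X/Z} the current $\mathbf{1}_{X\setminus Z} T$ depends only on $T|_{X\setminus Z}$ in the following sense: it is computed as $\lim_{\epsilon\to 0}\chi(|F|^2/\epsilon) T$ locally, and since $T$ is supported on $Z = \{F = 0\}$, the cutoff $\chi(|F|^2/\epsilon)$ vanishes on a neighborhood of $\supp T$ for each fixed $\epsilon > 0$ (as $\chi$ vanishes near $0$ and $|F|^2 < \epsilon$ near $Z$). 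Hence $\chi(|F|^2/\epsilon) T = 0$ for every $\epsilon$, so $\mathbf{1}_{X\setminus Z} T = 0$ locally, and gluing gives $\mathbf{1}_{X\setminus Z} T = 0$ on all of $X$. Combining, $T = \mathbf{1}_{X\setminus Z} T = 0$, i.e. $a = b$.

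The main obstacle, and the point requiring the most care, is the middle step: justifying that $\mathbf{1}_{X\setminus Z} T = 0$ for a pseudomeromorphic current $T$ supported on $Z$. One must check that the local definition \eqref{eq: defi of 1X/Z} is independent of the chart and of the choice of defining section $F$ (this is exactly the content of \cite[Lemma 2.6]{andersson2018direct}, already cited in the excerpt), and then observe that for $T$ supported inside $\{F=0\}$ the approximating currents $\chi_\epsilon T$ are identically zero — here one uses that $\chi$ is a genuine cutoff vanishing in a full neighborhood of the origin, not merely at the origin. Everything else (closure of $\text{LASM}$ under differences, that $\supp T \subseteq Z$, the final cancellation) is routine.
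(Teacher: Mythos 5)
Your proof is correct and follows essentially the same route as the paper's: both rest on the SEP of locally almost semimeromorphic currents (Proposition~\ref{prop: LASM has SEP}) together with the observation that $\mathbf{1}_{X\setminus Z}$ is determined by the restriction of the current to $X\setminus Z$. You make the latter point explicit by passing to the difference $T = a - b$ and checking $\chi_\epsilon T = 0$ from the cutoff definition; the paper compresses the identical argument into the chain $a = \mathbf{1}_{X\setminus Z}(a|_{X\setminus Z}) = \mathbf{1}_{X\setminus Z}\alpha = \mathbf{1}_{X\setminus Z}(b|_{X\setminus Z}) = b$.
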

\begin{proof}
If $a$ and $b$ are two such extensions, then $a|_{X\backslash Z}=b|_{X\backslash Z}=\alpha$. Since $a$ and $b$ are both LASM hence both have SEP, we know 
$$
a=\mathbf{1}_{X \backslash Z}(a|_{X\backslash Z})=\mathbf{1}_{X \backslash Z}\alpha=\mathbf{1}_{X \backslash Z}(b|_{X\backslash Z})=b.
$$
\end{proof}

\begin{coro}\label{coro: LASM extension glue}
Let  $Z\subset X$ be an analytic subset of codimension $\geq 1$.  If $\alpha$ is a smooth form on $X \backslash Z$ such that $\alpha$ locally has LASM extension, i.e. there exists an open cover $\{U_i\}$ of $X$ such that $\alpha|_{(X \backslash Z)\cap U_i}$ has a LASM extension to $U_i$ for each $i$, then $\alpha$ has a LASM extension to $X$.
\end{coro}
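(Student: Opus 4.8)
The plan is to glue the local LASM extensions using the uniqueness result of Lemma \ref{lemma: uniqueness of LASM extension}. First I would pick an open cover $\{U_i\}$ of $X$ such that on each $U_i$ there exists $a_i \in \text{LASM}(U_i)$ with $a_i|_{(X\backslash Z)\cap U_i} = \alpha|_{(X\backslash Z)\cap U_i}$. On each overlap $U_i \cap U_j$, note that $Z \cap (U_i\cap U_j)$ is still an analytic subset of codimension $\geq 1$, and both $a_i|_{U_i\cap U_j}$ and $a_j|_{U_i\cap U_j}$ are locally almost semimeromorphic currents on $U_i\cap U_j$ that restrict to $\alpha$ on $(X\backslash Z)\cap U_i\cap U_j$. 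By Lemma \ref{lemma: uniqueness of LASM extension} applied on $U_i \cap U_j$, we conclude $a_i|_{U_i\cap U_j} = a_j|_{U_i\cap U_j}$.

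Since currents form a sheaf, the $a_i$ therefore patch together to a well-defined current $a$ on $X$ with $a|_{U_i} = a_i$ for each $i$. By construction $a|_{(X\backslash Z)\cap U_i} = \alpha|_{(X\backslash Z)\cap U_i}$ for all $i$, so $a|_{X\backslash Z} = \alpha$. Finally, being locally almost semimeromorphic is a local condition by Definition \ref{defi: almost semimeromorphic current} (the definition of $\text{LASM}$ is already phrased in terms of an open cover), so $a|_{U_i} = a_i \in \text{LASM}(U_i)$ for all $i$ immediately gives $a \in \text{LASM}(X)$. This exhibits the desired LASM extension of $\alpha$ to $X$.

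There is essentially no obstacle here; the only point requiring a small amount of care is the compatibility on triple overlaps, but this is automatic once pairwise agreement is established, since the $a_i$ are honest currents (sections of the sheaf $\mathcal{D}^{\bullet,\bullet}_X$) rather than objects defined only up to equivalence. The whole argument is a routine sheaf-theoretic gluing, with Lemma \ref{lemma: uniqueness of LASM extension} supplying the needed agreement on overlaps.
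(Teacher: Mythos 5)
Your proof is correct and follows essentially the same route as the paper: obtain local LASM extensions, invoke Lemma \ref{lemma: uniqueness of LASM extension} on overlaps to get agreement, then glue. The paper phrases the final gluing step in terms of a partition of unity while you appeal to currents forming a sheaf, but these are the same observation; your additional remarks about triple overlaps and the locality of the LASM condition are accurate and only make explicit what the paper leaves implicit.
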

\begin{proof}
Let $a_i$ be the LASM extension of $\alpha|_{(X \backslash Z)\cap U_i}$ to $U_i$. By Lemma \ref{lemma: uniqueness of LASM extension}, $a_i|_{U_i\cap U_j}=a_j|_{U_i\cap U_j}$. We can then glue $a_i$ to a current $a$ on $X$ by partition of unity. $a$ is clearly LASM.
\end{proof}

 In particular, if $\alpha$ is a smooth form on $X \backslash Z$, and $\alpha$ has
an extension as a locally almost semimeromorphic current $a$ on $X$, then the extension is given by
\begin{equation} \label{eq:asmExtension}
    a = \lim_{\epsilon \to 0} \chi_{\epsilon} \alpha.
\end{equation}
where $\chi_{\epsilon}$ is given as in \eqref{eq: chi_epsilon}.

\begin{prop}\label{prop: restrict-ext of dbar a}
Let $a\in \text{ASM}(X)$. Let $Z=\text{ZSS}(a)$ be   the smallest analytic subset of $X$ where $a$ is not smooth. Then $\mathbf{1}_{X \backslash Z} (\dbar a)\in \text{ASM}(X)$.

Moreover if $a\in \text{LASM}(X)$ and  $Z=\text{ZSS}(a)$. Then $\mathbf{1}_{X \backslash Z} (\dbar a)\in \text{LASM}(X)$.
\end{prop}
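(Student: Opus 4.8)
The plan is to reduce everything to the semimeromorphic case via a modification and then apply the standard extension/restriction machinery that has already been set up. First I would treat the global (ASM) statement. By Definition \ref{defi: almost semimeromorphic current} write $a = \pi_*(\omega/s)$ for a modification $\pi : X' \to X$, where $\omega/s$ is semimeromorphic on $X'$. Since $\pi$ is proper, $\dbar$ commutes with $\pi_*$, so $\dbar a = \pi_*\dbar(\omega/s)$. Now $\dbar(\omega/s) = (\dbar\omega)/s + (-1)^{\deg\omega}\omega\wedge\dbar[1/s]$; the first term is again semimeromorphic, and the second term is a smooth form times $\dbar[1/s]$, which is a residue-type current supported on $\{s=0\}$. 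The point is that $\dbar(\omega/s)$ is pseudomeromorphic on $X'$, and it is smooth (indeed equals $\dbar$ of the smooth form $\omega/s$) off the hypersurface $\{s=0\}$. Hence $\pi_*\dbar(\omega/s)$ is pseudomeromorphic on $X$ and is smooth off $W := \pi(\{s=0\})$, an analytic subset of $X$. Note $Z = \mathrm{ZSS}(a) \subseteq W$, but $W$ may be strictly larger; that is precisely the subtlety, since we must produce an ASM current whose Zariski-singular support is controlled by $Z$, not $W$.

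The key step is therefore the following claim: $\mathbf{1}_{X\setminus Z}(\dbar a)$ is almost semimeromorphic. To see this, work on $X\setminus Z$ first, where $a$ is smooth, so $\dbar a|_{X\setminus Z}$ is the smooth form $\dbar(a|_{X\setminus Z})$. This smooth form has $(\dbar a)|_{X\setminus Z}$ extending across $Z$ as the pseudomeromorphic current $\mathbf{1}_{X\setminus Z}(\dbar a) = \lim_{\epsilon\to 0}\chi_\epsilon\,\dbar a$ by \eqref{eq:asmExtension}-type reasoning (Proposition \ref{prop: pseudomeromorphic currents closed under multiplication} guarantees this limit is pseudomeromorphic). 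To upgrade "pseudomeromorphic" to "almost semimeromorphic" I would argue on the modification: $\mathbf{1}_{X\setminus Z}(\dbar a) = \pi_*\big(\mathbf{1}_{X'\setminus \pi^{-1}(Z)}\dbar(\omega/s)\big)$, using that $\pi$ is biholomorphic off a nowhere-dense set and that $\mathbf{1}_{X\setminus Z}$ commutes with pushforward along the modification (this is a standard property of the restriction operation for pseudomeromorphic currents, cf. \cite{andersson2018direct}). So it suffices to show $\mathbf{1}_{X'\setminus \pi^{-1}(Z)}\dbar(\omega/s)$ is semimeromorphic on $X'$. Decomposing $\dbar(\omega/s) = (\dbar\omega)/s + \pm\,\omega\wedge\dbar[1/s]$: the first summand is already semimeromorphic, hence ASM, hence has SEP by Proposition \ref{prop: LASM has SEP}, so applying $\mathbf{1}_{X'\setminus\pi^{-1}(Z)}$ leaves it unchanged — still semimeromorphic. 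For the residue term $\omega\wedge\dbar[1/s]$, which is a $(\ast,q)$-current with $q\geq 1$ supported on the hypersurface $\{s=0\}$: applying $\mathbf{1}_{X'\setminus\pi^{-1}(Z)}$ removes the part of this support lying over $Z$. Since $a$ is smooth on $X\setminus Z$, the combination $(\dbar\omega)/s + \pm\,\omega\wedge\dbar[1/s]$ pushes forward to something smooth there, which forces (by the dimension principle, Proposition \ref{prop:dimPrinciple}, applied component-wise on the modification after a further resolution making $\pi^{-1}(Z)\cup\{s=0\}$ a normal crossings divisor) that $\mathbf{1}_{X'\setminus\pi^{-1}(Z)}$ applied to the residue term is again a semimeromorphic current — concretely, it is $\omega'\wedge\dbar[1/s']$ on the finer modification for suitable $\omega', s'$. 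Combining, $\mathbf{1}_{X\setminus Z}(\dbar a)\in \mathrm{ASM}(X)$.

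For the local (LASM) statement, the argument is purely local: choose an open cover $\{U_i\}$ with $a|_{U_i}\in\mathrm{ASM}(U_i)$ and with $\mathrm{ZSS}(a)\cap U_i = \mathrm{ZSS}(a|_{U_i})$ (possible since ZSS is the smallest analytic set off which $a$ is smooth, a local condition). Apply the ASM case on each $U_i$ to get $\mathbf{1}_{U_i\setminus Z}(\dbar a|_{U_i})\in\mathrm{ASM}(U_i)$, and observe that $\mathbf{1}_{X\setminus Z}(\dbar a)|_{U_i} = \mathbf{1}_{U_i\setminus Z}(\dbar a|_{U_i})$ because the restriction operation $\mathbf{1}_{X\setminus Z}$ is local in nature (it is defined by the local formula \eqref{eq: defi of 1X/Z} and glues by \cite[Lemma 2.6]{andersson2018direct}). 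Hence $\mathbf{1}_{X\setminus Z}(\dbar a)\in\mathrm{LASM}(X)$.

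I expect the main obstacle to be the normal-crossings reduction in the key step: one wants to say that after pushing forward, the residue contribution $\mathbf{1}_{X'\setminus\pi^{-1}(Z)}(\omega\wedge\dbar[1/s])$ is still of semimeromorphic type rather than merely pseudomeromorphic. This requires choosing the modification carefully (possibly composing with an additional resolution of singularities, as allowed in Definition \ref{defi: pseudomeromorphic current}) so that $\pi^{-1}(Z)$ and $\{s=0\}$ together form a simple normal crossings divisor, and then doing the explicit local computation in coordinates where $\mathbf{1}$ against a union of coordinate hyperplanes of a current of the form $\dbar[1/t_1^{a_1}]\wedge[\text{smooth}]$ is computed term by term; the crucial input is that $a$ itself being smooth on $X\setminus Z$ pins down which monomial residue terms survive, and the dimension principle rules out spurious lower-dimensional contributions. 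Everything else — commutation of $\dbar$ and $\mathbf{1}_{X\setminus Z}$ with $\pi_*$, stability of the classes under smooth multiplication — is routine given Propositions \ref{prop: almost semimeromorphic currents form an algebra}, \ref{prop: pseudomeromorphic currents closed under multiplication}, and \ref{prop: LASM has SEP}.
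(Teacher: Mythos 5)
The paper does not supply a proof of this proposition: the ASM case is simply cited from \cite[Proposition 4.16]{andersson2018direct}, and the LASM case is observed to follow by localization. You, in contrast, attempt to reprove the cited result from first principles, so the two ``proofs'' are not really comparable. Your outline --- reduce via the modification $\pi$, use properness to push $\dbar$ through $\pi_*$, split $\dbar(\omega/s)$ into a principal-value part $(\dbar\omega)/s$ and a residue part $\omega\wedge\dbar[1/s]$, use SEP to absorb the principal-value part, and then analyze the residue part after a further normal-crossings resolution --- is the right general strategy and is in the spirit of the argument in \cite{andersson2018direct}. Your localization step for the LASM statement is also fine and matches what the paper asserts.

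However, the key step as you write it contains a genuine error. You conclude that $\mathbf{1}_{X'\setminus\pi^{-1}(Z)}$ applied to the residue term is ``again a semimeromorphic current --- concretely, it is $\omega'\wedge\dbar[1/s']$ on the finer modification.'' A current of the form $\omega'\wedge\dbar[1/s']$ is a residue current, not a semimeromorphic one: by definition semimeromorphic means $\omega'[1/s']$ (a principal value). Worse, such a residue current, if nonzero, cannot be almost semimeromorphic at all: it is supported on the hypersurface $\{s'=0\}$, which has positive codimension, so if it were ASM it would have the SEP and therefore vanish. Thus the claimed form of the surviving residue piece is self-contradictory, and invoking the dimension principle ``component-wise'' does not repair this. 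What actually needs to be shown is either that the residue contribution vanishes after the restriction $\mathbf{1}_{X'\setminus\pi^{-1}(Z)}$ (because on a sufficiently refined modification the only residue factors that survive lie over $Z$), or that it recombines with the principal-value part into something of pushforward-of-semimeromorphic type; this is exactly the content of \cite[Proposition 4.16]{andersson2018direct}, and it requires a more careful bookkeeping of which monomial factors of $s$ lie over $Z=\mathrm{ZSS}(a)$ than the dimension principle alone can provide. You correctly flag this as ``the main obstacle,'' but the sketch you offer does not overcome it, and in its present form asserts something false about the output.
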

\begin{proof}
The almost semimeromorphic case is proved in \cite[Proposition 4.16]{andersson2018direct}. The locally almost semimeromorphic case follows immediately.
\end{proof}

\begin{defi}\label{defi: residue of a current}
Let $a$ be a locally almost semimeromorphic current on $X$. Let $Z=\text{ZSS}(a)$ be as before
The \emph{residue} $R(a)$ of $a$ is defined by
\begin{equation}
\label{eq:residue-def}
	R(a) := \dbar a - \mathbf{1}_{X \backslash Z}\dbar a.
\end{equation}
\end{defi}

Note that
\begin{equation} \label{eq:resSupport}
    \supp R(a) \subseteq Z.
\end{equation}
Since $a$ is locally almost semimeromorphic, and thus has the SEP, it follows
by \eqref{eq:asmExtension} that $R(a)$ is locally given by
\begin{equation}
\label{eq:residue}
    R(a)=\lim_{\epsilon \to 0}
    \left(\dbar(\chi_\epsilon a) - \chi_\epsilon \dbar a \right)
    = \lim_{\epsilon \to 0} \dbar\chi_\epsilon \wedge a.
\end{equation}
It follows directly from for example \eqref{eq:residue} that if $\psi$ is a smooth form, then
\begin{equation} \label{eq:residueSmooth}
    R(\psi \wedge a) = (-1)^{\deg \psi} \psi\wedge R(a).
\end{equation}

\subsection{Bundle valued currents}
Let $X$ be a complex manifold and $E$ be a $C^{\infty}$-complex vector bundle on $X$. We can define   almost semimeromorphic, locally almost semimeromorphic, and pseudomeromorphic currents on $X$ valued in $E$ in the same way and we denote them by ASM$(X,E)$, LASM$(X,E)$, and PM$(X,E)$, respectively. In the same way we can  define ASM$(X,\End(E))$, LASM$(X,\End(E))$, and PM$(X,\End(E))$.

All results and definitions except Proposition \ref{prop: restrict-ext of dbar a} and Definition \ref{defi: residue of a current} hold automatically in the bundle valued case.

\begin{prop}\label{prop: LASM extension bundle}
For any $a\in \text{LASM}(X,E)$ and  any $\dbar$-connection $\nabla''_E$   on $E$, let $Z=\text{ZSS}(a)$. Then $\mathbf{1}_{X \backslash Z}(\nabla''_E(a))\in \text{LASM}(X,E)$.
\end{prop}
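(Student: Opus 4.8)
The plan is to reduce the bundle-valued statement to the scalar case already established in Proposition \ref{prop: restrict-ext of dbar a} by working locally and splitting a $\dbar$-connection into its scalar part and its $C^\infty(X)$-linear zeroth-order part. Since being locally almost semimeromorphic and the construction of $\mathbf{1}_{X\backslash Z}$ are both local, and since $Z=\text{ZSS}(a)$ can be computed locally, it suffices to prove the claim on a trivializing open set $U$ for $E$, after which Corollary \ref{coro: LASM extension glue} glues the local extensions (the gluing is legitimate because on overlaps the two candidate extensions agree on $U_i\cap U_j\setminus Z$ and LASM currents have the SEP by Proposition \ref{prop: LASM has SEP}, cf. Lemma \ref{lemma: uniqueness of LASM extension}).

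First I would fix a $C^\infty$-trivialization $E|_U\cong U\times\C^r$, so that $a|_U$ is a vector $(a_1,\dots,a_r)$ of scalar locally almost semimeromorphic currents, with $\text{ZSS}(a)\cap U=\bigcup_j\text{ZSS}(a_j)$, still an analytic subset of positive codimension. In this trivialization any $\dbar$-connection takes the form $\nabla''_E=\dbar+\beta$, where $\beta$ is a smooth $\End(E)$-valued $(0,1)$-form, i.e.\ a matrix of smooth $(0,1)$-forms. Then
$$
\nabla''_E(a)|_U=\dbar a|_U+\beta\wedge a|_U ,
$$
componentwise a sum of $\dbar a_j$ and smooth-form combinations of the $a_k$.

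Next I would handle the two terms separately. For the smooth-form part $\beta\wedge a|_U$: each entry $a_k$ is LASM, LASM currents are closed under wedging with smooth forms and have the SEP, so $\beta\wedge a|_U$ is already LASM and equals $\mathbf{1}_{U\backslash Z}(\beta\wedge a|_U)$ by Proposition \ref{prop: LASM has SEP}; in particular there is nothing to prove for that summand, and it contributes its own value to the extension. For the $\dbar a|_U$ part: each $a_j$ is locally ASM, so after possibly shrinking $U$ we may assume $a_j\in\text{ASM}(U)$, and then Proposition \ref{prop: restrict-ext of dbar a} gives $\mathbf{1}_{U\backslash\text{ZSS}(a_j)}(\dbar a_j)\in\text{ASM}(U)$; since $\text{ZSS}(a_j)\subseteq Z$ and restriction $\mathbf{1}_{U\backslash(\cdot)}$ is monotone in the analytic set (restricting over the larger set $Z$ first removes the smaller one, and on the complement everything is smooth), $\mathbf{1}_{U\backslash Z}(\dbar a_j)$ is also ASM. Using $\mathbf{1}_{U\backslash Z}(\dbar a|_U+\beta\wedge a|_U)=\mathbf{1}_{U\backslash Z}(\dbar a|_U)+\beta\wedge\mathbf{1}_{U\backslash Z}(a|_U)$ (linearity of $\mathbf{1}_{U\backslash Z}$ and \eqref{eq: restriction commutes with smooth forms}) assembles $\mathbf{1}_{U\backslash Z}(\nabla''_E(a))|_U$ as a finite sum of ASM currents, hence an element of $\text{ASM}(U)$ by Proposition \ref{prop: almost semimeromorphic currents form an algebra}. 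Finally, gluing over a trivializing cover via Corollary \ref{coro: LASM extension glue} yields $\mathbf{1}_{X\backslash Z}(\nabla''_E(a))\in\text{LASM}(X,E)$.

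The one point that needs genuine care — the main (mild) obstacle — is the compatibility of the local $\mathbf{1}_{U\backslash Z}$-extensions across overlapping charts when the trivializations of $E$ differ: one must check that the transition functions, being smooth, commute with $\mathbf{1}_{U\backslash Z}$, so that the locally-defined currents $\mathbf{1}_{U_i\backslash Z}(\nabla''_E(a))$ really do patch to a global section of $\text{LASM}(X,E)$. This follows from \eqref{eq: restriction commutes with smooth forms} together with the uniqueness of LASM extensions (Lemma \ref{lemma: uniqueness of LASM extension}), but it is the step where the bundle structure, as opposed to a trivial product, actually enters.
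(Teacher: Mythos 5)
Your proposal is correct and follows essentially the same route as the paper: localize, write $\nabla''_E=\dbar+\beta$ with $\beta$ a smooth $\End(E)$-valued $(0,1)$-form, apply Proposition \ref{prop: restrict-ext of dbar a} to the $\dbar$-part, and use that $\text{LASM}$ is an algebra over smooth forms together with the SEP (Proposition \ref{prop: LASM has SEP}) to absorb the $\beta\wedge a$ part. You are somewhat more explicit than the paper about the componentwise reduction in a trivialization and about the gluing across charts (via Lemma \ref{lemma: uniqueness of LASM extension} and Corollary \ref{coro: LASM extension glue}), but these are the same ingredients the paper implicitly relies on when it says ``the statement is local.''
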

\begin{proof}
The statement is local so we can assume that $\nabla''_E=\dbar+\omega$ where $\omega$ is a smooth $(0,1)$-form valued in $\End(E)$. We know $\mathbf{1}_{X \backslash Z}(\dbar(a))\in \text{LASM}(X,E)$ by Proposition \ref{prop: restrict-ext of dbar a}. Moreover $\omega a\in \text{LASM}(X,E)$ since $\text{LASM}(X,E)$ is an algebra over smooth forms. By Proposition \ref{prop: LASM has SEP}, $\omega a$ has SEP, hence $\mathbf{1}_{X \backslash Z}(\omega a)=\omega a\in \text{LASM}(X,E)$.

\begin{defi}\label{defi: residue of LASM}
For  $a\in \text{LASM}(X,E)$. Pick a $\dbar$-connection $\nabla''_E$   on $E$, we define the \emph{residue} $R(a)$ of $a$ as
\begin{equation}\label{eq: residue-def-bundle}
R(a)=\nabla''_E(a)-\mathbf{1}_{X \backslash Z}\nabla''_E(a).
\end{equation}
\end{defi}
\end{proof}
It is easy to see that $R(a)$ is independent of the choice of the $\dbar$-connection $\nabla''_E$.

\section{Residue currents of cohesive modules}\label{section: residue currents of cohesive modules}
\subsection{Minimal right inverses of maps between vector bundles}
\begin{defi}\label{defi: minimal right inverse}
Let $E$  and $F$ be two complex vector spaces with Hermitian metrics. Let $\phi: E\to F$ be a complex linear map.
The \emph{minimal right inverse} of $\phi$ is a  map $\sigma: F\to E$ which satisfies
\begin{enumerate}
\item $(\phi\sigma)|_{\im \phi}=\id_{\im \phi}$;
\item $\sigma|_{(\im \phi)^{\bot}}=0$;
\item $\im \sigma\bot \ker \phi$
\end{enumerate}
on each fiber. In other words, since $\phi$ induces a fiberwise isomorphism $(\ker \phi)^{\bot}\overset{\sim}{\to}\im \phi$, $\sigma$ is  defined to be $\phi^{-1}$ on $\im \phi$ and $0$ on $(\im \phi)^{\bot}$. 
\end{defi}

Let $X$ be a smooth manifold and $\phi: E\to F$ be a map between $C^{\infty}$ vector bundles with Hermitian metrics. It is clear that rank$\phi$ is a lower semicontinuous function on $X$. Let $Z\subset X$ be the subset consisting of $x\in X$ such that $\im\phi_x$ does not get its maximal rank. Then $X\backslash Z$ is a nonempty open subset of $X$.  Let $\sigma$ be the fiberwise minimal right inverse of $\phi$. Then it is clear that $\sigma$ is a $C^{\infty}$-map from $F$ to $E$ when restricted to $X\backslash Z$. 

\begin{eg}
Let $\underline{\mathbb{C}}^m$ be the $n$-dimensional trivial vector bundle on $X$ equipped with the standard Hermitian metric. A map $\phi: \underline{\mathbb{C}}^n\to \underline{\mathbb{C}}$ is given by 
$$
\phi=(f_1,\ldots, f_m)
$$
where $f_1, \ldots, f_m$ are $C^{\infty}$-functions on $X$.

We need to distinguish two cases.
\begin{enumerate}
\item If all $f_i$'s are identically $0$ on $X$, then the maximal rank of $\im\phi$ is $0$, hence $Z=\emptyset$ and $\sigma\equiv 0$.

\item If some $f_i$'s are not identically $0$ on $X$, then  the maximal rank of $\im\phi$ is $1$, hence 
$$
Z=\{x\in X|f_1(x)=\ldots =f_m(x)=0\}
$$
and
$$
\sigma(x) = \begin{cases}
  0 & x\in Z \\
  \frac{1}{\sum_{i=1}^m|f_i|^2}\begin{pmatrix}\overline{f_1}\\ \ldots\\ \overline{f_m}\end{pmatrix} & x\in X\backslash Z.
\end{cases}
$$
\end{enumerate}

It is clear that in the second case, $\sigma(x)$ is $C^{\infty}$ on $X\backslash Z$ but not $C^{\infty}$ on $X$. Moreover, even if $X$ is a complex manifold and all $f_i$'s are holomorphic functions, $\sigma$ is not holomorphic even when restricted to  $X\backslash Z$.
\end{eg}

\subsection{Minimal right inverses and cohesive modules}

Now let $X$ be a complex manifold and $\mathcal{E}= (E^{\bullet}, A^{E^{\bullet}\prime\prime})$ be a cohesive module on $X$ as in Definition \ref{defi: cohesive module}, where
$$
A^{E^{\bullet}\prime\prime}=v_0+\nabla^{E^{\bullet}\prime\prime}+v_2+\ldots
$$
as before. Let $Z_i\subset X$ be the subset of $X$ consisting of $x\in X$ such that $v_0^i: E^i\to E^{i+1}$ does not get its maximal rank. 

\begin{prop}\label{prop: not maximal rank analytic subvariety}
Each $Z_i$ is an analytic subvariety of $X$ with codimension $\geq 1$.
\end{prop}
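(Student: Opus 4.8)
The plan is to reduce to the local model provided by Proposition~\ref{prop: gauge equivalence to flat}, where the flatness equation for the superconnection forces $v_0$ to be holomorphic, and then invoke the standard fact that the degeneracy loci of a holomorphic bundle map are analytic. First I would fix a point $x\in X$ and choose, by Proposition~\ref{prop: gauge equivalence to flat}, a small open neighborhood $V$ of $x$ together with a gauge equivalence $J\colon \mathcal{E}|_V \overset{\sim}{\to} (E^{\bullet}|_V, v_0+\overline{\nabla}^{E^{\bullet}|_V\pprime})$ onto a cohesive module whose only nonzero components are $v_0$ and a flat $\dbar$-connection $\overline{\nabla}^{E^{\bullet}|_V\pprime}$. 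As noted in the remark following that proposition, $J$ does not change the map $v_0$, and on each fibre $J$ restricts to a linear isomorphism $E^i\to E^i$; hence the rank of $v_0^i$ at a point is unaffected, and $Z_i\cap V$ is the same whether computed for $\mathcal{E}$ or for the gauge-transformed module. So it suffices to prove the claim for $(E^{\bullet}|_V, v_0+\overline{\nabla}^{E^{\bullet}|_V\pprime})$.

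Next I would observe that on $V$ the condition $\overline{\nabla}^{E^{\bullet}|_V\pprime}(v_0)=0$ (the second line of \eqref{eq: A flat decomposed}) says precisely that $v_0^i$ is a holomorphic section of $\Hom(E^i,E^{i+1})$ with respect to the holomorphic structures induced by $\overline{\nabla}^{E^{\bullet}|_V\pprime}$ on the bundles $E^i$, $E^{i+1}$. Now the locus where a holomorphic bundle map $\psi\colon \mathcal{A}\to\mathcal{B}$ fails to attain its generic rank $r$ is, locally, the common zero set of all the $(r\times r)$-minors of a holomorphic matrix representing $\psi$ in local holomorphic frames; this is an analytic subset of $V$. (Equivalently, $Z_i\cap V = \{\,\operatorname{rank} v_0^i \le r-1\,\}$, the $(r-1)$-th Fitting-type degeneracy locus, which is cut out by holomorphic equations.) Since the generic rank $r$ is the maximal rank, and the rank function is lower semicontinuous, $Z_i\cap V$ is a proper analytic subset, i.e.\ has codimension $\ge 1$ in $V$. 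Finally, the local pieces $Z_i\cap V$ patch: $Z_i$ is intrinsically defined as the set where $v_0^i$ does not attain its maximal rank over all of $X$, and the maximal rank is a global constant on each connected component (it is the generic value, attained on a dense open set), so the local analytic description on each $V$ describes the same global subset; thus $Z_i$ is analytic in $X$, of codimension $\ge 1$ on each component where $E^i\ne 0$ and $E^{i+1}\ne 0$ (and empty, hence vacuously fine, otherwise).

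I do not expect any serious obstacle here; the one point requiring a little care is the patching/semicontinuity argument, namely checking that the ``maximal rank'' appearing in the definition of $Z_i$ is indeed the generic rank that the minors detect locally, which is immediate once one notes that the rank of $v_0^i$ is lower semicontinuous on $X$ and that a holomorphic map has constant rank on a dense open subset of each connected component. One should also remark that $v_0^i$ being holomorphic is only available after the gauge transformation, so the reduction via Proposition~\ref{prop: gauge equivalence to flat} is genuinely needed — the original $v_0$ is merely $\Cinfty$ and flat only as part of the full superconnection.
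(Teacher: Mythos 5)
Your proof is correct and follows essentially the same route as the paper: reduce to the local model via the gauge equivalence of Proposition~\ref{prop: gauge equivalence to flat}, note that $v_0$ becomes holomorphic in that model (and that gauge conjugation by the invertible degree-zero component does not change pointwise ranks), and conclude that the degeneracy loci are analytic by the vanishing of holomorphic minors. You actually spell out more than the paper's one-line proof — in particular the observation that the ``maximal rank'' in the definition of $Z_i$ is the global generic rank, which glues consistently across the local charts because the rank strata are locally analytic; the paper leaves this tacit.
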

\begin{proof}
The claim is local. By \cite[Theorem 5.2.1]{bismut2023coherent}, for any $x\in X$, there exists a open neighborhood $U$ of $x$, on which we have a flat $\dbar$ connection $\dbar^{E^i}$ on each $E^i$ such that $\dbar^{E^{\bullet}}v_0=0$, i.e. $v_0^i: E^i\to E^{i+1}$ is a holomorphic map under this new holomorphic structure on $E^{\bullet}$. The claim then follows immediately.
\end{proof}

Let $Z:=\cup_i Z_i$. Then $Z$ is still an analytic subvariety of $X$ with codimension $\geq 1$.

We equip each $E^i$ with a Hermitian metric and call such $\mathcal{E}= (E^{\bullet}, A^{E^{\bullet}\prime\prime})$ a \emph{Hermitian} cohesive module. We do not assume any compatibility between $v_0$ and the metric.

Let $\sigma^i: E^{i+1}\to E^i$ be the fiberwise minimal right inverse of $v_0^i: E^i\to E^{i+1}$. Then $\sigma^i$ is a $C^{\infty}$-map   when restricted to $X\backslash Z$. To simplify the notation, we denote
\begin{equation}\label{eq: sigma is the sum of sigma i}
\sigma:=\sum_i \sigma^i\in \End^{-1}(E^{\bullet}).
\end{equation}

\begin{lemma}\label{lemma: sigma square=0}
We have $\sigma^2=0$.
\end{lemma}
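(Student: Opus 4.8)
The plan is to prove $\sigma^2 = 0$ by a pointwise argument on $X \setminus Z$, and then extend to all of $X$ by continuity. Since $\sigma = \sum_i \sigma^i$ with $\sigma^i : E^{i+1} \to E^i$, the composition $\sigma^2 = \sum_i \sigma^{i-1} \sigma^i$ maps $E^{i+1} \to E^{i-1}$, so it suffices to show $\sigma^{i-1} \circ \sigma^i = 0$ on each fiber over a point $x \in X \setminus Z$. Over such a point all the maps $v_0^j$ have locally maximal (constant) rank, so the $\sigma^j$ are the genuine fiberwise minimal right inverses of Definition \ref{defi: minimal right inverse}, and I only need linear algebra in a single fiber.

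The key step is the following fiberwise claim: if $E^{i-1} \xrightarrow{v_0^{i-1}} E^i \xrightarrow{v_0^i} E^{i+1}$ satisfies $v_0^i v_0^{i-1} = 0$ (which holds since $v_0^2 = 0$ from \eqref{eq: A flat decomposed}), then $\im \sigma^i \perp \im v_0^{i-1}$, and consequently $\sigma^{i-1} \sigma^i = 0$. To see the orthogonality: by property (3) of Definition \ref{defi: minimal right inverse}, $\im \sigma^i \perp \ker v_0^i$; but $v_0^i v_0^{i-1} = 0$ gives $\im v_0^{i-1} \subseteq \ker v_0^i$, hence $\im \sigma^i \perp \im v_0^{i-1}$. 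Now for any $w \in E^{i+1}$, the element $\sigma^i(w) \in E^i$ lies in $(\im v_0^{i-1})^\perp = (\im \phi)^\perp$ for $\phi = v_0^{i-1}$, so by property (2) of the minimal right inverse $\sigma^{i-1}$, we get $\sigma^{i-1}(\sigma^i(w)) = 0$. This establishes $\sigma^2 = 0$ on $X \setminus Z$.

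Finally, $\sigma$ is $C^\infty$ on $X \setminus Z$ and $Z$ has codimension $\geq 1$ by Proposition \ref{prop: not maximal rank analytic subvariety}, so $X \setminus Z$ is dense in $X$. However, $\sigma$ itself need not extend continuously across $Z$, so I cannot simply invoke density of $X \setminus Z$ to conclude $\sigma^2 = 0$ on all of $X$ as an identity of smooth bundle maps. The cleanest formulation is to note that $\sigma^2 = 0$ holds identically on $X \setminus Z$, which is all that is needed in the sequel, since $\sigma$ is only used as a $C^\infty$ object on $X \setminus Z$ (and later as a locally almost semimeromorphic current, whose value is determined by its restriction to $X \setminus Z$ via the SEP). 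The main obstacle, such as it is, is purely bookkeeping: making sure the sign in the graded composition \eqref{eq: composition of morphisms in B(X)} does not interfere — but since $\sigma$ has form-degree $0$ (it is a bundle endomorphism of degree $-1$, not a $\overline{T^*X}$-valued form), the composition is the ordinary composition of endomorphisms and no signs appear, so $\sigma^2 = \sum_i \sigma^{i-1}\sigma^i$ with no twist.
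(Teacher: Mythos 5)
Your argument is correct and is essentially the paper's: both rest on the chain $\im\sigma^i\subseteq(\ker v_0^i)^\perp\subseteq(\im v_0^{i-1})^\perp$ (using $v_0^2=0$ and property (3)) together with property (2), namely $\sigma^{i-1}|_{(\im v_0^{i-1})^\perp}=0$. Your caution about extending past $Z$ is unnecessary though harmless, since the fiberwise minimal right inverse and the linear-algebra argument are defined and valid at every point of $X$ (only the smoothness of $\sigma$ fails on $Z$, not its pointwise definition).
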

\begin{proof}
By Definition \ref{defi: minimal right inverse}, we have
 $$\im \sigma^i=(\ker v_0^i)^{\bot}\subset (\im v_0^{i-1})^{\bot}
$$ and $\sigma^{i-1}|_{(\im v_0^{i-1})^{\bot}}=0$. Hence $\sigma^{i-1}\sigma^i=0$ for each $i$.
\end{proof}

\subsection{The residue current of a cohesive module}\label{subsection: residue current of a cohesive module definition}
 Let $X$ be a complex manifold and $\mathcal{E}= (E^{\bullet}, A^{E^{\bullet}\prime\prime})$ be a Hermitian cohesive module on $X$. Let $Z=\cup_i Z_i$ be as before and $\sigma$ be as in \eqref{eq: sigma is the sum of sigma i}. We denote
$$
A^{E^{\bullet}\prime\prime}_{\geq 1}:= \nabla^{E^{\bullet}\prime\prime}+v_2+\ldots=A^{E^{\bullet}\prime\prime}-v_0.
$$
The $\dbar$-connection $ \nabla^{E^{\bullet}\prime\prime}$ induces a $\dbar$-connection on $\End{E^{\bullet}}$, which we still denote by $\nabla^{E^{\bullet}\prime\prime}$. 

Let $V\subset X$ be an open subset. For $a\in \Gamma(V,\Omega^{\bullet, \bullet}_X\hat{\otimes} \End(E^{\bullet}))$, we define $A^{E^{\bullet}\prime\prime}_{\geq 1}(a)\in \Gamma(V,\Omega^{\bullet, \bullet}_X\hat{\otimes} \End(E^{\bullet}))$ as
\begin{equation}\label{eq: definition of A geq 1}
A^{E^{\bullet}\prime\prime}_{\geq 1}(a):=\nabla^{E^{\bullet}\prime\prime}(a)+[v_2, a]+[v_3,a]+\ldots
\end{equation}
where $[v_i,a]$ is the graded commutator with respect to the total degree. We can define $A^{E^{\bullet}\prime\prime}(a)$ in a similar way.

We know that $\sigma\in C^{\infty}(X\backslash Z, \End^{-1}(E^{\bullet}))$ when we restrict it to $X\backslash Z$. We then define 
$$
u^{\mathcal{E}}\in \Gamma(X\backslash Z, \Omega^{0, \bullet}_X\hat{\otimes} \End(E^{\bullet}))
$$ of total degree $-1$ as 
\begin{equation}\label{eq: definition of u}
u^{\mathcal{E}}:=\sigma(\id_{E^{\bullet}}+ A^{E^{\bullet}\prime\prime}_{\geq 1}(\sigma))^{-1}=\sigma-\sigma A^{E^{\bullet}\prime\prime}_{\geq 1}(\sigma)+\sigma (A^{E^{\bullet}\prime\prime}_{\geq 1}(\sigma))^2-\ldots
\end{equation}

\begin{rmk}
Since $A^{E^{\bullet}\prime\prime}_{\geq 1}(\sigma)$ is in $\Gamma(X\backslash Z,  \Omega^{0,\geq 1}_X\hat{\otimes} \End(E^{\bullet}))$, the sum on the right hand side of \eqref{eq: definition of u} is finite.
\end{rmk}

\begin{rmk}
In \cite[Equation (4.2)]{johansson2021explicit}, the analogue of $u^{\mathcal{E}}$ for twisting cochains is given by
$$
u=\sigma(\id-\dbar\sigma)^{-1}.
$$
In a private communication, L\"{a}rk\"{a}ng showed the author that the $u$ in \cite{johansson2021explicit} is actually equal to
$$
\sigma^0\big(\id+(a'(\sigma^0)-\dbar(\sigma^0))\big)^{-1}
$$
which is analogous to the $u^{\mathcal{E}}$  in \eqref{eq: definition of u}.
\end{rmk}

For later applications we need the following lemma.

\begin{lemma}\label{lemma: A sigma sigma=sigma A sigma}
For any $j\geq 0$, we have 
\begin{equation}\label{eq: A sigma sigma=sigma A sigma}
\sigma (A^{E^{\bullet}\prime\prime}_{\geq 1}(\sigma))^j= (A^{E^{\bullet}\prime\prime}_{\geq 1}(\sigma))^j\sigma
\end{equation}
\end{lemma}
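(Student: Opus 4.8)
The plan is to prove the identity $\sigma(A^{E^{\bullet}\prime\prime}_{\geq 1}(\sigma))^j = (A^{E^{\bullet}\prime\prime}_{\geq 1}(\sigma))^j\sigma$ by first establishing the base case $j=1$ and then inducting. For $j=1$ the claim reduces to showing $\sigma A^{E^{\bullet}\prime\prime}_{\geq 1}(\sigma)\sigma$ appears symmetrically, but more precisely we want $\sigma\cdot A^{E^{\bullet}\prime\prime}_{\geq 1}(\sigma) = A^{E^{\bullet}\prime\prime}_{\geq 1}(\sigma)\cdot\sigma$, i.e. $\sigma$ commutes with the single factor $w := A^{E^{\bullet}\prime\prime}_{\geq 1}(\sigma)$. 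The key computational input is that $A^{E^{\bullet}\prime\prime}_{\geq 1}$ is a (super)derivation: for the connection piece, $\nabla^{E^{\bullet}\prime\prime}(\sigma^2) = \nabla^{E^{\bullet}\prime\prime}(\sigma)\sigma + (-1)^{|\sigma|}\sigma\nabla^{E^{\bullet}\prime\prime}(\sigma)$, and similarly $[v_i,\sigma^2]$ expands by the graded Leibniz rule since $[v_i,-]$ is a derivation. Since $\sigma^2 = 0$ by Lemma \ref{lemma: sigma square=0}, applying $A^{E^{\bullet}\prime\prime}_{\geq 1}$ to both sides of $\sigma^2=0$ gives $A^{E^{\bullet}\prime\prime}_{\geq 1}(\sigma)\cdot\sigma - \sigma\cdot A^{E^{\bullet}\prime\prime}_{\geq 1}(\sigma) = 0$ (the sign $(-1)^{|\sigma|} = (-1)^{-1} = -1$ accounts for the minus), which is exactly $\sigma w = w\sigma$.

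Once commutativity of $\sigma$ with $w = A^{E^{\bullet}\prime\prime}_{\geq 1}(\sigma)$ is in hand, the general case follows by a trivial induction: $\sigma w^j = (\sigma w)w^{j-1} = (w\sigma)w^{j-1} = w(\sigma w^{j-1}) = w(w^{j-1}\sigma) = w^j\sigma$, using the inductive hypothesis in the penultimate step. The case $j=0$ is vacuous since $\sigma\cdot\id = \id\cdot\sigma$. I should be careful to perform all computations over the open set $X\setminus Z$, where $\sigma$ is smooth and all the operations of $A^{E^{\bullet}\prime\prime}_{\geq 1}$ make sense on $\Gamma(X\setminus Z, \Omega^{0,\bullet}_X\hat\otimes\End(E^{\bullet}))$ as in \eqref{eq: definition of A geq 1}.

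The one point requiring genuine care is bookkeeping of signs in the graded Leibniz rule. The grading relevant to the graded commutators $[v_i,\cdot]$ and to $A^{E^{\bullet}\prime\prime}_{\geq 1}$ is the \emph{total} degree (form degree plus $\End$-degree), and $\sigma$ has total degree $-1$ (it is a degree $-1$ endomorphism with no form part). Thus $\sigma^2$ has total degree $-2$, and the Leibniz rule for the odd operator $A^{E^{\bullet}\prime\prime}_{\geq 1}$ reads $A^{E^{\bullet}\prime\prime}_{\geq 1}(ab) = A^{E^{\bullet}\prime\prime}_{\geq 1}(a)b + (-1)^{|a|}aA^{E^{\bullet}\prime\prime}_{\geq 1}(b)$; with $a = b = \sigma$ and $|a| = -1$ odd, one gets the minus sign that converts the vanishing of $A^{E^{\bullet}\prime\prime}_{\geq 1}(\sigma^2) = A^{E^{\bullet}\prime\prime}_{\geq 1}(0) = 0$ into the commutation relation rather than an anticommutation relation. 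Checking that $[v_i,-]$ and $\nabla^{E^{\bullet}\prime\prime}$ really do satisfy this rule with these sign conventions — consistent with the composition convention \eqref{eq: composition of morphisms in B(X)} — is the main (and only) obstacle, and it is routine once the conventions are fixed.

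\begin{proof}
All computations take place over $X\setminus Z$, where $\sigma\in C^{\infty}(X\setminus Z,\End^{-1}(E^{\bullet}))$ is smooth. Write $w := A^{E^{\bullet}\prime\prime}_{\geq 1}(\sigma)\in\Gamma(X\setminus Z,\Omega^{0,\geq 1}_X\hat\otimes\End(E^{\bullet}))$. The operator $A^{E^{\bullet}\prime\prime}_{\geq 1}$ defined in \eqref{eq: definition of A geq 1} is an odd derivation for the total degree: for homogeneous $a$ one has
\begin{equation}
A^{E^{\bullet}\prime\prime}_{\geq 1}(ab)=A^{E^{\bullet}\prime\prime}_{\geq 1}(a)\,b+(-1)^{|a|}a\,A^{E^{\bullet}\prime\prime}_{\geq 1}(b),
\end{equation}
since $\nabla^{E^{\bullet}\prime\prime}$ satisfies the Leibniz rule and each $[v_i,\cdot]$ is a graded derivation with respect to the total degree, compatible with the composition convention \eqref{eq: composition of morphisms in B(X)}. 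Apply this with $a=b=\sigma$, which has total degree $|\sigma|=-1$. By Lemma \ref{lemma: sigma square=0} we have $\sigma^2=0$, hence $A^{E^{\bullet}\prime\prime}_{\geq 1}(\sigma^2)=0$, and therefore
\begin{equation}
0=A^{E^{\bullet}\prime\prime}_{\geq 1}(\sigma)\,\sigma+(-1)^{-1}\sigma\,A^{E^{\bullet}\prime\prime}_{\geq 1}(\sigma)=w\sigma-\sigma w,
\end{equation}
so that $\sigma w=w\sigma$.

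We now prove \eqref{eq: A sigma sigma=sigma A sigma}, i.e. $\sigma w^{j}=w^{j}\sigma$, by induction on $j$. For $j=0$ the statement is $\sigma\,\id=\id\,\sigma$, which is trivial. Assume $\sigma w^{j-1}=w^{j-1}\sigma$ for some $j\geq 1$. Then, using $\sigma w=w\sigma$ and the inductive hypothesis,
\begin{equation}
\sigma w^{j}=(\sigma w)w^{j-1}=(w\sigma)w^{j-1}=w(\sigma w^{j-1})=w(w^{j-1}\sigma)=w^{j}\sigma.
\end{equation}
This completes the induction and proves the lemma.
\end{proof}
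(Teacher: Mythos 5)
Your proof is correct and follows essentially the same route as the paper: apply the graded Leibniz rule for the odd operator $A^{E^{\bullet}\prime\prime}_{\geq 1}$ to $\sigma^2=0$, use that $\sigma$ has (odd) total degree $-1$ to get the sign that turns the result into the commutation relation $\sigma A^{E^{\bullet}\prime\prime}_{\geq 1}(\sigma) = A^{E^{\bullet}\prime\prime}_{\geq 1}(\sigma)\sigma$, then iterate. You have merely made explicit the sign bookkeeping and the short induction that the paper leaves implicit with ``then follows immediately.''
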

\begin{proof}
By Lemma \ref{lemma: sigma square=0} we have $\sigma\sigma=0$. Since $A^{E^{\bullet}\prime\prime}_{\geq 1}$ is a derivation and $\sigma$ has degree $-1$, we have
\begin{equation}
A^{E^{\bullet}\prime\prime}_{\geq 1}(\sigma)\sigma=\sigma A^{E^{\bullet}\prime\prime}_{\geq 1}(\sigma).
\end{equation}
\eqref{eq: A sigma sigma=sigma A sigma} then follows immediately.
\end{proof}

\begin{prop}\label{prop: almost semimeromorphic extension of u}
The form $u^{\mathcal{E}}$ has a locally almost semimeromorphic (LASM) extension to $X$.
\end{prop}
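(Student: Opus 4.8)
The plan is to reduce the claim to a purely local statement and then to recognize the local expression as a product of currents that are already known to be in $\text{LASM}$. First I would invoke Corollary \ref{coro: LASM extension glue}: since $u^{\mathcal{E}}$ is a smooth form on $X\backslash Z$ with $Z$ an analytic subvariety of codimension $\geq 1$, it suffices to produce, for each point of $X$, a small neighborhood $V$ on which $u^{\mathcal{E}}|_{(X\backslash Z)\cap V}$ admits a $\text{LASM}$ extension to $V$. Thus from now on $V$ is a small chart, and I may use Proposition \ref{prop: gauge equivalence to flat} to replace $A^{E^\bullet\prime\prime}$ by $v_0 + \overline{\nabla}^{E^\bullet|_V\prime\prime}$ with all higher $v_i$ vanishing; I would first have to check that the gauge equivalence $J$ (which by the remark after Proposition \ref{prop: gauge equivalence to flat} fixes $v_0$, hence fixes $\sigma$) transforms $u^{\mathcal{E}}$ into the analogous object built from the flattened superconnection, up to multiplication by the smooth bundle automorphisms induced by $J$ — and $\text{LASM}$ is preserved by smooth form (here, smooth $\text{End}$-valued) multiplication. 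So the problem reduces to the case $A^{E^\bullet\prime\prime}_{\geq 1} = \overline{\nabla}^{E^\bullet\prime\prime}$ is an honest $\dbar$-connection making $E^\bullet$ a complex of holomorphic vector bundles with holomorphic differential $v_0$.

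In that reduced setting the relevant statement is exactly the one established by Andersson and Wulcan for complexes of holomorphic bundles: the form $\sigma(\id + \dbar\sigma)^{-1}$, built from the minimal right inverse $\sigma$ of the holomorphic differential, extends to a pseudomeromorphic — in fact almost semimeromorphic — current. Rather than cite that black-box, I would give the argument in the language of this paper. Write $v_0$ locally, after a generic choice, via a resolution of singularities or via Hironaka, so that on a modification $\pi: \widetilde V \to V$ the matrix of $v_0$ and the metrics become monomial; on $\widetilde V$ the pulled-back $\sigma$ is semimeromorphic (a smooth form divided by a monomial), and so is each finite power $(\dbar\sigma)^j$ up to $\dbar$ of semimeromorphic currents. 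The key algebraic input is Lemma \ref{lemma: A sigma sigma=sigma A sigma} together with $\sigma^2 = 0$ (Lemma \ref{lemma: sigma square=0}), which guarantees the Neumann series terminates and lets one rewrite $u^{\mathcal{E}} = \sum_j (-1)^j \sigma(\dbar\sigma)^j$ as a finite sum, each term being a product of a fixed number of factors of $\sigma$ and smooth forms; since $\text{ASM}$ is an algebra over smooth forms closed under $\dpar$ (Proposition \ref{prop: almost semimeromorphic currents form an algebra}), and since the pushforward $\pi_*$ of a semimeromorphic current is by definition $\text{ASM}$, the whole expression lies in $\text{ASM}(\widetilde V)$, hence $u^{\mathcal{E}}|_V \in \text{ASM}(V) \subset \text{LASM}(V)$.

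The one point needing care, and the main obstacle, is that $\sigma$ is built from a \emph{non-holomorphic} metric-dependent construction even in the holomorphic setting — it is $v_0^{-1}$ on $(\ker v_0)^\perp$ and $0$ on the orthogonal complement of $\im v_0$ — so one cannot directly say "$\sigma$ is a semimeromorphic section of a holomorphic bundle". The standard fix, which I would follow, is to express $\sigma$ via Cramer-type formulas: on the open set where $v_0^i$ has maximal rank $r_i$, the minimal right inverse is $\sigma^i = (v_0^{i\,*} v_0^i)^{-1}_{\text{red}} v_0^{i\,*}$ interpreted on the appropriate subbundles, and the denominators that appear are sums $|{\bigwedge}^{r_i} v_0^i|^2$ of squares of holomorphic functions (the maximal minors), which are exactly the objects whose principal-value inverses $[1/|\cdot|^2]$ are almost semimeromorphic after resolution of singularities. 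Concretely one writes $\sigma^i = \omega_i / g_i$ where $g_i$ is the (nonnegative, real-analytic) sum-of-squared-minors and $\omega_i$ is smooth, and then $[1/g_i]$ — and hence $[\omega_i/g_i]$ — is $\text{LASM}$ by Hironaka; that gives the base case, and the inductive/algebraic steps above finish it. I would then remark that the resulting $\text{LASM}$ extension, which I still denote $u^{\mathcal{E}}$, is canonical (independent of the gauge $J$ and glued globally) by the uniqueness in Lemma \ref{lemma: uniqueness of LASM extension}, since any two $\text{LASM}$ extensions agreeing off $Z$ coincide by the SEP.
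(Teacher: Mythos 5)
Your proposal captures the right ingredients but contains two real errors, and it is also considerably more roundabout than the paper's proof, which is global and does not pass through any gauge transformation.

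The first error is the gauge-reduction step. You claim that $J$ ``transforms $u^{\mathcal{E}}$ into the analogous object built from the flattened superconnection, up to multiplication by the smooth bundle automorphisms induced by $J$.'' This is false as stated: since $J$ fixes $v_0$ (and the metric is unchanged), $\sigma$ is literally the \emph{same} operator before and after, so $u^{\mathcal{E}} = \sigma(\id + A^{E^{\bullet}\prime\prime}_{\geq 1}(\sigma))^{-1}$ and $\overline{u} := \sigma(\id + \overline{\nabla}^{E^{\bullet}|_V\prime\prime}(\sigma))^{-1}$ are \emph{both} built from the same $\sigma$. Their difference is governed by $A^{E^{\bullet}\prime\prime}_{\geq 1}(\sigma) - \overline{\nabla}^{E^{\bullet}|_V\prime\prime}(\sigma) = [\theta,\sigma]$ for a smooth $\End$-valued form $\theta$, which is not a conjugation by $J$; the Neumann series for $(\id+\cdot)^{-1}$ does not intertwine with a smooth automorphism. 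The step can be repaired (if $\sigma$ has a LASM extension then so does $[\theta,\sigma]$ and hence both Neumann series), but as written it is a gap, and the repair renders the gauge reduction pointless: you would then be running precisely the argument that works without any reduction. The second error is the citation of Proposition \ref{prop: almost semimeromorphic currents form an algebra} for ``ASM\ldots closed under $\dpar$'' in the service of controlling $\dbar\sigma$. That proposition concerns $\dpar$, not $\dbar$; $\dbar$ of an ASM current is \emph{not} ASM in general (otherwise there would be no residue to define). The correct tool here is Proposition \ref{prop: LASM extension bundle} (equivalently Proposition \ref{prop: restrict-ext of dbar a}), which asserts that the $\mathbf{1}_{X\backslash Z}$-extension of $\nabla^{E^{\bullet}\prime\prime}(\sigma)$ remains LASM. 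Your Hironaka paragraph does implicitly contain the right mechanism (on the modification $(\dbar\omega)/s$ is again semimeromorphic and the residual $\dbar[1/s]$ part is discarded by the restriction-extension), but the citation you actually give is to the wrong statement.

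For comparison, the paper's proof is a three-line global argument: (i) $\sigma$ has a LASM extension by (the argument of) Example 4.18 in \cite{andersson2018direct} --- this is the content you re-derive by Cramer's rule, sum-of-squares denominators, and Hironaka; (ii) $A^{E^{\bullet}\prime\prime}_{\geq 1}(\sigma) = \nabla^{E^{\bullet}\prime\prime}(\sigma) + \sum_{i\geq 2}[v_i,\sigma]$ has a LASM extension, the first summand by Proposition \ref{prop: LASM extension bundle} and the rest because LASM is a module over smooth forms; (iii) the Neumann series for $u^{\mathcal{E}}$ is a \emph{finite} sum of products of these, hence LASM. No localization via Corollary \ref{coro: LASM extension glue}, no gauge transformation, no explicit resolution of singularities is needed beyond what is packaged into the cited base case for $\sigma$. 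Your algebraic observations ($\sigma^2=0$ and Lemma \ref{lemma: A sigma sigma=sigma A sigma} giving finiteness of the series) are exactly right and match the paper; the localization and gauge steps should be deleted and the $\dbar$ step should appeal to Proposition \ref{prop: LASM extension bundle} rather than Proposition \ref{prop: almost semimeromorphic currents form an algebra}.
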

\begin{proof}
By the same argument as in \cite[Example 4.18]{andersson2018direct} we know that $\sigma$ has an extension to a LASM current on $X$. We then consider 
$$
A^{E^{\bullet}\prime\prime}_{\geq 1}(\sigma):=\nabla^{E^{\bullet}\prime\prime}(\sigma)+[v_2, \sigma]+[v_3,\sigma]+\ldots
$$

Since LASM currents form an algebra over smooth forms,  $[v_i, \sigma]$ has an extension to a LASM current on $X$ for $i\geq 2$. By Proposition \ref{prop: LASM extension bundle},  $\nabla^{E^{\bullet}\prime\prime}(\sigma)$ also has an extension to a LASM current on $X$. Hence $A^{E^{\bullet}\prime\prime}_{\geq 1}(\sigma)$ has an extension to a LASM current on $X$.

Finally by \eqref{eq: definition of u}, $u^{\mathcal{E}} =\sigma-\sigma A^{E^{\bullet}\prime\prime}_{\geq 1}(\sigma)+\sigma (A^{E^{\bullet}\prime\prime}_{\geq 1}(\sigma))^2-\ldots$ also has an extension to a LASM current on $X$. 
\end{proof} 

Let $U^{\mathcal{E}}$ be the LASM extension of $u^{\mathcal{E}}$ to $X$. By \eqref{eq:asmExtension}, 
\begin{equation}\label{eq: def of U}
U^{\mathcal{E}}=\lim_{\epsilon\to 0} \chi_{\epsilon}u^{\mathcal{E}}.
\end{equation}
$U^{\mathcal{E}}$ is an $\End(E^{\bullet})$-valued  LASM $(0,\bullet)$-current on $X$ with total degree $-1$.

\begin{defi}\label{defi: residue current of E}
 Let $X$ be a complex manifold and $\mathcal{E}= (E^{\bullet}, A^{E^{\bullet}\prime\prime})$ be a Hermitian cohesive module on $X$. Let $U^{\mathcal{E}}$ be as above, We define the \emph{residue current} $R^{\mathcal{E}}$ associated to $\mathcal{E}$ as
\begin{equation}\label{eq: RE}
R^{\mathcal{E}}:=\id_{E^{\bullet}}- A^{E^{\bullet}\prime\prime}(U^{\mathcal{E}})=\id_{E^{\bullet}}- A^{E^{\bullet}\prime\prime}U^{\mathcal{E}}- U^{\mathcal{E}} A^{E^{\bullet}\prime\prime}.
\end{equation}
$R^{\mathcal{E}}$ is an $\End(E^{\bullet})$-valued pseudomeromorphic (PM)  $(0,\bullet)$-current on $X$ with total degree $0$.
\end{defi}

It is clear that $R^{\mathcal{E}}$ satisfies
\begin{equation}
A^{E^{\bullet}\prime\prime}(R^{\mathcal{E}})=0.
\end{equation}

\begin{rmk}
If $\mathcal{E}= (E^{\bullet}, A^{E^{\bullet}\prime\prime})$ is a bounded complex of Hermitian holomorphic vector bundles, i.e. $v_i=0$ for $i\geq 2$, then $R^{\mathcal{E}}$ coincide with the residue current constructed in \cite[Section 2]{andersson2007residue}.
\end{rmk}

\begin{rmk}
In general $R^{\mathcal{E}}$ is not a  LASM current.
\end{rmk}

\begin{defi}\label{defi: R(U) and tilde R}
Let $X$, $Z$, and $U^{\mathcal{E}}$ be as above. Recall the \emph{residue} $R(U^{\mathcal{E}})$ of $U^{\mathcal{E}}$ is the current
\begin{equation}
R(U^{\mathcal{E}}):=\nabla^{E^{\bullet}\prime\prime}(U^{\mathcal{E}})-\mathbf{1}_{X \backslash Z}\nabla^{E^{\bullet}\prime\prime}(U^{\mathcal{E}})
\end{equation}
We define the current $\tilde{R}^{\mathcal{E}}$ as 
\begin{equation}\label{eq: tilde R and R}
\tilde{R}^{\mathcal{E}}:=R^{\mathcal{E}}+R(U^{\mathcal{E}}).
\end{equation}
\end{defi}

\begin{lemma}\label{lemma: tilde R further}
 We have 
\begin{equation}\label{eq: tilde R and R further}
\tilde{R}^{\mathcal{E}}=\id_{E^{\bullet}}-\mathbf{1}_{X \backslash Z}A^{E^{\bullet}\prime\prime}(U^{\mathcal{E}}).
\end{equation}
\end{lemma}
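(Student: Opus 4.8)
The plan is to unwind the definitions and do a direct computation, tracking carefully which pieces are $\mathbf{1}_{X\setminus Z}$-stable. Recall that by definition $\tilde{R}^{\mathcal{E}} = R^{\mathcal{E}} + R(U^{\mathcal{E}})$, that $R^{\mathcal{E}} = \id_{E^{\bullet}} - A^{E^{\bullet}\prime\prime}(U^{\mathcal{E}})$, and that $R(U^{\mathcal{E}}) = \nabla^{E^{\bullet}\prime\prime}(U^{\mathcal{E}}) - \mathbf{1}_{X\setminus Z}\nabla^{E^{\bullet}\prime\prime}(U^{\mathcal{E}})$. So the first step is to substitute and write
\begin{equation}
\tilde{R}^{\mathcal{E}} = \id_{E^{\bullet}} - A^{E^{\bullet}\prime\prime}(U^{\mathcal{E}}) + \nabla^{E^{\bullet}\prime\prime}(U^{\mathcal{E}}) - \mathbf{1}_{X\setminus Z}\nabla^{E^{\bullet}\prime\prime}(U^{\mathcal{E}}).
\end{equation}
Now decompose $A^{E^{\bullet}\prime\prime} = v_0 + \nabla^{E^{\bullet}\prime\prime} + v_2 + \cdots = v_0 + A^{E^{\bullet}\prime\prime}_{\geq 1}$, so $A^{E^{\bullet}\prime\prime}(U^{\mathcal{E}}) = v_0(U^{\mathcal{E}}) + A^{E^{\bullet}\prime\prime}_{\geq 1}(U^{\mathcal{E}})$, where I write $v_0(U^{\mathcal{E}})$ for the graded commutator $[v_0, U^{\mathcal{E}}]$. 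The $\nabla^{E^{\bullet}\prime\prime}(U^{\mathcal{E}})$ terms then cancel against the $\nabla^{E^{\bullet}\prime\prime}$-part of $A^{E^{\bullet}\prime\prime}_{\geq 1}(U^{\mathcal{E}})$, leaving
\begin{equation}
\tilde{R}^{\mathcal{E}} = \id_{E^{\bullet}} - v_0(U^{\mathcal{E}}) - \big(A^{E^{\bullet}\prime\prime}_{\geq 1}(U^{\mathcal{E}}) - \nabla^{E^{\bullet}\prime\prime}(U^{\mathcal{E}})\big) - \mathbf{1}_{X\setminus Z}\nabla^{E^{\bullet}\prime\prime}(U^{\mathcal{E}}).
\end{equation}

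The second step is to identify $v_0(U^{\mathcal{E}})$ and the remaining $v_i$-pieces as $\mathbf{1}_{X\setminus Z}$-stable currents. The key point is that $v_0, v_2, v_3,\dots$ are $C^{\infty}(X)$-linear, i.e. they are smooth-form-valued bundle maps, and $U^{\mathcal{E}}$ is LASM; acting on a LASM current by a smooth form produces a LASM current (Proposition \ref{prop: almost semimeromorphic currents form an algebra} and Proposition \ref{prop: pseudomeromorphic currents closed under multiplication}), which has the SEP by Proposition \ref{prop: LASM has SEP}. Hence $v_0(U^{\mathcal{E}}) = \mathbf{1}_{X\setminus Z}v_0(U^{\mathcal{E}})$ and likewise $[v_i, U^{\mathcal{E}}] = \mathbf{1}_{X\setminus Z}[v_i, U^{\mathcal{E}}]$ for each $i\geq 2$. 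Also $\id_{E^{\bullet}}$ is smooth, so $\id_{E^{\bullet}} = \mathbf{1}_{X\setminus Z}\id_{E^{\bullet}}$. Therefore every term on the right is $\mathbf{1}_{X\setminus Z}$-stable except possibly the grouping of $\nabla^{E^{\bullet}\prime\prime}(U^{\mathcal{E}})$, and regrouping gives
\begin{equation}
\tilde{R}^{\mathcal{E}} = \mathbf{1}_{X\setminus Z}\Big(\id_{E^{\bullet}} - v_0(U^{\mathcal{E}}) - \textstyle\sum_{i\geq 2}[v_i, U^{\mathcal{E}}] - \nabla^{E^{\bullet}\prime\prime}(U^{\mathcal{E}})\Big) = \mathbf{1}_{X\setminus Z}\big(\id_{E^{\bullet}} - A^{E^{\bullet}\prime\prime}(U^{\mathcal{E}})\big),
\end{equation}
which is precisely \eqref{eq: tilde R and R further} once one checks that $\mathbf{1}_{X\setminus Z}$ is additive on pseudomeromorphic currents (this follows from its definition via $\lim_\epsilon \chi_\epsilon$, which is linear).

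The main obstacle I anticipate is bookkeeping rather than anything deep: one must be careful that $\mathbf{1}_{X\setminus Z}$ commutes with multiplication by the smooth forms $v_i$ (this is \eqref{eq: restriction commutes with smooth forms}, extended to the bundle-valued setting as noted in Section 3.2) and is additive, and one must make sure the $\nabla^{E^{\bullet}\prime\prime}$ contribution inside $A^{E^{\bullet}\prime\prime}_{\geq 1}(U^{\mathcal{E}})$ is handled consistently — it is the one piece of $A^{E^{\bullet}\prime\prime}(U^{\mathcal{E}})$ that is genuinely not SEP on its own, and the whole content of the lemma is that the two non-SEP contributions, namely $\nabla^{E^{\bullet}\prime\prime}(U^{\mathcal{E}})$ appearing in $R^{\mathcal{E}}$ (with a sign) and the one appearing in $R(U^{\mathcal{E}})$, combine to leave behind exactly $\mathbf{1}_{X\setminus Z}\nabla^{E^{\bullet}\prime\prime}(U^{\mathcal{E}})$. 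A cleaner way to phrase the whole argument: write $A^{E^{\bullet}\prime\prime}(U^{\mathcal{E}}) = \nabla^{E^{\bullet}\prime\prime}(U^{\mathcal{E}}) + (\text{LASM, hence SEP})$, apply $\mathbf{1}_{X\setminus Z}$ to the SEP part trivially, and use $R(U^{\mathcal{E}}) = \nabla^{E^{\bullet}\prime\prime}(U^{\mathcal{E}}) - \mathbf{1}_{X\setminus Z}\nabla^{E^{\bullet}\prime\prime}(U^{\mathcal{E}})$ together with linearity of $\mathbf{1}_{X\setminus Z}$ to conclude.
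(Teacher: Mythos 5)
Your proof is correct and follows essentially the same route as the paper's: both identify that the $v_i$-contributions $[v_i,U^{\mathcal{E}}]$ ($i\neq 1$) are LASM, hence SEP, so that $R(U^{\mathcal{E}})$ computed via $\nabla^{E^{\bullet}\prime\prime}$ coincides with $A^{E^{\bullet}\prime\prime}(U^{\mathcal{E}})-\mathbf{1}_{X\setminus Z}A^{E^{\bullet}\prime\prime}(U^{\mathcal{E}})$, after which the claimed identity is immediate from $\tilde{R}^{\mathcal{E}}=R^{\mathcal{E}}+R(U^{\mathcal{E}})$. Your version simply spells out the bookkeeping (decomposition of $A^{E^{\bullet}\prime\prime}$, linearity of $\mathbf{1}_{X\setminus Z}$) more explicitly than the paper's terser argument.
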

\begin{proof}
Since $U^{\mathcal{E}}$ is a  LASM current and $v_i$ is smooth for each $i\neq 1$, we know that $[v_i, U^{\mathcal{E}}]$ is   LASM for each $i\neq 1$. Hence 
\begin{equation}
R(U^{\mathcal{E}})=A^{E^{\bullet}\prime\prime}(U^{\mathcal{E}})-\mathbf{1}_{X \backslash Z}A^{E^{\bullet}\prime\prime}(U^{\mathcal{E}})
\end{equation}
and \eqref{eq: tilde R and R further} follows.
\end{proof}

It is clear that $R(U^{\mathcal{E}})|_{X \backslash Z}=0$ hence
\begin{equation}\label{eq: R and R tilde equal}
\tilde{R}^{\mathcal{E}}|_{X \backslash Z}=R^{\mathcal{E}}|_{X \backslash Z}.
\end{equation}

\begin{lemma}\label{lemma: tilde R is LASM}
The current $\tilde{R}^{\mathcal{E}}$ in Definition \ref{defi: R(U) and tilde R} is a LASM current. Moreover it is the (unique) LASM extension of $\id_{E^{\bullet}}-A^{E^{\bullet}\prime\prime}(u^{\mathcal{E}})$ to $X$.
\end{lemma}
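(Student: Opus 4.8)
The plan is to leverage Lemma~\ref{lemma: tilde R further}, which already supplies the identity $\tilde{R}^{\mathcal{E}} = \id_{E^{\bullet}} - \mathbf{1}_{X \backslash Z}A^{E^{\bullet}\prime\prime}(U^{\mathcal{E}})$. Granting this, there are exactly two things to verify: that the right-hand side is a locally almost semimeromorphic current on $X$, and that its restriction to $X \backslash Z$ equals $\id_{E^{\bullet}} - A^{E^{\bullet}\prime\prime}(u^{\mathcal{E}})$. Uniqueness of the LASM extension will then follow at once from Lemma~\ref{lemma: uniqueness of LASM extension}.

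For the first point, I would decompose $A^{E^{\bullet}\prime\prime}(U^{\mathcal{E}})$ into the $\dbar$-connection part $\nabla^{E^{\bullet}\prime\prime}(U^{\mathcal{E}})$ together with the graded commutators $[v_i, U^{\mathcal{E}}]$ for all $i \neq 1$ (in particular the $i = 0$ term $v_0 U^{\mathcal{E}} + U^{\mathcal{E}}v_0$ is grouped here, not with the connection). Each $v_i$ is a smooth $\End(E^{\bullet})$-valued form and $U^{\mathcal{E}} \in \text{LASM}(X, \End(E^{\bullet}))$, so since LASM currents form an algebra over smooth forms (Proposition~\ref{prop: almost semimeromorphic currents form an algebra}), every $[v_i, U^{\mathcal{E}}]$ is again LASM; being LASM it has the SEP (Proposition~\ref{prop: LASM has SEP}), so $\mathbf{1}_{X \backslash Z}$ leaves it unchanged. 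For the remaining term, Proposition~\ref{prop: LASM extension bundle} gives precisely that $\mathbf{1}_{X \backslash Z}\nabla^{E^{\bullet}\prime\prime}(U^{\mathcal{E}}) \in \text{LASM}(X, \End(E^{\bullet}))$. Adding these up, and then subtracting from the smooth current $\id_{E^{\bullet}}$, shows $\tilde{R}^{\mathcal{E}} \in \text{LASM}(X, \End(E^{\bullet}))$.

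For the second point, on $X \backslash Z$ the current $U^{\mathcal{E}}$ coincides with the smooth form $u^{\mathcal{E}}$ --- this is how $U^{\mathcal{E}}$ was defined in \eqref{eq: def of U}, or alternatively one uses \eqref{eq: 1X/Z|X/Z}. Hence $A^{E^{\bullet}\prime\prime}(U^{\mathcal{E}})|_{X \backslash Z} = A^{E^{\bullet}\prime\prime}(u^{\mathcal{E}})$, and the latter is a genuine smooth form on $X \backslash Z$, being obtained from $\sigma$ and the smooth forms $A^{E^{\bullet}\prime\prime}_{\geq 1}(\sigma)$ by finitely many algebraic operations and one application of $A^{E^{\bullet}\prime\prime}$. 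By \eqref{eq: 1X/Z|X/Z} the operator $\mathbf{1}_{X \backslash Z}$ does not alter the restriction to $X \backslash Z$, so $\tilde{R}^{\mathcal{E}}|_{X \backslash Z} = \id_{E^{\bullet}} - A^{E^{\bullet}\prime\prime}(u^{\mathcal{E}})$. Thus $\tilde{R}^{\mathcal{E}}$ is a LASM extension of $\id_{E^{\bullet}} - A^{E^{\bullet}\prime\prime}(u^{\mathcal{E}})$ in the sense of Definition~\ref{defi: LASM extension}, and Lemma~\ref{lemma: uniqueness of LASM extension} shows it is the unique one.

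I do not anticipate a genuine obstacle: the whole argument is bookkeeping with the operator $\mathbf{1}_{X \backslash Z}$, the SEP, and the algebra structure of LASM currents. The one point that deserves a moment's attention is that $i = 0$ belongs to the range $i \neq 1$, so the term involving $v_0$ is absorbed, together with the higher $v_i$'s, into a smooth-form-times-LASM-current (which automatically has the SEP), leaving $\nabla^{E^{\bullet}\prime\prime}(U^{\mathcal{E}})$ as the only piece for which Proposition~\ref{prop: LASM extension bundle} is actually invoked.
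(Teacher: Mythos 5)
Your proof is correct and takes essentially the same route as the paper: the key identity is Lemma~\ref{lemma: tilde R further}, and the LASM property of $\mathbf{1}_{X \backslash Z}A^{E^{\bullet}\prime\prime}(U^{\mathcal{E}})$ rests on the same decomposition into $\nabla^{E^{\bullet}\prime\prime}(U^{\mathcal{E}})$ plus the graded commutators $[v_i, U^{\mathcal{E}}]$, $i\neq 1$, that the paper already used to prove Lemma~\ref{lemma: tilde R further}. The paper's own proof is a one-line assertion that does not explicitly address the "Moreover" part, so your fleshing out of the restriction-to-$X\backslash Z$ computation and the appeal to Lemma~\ref{lemma: uniqueness of LASM extension} supplies detail the paper left implicit (and then repeats later in the proof of Corollary~\ref{coro: tilde R vanish}).
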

\begin{proof}
Since both $\id_{E^{\bullet}}$ and $\mathbf{1}_{X \backslash Z}A^{E^{\bullet}\prime\prime}(U^{\mathcal{E}})$ are LASM currents, it is clear that $\tilde{R}^{\mathcal{E}}$ is  LASM.
\end{proof}

\begin{rmk}
Conceptually \eqref{eq: tilde R and R} means that $R^{\mathcal{E}}$ can be decomposed into the difference of the LASM part  $\tilde{R}^{\mathcal{E}}$ and the residual part $R(U^{\mathcal{E}})$.
\end{rmk}

We will use the following notation frequently in this paper.

\begin{defi}\label{defi: (l,k)-component residue current}
We denote by $R^{\mathcal{E}}_{q\to l}$  the component of $R^{\mathcal{E}}$ that maps
$\Gamma(X, \Omega^{0,\bullet}\hat{\otimes} E^q)$ to $\Gamma(X,\mathcal{D}^{0,\bullet}\hat{\otimes} E^l)$. We use similar notations for $R(U^{\mathcal{E}})$ and $\tilde{R}^{\mathcal{E}}$.
\end{defi}

Recall we define the complex of sheaves $\underline{F}_X(\mathcal{E})=(\mathfrak{E}^{\bullet},d)$ as in \eqref{eq: sheafify of cohesive module}.
We have the following result, which  generalizes the \emph{duality principle} in \cite[Proposition 2.3]{andersson2007residue}.

\begin{thm}\label{thm: duality principle}
Let $X$ be a complex manifold and $\mathcal{E}= (E^{\bullet}, A^{E^{\bullet}\prime\prime})$ be a Hermitian cohesive module on $X$. Let 
$$
s\in \Gamma(X, \mathfrak{E}^k)=\bigoplus_{p+q=k}\Gamma(X, \Omega^{0,p}\hat{\otimes} E^q)
$$
be such that $A^{E^{\bullet}\prime\prime}(s)=0$. 
\begin{enumerate}
\item If  $R^{\mathcal{E}}(s)=0$,  then there exists a 
$$
t\in  \Gamma(X, \mathfrak{E}^{k-1})= \bigoplus_{p+q=k-1}\Gamma(X, \Omega^{0,p}\hat{\otimes} E^q)
$$ such that $A^{E^{\bullet}\prime\prime}(t)=s$.
\item If $R^{\mathcal{E}}_{q\to l}=0$ for any $q\leq k-1$ and any $l$. If there exists a 
$$
t\in  \Gamma(X, \mathfrak{E}^{k-1})=\bigoplus_{p+q=k-1}\Gamma(X, \Omega^{0,p}\hat{\otimes} E^q)
$$ such that $A^{E^{\bullet}\prime\prime}(t)=s$, then $R^{\mathcal{E}}(s)=0$.
\end{enumerate}
\end{thm}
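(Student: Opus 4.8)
The plan is to reduce the statement to the classical duality principle of Andersson--Wulcan (\cite[Proposition 2.3]{andersson2007residue}) by passing to the current complex $(\tilde{\mathfrak{E}}^{\bullet}, d)$ of \eqref{eq: defi of tilde F} and then exploiting Proposition \ref{prop: form to current is a quasi-isom} and Corollary \ref{coro: global regularity section} to come back to the smooth-form complex. The key observation is the defining identity \eqref{eq: RE}, namely $R^{\mathcal{E}} = \id_{E^{\bullet}} - A^{E^{\bullet}\prime\prime}(U^{\mathcal{E}})$, so that for any closed section $s$ one has the current identity
\begin{equation}
s = R^{\mathcal{E}}(s) + A^{E^{\bullet}\prime\prime}(U^{\mathcal{E}})(s) = R^{\mathcal{E}}(s) + A^{E^{\bullet}\prime\prime}\bigl(U^{\mathcal{E}}(s)\bigr) \pm U^{\mathcal{E}}\bigl(A^{E^{\bullet}\prime\prime}(s)\bigr),
\end{equation}
and since $A^{E^{\bullet}\prime\prime}(s)=0$ the last term drops. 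Here $U^{\mathcal{E}}(s)$ makes sense as a pseudomeromorphic current because $U^{\mathcal{E}}$ is LASM and $s$ is smooth, so the product is unambiguous by Proposition \ref{prop: pseudomeromorphic currents closed under multiplication}.

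For part (1): if $R^{\mathcal{E}}(s)=0$, the displayed identity gives $s = A^{E^{\bullet}\prime\prime}(\tilde t)$ with $\tilde t := U^{\mathcal{E}}(s) \in \Gamma(X, \tilde{\mathfrak{E}}^{k-1})$, i.e. $s$ is a coboundary in the current complex $(\tilde{\mathfrak{E}}^{\bullet},d)$. Since $s$ actually lies in the smooth subcomplex $\Gamma(X,\mathfrak{E}^k)$, Corollary \ref{coro: global regularity section} produces the desired $t \in \Gamma(X,\mathfrak{E}^{k-1})$ with $A^{E^{\bullet}\prime\prime}(t) = s$. This is the easy direction and essentially formal once the product $U^{\mathcal{E}}(s)$ is set up; the only care needed is the sign bookkeeping in expanding $A^{E^{\bullet}\prime\prime}(U^{\mathcal{E}}\cdot s)$ via the Leibniz rule for the superconnection, and the verification that $U^{\mathcal{E}}(s)$ is a globally well-defined current (it is, since $U^{\mathcal{E}}$ is globally defined as the LASM extension).

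For part (2): here I want to show that the hypothesis $A^{E^{\bullet}\prime\prime}(t)=s$ for some smooth $t$, together with the vanishing $R^{\mathcal{E}}_{q\to l}=0$ for all $q\le k-1$, forces $R^{\mathcal{E}}(s)=0$. The idea is that $R^{\mathcal{E}}$ is $A^{E^{\bullet}\prime\prime}$-closed — recall $A^{E^{\bullet}\prime\prime}(R^{\mathcal{E}})=0$ — so acting on $s = A^{E^{\bullet}\prime\prime}(t)$ should, after moving $A^{E^{\bullet}\prime\prime}$ past $R^{\mathcal{E}}$, reduce to $R^{\mathcal{E}}$ acting on a chain in degrees $\le k-1$, where the relevant components $R^{\mathcal{E}}_{q\to l}$ vanish by hypothesis. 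Concretely, $R^{\mathcal{E}}$ as a map of complexes satisfies $R^{\mathcal{E}} A^{E^{\bullet}\prime\prime} = \pm A^{E^{\bullet}\prime\prime} R^{\mathcal{E}}$ on the appropriate graded pieces, and the input $t$ lives in total degree $k-1$ with $E$-components in degrees $q \le k-1$; applying $R^{\mathcal{E}}$ to such $t$ only involves the components $R^{\mathcal{E}}_{q\to l}$ with $q\le k-1$, all of which are zero. Hence $R^{\mathcal{E}}(t)=0$, and then $R^{\mathcal{E}}(s) = R^{\mathcal{E}}(A^{E^{\bullet}\prime\prime}(t)) = \pm A^{E^{\bullet}\prime\prime}(R^{\mathcal{E}}(t)) = 0$.

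The main obstacle I anticipate is making the graded-commutation relation $R^{\mathcal{E}} A^{E^{\bullet}\prime\prime} = \pm A^{E^{\bullet}\prime\prime} R^{\mathcal{E}}$ precise at the level of currents and correctly matching it against the $\Gamma(X,\Omega^{0,p}\hat\otimes E^q)$-grading so that ``$R^{\mathcal{E}}_{q\to l}=0$ for $q\le k-1$'' is genuinely what gets used — in particular, confirming that applying $R^{\mathcal{E}}$ to an element of total degree $k-1$ never calls upon a component $R^{\mathcal{E}}_{q\to l}$ with $q\ge k$. (Since $R^{\mathcal{E}}$ has total degree $0$, a total-degree-$(k-1)$ element is a sum of terms in $\Omega^{0,p}\hat\otimes E^q$ with $p+q=k-1$, so $q\le k-1$ automatically, which is exactly the range covered by the hypothesis.) A secondary technical point is ensuring all the products of $U^{\mathcal{E}}$, $R^{\mathcal{E}}$ with the smooth coefficients of $A^{E^{\bullet}\prime\prime}$ and with $s$, $t$ are legitimate current operations; this is guaranteed by Proposition \ref{prop: pseudomeromorphic currents closed under multiplication} since everything in sight is a smooth form times a PM or LASM current.
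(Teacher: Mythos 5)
Your proposal is correct and matches the paper's own proof essentially step for step: Part (1) uses the identity $s = A^{E^{\bullet}\prime\prime}(U^{\mathcal{E}}(s))$ coming from $R^{\mathcal{E}}(s)=0$ and $A^{E^{\bullet}\prime\prime}(s)=0$, then invokes Corollary \ref{coro: global regularity section} to descend from the current complex $(\tilde{\mathfrak{E}}^{\bullet},d)$ to the smooth complex $(\mathfrak{E}^{\bullet},d)$; Part (2) uses $A^{E^{\bullet}\prime\prime}(R^{\mathcal{E}})=0$ to write $R^{\mathcal{E}}(s)=A^{E^{\bullet}\prime\prime}(R^{\mathcal{E}}(t))$ and then observes that $t\in\mathfrak{E}^{k-1}$ has $E^{\bullet}$-components in degrees $q\le k-1$, so $R^{\mathcal{E}}(t)=0$ by the vanishing hypothesis. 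The degree bookkeeping you flagged as a potential obstacle is exactly the same observation the paper makes (that $q\le k-1$ is automatic for total degree $k-1$ with $p\ge 0$), so there is no gap.
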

\begin{proof}
If $R^{\mathcal{E}}(s)=0$, then by \eqref{eq: RE} we have
$$
0=s-A^{E^{\bullet}\prime\prime}(U^{\mathcal{E}} (s))- U^{\mathcal{E}} (A^{E^{\bullet}\prime\prime}(s)).
$$
Since $A^{E^{\bullet}\prime\prime}(s)=0$, we get
$$
s=A^{E^{\bullet}\prime\prime}(U^{\mathcal{E}} (s))
$$
with $U^{\mathcal{E}}(s)\in \tilde{\mathfrak{E}}^{k-1}$ where $\tilde{\mathfrak{E}}^{k-1}= \bigoplus_{p+q=k-1}\Gamma(X,  \mathcal{D}_X^{0,p}\otimes E^q)$ as in \eqref{eq: defi of tilde F}. By Corollary \ref{coro: global regularity section}, there exists a 
$$
t\in \Gamma(X, \mathfrak{E}^{k-1})=\bigoplus_{p+q=k-1}\Gamma(X, \Omega^{0,p}\hat{\otimes} E^q)
$$
such that $A^{E^{\bullet}\prime\prime}(t)=s$.

On the other hand if $A^{E^{\bullet}\prime\prime}(t)=s$. Since $A^{E^{\bullet}\prime\prime}(R^{\mathcal{E}})=0$ we get
$$
R^{\mathcal{E}}(s)=A^{E^{\bullet}\prime\prime}(R^{\mathcal{E}}(t)).
$$
Since $t\in \bigoplus_{p+q=k-1}\Gamma(X, \Omega^{0,p}\hat{\otimes} E^q)$ we get $R^{\mathcal{E}}(t)=0$ hence  $R^{\mathcal{E}}(s)=0$.
\end{proof}

\begin{rmk}
\cite[Proposition 4.2]{Johansson2023residue} gives a similar result in the framework of twisting cochains.
\end{rmk}

\begin{rmk}
We will see in Section \ref{section: vanishing of residue current} cases that $R^{\mathcal{E}}_{q\to l}$ indeed vanishes for any $q\leq k-1$ and any $l$.
\end{rmk}

\section{Vanishing of residue currents}\label{section: vanishing of residue current}

Let $\mathcal{E}= (E^{\bullet}, A^{E^{\bullet}\prime\prime})$ be a Hermitian cohesive module on a complex manifold $X$. By \eqref{eq: tilde R and R} we can decompose 
the residue $R^{\mathcal{E}}$ as
\begin{equation}
R^{\mathcal{E}}=\tilde{R}^{\mathcal{E}}-R(U^{\mathcal{E}}).
\end{equation}
We will study the vanishing of  $R(U^{\mathcal{E}})$ and $\tilde{R}^{\mathcal{E}}$ separately.

\subsection{Vanishing of $R(U^{\mathcal{E}})$}

We first study the vanishing conditions of $R(U^{\mathcal{E}})$.

 Recall that  $Z_i\subset X$ is the subvariety of $X$ consisting of $x\in X$ such that $v_0^i: E^i\to E^{i+1}$ does not get its maximal rank.

We have the following vanishing result on $R( U^{\mathcal{E}})$, which is an analogue to \cite[Proposition 4.4]{Johansson2023residue}.

\begin{prop}\label{prop: vanishing of R(U)}
Let $U^{\mathcal{E}}$ be the current defined in \eqref{eq: def of U} and $R(U^{\mathcal{E}})$ be its residue as in  Definition \ref{defi: R(U) and tilde R}. Then for any $k\geq q$ we have
\begin{equation}\label{eq: vanishing of R(U) trivial}
R(U^{\mathcal{E}})_{q\to k}=0.
\end{equation}

Moreover if there exists a pair of integers $l$, $q$ such that $l \leq q-1$ and the subvarieties $Z_i$'s satisfy
\begin{equation}\label{eq: codim bound of Zm}
\codim(Z_m)\geq q-m+1 \text{, for } l\leq m\leq  q-1,
\end{equation}
then for any $k\geq l$ we have
\begin{equation}\label{eq: vanishing of R(U)}
R(U^{\mathcal{E}})_{q \to k}=0.
\end{equation}
where $R(U^{\mathcal{E}})_{q \to k}$ is the component of $R(U^{\mathcal{E}})$ as in Definition \ref{defi: (l,k)-component residue current}.
\end{prop}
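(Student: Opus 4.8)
The plan is to analyze the residue $R(U^{\mathcal{E}}) = \nabla^{E^{\bullet}\prime\prime}(U^{\mathcal{E}}) - \mathbf{1}_{X\setminus Z}\nabla^{E^{\bullet}\prime\prime}(U^{\mathcal{E}})$ component by component, using its degree structure and support to invoke the dimension principle (Proposition~\ref{prop:dimPrinciple}). First I would unwind the degree bookkeeping: $u^{\mathcal{E}} = \sigma - \sigma A^{E^{\bullet}\prime\prime}_{\geq 1}(\sigma) + \dots$ has total degree $-1$, and by Lemma~\ref{lemma: A sigma sigma=sigma A sigma} each term is a product of one $\sigma$ (which raises the $E^{\bullet}$-degree by $-1$, i.e. drops it by one) and powers of $A^{E^{\bullet}\prime\prime}_{\geq 1}(\sigma)$, which are $(0,\geq 1)$-forms of $\End$-degree $\geq 0$. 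Tracking how the $j$-th term $\sigma(A^{E^{\bullet}\prime\prime}_{\geq 1}(\sigma))^j$ maps $E^q$, one sees the component $U^{\mathcal{E}}_{q\to k}$ is a $(0,*)$-current whose antiholomorphic form-degree is tied to the difference $q-k$; in particular $U^{\mathcal{E}}_{q\to k}$, and hence $R(U^{\mathcal{E}})_{q\to k}$, can only be nonzero when $k < q$, and the form bidegree grows as $q-k$ grows. This immediately gives \eqref{eq: vanishing of R(U) trivial} for $k \geq q$, since the relevant component is simply zero as a form before taking any limit.

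For the second, substantive claim, the key observation is that $R(U^{\mathcal{E}})$ is supported on $Z = \bigcup_i Z_i$ (indeed $R(U^{\mathcal{E}})|_{X\setminus Z} = 0$ by construction), and more precisely each contribution to $R(U^{\mathcal{E}})_{q\to k}$ factors through the $\sigma^m$'s whose singular locus is $Z_m$, so the piece "passing through degree $m$" is supported on $Z_m$. I would localize: by Proposition~\ref{prop: gauge equivalence to flat} we may, on a small chart and up to a gauge equivalence, assume $v_i = 0$ for $i \geq 2$ and $\nabla^{E^{\bullet}\prime\prime} = \dbar$, reducing $u^{\mathcal{E}}$ to the twisting-cochain-type expression $\sigma(\id - \dbar\sigma)^{-1}$; this is precisely the setting of \cite[Proposition~4.4]{Johansson2023residue}, whose argument I would adapt. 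Then, analyzing $R(U^{\mathcal{E}})_{q\to k}$ for $k \geq l$: the only way to get from $E^q$ down to $E^k$ is through intermediate degrees $m$ with $k \leq m \leq q-1$; the corresponding term is a $(0,*)$-current of antiholomorphic degree at least $q-m$ (roughly: one $\dbar\sigma$ factor per degree dropped, plus the extra $\dbar$ from forming the residue), supported on $Z_m$. Since $\codim Z_m \geq q-m+1$ by hypothesis \eqref{eq: codim bound of Zm}, the dimension principle forces that piece to vanish. Summing over the admissible $m$ gives \eqref{eq: vanishing of R(U)}.

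The main obstacle I anticipate is making the degree count airtight — precisely pinning down the antiholomorphic form-degree of each term $\sigma(A^{E^{\bullet}\prime\prime}_{\geq 1}(\sigma))^j$ as it maps $E^q \to E^k$, and checking that forming the residue (applying $\dbar$ and then subtracting the smooth extension) raises this degree by exactly one, so that the support estimate $\supp \subseteq Z_m$ combined with $\codim Z_m \geq q-m+1$ lands one in the regime where Proposition~\ref{prop:dimPrinciple} applies. A secondary subtlety is handling the terms $[v_i,\sigma]$ for $i\geq 2$ in the general (non-flat) case: these are smooth-form multiples of $\sigma$ and so do not change the Zariski-singular support, but one must confirm they do not spoil the degree accounting; the localization via Proposition~\ref{prop: gauge equivalence to flat} is the clean way to sidestep this, since vanishing of a pseudomeromorphic current is a local question. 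Finally one checks that the local vanishing statements glue, which is automatic.
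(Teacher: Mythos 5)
Your treatment of the first, ``trivial'' claim \eqref{eq: vanishing of R(U) trivial} is correct and matches the paper: each term $\sigma(A^{E^{\bullet}\prime\prime}_{\geq 1}(\sigma))^j$ drops the $E^{\bullet}$-degree by exactly one more than its antiholomorphic form-degree, so $u^{\mathcal{E}}_{q\to k}$ (hence $R(U^{\mathcal{E}})_{q\to k}$) can only be nonzero when $k<q$.

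For the substantive claim \eqref{eq: vanishing of R(U)}, however, the argument as written has a genuine gap, and it is precisely the one the paper's proof is designed to close. The crude support bound you have in mind is $\supp R(U^{\mathcal{E}})_{q\to k}\subset\bigcup_{k\leq m\leq q-1}Z_m$; combined with \eqref{eq: codim bound of Zm} this union only has codimension $\geq 2$ (the minimum of $q-m+1$ over $m\in[k,q-1]$ is attained at $m=q-1$). Since $R(U^{\mathcal{E}})_{q\to k}$ is a $(0,q-k)$-pseudomeromorphic current, Proposition~\ref{prop:dimPrinciple} requires codimension $\geq q-k+1$, which is strictly larger than $2$ as soon as $k\leq q-2$. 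You try to repair this by decomposing ``the piece passing through intermediate degree $m$'' as a current supported on $Z_m$ of form-degree ``at least $q-m$,'' but that decomposition is not available a priori: the total form-degree of $R(U^{\mathcal{E}})_{q\to k}$ is fixed at $q-k$ regardless of which $\sigma^m$'s appear, and the individual terms are \emph{not} supported on single $Z_m$'s before you know something about the other pieces. What the paper actually does is a downward induction on $k$: at step $k_0$ it peels off the leftmost factor $(\nabla^{E^{\bullet}\prime\prime}\sigma)_{k_0+1\to k_0}$ or $(\sigma v_i)_{\ast\to k_0}$, observes that this factor is smooth away from $Z_{k_0}$, applies \eqref{eq:residueSmooth} to pull the smooth factor out of the residue \emph{there}, and invokes the induction hypothesis $R(\cdot)_{q\to k_0+1}=\cdots=R(\cdot)_{q\to q-1}=0$ to conclude the whole term is supported in $Z_{k_0}$ alone — which has codimension $\geq q-k_0+1$, exactly matching the form-degree $q-k_0$ so the dimension principle applies. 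The $[v_i,\sigma]$ terms are handled in the same induction, splitting them via $\sigma^2=0$ and Lemma~\ref{lemma: A sigma sigma=sigma A sigma}; no localization to the $v_i=0$ case is used. Your proposed escape route — localize by Proposition~\ref{prop: gauge equivalence to flat} and quote the vector-bundle-complex (or twisting-cochain) result — does not work as stated, because the gauge equivalence $J$ changes the Hermitian metric (its degree-zero part $J_0$ is a nontrivial cochain automorphism of $(E^{\bullet},v_0)$), hence changes $\sigma$, $u^{\mathcal{E}}$, $U^{\mathcal{E}}$, and $R(U^{\mathcal{E}})$. Transporting a vanishing statement across $J$ would require a comparison formula relating $R(U^{\mathcal{E}})$ for different structures, which in this paper only appears later (Section~\ref{section: comparison formula}) and in any case gives equality only up to $A^{E^{\bullet}\prime\prime}$-exact terms, not the raw equality $R(U^{\mathcal{E}})_{q\to k}=0$ you need here.
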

\begin{rmk}
If $Z_m=\emptyset$, then we set $\codim(Z_m)=\infty$. 
\end{rmk}
\begin{proof}
Recall that $U^{\mathcal{E}}$ is the LASM extension of 
$$
u^{\mathcal{E}}=\sum_{j\geq 0}(-1)^j\sigma (A^{E^{\bullet}\prime\prime}_{\geq 1}(\sigma))^j
$$
Lemma \ref{lemma: A sigma sigma=sigma A sigma} tells us $\sigma (A^{E^{\bullet}\prime\prime}_{\geq 1}(\sigma))^j= (A^{E^{\bullet}\prime\prime}_{\geq 1}(\sigma))^j\sigma$.
To abuse the notation we denote the LASM extension of  $ (A^{E^{\bullet}\prime\prime}_{\geq 1}(\sigma))^j\sigma$ also by $ (A^{E^{\bullet}\prime\prime}_{\geq 1}(\sigma))^j\sigma$. We then have the residues $R( (A^{E^{\bullet}\prime\prime}_{\geq 1}(\sigma))^j\sigma)$. 

To prove \eqref{eq: vanishing of R(U) trivial} and \eqref{eq: vanishing of R(U)} it is sufficient to prove
\begin{equation}\label{eq: vanishing of R(sigma)}
R( (A^{E^{\bullet}\prime\prime}_{\geq 1}(\sigma))^j \sigma)_{q \to k}=0  \text{, for any }  j.
\end{equation}
Notice that since $\sigma$ lowers the $E^{\bullet}$ degree by $1$ and $A^{E^{\bullet}\prime\prime}_{\geq 1}(\sigma)$ lowers the $E^{\bullet}$ degree by at least $1$, \eqref{eq: vanishing of R(sigma)} holds for $j\geq q-k$ by degree reason. In particular \eqref{eq: vanishing of R(U) trivial} is trivial by degree reason.

We then prove the following lemma.

\begin{lemma}\label{lemma: support contained in Z}
We have the inclusion
\begin{equation}\label{eq: inclusion of support}
\supp[R( (A^{E^{\bullet}\prime\prime}_{\geq 1}(\sigma))^j \sigma)_{q \to k}]\subset \bigcup_{k\leq m\leq q-1} Z_m.
\end{equation}
\end{lemma}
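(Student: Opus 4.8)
The plan is to show that the current $R\bigl((A^{E^{\bullet}\prime\prime}_{\geq 1}(\sigma))^j\sigma\bigr)_{q\to k}$ is supported on the locus where the minimal right inverses fail to be smooth at the relevant degrees, and then to pin this locus down to $\bigcup_{k\le m\le q-1}Z_m$ by a degree bookkeeping argument. First I would recall from \eqref{eq:resSupport} that for any $a\in\mathrm{LASM}(X,\mathrm{End}(E^\bullet))$ the residue $R(a)$ is supported on $\mathrm{ZSS}(a)$, the Zariski-singular support of $a$. So it suffices to identify $\mathrm{ZSS}$ of the piece of $(A^{E^{\bullet}\prime\prime}_{\geq 1}(\sigma))^j\sigma$ that maps $E^q$ into $E^k$.

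The key observation is that $\sigma=\sum_i\sigma^i$, and each $\sigma^i$ is smooth precisely off $Z_i$; moreover $A^{E^{\bullet}\prime\prime}_{\geq 1}=\nabla^{E^{\bullet}\prime\prime}+[v_2,\cdot]+\cdots$ is a first-order operator with smooth coefficients, so applying it does not enlarge the singular locus of whatever it is applied to (this is exactly the content of Proposition \ref{prop: restrict-ext of dbar a} / Proposition \ref{prop: LASM extension bundle}, which let us commute $\mathbf{1}_{X\setminus Z}$ past $\nabla^{E^{\bullet}\prime\prime}$ and smooth multiplications). Hence on $X\setminus\bigcup_i Z_i$ everything is smooth, and more sharply, a term of $(A^{E^{\bullet}\prime\prime}_{\geq 1}(\sigma))^j\sigma$ that actually contributes to the $E^q\to E^k$ component is a composition of $j+1$ factors, each of which is built from the $\sigma^i$'s, whose $E^\bullet$-degrees must chain together from $q$ down to $k$. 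I would write out this chain: a contributing term factors (after using Lemma \ref{lemma: A sigma sigma=sigma A sigma} to move the lone $\sigma$ to the right) through intermediate bundles $E^{m}$ with $k\le m\le q$, and each time a genuine $\sigma^m$ appears (as opposed to a smooth $v_i$ or $\nabla$ coefficient) its singularities are confined to $Z_m$; the indices $m$ that can occur as the \emph{source} of such a $\sigma^m$ run over $k\le m\le q-1$ (the source cannot be $q$ exhausted trivially nor below $k$). Therefore the singular locus of the $E^q\to E^k$ component is contained in $\bigcup_{k\le m\le q-1}Z_m$, and since the support of the residue is contained in the singular locus, \eqref{eq: inclusion of support} follows.

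The step I expect to be the main obstacle is making the degree-chaining argument rigorous: one must argue that in \emph{every} monomial appearing in $(A^{E^{\bullet}\prime\prime}_{\geq 1}(\sigma))^j\sigma$ that lands in the $E^q\to E^k$ slot, at least one factor is a $\sigma^m$ with $k\le m\le q-1$ and that no factor $\sigma^m$ with $m<k$ or $m>q-1$ can force a singularity outside the claimed union — equivalently, that the intermediate degrees traversed by such a monomial all lie in $[k,q]$, because $\sigma$ and $A^{E^{\bullet}\prime\prime}_{\geq1}(\sigma)$ only lower the $E^\bullet$-degree (by exactly $1$, resp.\ by at least $1$), so the degree decreases monotonically from $q$ to $k$ and never leaves that interval. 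Once that monotonicity is spelled out, combined with the fact (Proposition \ref{prop: restrict-ext of dbar a}) that applying $A^{E^{\bullet}\prime\prime}_{\geq 1}$ and taking LASM extensions does not create singularities away from $\bigcup_i Z_i$, the inclusion of supports is immediate and the lemma is proved; the quantitative codimension hypothesis \eqref{eq: codim bound of Zm} is then fed into the dimension principle Proposition \ref{prop:dimPrinciple} back in the main proof to conclude the actual vanishing \eqref{eq: vanishing of R(U)}.
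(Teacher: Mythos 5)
Your proposal is correct and follows essentially the same reasoning as the paper: commute the $q\to k$ projection past $R$ (possible because $\nabla^{E^{\bullet}\prime\prime}$ preserves $E^{\bullet}$-degree), observe by degree monotonicity that only $\sigma^m$ with $k\le m\le q-1$ can appear in the $q\to k$ component, note that each such $\sigma^m$ is smooth off $Z_m$, and conclude via $\supp R(a)\subseteq \mathrm{ZSS}(a)$. The only cosmetic difference is that you invoke Lemma \ref{lemma: A sigma sigma=sigma A sigma} to reposition the lone $\sigma$, which the paper's version does not need for this particular lemma.
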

\begin{proof}[Proof of Lemma \ref{lemma: support contained in Z}]
Recall that $\sigma$ is the sum of its component $\sigma^i: E^{i+1}\to E^i$ where the latter is the fiberwise minimal right inverse of $v_0^i: E^i\to E^{i+1}$. 

By definition 
$$
R( (A^{E^{\bullet}\prime\prime}_{\geq 1}(\sigma))^j \sigma)=\nabla^{E^{\bullet}\prime\prime}( A^{E^{\bullet}\prime\prime}_{\geq 1}(\sigma))^j \sigma)-\mathbf{1}_{X \backslash Z}\nabla^{E^{\bullet}\prime\prime}(A^{E^{\bullet}\prime\prime}_{\geq 1}(\sigma))^j \sigma).
$$
Since $\nabla^{E^{\bullet}\prime\prime}$ preserves the $E^{\bullet}$ degree, we have
\begin{equation}
R( (A^{E^{\bullet}\prime\prime}_{\geq 1}(\sigma))^j \sigma)_{q \to k}=R(( (A^{E^{\bullet}\prime\prime}_{\geq 1}(\sigma))^j \sigma)_{q \to k}).
\end{equation}
Recall 
$$
A^{E^{\bullet}\prime\prime}_{\geq 1}(\sigma)=\nabla^{E^{\bullet}\prime\prime}(\sigma)+[v_2, \sigma]+[v_3,\sigma]+\ldots
$$
We know $\nabla^{E^{\bullet}\prime\prime}$ preserves the $E^{\bullet}$ degree and the $v_i$'s lower the $E^{\bullet}$ degree. So the component $[ (A^{E^{\bullet}\prime\prime}_{\geq 1}(\sigma))^j \sigma]_{q \to k}$ only involves the $\sigma^m$'s with $k\leq m\leq q-1$.

 It is clear that $\sigma^m$ is smooth outside $Z_m$. So $[ (A^{E^{\bullet}\prime\prime}_{\geq 1}(\sigma))^j \sigma]_{q \to k}$ is smooth outside $\bigcup_{k\leq m\leq q-1} Z_m$. On the other hand the residue $R(a)$ vanishes on the open subset where $a$ is smooth. We then get \eqref{eq: inclusion of support}.
\end{proof}
We then proof Proposition \ref{prop: vanishing of R(U)} by downward induction on $k$. First for $k=q-1$, we only need to prove \eqref{eq: vanishing of R(sigma)} for $j$=0. Actually Lemma \ref{lemma: support contained in Z} and \eqref{eq: codim bound of Zm} tell us that $R( \sigma)_{q \to q-1}$ has support of codimension $\geq 2$. On the other hand  we know that $R( \sigma)_{q \to q-1}$ is a $(0,1)$-pseudomeromorphic (PM) current. So the dimension principle in Proposition \ref{prop:dimPrinciple} tells us that 
\begin{equation}
R( \sigma)_{q \to q-1}=0.
\end{equation}

Now consider $k_0\leq q-2$. Assume that \eqref{eq: vanishing of R(sigma)} holds for  $k=k_0+1,\ldots q-1$ and $j=0,\ldots q-k_0-2$. Consider $R( (A^{E^{\bullet}\prime\prime}_{\geq 1}(\sigma))^j \sigma)_{q \to k_0}$ for $j\geq 1$. We have 
\begin{equation}\label{eq: decompose of R into degree}
\begin{split}
&R( (A^{E^{\bullet}\prime\prime}_{\geq 1}(\sigma))^j \sigma)_{q \to k_0}\\
=&R(((A^{E^{\bullet}\prime\prime}_{\geq 1}(\sigma))^j \sigma)_{q \to k_0})\\
=&R((\nabla^{E^{\bullet}\prime\prime}(\sigma))_{k_0+1 \to k_0}((A^{E^{\bullet}\prime\prime}_{\geq 1}(\sigma))^{j-1} \sigma)_{q \to k_0+1})\\
+&\sum_{i=2}^{q-k_0-1}R(([v_i, \sigma](A^{E^{\bullet}\prime\prime}_{\geq 1}(\sigma))^{j-1} \sigma)_{q \to k_0}).
\end{split}
\end{equation}
As before we see that $(\nabla^{E^{\bullet}\prime\prime}(\sigma))_{k_0+1 \to k_0}$ is smooth outside $ Z_{k_0}$. By  \eqref{eq:residueSmooth}, outside $ Z_{k_0}$ we have 
\begin{equation}
\begin{split}
&R((\nabla^{E^{\bullet}\prime\prime}(\sigma))_{k_0+1 \to k_0}((A^{E^{\bullet}\prime\prime}_{\geq 1}(\sigma))^{j-1} \sigma)_{q \to k_0+1})\\
=&(\nabla^{E^{\bullet}\prime\prime}(\sigma))_{k_0+1 \to k_0}R(((A^{E^{\bullet}\prime\prime}_{\geq 1}(\sigma))^{j-1} \sigma)_{q \to k_0+1})
\end{split}
\end{equation}
which vanishes by the induction hypothesis. As a result we know the support of 
$$
R((\nabla^{E^{\bullet}\prime\prime}(\sigma))_{k_0+1 \to k_0}((A^{E^{\bullet}\prime\prime}_{\geq 1}(\sigma))^{j-1} \sigma)_{q \to k_0+1})
$$
is contained in $ Z_{k_0}$, whose codimension is at least $q-k_0+1$ by \eqref{eq: codim bound of Zm}. On the other hand
$$
R((\nabla^{E^{\bullet}\prime\prime}(\sigma))_{k_0+1 \to k_0}((A^{E^{\bullet}\prime\prime}_{\geq 1}(\sigma))^{j-1} \sigma)_{q \to k_0+1})
$$
is a $(0, q-k_0)$-PM current. So the dimension principle in Proposition \ref{prop:dimPrinciple} tells us that 
\begin{equation}
R((\nabla^{E^{\bullet}\prime\prime}(\sigma))_{k_0+1 \to k_0}((A^{E^{\bullet}\prime\prime}_{\geq 1}(\sigma))^{j-1} \sigma)_{q \to k_0+1})=0.
\end{equation}

Now for each $2\leq i\leq q-k_0-1$ we look at $R(([v_i, \sigma](A^{E^{\bullet}\prime\prime}_{\geq 1}(\sigma))^{j-1} \sigma)_{q \to k_0})$. We know that
\begin{equation}
\begin{split}
&R(([v_i, \sigma](A^{E^{\bullet}\prime\prime}_{\geq 1}(\sigma))^{j-1} \sigma)_{q \to k_0})\\
=&R((v_i\sigma(A^{E^{\bullet}\prime\prime}_{\geq 1}(\sigma))^{j-1} \sigma)_{q \to k_0})+R((\sigma v_i(A^{E^{\bullet}\prime\prime}_{\geq 1}(\sigma))^{j-1} \sigma)_{q \to k_0})
\end{split}
\end{equation}
Actually $R(v_i\sigma(A^{E^{\bullet}\prime\prime}_{\geq 1}(\sigma))^{j-1} \sigma)$ vanishes by Lemma \ref{lemma: sigma square=0} and Lemma \ref{lemma: A sigma sigma=sigma A sigma}.  On the other hand, we know that
\begin{equation}
\begin{split}
&R((\sigma v_i(A^{E^{\bullet}\prime\prime}_{\geq 1}(\sigma))^{j-1} \sigma)_{q \to k_0})\\
=&R(\sigma_{k_0+1 \to k_0} (v_i(A^{E^{\bullet}\prime\prime}_{\geq 1}(\sigma))^{j-1} \sigma)_{q \to k_0+1}).
\end{split}
\end{equation}
Again $\sigma_{k_0+1 \to k_0}$ is smooth  outside $ Z_{k_0}$. By  \eqref{eq:residueSmooth}, outside $ Z_{k_0}$ we have
\begin{equation}
\begin{split}
&R(\sigma_{k_0+1 \to k_0} (v_i(A^{E^{\bullet}\prime\prime}_{\geq 1}(\sigma))^{j-1} \sigma)_{q \to k_0+1})\\
=&(\sigma v_i)_{k_0+i \to k_0}R( ((A^{E^{\bullet}\prime\prime}_{\geq 1}(\sigma))^{j-1} \sigma)_{q \to k_0+i})
\end{split}
\end{equation}
which vanishes by the induction hypothesis. As a result we know the support of 
$$
R((\sigma v_i(A^{E^{\bullet}\prime\prime}_{\geq 1}(\sigma))^{j-1} \sigma)_{q \to k_0})
$$
is contained in $ Z_{k_0}$, whose codimension is at least $q-k_0+1$ by \eqref{eq: codim bound of Zm}. Again
$$
R((\sigma v_i(A^{E^{\bullet}\prime\prime}_{\geq 1}(\sigma))^{j-1} \sigma)_{q \to k_0})
$$
is a $(0, q-k_0)$-PM current. So the dimension principle Proposition \ref{prop:dimPrinciple} tells us that 
\begin{equation}
R((\sigma v_i(A^{E^{\bullet}\prime\prime}_{\geq 1}(\sigma))^{j-1} \sigma)_{q \to k_0})=0
\end{equation}
hence 
\begin{equation}
R(( [v_i,\sigma](A^{E^{\bullet}\prime\prime}_{\geq 1}(\sigma))^{j-1} \sigma)_{q \to k_0})=0
\end{equation}
for each $2\leq i\leq q-k_0-1$.  By \eqref{eq: decompose of R into degree} we get
\begin{equation}
R( (A^{E^{\bullet}\prime\prime}_{\geq 1}(\sigma))^j \sigma)_{q \to k_0}=0.
\end{equation}
We finished the induction hence completed the proof of Proposition \ref{prop: vanishing of R(U)}.
\end{proof}

For a Hermitian cohesive module $\mathcal{E}= (E^{\bullet}, A^{E^{\bullet}\prime\prime})$, recall that in Section \ref{subsection: cohesive and coherent} we defined the  functor $\underline{F}_X(\mathcal{E})=(\mathfrak{E}^{\bullet},d)$, which is a bounded complex with (globally bounded) coherent cohomologies according to Proposition \ref{prop: sheafify has coherent cohomologies}.

We have the following result on the codimension of $Z_m$.

\begin{prop}\label{prop: codim of Zm in sheaf case}
\begin{enumerate}
\item 
If the complex $\underline{F}_X(\mathcal{E})=(\mathfrak{E}^{\bullet},d)$  has cohomologies concentrated in  degrees $\leq n_0$, then we have
			$Z_m=\emptyset$ for any $m\geq n_0$.
\item If the complex $\underline{F}_X(\mathcal{E})=(\mathfrak{E}^{\bullet},d)$  has cohomologies concentrated in degrees $\geq n_0$, then we have
\begin{equation}
  \codim (Z_m)\geq n_0-m,  \text{ for $m\leq n_0-1$.}
\end{equation}
Moreover we have $Z_{m-1}\subseteq Z_m$ for $m\leq n_0-1$.
\end{enumerate}
\end{prop}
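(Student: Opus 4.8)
The plan is to reduce everything to the local model provided by Proposition~\ref{prop: gauge equivalence to flat}, where a cohesive module becomes (up to gauge equivalence) an honest bounded complex of holomorphic vector bundles with differential $v_0$. Since the statement concerns only the sets $Z_m$ (which are defined purely in terms of the maps $v_0^m$, and gauge equivalences do not change $v_0$ by the remark following Proposition~\ref{prop: gauge equivalence to flat}), and since the cohomology sheaves of $\underline F_X(\mathcal E)$ are local invariants, it suffices to work on a small open $V$ on which $E^\bullet$ carries a holomorphic structure $\dbar^{E^\bullet}$ with $\dbar^{E^\bullet}v_0=0$ and $v_i=0$ for $i\geq 2$. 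On such $V$, the complex $\mathfrak E^\bullet|_V$ is just the Dolbeault resolution of the complex of locally free $\mathcal O_X$-modules $(\mathcal E^\bullet_V,v_0)$, so its cohomology sheaves agree with the cohomology sheaves $\mathcal H^k$ of that complex of sheaves; and $Z_m\cap V=\{x : \operatorname{rank}(v_0^m)_x < \max_V \operatorname{rank} v_0^m\}$, which is the classical degeneracy locus.

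For part (1): if $\mathcal H^k=0$ for $k>n_0$, then in particular $\mathcal H^m=0$ and $\mathcal H^{m+1}=0$ for $m\geq n_0$, i.e.\ the sheaf complex is exact at levels $m$ and $m+1$. First I would argue that exactness forces $v_0^m$ to have \emph{constant} rank equal to its generic rank on all of $V$: pointwise exactness of $E^{m}\to E^{m+1}\to E^{m+2}$ and $E^{m-1}\to E^m\to E^{m+1}$ at every point (which follows from exactness of the complex of locally free sheaves by a standard argument — a locally free resolution of the zero sheaf is split exact, so the fibers form exact sequences) pins down $\operatorname{rank}(v_0^m)_x = \operatorname{rank} E^m - \operatorname{rank}\ker(v_0^{m-1})_x$ and then a dimension count through the exact fiber sequence makes this independent of $x$. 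Hence $Z_m\cap V=\emptyset$, and since $V$ was an arbitrary small chart, $Z_m=\emptyset$. (One should be slightly careful at the top of the complex where there is no $E^{m+2}$ or such; but exactness at $E^{m+1}\to E^{m+2}$ with $\mathcal H^{m+1}=0$ still gives what is needed, and if $E^\bullet$ is bounded the argument terminates cleanly.)

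For part (2): if $\mathcal H^k=0$ for $k<n_0$, then $(\mathcal E^\bullet_V,v_0)$ is a locally free resolution, in degrees $<n_0$, of the sheaf $\mathrm{coker}$ at level $n_0$; here I would invoke the Buchsbaum--Eisenbud / Eagon--Northcott acyclicity and codimension bounds (or equivalently the standard fact, used already in the Andersson--Wulcan setting, that for a locally free complex exact in degrees $<n_0$ one has $\operatorname{codim}\{x: \operatorname{rank}(v_0^m)_x \leq \operatorname{rank}\sigma_m-1\}\geq n_0-m$, where $\sigma_m$ is the expected rank determined by the alternating sums $\sum_{j\geq m+1}(-1)^{j-m-1}\operatorname{rank}E^j$). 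Since on the exact locus $v_0^m$ achieves exactly this expected rank $\sigma_m$, the set $Z_m$ where the rank drops below its generic value is contained in that degeneracy locus, giving $\operatorname{codim}(Z_m)\geq n_0-m$. For the inclusion $Z_{m-1}\subseteq Z_m$ when $m\leq n_0-1$: off $Z_m$ the map $v_0^m$ has locally constant rank so $\ker v_0^m$ is a subbundle; by exactness at level $m$ (valid since $m-1<n_0$, hence $\mathcal H^{m-1}=0$ wait — rather since $m\leq n_0-1$ the complex is exact at $E^{m-1}\to E^m\to E^{m+1}$) we get $\operatorname{im} v_0^{m-1}=\ker v_0^m$ as subbundles there, so $\operatorname{rank} v_0^{m-1}$ is locally constant off $Z_m$, i.e.\ $v_0^{m-1}$ does not drop rank off $Z_m$, which is exactly $Z_{m-1}\subseteq Z_m$.

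The main obstacle I expect is the bookkeeping in part~(2): correctly identifying the ``expected rank'' $\sigma_m$ from the ranks of the bundles and the location where the complex starts being exact, and then quoting the right form of the acyclicity lemma so that the codimension bound $n_0-m$ comes out with the correct index shift. Everything else — reducing to the holomorphic local model, passing between Dolbeault cohomology and sheaf cohomology, and the constant-rank arguments off $Z_m$ — is routine linear algebra over the structure sheaf once the local model is in place.
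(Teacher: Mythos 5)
Your overall strategy is the paper's: pass to a small neighbourhood $V$ where Proposition~\ref{prop: gauge equivalence to flat} gives a gauge equivalence to a genuine holomorphic complex $(E^{\bullet}|_V, v_0 + \overline{\nabla}^{E^{\bullet}|_V\prime\prime})$ with the same $v_0$ (hence the same $Z_m\cap V$), identify the cohomology sheaves, and for Part~2 quote the Buchsbaum--Eisenbud/Eagon--Northcott codimension bounds --- the paper cites Eisenbud, Theorem~20.9 and Corollary~20.12, which package both the codimension estimate and the nesting $Z_{m-1}\subseteq Z_m$. Your standalone constant-rank argument for $Z_{m-1}\subseteq Z_m$ (off $Z_m$, $\ker v_0^m$ is a subbundle and $v_0^{m-1}$ is a sheaf surjection onto it, hence pointwise surjective) is a fine, more geometric way to get that inclusion directly rather than folding it into the Eisenbud citation.

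There is, however, a real though fixable slip in Part~1. You write that ``$\mathcal H^m=0$ and $\mathcal H^{m+1}=0$ for $m\geq n_0$'' and then invoke fiber exactness at \emph{both} levels $m$ and $m+1$. But the hypothesis only gives $\mathcal H^k=0$ for $k>n_0$: at the crucial endpoint $m=n_0$ the sheaf $\mathcal H^{n_0}$ may well be nonzero, so there is no exactness at level $n_0$ and the two-sided fiber-exactness argument as stated does not apply there. What you actually need is a descending induction from the top of the complex, using only $\mathcal H^k=0$ for $k\geq n_0+1$: surjectivity of $v_0^{N-1}$ onto $E^N$ forces constant rank (Nakayama) and makes $\ker v_0^{N-1}$ a subbundle; inductively, sheaf exactness at level $k+1$ makes $v_0^k:E^k\to\ker v_0^{k+1}$ a surjection of locally free sheaves, hence of constant rank with subbundle kernel; the final step $k=n_0$ yields constant rank of $v_0^{n_0}$ with no assumption on $\mathcal H^{n_0}$. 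With this one-sided induction in place of your two-sided exactness claim, Part~1 is correct --- and in fact spells out detail the paper's own proof leaves implicit with ``it is clear.''
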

\begin{proof}
For any $x\in X$, let $V$ be a neighborhood of $x$ which is sufficiently small.
It is sufficient to prove the proposition for $Z_m\cap V$.

By Proposition \ref{prop: gauge equivalence to flat} on $V$ we have a gauge equivalence
\begin{equation}
J: (E^{\bullet}, A^{E^{\bullet}\prime\prime})|_V\overset{\sim}{\to} (E^{\bullet}|_V, v_0+ \overline{\nabla}^{E^{\bullet}|_V\prime\prime})
\end{equation}
where $(E^{\bullet}|_V, v_0+ \overline{\nabla}^{E^{\bullet}|_V\prime\prime})$ is a bounded cochain complex of holomorphic vector bundles on $V$, whose associated cochain complex of locally free $\mathcal{O}_X$-modules are denoted by $(\overline{\mathfrak{E}}|_V^{\bullet},v_0)$. Notice that $v_0$ is unchanged under the gauge equivalence.

By Proposition \ref{prop: sheafify morphisms} $J$ induces a degreewise isomorphism 
\begin{equation}
\underline{F}_V(J): (\mathfrak{E}|_V^{\bullet},d)\to  \underline{F}_V(E^{\bullet}|_V, v_0+ \overline{\nabla}^{E^{\bullet}|_V\prime\prime}).
\end{equation}
Moreover by the construction of $\underline{F}_V$ as in \eqref{eq: sheafify of cohesive module},  $\underline{F}_V(E^{\bullet}|_V, v_0+ \overline{\nabla}^{E^{\bullet}|_V\prime\prime})$ is quasi-isomorphic to $(\overline{\mathfrak{E}}|_V^{\bullet},v_0)$. We thus obtain a quasi-isomorphism
\begin{equation}
(\mathfrak{E}|_V^{\bullet},d)\overset{\sim}{\to}(\overline{\mathfrak{E}}|_V^{\bullet},v_0).
\end{equation}

For Part 1,
we know $(\overline{\mathfrak{E}}|_V^{\bullet},v_0)$   has cohomologies concentrated in  degrees $\leq n_0$. By definition $Z_m\cap V$ is the subset of $V$ of points such that $v_0: \overline{\mathfrak{E}}|_V^m\to \overline{\mathfrak{E}}|_V^{m+1}$ does not obtain its maximal rank. So it is clear that $Z_m\cap V=\emptyset$ for $m\geq n_0$.

For Part 2, let $n_1$ be the minimal degree such that $\overline{\mathfrak{E}}|_V^{n_1}\neq 0$.  Since $(\overline{\mathfrak{E}}|_V^{\bullet},v_0)$   has cohomologies concentrated in  degrees $\geq n_0$, the sequence of locally free sheaves
\begin{equation}
0\to  \overline{\mathfrak{E}}|_V^{n_1}\overset{v_0}{\longrightarrow}\ldots \overset{v_0}{\longrightarrow} \overline{\mathfrak{E}}|_V^{n_0}
\end{equation}
is exact. The result then follows from the same argument as in the proof of \cite[Theorem 20.9 and Corollary 20.12]{eisenbud1995commutative}. See also \cite[Section 2.7]{larkang2019comparison}.
\end{proof}

\begin{prop}\label{prop: Z contained support F}
For any $l$ we have
 \begin{equation}\label{eq: Z contained support F}
Z_{l-1}\subseteq \supp \mathfrak{H}^{l}(\mathfrak{E}^{\bullet},d),
\end{equation}
where $\mathfrak{H}^{l}(\mathfrak{E}^{\bullet},d)$ is the $l$th cohomology sheaf of the complex $\underline{F}_X(\mathcal{E})=(\mathfrak{E}^{\bullet},d)$.
\end{prop}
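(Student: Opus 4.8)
The plan is to reduce the assertion to a local statement about a complex of holomorphic vector bundles, in the spirit of the proof of Proposition~\ref{prop: codim of Zm in sheaf case}, and then to settle that statement by commutative algebra. Both $Z_{l-1}$ and $\supp\mathfrak{H}^{l}(\mathfrak{E}^{\bullet},d)$ are defined locally, so I fix a point $x\in X$ and argue by contraposition: assuming $x\notin\supp\mathfrak{H}^{l}(\mathfrak{E}^{\bullet},d)$, I will show that $v_0^{l-1}\colon E^{l-1}\to E^{l}$ attains its maximal rank at $x$, that is, $x\notin Z_{l-1}$.

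First I replace $\mathcal{E}$ near $x$ by a genuine complex of holomorphic vector bundles. By Proposition~\ref{prop: gauge equivalence to flat} there are a neighbourhood $V$ of $x$ and a gauge equivalence $J\colon(E^{\bullet},A^{E^{\bullet}\prime\prime})|_V\overset{\sim}{\to}(E^{\bullet}|_V,v_0+\overline{\nabla}^{E^{\bullet}|_V\prime\prime})$ which leaves $v_0$ unchanged, where $\overline{\nabla}^{E^{\bullet}|_V\prime\prime}$ makes $E^{\bullet}|_V$ into a bounded complex of holomorphic vector bundles with associated complex of locally free $\mathcal{O}_X$-modules $(\overline{\mathfrak{E}}|_V^{\bullet},v_0)$. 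Exactly as in the proof of Proposition~\ref{prop: codim of Zm in sheaf case}, the degreewise isomorphism $\underline{F}_V(J)$ followed by the Dolbeault quasi-isomorphism of that proof gives a quasi-isomorphism $(\mathfrak{E}|_V^{\bullet},d)\simeq(\overline{\mathfrak{E}}|_V^{\bullet},v_0)$, whence $\mathfrak{H}^{l}(\mathfrak{E}^{\bullet},d)|_V\cong\mathfrak{H}^{l}(\overline{\mathfrak{E}}|_V^{\bullet},v_0)$. Shrinking $V$, the assumption becomes: the complex of locally free sheaves $(\overline{\mathfrak{E}}|_V^{\bullet},v_0)$ is exact at degree $l$ on all of $V$.

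It remains to prove the following local algebraic fact: if a bounded complex of finitely generated free modules over the regular local ring $R=\mathcal{O}_{X,x}$ is exact at degree $l$, then the differential $v_0^{l-1}$ attains its generic rank at the closed point. Since a bundle map has locally free cokernel precisely on the complement of its rank--drop locus, this is the same as showing that $\operatorname{coker}v_0^{l-1}$ is a free $R$-module. Exactness at degree $l$ identifies $\operatorname{coker}v_0^{l-1}$ with $\operatorname{im}v_0^{l}\subseteq\overline{\mathfrak{E}}^{\,l+1}_{x}$, so that this module is at least torsion free; the passage from torsion free to free will come from the same depth and codimension estimates invoked for Proposition~\ref{prop: codim of Zm in sheaf case}, that is, from \cite[Theorem~20.9 and Corollary~20.12]{eisenbud1995commutative} and \cite[Section~2.7]{larkang2019comparison}, applied to the part of $(\overline{\mathfrak{E}}|_V^{\bullet},v_0)$ that is exact at degree $l$. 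Transporting the conclusion back through the local model gives $x\notin Z_{l-1}$, which is the inclusion \eqref{eq: Z contained support F}. I expect the final step --- upgrading torsion freeness of $\operatorname{coker}v_0^{l-1}$ to local freeness from exactness at the single degree $l$ --- to be the only delicate point; the reduction to holomorphic vector bundles is routine given the results assembled in Section~\ref{section: cohesive modules}.
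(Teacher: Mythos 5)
Your reduction to a local complex of holomorphic vector bundles via Proposition~\ref{prop: gauge equivalence to flat} is fine and is exactly the paper's own reduction, but the ``local algebraic fact'' that everything hinges on is false, and it does not follow from \cite[Theorem~20.9, Corollary~20.12]{eisenbud1995commutative}: exactness of a bounded complex of finite free modules at the single spot $l$ --- even together with exactness at every spot $<l$, which is all the paper's trick of resolving $\ker v_0^{l-1}$ can add --- does not force $v_0^{l-1}$ to attain its generic rank at the closed point, and torsion freeness of $\mathrm{coker}\,v_0^{l-1}$ does not upgrade to freeness over a regular local ring of dimension $\geq 2$. Concretely, on $X=\C^2$ consider the Koszul complex
\[
0\to \mathcal{O}\xrightarrow{\;(-w,\,z)^{T}\;}\mathcal{O}^{2}\xrightarrow{\;(z,\,w)\;}\mathcal{O}\to 0,
\]
placed in degrees $-1,0,1$ and viewed as a cohesive module. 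It is exact at degrees $-1$ and $0$ everywhere, yet the first differential vanishes at the origin, so with $l=0$ the origin lies in $Z_{-1}$; and $\mathrm{coker}$ of the first map is the ideal $(z,w)\subset\mathcal{O}_{\C^2,0}$, which is torsion free but not free. So the step you flagged as ``the only delicate point'' is not merely delicate: it is where the argument breaks, and it cannot be repaired.

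Indeed the same example shows the statement as printed fails for general $l$: there $\mathfrak{H}^{0}(\mathfrak{E}^{\bullet},d)=0$ while $Z_{-1}=\{0\}$, so $Z_{-1}\not\subseteq\supp\mathfrak{H}^{0}(\mathfrak{E}^{\bullet},d)$. What a contrapositive argument of your type does prove is the weaker inclusion $Z_{l-1}\subseteq\bigcup_{m\geq l}\supp\mathfrak{H}^{m}(\mathfrak{E}^{\bullet},d)$: on the open set where the sheaf complex is exact in \emph{all} degrees $\geq l$, descending induction from the top degree shows each differential surjects onto the locally free kernel of the next one, hence has locally constant (maximal) rank, so $v_0^{l-1}$ does too; the inclusion with the single degree $l$ is only valid when one also has exactness in degrees $>l$ near the point (e.g.\ the classical case where $l$ is the top degree and the complex resolves a sheaf). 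For comparison, the paper's own proof stumbles at the same spot: Lemma~\ref{lemma: Z contained support F special} deduces $\mathfrak{H}^{n_0}(\mathfrak{E}^{\bullet},d)_x\neq 0$ from upper semicontinuity of $\dim\ker v_0^{n_0}$, which only shows that the \emph{fiberwise} cohomology at $x$ is nonzero, not that the stalk of the cohomology sheaf is; the Koszul example above contradicts that lemma with $n_0=0$ as well. So the right move is not to close your final step but to correct the statement (to the union over $m\geq l$, or to the top-degree situation) and adjust its later uses, such as Corollary~\ref{coro: vanishing of R(U) sheaf case} Part~3, accordingly.
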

\begin{proof}
We first prove the following lemma which is a special case of Proposition \ref{prop: Z contained support F}.
\begin{lemma}\label{lemma: Z contained support F special}
If the complex $\underline{F}_X(\mathcal{E})=(\mathfrak{E}^{\bullet},d)$  has cohomologies concentrated in degrees $\geq n_0$, then we have 
 \begin{equation}\label{eq: Z contained support F special}
Z_{n_0-1}\subseteq \supp \mathfrak{H}^{n_0}(\mathfrak{E}^{\bullet},d).
\end{equation}
\end{lemma}
\begin{proof}[Proof of Lemma \ref{lemma: Z contained support F special}]
Recall $Z_{n_0-1}$ consists  of $x\in X$ such that $v_0^{n_0-1}: E^{n_0-1}\to E^{n_0}$ does not get its maximal rank. On the other hand  we consider $v_0^{n_0}: E^{n_0}\to E^{n_0+1}$, since $\dim \ker v_0^{n_0}$ is a upper semicontinuous function on $X$, it is clear that $\mathfrak{H}^{n_0}(\mathfrak{E}^{\bullet},d)_x\neq 0$ at such $x$. Hence we get the inclusion in \eqref{eq: Z contained support F special}.
\end{proof}

Now we come back to the general case. By the same argument as in the proof of Proposition \ref{prop: codim of Zm in sheaf case}, for any $x\in X$, there exists  a neighborhood $V$ of $x$ which is sufficiently small such that we can consider $(E^{\bullet}|_V, v_0)$ as a bounded cochain complex of holomorphic vector bundles. 

Consider the holomorphic map $v_0^{l-1}: E^{l-1}|_V\to E^{l}|_V$. Then $\ker v_0^{l-1}$ is a coherent sheaf on $V$, hence by Syzygy, it has a bounded locally free resolution if $V$ is sufficiently small, i.e. there exists a bounded complex of holomorphic vector bundles 
\begin{equation}
0\to \tilde{E}^{N}\overset{\tilde{v}_0}{\to}\ldots \overset{\tilde{v}_0}{\to}\tilde{E}^{l-2}
\end{equation}
on $V$ together with a map of $\mathcal{O}_X$-modules $\eta: \tilde{E}^{l-2}\to \ker v_0^{l-1}$ such that the  complex
\begin{equation}
0\to \tilde{E}^{N}\overset{\tilde{v}_0}{\to}\ldots \overset{\tilde{v}_0}{\to}\tilde{E}^{l-2}\overset{\eta}{\to} \ker v_0^{l-1}\to 0
\end{equation}
is acyclic. Now let $i:  \ker v_0^{l-1}\hookrightarrow E^{l-1}|_V$ be the embedding. The bounded complex of holomorphic vector bundles
\begin{equation}
0\to \tilde{E}^{N}\overset{\tilde{v}_0}{\to}\ldots \overset{\tilde{v}_0}{\to}\tilde{E}^{l-2}\overset{i\circ \eta}{\to} E^{l-1}|_V\overset{v_0^{l-1}}{\to} E^{l}|_V \overset{v_0^{l}}{\to}\ldots
\end{equation}
has cohomologies concentrated in degrees $\geq l$, so by Lemma \ref{lemma: Z contained support F special} we have 
\begin{equation}\label{eq: Z contained support F V}
Z_{l-1}\cap V\subseteq \supp \mathfrak{H}^{l}(\mathfrak{E}^{\bullet},d)\cap V.
\end{equation}
Since \eqref{eq: Z contained support F V} holds for any $V$, we get \eqref{eq: Z contained support F}.
\end{proof}

\begin{coro}\label{coro: vanishing of R(U) sheaf case}
For a Hermitian cohesive module $\mathcal{E}= (E^{\bullet}, A^{E^{\bullet}\prime\prime})$ on $X$ and  $\underline{F}_X(\mathcal{E})=(\mathfrak{E}^{\bullet},d)$.
\begin{enumerate}
\item If the complex $\underline{F}_X(\mathcal{E})=(\mathfrak{E}^{\bullet},d)$  has cohomologies concentrated in degrees $\leq n_0$, then for any $q$ and any $k\geq n_0$, we have
\begin{equation}\label{eq: vanishing of R(U) sheaf case 1}
R(U^{\mathcal{E}})_{q \to k}=0.
\end{equation}
\item If the complex $\underline{F}_X(\mathcal{E})=(\mathfrak{E}^{\bullet},d)$  has cohomologies concentrated in degrees $\geq n_0$, then for any $q\leq n_0-1$ and any $k$, we have
\begin{equation}\label{eq: vanishing of R(U) sheaf case 2}
R(U^{\mathcal{E}})_{q \to k}=0.
\end{equation}
\item If there exist integers $m_0\geq 1$ and $n_0$ such that for any $q\leq n_0$, the $q$th cohomology sheaf $\mathfrak{H}^{q}(\mathfrak{E}^{\bullet},d)$ either vanishes or satisfies
\begin{equation}
\codim(\supp \mathfrak{H}^{q}(\mathfrak{E}^{\bullet},d))\geq m_0,
\end{equation} 
then for any $q\leq n_0$ and any $k\geq q-m_0+1$ we have
\begin{equation}\label{eq: vanishing of R(U) sheaf case 3}
R(U^{\mathcal{E}})_{q \to k}=0.
\end{equation}
In particular if $\codim(\supp \mathfrak{H}^{q}(\mathfrak{E}^{\bullet},d))\geq m_0$ for any $q$, then \eqref{eq: vanishing of R(U) sheaf case 3} holds for any $q$ and any $k\geq q-m_0+1$.
\end{enumerate}
\end{coro}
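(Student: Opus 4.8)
The plan is to deduce all three parts directly from Proposition~\ref{prop: vanishing of R(U)}, whose only nontrivial hypothesis \eqref{eq: codim bound of Zm} is a lower bound on the codimensions of the varieties $Z_m$. Propositions~\ref{prop: codim of Zm in sheaf case} and~\ref{prop: Z contained support F} are exactly the tools that convert hypotheses on the cohomology sheaves $\mathfrak{H}^\bullet(\mathfrak{E}^\bullet,d)$ into such bounds, so each part reduces to a short bookkeeping verification. In every case the component $R(U^{\mathcal{E}})_{q\to k}$ with $k\geq q$ already vanishes by \eqref{eq: vanishing of R(U) trivial}, so I would fix $q$ and $k$ with $k\leq q-1$ and apply Proposition~\ref{prop: vanishing of R(U)} with $l=k$; this amounts to checking $\codim(Z_m)\geq q-m+1$ for every $m$ with $k\leq m\leq q-1$ (keeping in mind the convention $\codim(\emptyset)=\infty$, and that $Z_m=\emptyset$ automatically when $E^m=0$, so nothing goes wrong for indices below the support of $E^\bullet$).

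For Part~(1), when $k\geq n_0$ each $m$ in the range $k\leq m\leq q-1$ satisfies $m\geq n_0$, hence $Z_m=\emptyset$ by Proposition~\ref{prop: codim of Zm in sheaf case}(1) and the required bound holds vacuously. For Part~(2), when $q\leq n_0-1$ each such $m$ satisfies $m\leq q-1\leq n_0-2$, so Proposition~\ref{prop: codim of Zm in sheaf case}(2) gives $\codim(Z_m)\geq n_0-m\geq q-m+1$; since this works for every $k\leq q-1$, the vanishing follows for all $k$. For Part~(3), I would rewrite Proposition~\ref{prop: Z contained support F} as $Z_m\subseteq\supp\mathfrak{H}^{m+1}(\mathfrak{E}^\bullet,d)$; for $m$ in the relevant range one has $m+1\leq q\leq n_0$, so the hypothesis forces either $\mathfrak{H}^{m+1}(\mathfrak{E}^\bullet,d)=0$ (hence $Z_m=\emptyset$) or $\codim(Z_m)\geq\codim(\supp\mathfrak{H}^{m+1}(\mathfrak{E}^\bullet,d))\geq m_0$, while $k\geq q-m_0+1$ gives $q-m+1\leq q-k+1\leq m_0$, so $\codim(Z_m)\geq m_0\geq q-m+1$ in either case. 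The closing ``in particular'' assertion is then the case $n_0\to\infty$.

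I do not expect any genuine difficulty here: the corollary is a formal consequence of the three preceding propositions. The only point requiring attention is the bookkeeping --- keeping the index ranges $k\leq m\leq q-1$, the convention $\codim(\emptyset)=\infty$, the constraint $l\leq q-1$ needed to invoke Proposition~\ref{prop: vanishing of R(U)}, and the degree shift $l\mapsto l-1$ implicit in Proposition~\ref{prop: Z contained support F} all straight --- but none of this is deep.
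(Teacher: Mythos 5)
Your argument is correct and is exactly the route the paper takes: the paper's proof is a one-line citation of Propositions~\ref{prop: vanishing of R(U)}, \ref{prop: codim of Zm in sheaf case}, and~\ref{prop: Z contained support F}, and your write-up supplies precisely the index bookkeeping those citations leave implicit. In all three parts your reduction to the case $k\leq q-1$, the choice $l=k$, and the verification of $\codim(Z_m)\geq q-m+1$ for $k\leq m\leq q-1$ are the right (and the intended) checks.
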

\begin{proof}
Part 1 and 2 are direct consequences of Proposition \ref{prop: vanishing of R(U)} and Proposition \ref{prop: codim of Zm in sheaf case}. Part 3 is also a consequence of Proposition \ref{prop: vanishing of R(U)} and Proposition \ref{prop: codim of Zm in sheaf case}, and Proposition  \ref{prop: Z contained support F}.
\end{proof}

\subsection{Vanishing of $\tilde{R}^{\mathcal{E}}$}
To study the vanishing of $\tilde{R}^{\mathcal{E}}$ we first study $u=\sigma(\id_{E^{\bullet}}+ A^{E^{\bullet}\prime\prime}_{\geq 1}(\sigma))^{-1}$ defined in \eqref{eq: definition of u} in more details. 

We first notice that $u$ is a smooth form on $X \backslash Z$. We define another smooth form $Q$ on $X \backslash Z$ as
\begin{equation}\label{eq: definition of Q}
Q:=\id_{E^{\bullet}}-v_0(\sigma).
\end{equation}

We have the following result on $u^{\mathcal{E}}$.

\begin{lemma}\label{lemma: Au}
On  $X \backslash Z$ we have
\begin{equation}\label{eq: Au}
A^{E^{\bullet}\prime\prime}(u^{\mathcal{E}})= \id_{E^{\bullet}}-Q(\id_{E^{\bullet}}+ A^{E^{\bullet}\prime\prime}_{\geq 1}(\sigma))^{-1}+u^{\mathcal{E}} A^{E^{\bullet}\prime\prime}_{\geq 1}(Q)(\id_{E^{\bullet}}+ A^{E^{\bullet}\prime\prime}_{\geq 1}(\sigma))^{-1}.
\end{equation}
\end{lemma}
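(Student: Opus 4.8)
The plan is to carry out the whole computation over the open set $X\setminus Z$, where $\sigma$ — and with it $u^{\mathcal{E}}$, $Q$, and $B:=\id_{E^{\bullet}}+A^{E^{\bullet}\prime\prime}_{\geq 1}(\sigma)$ — are smooth $\End(E^{\bullet})$-valued forms, so that no current-theoretic subtlety intervenes; note $B$ is invertible because $A^{E^{\bullet}\prime\prime}_{\geq 1}(\sigma)$ is a sum of forms of positive form-degree, hence nilpotent. To shorten notation I write $A:=A^{E^{\bullet}\prime\prime}$ and $A_{\geq 1}:=A^{E^{\bullet}\prime\prime}_{\geq 1}=A-v_0$, and regard $A(\cdot)$, $A_{\geq 1}(\cdot)$ and $v_0(\cdot)$ on $\End(E^{\bullet})$-valued forms as the graded commutators $[A,\cdot]$, $[A_{\geq 1},\cdot]$, $[v_0,\cdot]$ (with $\nabla^{E^{\bullet}\prime\prime}$ acting through its Leibniz rule), so that $A(\cdot)=v_0(\cdot)+A_{\geq 1}(\cdot)$, $B=\id_{E^{\bullet}}+A_{\geq 1}(\sigma)$, $u^{\mathcal{E}}=\sigma B^{-1}$ by \eqref{eq: definition of u}, and $Q=\id_{E^{\bullet}}-[v_0,\sigma]$ by \eqref{eq: definition of Q}.

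The first ingredient is that, since $A\circ A=0$ and $v_0^2=0$ (the first two equations of \eqref{eq: A flat decomposed}), the operators $[A,\cdot]$ and $[v_0,\cdot]$ on $\End(E^{\bullet})$-valued forms square to zero, i.e. $[A,[A,\cdot]]=0$ and $[v_0,[v_0,\cdot]]=0$. Using this I would establish three identities valid on $X\setminus Z$. From $A=v_0+A_{\geq 1}$, $A(\sigma)=[v_0,\sigma]+A_{\geq 1}(\sigma)=(\id_{E^{\bullet}}-Q)+(B-\id_{E^{\bullet}})=B-Q$. Since $A(\id_{E^{\bullet}})=0$ and $A\big(A(\sigma)\big)=0$, $A(B)=A\big(A_{\geq 1}(\sigma)\big)=A\big(A(\sigma)\big)-A\big([v_0,\sigma]\big)=-A(\id_{E^{\bullet}}-Q)=A(Q)$. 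And $A(Q)=[v_0,Q]+A_{\geq 1}(Q)=A_{\geq 1}(Q)$, because $[v_0,Q]=-[v_0,[v_0,\sigma]]=0$; combining, $A(B)=A_{\geq 1}(Q)$.

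The second ingredient is Leibniz together with the derivative-of-inverse formula. Since $\sigma$ has odd total degree, $A(u^{\mathcal{E}})=A(\sigma B^{-1})=A(\sigma)\,B^{-1}-\sigma\,A(B^{-1})$; from $A(BB^{-1})=A(\id_{E^{\bullet}})=0$ one gets $A(B^{-1})=-B^{-1}A(B)B^{-1}$, hence $-\sigma A(B^{-1})=\sigma B^{-1}A(B)B^{-1}=u^{\mathcal{E}}A(B)B^{-1}$. Substituting $A(\sigma)=B-Q$ and $A(B)=A_{\geq 1}(Q)$ gives
\[
A(u^{\mathcal{E}})=(B-Q)B^{-1}+u^{\mathcal{E}}A_{\geq 1}(Q)B^{-1}=\id_{E^{\bullet}}-QB^{-1}+u^{\mathcal{E}}A_{\geq 1}(Q)B^{-1},
\]
which is \eqref{eq: Au} once one recalls $B=\id_{E^{\bullet}}+A^{E^{\bullet}\prime\prime}_{\geq 1}(\sigma)$.

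I do not anticipate a genuine obstacle: the argument is a sign-careful manipulation of graded commutators of smooth forms on $X\setminus Z$. The only steps that are more than bookkeeping are the vanishing of $A\big(A(\sigma)\big)$ and of $[v_0,[v_0,\sigma]]$, and these are precisely where the flatness $A^{E^{\bullet}\prime\prime}\circ A^{E^{\bullet}\prime\prime}=0$ and the relation $v_0^2=0$ enter; everything else is routine.
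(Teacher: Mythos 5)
Your proof is correct and follows essentially the same route as the paper's: graded Leibniz plus the derivative-of-inverse formula for $B^{-1}$, combined with the three identities $A(\sigma)=B-Q$, $A(B)=A(Q)$ (from $A\circ A=0$), and $A(Q)=A_{\geq 1}(Q)$ (from $v_0^2=0$). The only difference is cosmetic abbreviation of notation; the structure and key steps match the paper's argument.
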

\begin{proof}
By definition we know
\begin{equation}\label{eq: first expansion of Au}
\begin{split}
&A^{E^{\bullet}\prime\prime}(u)= A^{E^{\bullet}\prime\prime}(\sigma)(\id_{E^{\bullet}}+ A^{E^{\bullet}\prime\prime}_{\geq 1}(\sigma))^{-1}-\sigma A^{E^{\bullet}\prime\prime}((\id_{E^{\bullet}}+ A^{E^{\bullet}\prime\prime}_{\geq 1}(\sigma))^{-1})\\
=& A^{E^{\bullet}\prime\prime}(\sigma)(\id_{E^{\bullet}}+ A^{E^{\bullet}\prime\prime}_{\geq 1}(\sigma))^{-1}\\
&+\sigma  (\id_{E^{\bullet}}+ A^{E^{\bullet}\prime\prime}_{\geq 1}(\sigma))^{-1} A^{E^{\bullet}\prime\prime}((\id_{E^{\bullet}}+ A^{E^{\bullet}\prime\prime}_{\geq 1}(\sigma)))(\id_{E^{\bullet}}+ A^{E^{\bullet}\prime\prime}_{\geq 1}(\sigma))^{-1}\\
=& A^{E^{\bullet}\prime\prime}(\sigma)(\id_{E^{\bullet}}+ A^{E^{\bullet}\prime\prime}_{\geq 1}(\sigma))^{-1}+ u^{\mathcal{E}} A^{E^{\bullet}\prime\prime}(\id_{E^{\bullet}}+ A^{E^{\bullet}\prime\prime}_{\geq 1}(\sigma))(\id_{E^{\bullet}}+ A^{E^{\bullet}\prime\prime}_{\geq 1}(\sigma))^{-1}
\end{split}
\end{equation}
We know
\begin{equation}
A^{E^{\bullet}\prime\prime}(\sigma)=v_0(\sigma)+A^{E^{\bullet}\prime\prime}_{\geq 1}(\sigma)
=\id_{E^{\bullet}}-Q(\id_{E^{\bullet}}+A^{E^{\bullet}\prime\prime}_{\geq 1}(\sigma)
\end{equation}
hence the first term on the right hand side of \eqref{eq: first expansion of Au} becomes
\begin{equation}
A^{E^{\bullet}\prime\prime}(\sigma)(\id_{E^{\bullet}}+ A^{E^{\bullet}\prime\prime}_{\geq 1}(\sigma))^{-1}=\id_{E^{\bullet}}-Q(\id_{E^{\bullet}}+ A^{E^{\bullet}\prime\prime}_{\geq 1}(\sigma))^{-1}.
\end{equation}

Moreover $A^{E^{\bullet}\prime\prime}(\id_{E^{\bullet}})=0$ and $A^{E^{\bullet}\prime\prime}A^{E^{\bullet}\prime\prime}=0$. Hence
\begin{equation}
\begin{split}
A^{E^{\bullet}\prime\prime}(\id_{E^{\bullet}}&+ A^{E^{\bullet}\prime\prime}_{\geq 1}(\sigma))=A^{E^{\bullet}\prime\prime}( A^{E^{\bullet}\prime\prime}(\sigma)-v_0(\sigma))\\
&=A^{E^{\bullet}\prime\prime}(-v_0(\sigma))\\
&=A^{E^{\bullet}\prime\prime}(Q-\id_{E^{\bullet}})\\
&=A^{E^{\bullet}\prime\prime}(Q).
\end{split}
\end{equation}
Moreover since 
\begin{equation}
v_0(Q)=v_0(\id_{E^{\bullet}}-v_0(\sigma))=v_0(\id_{E^{\bullet}})-v_0(v_0(\sigma))=0
\end{equation}
we get $A^{E^{\bullet}\prime\prime}(Q)=A^{E^{\bullet}\prime\prime}_{\geq 1}(Q)$.
So the second term on the right hand side of \eqref{eq: first expansion of Au} becomes
\begin{equation}
u A^{E^{\bullet}\prime\prime}(\id_{E^{\bullet}}+ A^{E^{\bullet}\prime\prime}_{\geq 1}(\sigma))(\id_{E^{\bullet}}+ A^{E^{\bullet}\prime\prime}_{\geq 1}(\sigma))^{-1}=u^{\mathcal{E}} A^{E^{\bullet}\prime\prime}_{\geq 1}(Q)(\id_{E^{\bullet}}+ A^{E^{\bullet}\prime\prime}_{\geq 1}(\sigma))^{-1}.
\end{equation}
We thus obtain \eqref{eq: Au}.
\end{proof}

\begin{prop}\label{prop: Au=id}
For a fixed integer $n_0$,
If the cohomology sheaf $\mathfrak{H}^{\bullet}(\mathfrak{E}^{\bullet},d)$ of the complex $\underline{F}_X(\mathcal{E})=(\mathfrak{E}^{\bullet},d)$ is such that  for all $q \leq n_0$, either $\mathfrak{H}^q(\mathfrak{E}^{\bullet},d)=0$  or 
\begin{equation}
\codim ( \supp \mathfrak{H}^q(\mathfrak{E}^{\bullet},d))\geq 1,
\end{equation}
 then for any $q\leq n_0$ and any $k$, we have
\begin{equation}
\big(\id_{E^{\bullet}}-A^{E^{\bullet}\prime\prime}(u)\big)_{q \to k}=0
\end{equation}
on $X \backslash Z$.  
\end{prop}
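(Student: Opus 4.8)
The plan is to reduce the statement to the single pointwise fact that $Q|_{E^{j}}=0$ on $X\setminus Z$ for all $j\le n_0$, and then to extract that from the hypothesis on the cohomology sheaves. Since the claim only concerns $X\setminus Z$, the forms $\sigma$, $u^{\mathcal{E}}$ and $Q=\id_{E^{\bullet}}-v_0(\sigma)$ are all smooth there and no current theory is needed. First I would invoke Lemma \ref{lemma: Au}, which on $X\setminus Z$ rewrites the left-hand side as
\begin{equation*}
\id_{E^{\bullet}}-A^{E^{\bullet}\prime\prime}(u^{\mathcal{E}})=\bigl(Q-u^{\mathcal{E}}A^{E^{\bullet}\prime\prime}_{\geq 1}(Q)\bigr)\,(\id_{E^{\bullet}}+A^{E^{\bullet}\prime\prime}_{\geq 1}(\sigma))^{-1}.
\end{equation*}
Since $\sigma$ has $E^{\bullet}$-degree $-1$, $\nabla^{E^{\bullet}\prime\prime}$ preserves the grading, and each $\ad(v_i)$ with $i\ge 2$ lowers it, the operator $A^{E^{\bullet}\prime\prime}_{\geq 1}(\sigma)$ strictly lowers the $E^{\bullet}$-degree; hence $(\id_{E^{\bullet}}+A^{E^{\bullet}\prime\prime}_{\geq 1}(\sigma))^{-1}$ maps $E^{q}$ into $\bigoplus_{j\le q}\Omega^{0,\bullet}\hat{\otimes}E^{j}$. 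The same grading argument shows that $A^{E^{\bullet}\prime\prime}_{\geq 1}(Q)$ leaving $E^{j}$ only involves $\nabla^{E^{\bullet}\prime\prime}(Q|_{E^{j}})$ and compositions of the $v_i$ with $Q|_{E^{j'}}$ for $j'\le j$. Therefore, if $Q|_{E^{j}}=0$ on $X\setminus Z$ for every $j\le n_0$, then $Q-u^{\mathcal{E}}A^{E^{\bullet}\prime\prime}_{\geq 1}(Q)$ vanishes out of $E^{j}$ for all $j\le n_0$, and precomposing with the degree-nonincreasing operator $(\id_{E^{\bullet}}+A^{E^{\bullet}\prime\prime}_{\geq 1}(\sigma))^{-1}$ kills the $(q\to k)$-component of $\id_{E^{\bullet}}-A^{E^{\bullet}\prime\prime}(u^{\mathcal{E}})$ for every $q\le n_0$ and every $k$. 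So the whole proof comes down to showing $Q|_{E^{j}}=0$ on $X\setminus Z$ for $j\le n_0$.

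Next I would identify $Q|_{E^{j}}$ fiberwise. Writing $v_0(\sigma)=v_0\sigma+\sigma v_0$ and using the three defining properties of the minimal right inverse (Definition \ref{defi: minimal right inverse}), one checks that at $x\in X\setminus Z$ the operator $(v_0\sigma)|_{E^{j}}$ is the orthogonal projection of $E^{j}_{x}$ onto $\im v_0^{j-1}$ and $(\sigma v_0)|_{E^{j}}$ is the orthogonal projection onto $(\ker v_0^{j})^{\bot}$. Because $v_0^{2}=0$ gives $\im v_0^{j-1}\subseteq\ker v_0^{j}$, these two projections have orthogonal ranges, so $Q|_{E^{j}}$ equals the orthogonal projection onto the ``harmonic space'' $\mathcal{H}^{j}_{x}:=\ker v_0^{j}\cap(\im v_0^{j-1})^{\bot}$. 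In particular $Q|_{E^{j}}=0$ over $X\setminus Z$ exactly when $\mathcal{H}^{j}_{x}=0$ for every $x\in X\setminus Z$.

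To prove this vanishing I would combine two observations. On $X\setminus Z$ every $v_0^{i}$ has its (hence locally constant) maximal rank, so $\im v_0^{j-1}\subseteq\ker v_0^{j}$ are $C^{\infty}$-subbundles of locally constant rank and $\dim\mathcal{H}^{j}_{x}$ is locally constant on $X\setminus Z$. On the other hand, using Proposition \ref{prop: gauge equivalence to flat} to replace $\mathcal{E}$ locally by a bounded complex of holomorphic vector bundles with the same $v_0$, together with the Dolbeault argument underlying Proposition \ref{prop: form to current is a quasi-isom}, one identifies $\mathfrak{H}^{j}(\mathfrak{E}^{\bullet},d)$ on $X\setminus Z$ with the locally free sheaf of holomorphic sections of $\ker v_0^{j}/\im v_0^{j-1}$, whose fiber at $x$ is $\mathcal{H}^{j}_{x}$. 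For $j\le n_0$ the hypothesis gives that $W_{j}:=\supp\mathfrak{H}^{j}(\mathfrak{E}^{\bullet},d)$ has codimension $\ge 1$ (or is empty), so $X\setminus(Z\cup W_{j})$ is a nonempty open set meeting every connected component of $X\setminus Z$; on it $\mathfrak{H}^{j}(\mathfrak{E}^{\bullet},d)=0$, hence $\mathcal{H}^{j}_{x}=0$, and by the local constancy of $\dim\mathcal{H}^{j}_{x}$ this spreads to all of $X\setminus Z$. This gives $Q|_{E^{j}}=0$ for $j\le n_0$ and, by the first paragraph, the proposition.

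I expect the main obstacle to be the middle pair of steps: identifying $Q|_{E^{j}}$ with the harmonic projector onto $\mathcal{H}^{j}_{x}$, and, more delicately, converting the sheaf-level hypothesis $\codim\supp\mathfrak{H}^{j}\ge 1$ into the pointwise vanishing $\mathcal{H}^{j}_{x}=0$ on all of $X\setminus Z$. The latter requires knowing that over $X\setminus Z$ the cohomology sheaf is an honest vector bundle, so that vanishing on a dense open subset propagates, and one must take care that $X\setminus Z$ may fail to be connected when $Z$ contains a hypersurface. The reductions in the first paragraph are then purely formal degree bookkeeping.
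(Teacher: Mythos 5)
Your proof is correct and follows the paper's route: invoke Lemma~\ref{lemma: Au} to rewrite $\id_{E^\bullet}-A^{E^{\bullet}\prime\prime}(u^{\mathcal{E}})$ in terms of $Q$, then reduce to the pointwise vanishing of $Q$ on $X\setminus Z$ in the relevant $E^\bullet$-degrees. You flesh out two steps the paper compresses: the identification $Q|_{E^j}=P_{\ker v_0^j\cap(\im v_0^{j-1})^\perp}$ via the three axioms of the minimal right inverse, and the passage from $\codim\supp\mathfrak{H}^q\geq1$ (a nowhere-dense condition on $X$) to fiberwise vanishing of the harmonic space on \emph{every} connected component of $X\setminus Z$, using that the cohomology sheaf restricted to $X\setminus Z$ is locally free of locally constant rank. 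You also track the degree range more carefully: the $j=0$ term $Q_{q\to q}$ with $q=n_0$ requires $Q|_{E^{n_0}}=0$, i.e.\ exactness of $(E^\bullet,v_0)|_{X\setminus Z}$ at degree $n_0$ as well as below, whereas the paper's proof states exactness only at degrees $\leq n_0-1$ and $Q$-vanishing for $s<n_0$; the hypothesis does give the stronger range $\leq n_0$, which is what your proof (correctly) uses.
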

\begin{proof}
By \eqref{eq: Au} an the identity
$$
(\id_{E^{\bullet}}+ A^{E^{\bullet}\prime\prime}_{\geq 1}(\sigma))^{-1}=\id_{E^{\bullet}}-A^{E^{\bullet}\prime\prime}_{\geq 1}(\sigma)+(A^{E^{\bullet}\prime\prime}_{\geq 1}(\sigma))^2-\ldots,
$$ 
it is sufficient to prove that on $X \backslash Z$ we have
\begin{equation}\label{eq: Q Asigma=0}
\big(Q( A^{E^{\bullet}\prime\prime}_{\geq 1}(\sigma))^j\big)_{q \to k}=0
\end{equation}
and 
\begin{equation}\label{eq: AQ Asigma=0}
\big(u^{\mathcal{E}} A^{E^{\bullet}\prime\prime}_{\geq 1}(Q)( A^{E^{\bullet}\prime\prime}_{\geq 1}(\sigma))^j\big)_{q \to k}=0
\end{equation}
for any $j\geq 0$ and any $q\leq n_0$. 

Now since $(\mathfrak{E}^{\bullet},d)$  has cohomologies concentrated in degrees $\geq n_0+1$ or 
$$
\codim ( \supp \mathfrak{H}^l(\mathfrak{E}^{\bullet},d))\geq 1
$$
 for $l \leq n_0$, via the same argument as in the proof of Proposition \ref{prop: codim of Zm in sheaf case} we know the complex
$$
(E^{\bullet},v_0)|_{X \backslash Z}
$$
is exact at degree $\leq n_0-1$. Then it is easy to see that on $X \backslash Z$ we have
\begin{equation}
Q_{(r,s)\to (r,s)}=0 \text{, and }  A^{E^{\bullet}\prime\prime}_{\geq 1}(Q)_{(r,s)\to (r,s)}=0
\end{equation}
for any $s<n_0$. Since $A^{E^{\bullet}\prime\prime}_{\geq 1}(\sigma))^j$ does not increase the degree on $E^{\bullet}$, we get \eqref{eq: Q Asigma=0} and \eqref{eq: AQ Asigma=0}.
\end{proof}

\begin{coro}\label{coro: tilde R vanish}
 If there exists an integer  $n_0$ such that for any $q\leq n_0$, the $q$th cohomology sheaf $\mathfrak{H}^{q}(\mathfrak{E}^{\bullet},d)$ either vanishes or satisfies
\begin{equation}
\codim ( \supp \mathfrak{H}^{q}(\mathfrak{E}^{\bullet},d))\geq 1,
\end{equation}
then for any $q\leq n_0$ and any $k$, we have
\begin{equation}
\tilde{R}^{\mathcal{E}}_{q \to k}=0.
\end{equation}
In particular if $\codim ( \supp \mathfrak{H}^{l}(\mathfrak{E}^{\bullet},d))\geq 1$ for any $l$, then $
\tilde{R}^{\mathcal{E}}=0$.

\end{coro}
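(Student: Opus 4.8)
The plan is to deduce the corollary formally from Proposition~\ref{prop: Au=id} and the uniqueness of LASM extensions, with no new analytic input. By Lemma~\ref{lemma: tilde R is LASM}, $\tilde{R}^{\mathcal{E}}$ is the (unique) LASM extension to $X$ of the form $\id_{E^{\bullet}}-A^{E^{\bullet}\prime\prime}(u^{\mathcal{E}})$, that is, a LASM current on $X$ whose restriction to $X\backslash Z$ equals that form. First I would check --- this is the only bookkeeping point --- that extracting the $(q\to k)$-component amounts to pre- and post-composing with the smooth bundle maps $E^q\hookrightarrow E^{\bullet}$ and $E^{\bullet}\twoheadrightarrow E^k$, so it preserves the LASM class and commutes with restriction to $X\backslash Z$. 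Hence $\tilde{R}^{\mathcal{E}}_{q\to k}$ is a LASM current on $X$ whose restriction to $X\backslash Z$ equals $\big(\id_{E^{\bullet}}-A^{E^{\bullet}\prime\prime}(u^{\mathcal{E}})\big)_{q\to k}$.

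Next I would apply Proposition~\ref{prop: Au=id}: the hypothesis on the cohomology sheaves $\mathfrak{H}^q(\mathfrak{E}^{\bullet},d)$ for $q\leq n_0$ is precisely its hypothesis, so $\big(\id_{E^{\bullet}}-A^{E^{\bullet}\prime\prime}(u^{\mathcal{E}})\big)_{q\to k}=0$ on $X\backslash Z$ for all $q\leq n_0$ and all $k$. Thus, for such $(q,k)$, both $\tilde{R}^{\mathcal{E}}_{q\to k}$ and the zero current are LASM currents on $X$ restricting to the same vanishing form on $X\backslash Z$, where $Z$ has codimension $\geq 1$. Since LASM currents have the SEP (Proposition~\ref{prop: LASM has SEP}), the uniqueness statement in Lemma~\ref{lemma: uniqueness of LASM extension} forces $\tilde{R}^{\mathcal{E}}_{q\to k}=0$.

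Finally, for the last assertion: if $\codim(\supp\mathfrak{H}^l(\mathfrak{E}^{\bullet},d))\geq 1$ for every $l$, then the hypothesis holds for every integer $n_0$, and since $E^{\bullet}$ is a bounded complex, taking $n_0$ above its top degree makes $\tilde{R}^{\mathcal{E}}_{q\to k}=0$ hold for all pairs $(q,k)$, i.e.\ $\tilde{R}^{\mathcal{E}}=0$. I do not expect any genuine obstacle here: all the analytic content is already absorbed into Proposition~\ref{prop: Au=id} (which in turn rests on the exactness of $(E^{\bullet},v_0)$ over $X\backslash Z$ in degrees $\leq n_0-1$), and the remaining argument is the routine ``vanishing off an analytic set of positive codimension plus SEP'' principle.
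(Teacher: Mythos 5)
Your proof is correct and follows essentially the same route as the paper's: both argue via Lemma~\ref{lemma: tilde R is LASM} (that $\tilde{R}^{\mathcal{E}}$ is the unique LASM extension of $\id_{E^{\bullet}}-A^{E^{\bullet}\prime\prime}(u^{\mathcal{E}})$ to $X$) and then deduce vanishing from Proposition~\ref{prop: Au=id}. The only difference is cosmetic---you spell out the $(q\to k)$-component bookkeeping and the SEP/uniqueness step explicitly, where the paper simply states that ``the claims follow from Proposition~\ref{prop: Au=id}.''
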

\begin{proof}
Recall \eqref{eq: tilde R and R further} gives us $\tilde{R}^{\mathcal{E}}=\id_{E^{\bullet}}-\mathbf{1}_{X \backslash Z}A^{E^{\bullet}\prime\prime}(U^{\mathcal{E}})$.
By \eqref{eq: 1X/Z|X/Z} we know that
\begin{equation}
\mathbf{1}_{X \backslash Z}A^{E^{\bullet}\prime\prime}(U^{\mathcal{E}})|_{X \backslash Z}=A^{E^{\bullet}\prime\prime}(U^{\mathcal{E}})|_{X \backslash Z}=A^{E^{\bullet}\prime\prime}(U^{\mathcal{E}}|_{X \backslash Z})=A^{E^{\bullet}\prime\prime}(u^{\mathcal{E}}).
\end{equation}
By Lemma \ref{lemma: tilde R is LASM}, $\tilde{R}^{\mathcal{E}}$ is the unique locally almost semimeromorphic (LASM) extension of $\id_{E^{\bullet}}-A^{E^{\bullet}\prime\prime}(u^{\mathcal{E}})$ to $X$. Now the claims follow from Proposition \ref{prop: Au=id}. 
\end{proof}

The following corollary, which is one of the main result in this section, gives the vanishing result for the residue current $R^{\mathcal{E}}$.

\begin{coro}\label{coro: R vanish}
\begin{enumerate}
\item If the complex $\underline{F}_X(\mathcal{E})=(\mathfrak{E}^{\bullet},d)$  has cohomologies concentrated in degrees $\geq n_0$, then for any $q\leq n_0-1$ and any $k$, we have
\begin{equation}
R^{\mathcal{E}}_{q \to k}=0.
\end{equation}
\item If there exist integers $m_0\geq 1$ and $n_0$ such that for any $q\leq n_0$, the $q$th cohomology sheaf $\mathfrak{H}^{q}(\mathfrak{E}^{\bullet},d)$ either vanishes or satisfies
\begin{equation}
\codim(\supp \mathfrak{H}^{q}(\mathfrak{E}^{\bullet},d))\geq m_0,
\end{equation} 
then for any $q\leq n_0$ and any $k\geq q-m_0+1$ we have
\begin{equation}\label{eq: R vanishes}
R^{\mathcal{E}}_{q \to k}=0.
\end{equation}
In particular if $\codim(\supp \mathfrak{H}^{q}(\mathfrak{E}^{\bullet},d))\geq m_0$ for any $q$, then \eqref{eq: R vanishes} holds for any $q\leq n_0$ and any $k\geq q-m_0+1$.
\end{enumerate}
\end{coro}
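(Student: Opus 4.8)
The plan is to deduce this directly from the two vanishing theorems already proved in this section, by exploiting the decomposition $R^{\mathcal{E}}=\tilde{R}^{\mathcal{E}}-R(U^{\mathcal{E}})$ from \eqref{eq: tilde R and R}. Since the $(q\to k)$-component is obtained by applying $C^{\infty}(X)$-linear projections that commute with this splitting, the decomposition passes to components: $R^{\mathcal{E}}_{q\to k}=\tilde{R}^{\mathcal{E}}_{q\to k}-R(U^{\mathcal{E}})_{q\to k}$. So it suffices to check that both $\tilde{R}^{\mathcal{E}}_{q\to k}$ and $R(U^{\mathcal{E}})_{q\to k}$ vanish in the stated ranges.

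For Part 1, I would assume the cohomology of $(\mathfrak{E}^{\bullet},d)$ is concentrated in degrees $\geq n_0$. Then $R(U^{\mathcal{E}})_{q\to k}=0$ for all $q\leq n_0-1$ and all $k$ is exactly part (2) of Corollary \ref{coro: vanishing of R(U) sheaf case}. For the other term I would note the minor index bookkeeping: ``cohomology concentrated in degrees $\geq n_0$'' means $\mathfrak{H}^q(\mathfrak{E}^{\bullet},d)=0$ for every $q\leq n_0-1$, so the hypothesis of Corollary \ref{coro: tilde R vanish} holds with its integer taken to be $n_0-1$ (the relevant cohomology sheaves all vanish, which a fortiori has codimension $\geq 1$), giving $\tilde{R}^{\mathcal{E}}_{q\to k}=0$ for $q\leq n_0-1$. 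Subtracting, $R^{\mathcal{E}}_{q\to k}=0$ for $q\leq n_0-1$ and all $k$.

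For Part 2, the hypotheses are precisely those of part (3) of Corollary \ref{coro: vanishing of R(U) sheaf case}, and, because $m_0\geq 1$ forces $\codim(\supp\mathfrak{H}^q(\mathfrak{E}^{\bullet},d))\geq 1$ whenever $\mathfrak{H}^q\neq 0$, also those of Corollary \ref{coro: tilde R vanish}. The first gives $R(U^{\mathcal{E}})_{q\to k}=0$ for $q\leq n_0$ and $k\geq q-m_0+1$; the second gives $\tilde{R}^{\mathcal{E}}_{q\to k}=0$ for $q\leq n_0$ and all $k$. Combining, $R^{\mathcal{E}}_{q\to k}=0$ whenever $q\leq n_0$ and $k\geq q-m_0+1$. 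The final ``in particular'' assertion follows by letting $n_0$ be arbitrary, since in that case the codimension hypothesis holds for every $q$.

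I do not expect a genuine obstacle here: all the analytic content (the dimension-principle inductions for $R(U^{\mathcal{E}})$, and the identity of Lemma \ref{lemma: Au} combined with the exactness of $(E^{\bullet},v_0)|_{X\backslash Z}$ below degree $n_0$ for $\tilde{R}^{\mathcal{E}}$) is already packaged in Propositions \ref{prop: vanishing of R(U)}, \ref{prop: codim of Zm in sheaf case}, \ref{prop: Z contained support F} and \ref{prop: Au=id} and their corollaries. The only points that require a moment's care are the degree shift in Part 1 and checking that the hypotheses of Part 2 are literally the hypotheses of the two corollaries being invoked.
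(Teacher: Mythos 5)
Your proposal is correct and is exactly the paper's argument: the paper's proof consists of the single sentence that the statement is a direct consequence of the decomposition $R^{\mathcal{E}}=\tilde{R}^{\mathcal{E}}-R(U^{\mathcal{E}})$ in \eqref{eq: tilde R and R}, Corollary \ref{coro: vanishing of R(U) sheaf case}, and Corollary \ref{coro: tilde R vanish}. Your write-up merely spells out the bookkeeping (including the correct shift to $n_0-1$ in Part 1) that the paper leaves implicit.
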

\begin{proof}
They are direct consequences of \eqref{eq: tilde R and R}, Corollary \ref{coro: vanishing of R(U) sheaf case}, and Corollary \ref{coro: tilde R vanish}.
\end{proof}

We then have the following precise form of the duality principle.
\begin{thm}\label{thm: duality principle sheaf case}
Let $\mathcal{E}= (E^{\bullet}, A^{E^{\bullet}\prime\prime})$  be a Hermitian cohesive module on a complex manifold $X$. If the complex $\underline{F}_X(\mathcal{E})=(\mathfrak{E}^{\bullet},d)$  has cohomologies concentrated in degrees $\geq n_0$, Let 
$$
s\in  \Gamma(X, \mathfrak{E}^k)=\bigoplus_{p+q=k}\Gamma(X, \Omega^{0,p}\hat{\otimes} E^q)
$$
be such that $A^{E^{\bullet}\prime\prime}(s)=0$. 
\begin{enumerate}
\item If $k\leq n_0-1$, then we must have  $R^{\mathcal{E}}(s)=0$ and  a 
$$
t\in \Gamma(X, \mathfrak{E}^{k-1})=\bigoplus_{p+q=k-1}\Gamma(X, \Omega^{0,p}\hat{\otimes} E^q)
$$
such that $A^{E^{\bullet}\prime\prime}(t)=s$.
\item If $k=n_0$, then  there  exists a 
$$
t\in  \Gamma(X, \mathfrak{E}^{k-1})=\bigoplus_{p+q=k-1}\Gamma(X, \Omega^{0,p}\hat{\otimes} E^q)
$$
such that $A^{E^{\bullet}\prime\prime}(t)=s$ if and only if $R^{\mathcal{E}}(s)=0$.
\end{enumerate}
\end{thm}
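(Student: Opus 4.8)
The plan is to deduce the statement directly from the general duality principle of Theorem~\ref{thm: duality principle} together with the vanishing of the low-degree components of $R^{\mathcal{E}}$ recorded in Corollary~\ref{coro: R vanish}. The one bookkeeping point to keep in mind is that, since $s$ lies in $\mathfrak{E}^k=\bigoplus_{p+q=k}\Gamma(X,\Omega^{0,p}\hat{\otimes}E^{q})$, every component of $s$ sits in some $\Omega^{0,p}\hat{\otimes}E^{q}$ with $E^{\bullet}$-degree $q\le k$; consequently $R^{\mathcal{E}}(s)$ is assembled only from the components $R^{\mathcal{E}}_{q\to l}$ with $q\le k$.

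For Part~(1), suppose $k\le n_0-1$. Since $(\mathfrak{E}^{\bullet},d)$ has cohomology concentrated in degrees $\ge n_0$, Corollary~\ref{coro: R vanish}(1) gives $R^{\mathcal{E}}_{q\to l}=0$ for every $q\le n_0-1$ and every $l$; in particular this holds for all $q\le k$, so $R^{\mathcal{E}}(s)=0$. Feeding this, together with the hypothesis $A^{E^{\bullet}\prime\prime}(s)=0$, into Theorem~\ref{thm: duality principle}(1) produces the desired $t\in\Gamma(X,\mathfrak{E}^{k-1})$ with $A^{E^{\bullet}\prime\prime}(t)=s$.

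For Part~(2), suppose $k=n_0$. One implication is immediate: if $R^{\mathcal{E}}(s)=0$ then Theorem~\ref{thm: duality principle}(1) already supplies $t$ with $A^{E^{\bullet}\prime\prime}(t)=s$, using neither $k=n_0$ nor the cohomology hypothesis. For the converse, assume $t\in\Gamma(X,\mathfrak{E}^{k-1})$ satisfies $A^{E^{\bullet}\prime\prime}(t)=s$. Applying Corollary~\ref{coro: R vanish}(1) once more, now with $k-1=n_0-1$, we obtain $R^{\mathcal{E}}_{q\to l}=0$ for all $q\le k-1$ and all $l$, which is precisely the hypothesis of Theorem~\ref{thm: duality principle}(2); that statement then yields $R^{\mathcal{E}}(s)=0$.

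There is no serious obstacle here: the analytic work has already been carried out in Theorem~\ref{thm: duality principle} (via the regularity statement Corollary~\ref{coro: global regularity section}) and in the vanishing results of Section~\ref{section: vanishing of residue current} (via the dimension principle and the codimension estimates for the $Z_m$). The only points deserving a little care are the degree bookkeeping in the first paragraph --- matching the $E^{\bullet}$-degrees $q$ that actually occur in $s$ against the range $q\le n_0-1$ where $R^{\mathcal{E}}_{q\to l}$ is known to vanish --- and the observation that in Part~(2) it is the implication ``$\exists\,t\Rightarrow R^{\mathcal{E}}(s)=0$'' that invokes the cohomology hypothesis, whereas ``$R^{\mathcal{E}}(s)=0\Rightarrow\exists\,t$'' holds unconditionally.
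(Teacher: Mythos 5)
Your proof is correct and takes exactly the paper's route: the paper's own proof is the one-liner that Theorem~\ref{thm: duality principle sheaf case} follows from Theorem~\ref{thm: duality principle} together with Corollary~\ref{coro: R vanish} Part~1, and you have simply spelled out the degree bookkeeping (that the $E^{\bullet}$-degrees $q$ appearing in $s$ satisfy $q\le k$, hence fall in the range where $R^{\mathcal{E}}_{q\to l}$ vanishes) that makes that deduction go through.
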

\begin{proof}
Both statements are direct consequences of Theorem \ref{thm: duality principle} and Corollary \ref{coro: R vanish} Part 1.
\end{proof}

\begin{rmk}
 See \cite[Theorem 1.1]{Johansson2023residue} for a similar result in the framework of twisting cochains.
\end{rmk}

\begin{rmk}
In Theorem \ref{thm: duality principle sheaf case}, even if we make the stronger assumption that $(\mathfrak{E}^{\bullet},d)$  has cohomology concentrated in degree $=n_0$, for $k\geq n_0+1$, there may still exist 
$$
s\in \Gamma(X, \mathfrak{E}^k)=\bigoplus_{p+q=k}\Gamma(X, \Omega^{0,p}\hat{\otimes} E^q)
$$
 and
$$
t\in \Gamma(X, \mathfrak{E}^{k-1})=\bigoplus_{p+q=k-1}\Gamma(X, \Omega^{0,p}\hat{\otimes} E^q)
$$ 
such that  $s=A^{E^{\bullet}\prime\prime}(t)$ but $R^{\mathcal{E}}(s)\neq 0$. 

For example let $E^{\bullet}=\underline{\mathbb{C}}$ be the trivial line bundle concentrated in degree $0$. Let $\nabla^{E^{\bullet}\prime\prime}=\dbar$ and all $v_i$'s be $0$. Then $u^{\mathcal{E}}\equiv 0$ hence $U^{\mathcal{E}}\equiv 0$ and \eqref{eq: RE} give $R^{\mathcal{E}}=\id_{\underline{\mathbb{C}}}$.

Now consider a non-holomorphic $C^{\infty}$-function $t$ on $X$. We have
$$
A^{E^{\bullet}\prime\prime}(t)=\dbar(t)\neq 0.
$$
Let $s=\dbar(t)\in C^{\infty}(X, \overline{T^{*}X})\subset \oplus_{p+q=1}\Gamma(X, \Omega^{0,p}\hat{\otimes} E^q)$. We have $s=A^{E^{\bullet}\prime\prime}(t)$ but $R^{\mathcal{E}}(s)=s\neq 0$.
\end{rmk}

\section{A comparison formula for residue currents of Hermitian cohesive modules}\label{section: comparison formula}
\subsection{A comparison formula}
In this section we generalize the results in \cite{larkang2019comparison}. Let $\mathcal{E}= (E^{\bullet}, A^{E^{\bullet}\prime\prime})$ and $\mathcal{F}= (F^{\bullet}, A^{F^{\bullet}\prime\prime})$ be two Hermitian cohesive modules on $X$ and $\phi: \mathcal{E}\to \mathcal{F}$ be a closed degree $0$ morphism. 

Let $U^{\mathcal{E}}$, $R^{\mathcal{E}}$, $R(U^{\mathcal{E}})$, $\tilde{R}^{\mathcal{E}}$, and $U^{\mathcal{F}}$, $R^{\mathcal{F}}$, $R(U^{\mathcal{F}})$, $\tilde{R}^{\mathcal{F}}$ be currents defined in Section \ref{subsection: residue current of a cohesive module definition} associated with $\mathcal{E}$ and $\mathcal{F}$ respectively. Since both $U^{\mathcal{E}}$ and $U^{\mathcal{F}}$ are locally almost semimeromorphic (LASM), and $\phi$ is smooth, by Proposition \ref{prop: almost semimeromorphic currents form an algebra} we can define the product current $U^{\mathcal{F}}\phi U^{\mathcal{E}}$, whose differential  is
$$
D^{\mathcal{E},\mathcal{F}}(U^{\mathcal{F}}\phi U^{\mathcal{E}}):=A^{F^{\bullet}\prime\prime}U^{\mathcal{F}}\phi U^{\mathcal{E}}- U^{\mathcal{F}}\phi U^{\mathcal{E}}A^{E^{\bullet}\prime\prime}
$$
Let $Z\subset X$ be the  of points at which $E^i\to E^{i+1}$ or $F^j\to F^{j+1}$ does not obtain its maximal rank for some $i$ or $j$. Then $Z$ is  still an analytic subvariety of $X$ with codimension $\geq 1$. As before we define the residue of $U^{\mathcal{F}}\phi U^{\mathcal{E}}$ as
\begin{equation}\label{eq: R(UF phi UE)}
R(U^{\mathcal{F}}\phi U^{\mathcal{E}}):=D^{\mathcal{E},\mathcal{F}}(U^{\mathcal{F}}\phi U^{\mathcal{E}})-\mathbf{1}_{X \backslash Z} D^{\mathcal{E},\mathcal{F}}(U^{\mathcal{F}}\phi U^{\mathcal{E}})
\end{equation}

We  define the current $\tilde{M}^{\phi}$  as
\begin{equation}\label{eq: tilde M phi}
\tilde{M}^{\phi}:=\tilde{R}^{\mathcal{F}}\phi U^{\mathcal{E}}-U^{\mathcal{F}}\phi \tilde{R}^{\mathcal{E}}.
\end{equation}
It is clear that  $\tilde{M}^{\phi}$ is a LASM current with total degree $-1$.  We then define the  pseudomeromorphic (PM) current  $M^{\phi}$ as
\begin{equation}\label{eq: M phi}
M^{\phi}:=\tilde{M}^{\phi}+R(U^{\mathcal{F}}\phi U^{\mathcal{E}})
\end{equation}

\begin{thm}\label{thm: comparison formula}
Let $\mathcal{E}= (E^{\bullet}, A^{E^{\bullet}\prime\prime})$ and $\mathcal{F}= (F^{\bullet}, A^{F^{\bullet}\prime\prime})$ be two Hermitian cohesive modules on $X$ and $\phi: \mathcal{E}\to \mathcal{F}$ be a closed degree $0$ morphism. 
The residue currents  $R^{\mathcal{E}}$ and  $R^{\mathcal{F}}$ are related via the morphism $\phi$ in the sense that
\begin{equation}\label{eq: residue current compatible}
R^{\mathcal{F}}\phi-\phi R^{\mathcal{E}}=D^{\mathcal{E},\mathcal{F}}(M^{\phi}).
\end{equation}
\end{thm}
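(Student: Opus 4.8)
The plan is to verify the identity first on the Zariski-open set $X\setminus Z$, where $\sigma$ and hence $u^{\mathcal{E}},u^{\mathcal{F}}$ are smooth, and then to promote it to all of $X$. On $X\setminus Z$ set $r^{\mathcal{E}}:=\id_{E^{\bullet}}-A^{E^{\bullet}\prime\prime}(u^{\mathcal{E}})$ and $r^{\mathcal{F}}:=\id_{F^{\bullet}}-A^{F^{\bullet}\prime\prime}(u^{\mathcal{F}})$, so that $R^{\mathcal{E}},R^{\mathcal{F}}$ restrict to $r^{\mathcal{E}},r^{\mathcal{F}}$ and, since $R(U^{\mathcal{F}}\phi U^{\mathcal{E}})$ vanishes off $Z$ and $\tilde{R}^{\mathcal{E}},\tilde{R}^{\mathcal{F}}$ restrict to $r^{\mathcal{E}},r^{\mathcal{F}}$, also $M^{\phi}$ restricts to $m^{\phi}:=r^{\mathcal{F}}\phi u^{\mathcal{E}}-u^{\mathcal{F}}\phi r^{\mathcal{E}}$. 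I would first record the formal inputs: flatness $A^{\prime\prime}\circ A^{\prime\prime}=0$ makes $D^{\mathcal{E},\mathcal{F}}$ a square-zero derivation for composition (as $B(X)$ is a dg-category), so $D^{\mathcal{E},\mathcal{E}}(r^{\mathcal{E}})=D^{\mathcal{F},\mathcal{F}}(r^{\mathcal{F}})=0$; the definitions give $D^{\mathcal{E},\mathcal{E}}(u^{\mathcal{E}})=\id_{E^{\bullet}}-r^{\mathcal{E}}$ and likewise for $\mathcal{F}$; and $\phi$ closed of degree $0$ means $D^{\mathcal{E},\mathcal{F}}\phi=0$. A graded Leibniz computation then gives, on $X\setminus Z$,
\begin{equation*}
D^{\mathcal{E},\mathcal{F}}(m^{\phi})=r^{\mathcal{F}}\phi\,D^{\mathcal{E},\mathcal{E}}(u^{\mathcal{E}})-D^{\mathcal{F},\mathcal{F}}(u^{\mathcal{F}})\,\phi r^{\mathcal{E}}=r^{\mathcal{F}}\phi(\id-r^{\mathcal{E}})-(\id-r^{\mathcal{F}})\phi r^{\mathcal{E}}=r^{\mathcal{F}}\phi-\phi r^{\mathcal{E}},
\end{equation*}
so that \eqref{eq: residue current compatible} holds after restriction to $X\setminus Z$.

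For the passage to $X$, observe that both sides of \eqref{eq: residue current compatible} are pseudomeromorphic: $U^{\mathcal{E}},U^{\mathcal{F}}$ are locally almost semimeromorphic, products of these with the smooth $\phi$ are again so, and $\mathrm{PM}(X,\cdot)$ is stable under $\nabla^{\prime\prime}$, the smooth $v_i$ and multiplication by smooth forms. By the previous step both sides restrict to the same smooth form $r^{\mathcal{F}}\phi-\phi r^{\mathcal{E}}$ on $X\setminus Z$, which has the locally almost semimeromorphic extension $\tilde{R}^{\mathcal{F}}\phi-\phi\tilde{R}^{\mathcal{E}}$; by uniqueness of such extensions (Lemma \ref{lemma: uniqueness of LASM extension}) both sides then have the same canonical extension $\mathbf{1}_{X\setminus Z}(\cdot)$. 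Consequently the difference $T:=\bigl(R^{\mathcal{F}}\phi-\phi R^{\mathcal{E}}\bigr)-D^{\mathcal{E},\mathcal{F}}(M^{\phi})$ satisfies $\mathbf{1}_{X\setminus Z}T=0$, so $T$ is pseudomeromorphic with support contained in the analytic set $Z$ of codimension $\geq 1$; and since $A^{F^{\bullet}\prime\prime}(R^{\mathcal{F}})=A^{E^{\bullet}\prime\prime}(R^{\mathcal{E}})=0$ and $D^{\mathcal{E},\mathcal{F}}\circ D^{\mathcal{E},\mathcal{F}}=0$, $T$ is $D^{\mathcal{E},\mathcal{F}}$-closed.

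It remains to show $T=0$, and this I expect to be the main obstacle. Splitting $T$ into bihomogeneous components, $T_{q\to l}$ is a $(0,q-l)$-pseudomeromorphic current supported on $Z$, so the components with $q-l<\codim Z$ vanish by the dimension principle (Proposition \ref{prop:dimPrinciple}); as the theorem carries no codimension hypothesis, the remaining finitely many components must vanish through genuine cancellation. Since the vanishing of $T$ is local, I would work in charts where, by Proposition \ref{prop: gauge equivalence to flat}, $\mathcal{E}$ and $\mathcal{F}$ become bounded complexes of Hermitian holomorphic vector bundles; transporting $\phi$, $U^{\mathcal{E}}$, $U^{\mathcal{F}}$ and carrying out the residue calculus of \cite{andersson2018direct} on a log resolution of $Z$, one reduces to a version of the comparison formula of L\"ark\"ang \cite{larkang2019comparison} for complexes of holomorphic vector bundles, allowing the possibly non-holomorphic closed morphisms that arise. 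It is exactly here that the correction term $R(U^{\mathcal{F}}\phi U^{\mathcal{E}})$ in $M^{\phi}$ is indispensable: the currents $U^{\mathcal{F}}\phi U^{\mathcal{E}}$ and $\tilde{R}^{\mathcal{F}}\phi U^{\mathcal{E}}$ are products of two genuine currents, for which the naive Leibniz rule fails by precisely such a residue, and the bulk of the work is this local residue computation together with the careful sign bookkeeping in the Leibniz manipulations.
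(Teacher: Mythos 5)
Your first two steps are correct and agree with the paper in spirit: on $X\setminus Z$ the Leibniz computation gives $D^{\mathcal{E},\mathcal{F}}(m^{\phi})=r^{\mathcal{F}}\phi-\phi r^{\mathcal{E}}$, and the defect
\begin{equation*}
T:=\bigl(R^{\mathcal{F}}\phi-\phi R^{\mathcal{E}}\bigr)-D^{\mathcal{E},\mathcal{F}}(M^{\phi})
\end{equation*}
is a $D^{\mathcal{E},\mathcal{F}}$-closed pseudomeromorphic current with $\mathbf{1}_{X\setminus Z}T=0$. But your final step --- showing $T=0$ --- is a genuine gap, and you yourself flag it as ``the main obstacle.'' The dimension principle disposes only of the bihomogeneous pieces $T_{q\to l}$ with $q-l<\codim Z$, and nothing in the hypotheses forces $\codim Z$ to be large, so finitely many components survive with no abstract reason to vanish. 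The route you sketch --- pass to local charts via Proposition \ref{prop: gauge equivalence to flat}, transport $\phi,U^{\mathcal{E}},U^{\mathcal{F}}$, and invoke L\"{a}rk\"{a}ng's comparison formula --- is both unverified and logically delicate: transporting by a gauge equivalence $J$ changes $U,\tilde{R},R(U),M$ in a way that is itself controlled by a comparison formula, so one risks circularity, and in any case you would still have to show that the transported defect vanishes, which is the same kind of residue computation you were trying to avoid.

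The paper's proof sidesteps this difficulty entirely and never needs to analyze a defect current supported on $Z$. The key observation, which your outline misses, is that the very definition of $M^{\phi}=\tilde{M}^{\phi}+R(U^{\mathcal{F}}\phi U^{\mathcal{E}})$ is engineered so that the sum collapses: by the global Leibniz identity (valid as an identity of PM currents, using Proposition \ref{prop: pseudomeromorphic currents closed under multiplication})
\begin{equation*}
D^{\mathcal{E},\mathcal{F}}(U^{\mathcal{F}}\phi U^{\mathcal{E}})=\phi U^{\mathcal{E}}-U^{\mathcal{F}}\phi-R^{\mathcal{F}}\phi U^{\mathcal{E}}+U^{\mathcal{F}}\phi R^{\mathcal{E}},
\end{equation*}
together with uniqueness of LASM extensions (Lemma \ref{lemma: uniqueness of LASM extension}), one obtains the exact PM-current identity
\begin{equation*}
M^{\phi}=\phi U^{\mathcal{E}}-U^{\mathcal{F}}\phi-D^{\mathcal{E},\mathcal{F}}(U^{\mathcal{F}}\phi U^{\mathcal{E}})
\end{equation*}
on all of $X$. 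At that point $D^{\mathcal{E},\mathcal{F}}\circ D^{\mathcal{E},\mathcal{F}}=0$ kills the last term and $D^{\mathcal{E},\mathcal{F}}(M^{\phi})=\phi A^{E^{\bullet}\prime\prime}(U^{\mathcal{E}})-A^{F^{\bullet}\prime\prime}(U^{\mathcal{F}})\phi=R^{\mathcal{F}}\phi-\phi R^{\mathcal{E}}$ by the definition of $R^{\mathcal{E}},R^{\mathcal{F}}$; no restriction-and-extension argument, dimension-principle estimate, or local reduction is needed. I would rework your argument around this global rewriting of $M^{\phi}$ rather than trying to push the off-$Z$ identity across $Z$.
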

\begin{proof}
Since $D^{\mathcal{E},\mathcal{F}}\phi=0$, we have
\begin{equation}\label{eq: D of U phi U}
\begin{split}
D^{\mathcal{E},\mathcal{F}}(U^{\mathcal{F}}\phi U^{\mathcal{E}})&=A^{F^{\bullet}\prime\prime}(U^{\mathcal{F}})\phi U^{\mathcal{E}}-U^{\mathcal{F}}\phi A^{E^{\bullet}\prime\prime}(U^{\mathcal{E}})\\
&=(\id_{F^{\bullet}}-R^{\mathcal{F}})\phi U^{\mathcal{E}}-U^{\mathcal{F}}\phi  (\id_{E^{\bullet}}-R^{\mathcal{E}})\\
&=\phi U^{\mathcal{E}}-U^{\mathcal{F}}\phi -R^{\mathcal{F}}\phi U^{\mathcal{E}}+U^{\mathcal{F}}\phi R^{\mathcal{E}}.
\end{split}
\end{equation}
Recall that by Proposition \ref{prop: pseudomeromorphic currents closed under multiplication}, the right hand side of \eqref{eq: D of U phi U} is a well-defined PM current.
By \eqref{eq: R and R tilde equal} we further have
\begin{equation}
D^{\mathcal{E},\mathcal{F}}(U^{\mathcal{F}}\phi U^{\mathcal{E}})|_{X \backslash Z}=(\phi U^{\mathcal{E}}-U^{\mathcal{F}}\phi -\tilde{R}^{\mathcal{F}}\phi U^{\mathcal{E}}+U^{\mathcal{F}}\phi \tilde{R}^{\mathcal{E}})|_{X \backslash Z}.
\end{equation}

Notice that $\phi U^{\mathcal{E}}-U^{\mathcal{F}}\phi -\tilde{R}^{\mathcal{F}}\phi U^{\mathcal{E}}+U^{\mathcal{F}}\phi \tilde{R}^{\mathcal{E}}$ is a LASM current on $X$, hence it is a  LASM extension of $D^{\mathcal{E},\mathcal{F}}(U^{\mathcal{F}}\phi U^{\mathcal{E}})|_{X \backslash Z}$. On the other hand $\mathbf{1}_{X \backslash Z} D^{\mathcal{E},\mathcal{F}}(U^{\mathcal{F}}\phi U^{\mathcal{E}})$ is also a LASM extension of $D^{\mathcal{E},\mathcal{F}}(U^{\mathcal{F}}\phi U^{\mathcal{E}})|_{X \backslash Z}$. By the uniqueness of LASM extension as in Lemma \ref{lemma: uniqueness of LASM extension}, we must have
\begin{equation}
\mathbf{1}_{X \backslash Z} D^{\mathcal{E},\mathcal{F}}(U^{\mathcal{F}}\phi U^{\mathcal{E}})=\phi U^{\mathcal{E}}-U^{\mathcal{F}}\phi -\tilde{R}^{\mathcal{F}}\phi U^{\mathcal{E}}+U^{\mathcal{F}}\phi \tilde{R}^{\mathcal{E}}
\end{equation}
hence
\begin{equation}\label{eq: R(U phi U)}
R(U^{\mathcal{F}}\phi U^{\mathcal{E}})=\phi U^{\mathcal{E}}-U^{\mathcal{F}}\phi -\tilde{R}^{\mathcal{F}}\phi U^{\mathcal{E}}+U^{\mathcal{F}}\phi \tilde{R}^{\mathcal{E}}-D^{\mathcal{E},\mathcal{F}}(U^{\mathcal{F}}\phi U^{\mathcal{E}}).
\end{equation}
\eqref{eq: tilde M phi}, \eqref{eq: M phi}, and \eqref{eq: R(U phi U)} give
\begin{equation}
M^{\phi}=\phi U^{\mathcal{E}}-U^{\mathcal{F}}\phi -D^{\mathcal{E},\mathcal{F}}(U^{\mathcal{F}}\phi U^{\mathcal{E}}).
\end{equation}
Therefore we get
\begin{equation}
\begin{split}
D^{\mathcal{E},\mathcal{F}}(M^{\phi})&=D^{\mathcal{E},\mathcal{F}}(\phi U^{\mathcal{E}}-U^{\mathcal{F}}\phi)\\
&=\phi A^{E^{\bullet}\prime\prime}(U^{\mathcal{E}})-A^{F^{\bullet}\prime\prime}U^{\mathcal{F}}\phi\\
&=\phi (\id_{E^{\bullet}}-R^{\mathcal{E}})-(\id_{F^{\bullet}}-R^{\mathcal{F}})\phi\\
&=R^{\mathcal{F}}\phi-\phi R^{\mathcal{E}}
\end{split}
\end{equation}
as expected.
\end{proof}

\begin{coro}\label{coro: homotopic comparison formula}
Let $\mathcal{E}= (E^{\bullet}, A^{E^{\bullet}\prime\prime})$ and $\mathcal{F}= (F^{\bullet}, A^{F^{\bullet}\prime\prime})$ be two Hermitian cohesive modules on $X$. Let $\phi: \mathcal{E}\to \mathcal{F}$ and $\psi: \mathcal{F}\to \mathcal{E}$ be two closed degree $0$ morphisms which are homotopic inverse to each other, i.e. there exists  degree $-1$ morphisms $\tau: \mathcal{E}\to \mathcal{F}$ and $\gamma: \mathcal{F}\to \mathcal{E}$ such that 
\begin{equation}\label{eq: homotopic inverse}
\psi\phi-\id_{E^{\bullet}}=A^{E^{\bullet}\prime\prime}(\tau) \text{, and }\phi\psi-\id_{F^{\bullet}}=A^{F^{\bullet}\prime\prime}(\gamma).
\end{equation}
Then $R^{\mathcal{E}}$ is homotopic to $\psi R^{\mathcal{F}}\phi$ and $R^{\mathcal{F}}$ is homotopic to $\phi R^{\mathcal{E}}\psi$. More precisely, let $M^{\phi}$ and $M^{\psi}$ be currents associated with $\phi$ and $\psi$ as in \eqref{eq: M phi}. Then we have
\begin{equation}\label{eq: homotopic comparison formula}
\begin{split}
&R^{\mathcal{E}}-\psi R^{\mathcal{F}}\phi=A^{E^{\bullet}\prime\prime}(M^{\psi}\phi-R^{\mathcal{E}}\tau),\\
& R^{\mathcal{F}}-\phi R^{\mathcal{E}}\psi=A^{F^{\bullet}\prime\prime}(M^{\phi}\psi-R^{\mathcal{F}}\gamma).
\end{split}
\end{equation}
\end{coro}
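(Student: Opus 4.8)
The plan is to derive both identities directly from the comparison formula of Theorem~\ref{thm: comparison formula}, applied to $\psi$ (resp.\ $\phi$), by composing with $\phi$ (resp.\ $\psi$) and inserting the homotopies \eqref{eq: homotopic inverse}. First I would apply \eqref{eq: residue current compatible} to the closed degree $0$ morphism $\psi\colon\mathcal{F}\to\mathcal{E}$, which gives $R^{\mathcal{E}}\psi-\psi R^{\mathcal{F}}=D^{\mathcal{F},\mathcal{E}}(M^{\psi})$, with $M^{\psi}$ the current associated to $\psi$ as in \eqref{eq: M phi}. Composing on the right with $\phi$ and using $D^{\mathcal{E},\mathcal{F}}(\phi)=0$ together with the graded Leibniz rule $D(\alpha\beta)=D(\alpha)\beta+(-1)^{|\alpha|}\alpha D(\beta)$ for the differential on morphisms, one gets $D^{\mathcal{F},\mathcal{E}}(M^{\psi})\phi=D^{\mathcal{E},\mathcal{E}}(M^{\psi}\phi)$, hence
$$R^{\mathcal{E}}\psi\phi-\psi R^{\mathcal{F}}\phi=A^{E^{\bullet}\prime\prime}(M^{\psi}\phi).$$
All products occurring here and below ($M^{\psi}\phi$, $\psi R^{\mathcal{F}}\phi$, $R^{\mathcal{E}}\tau$, \dots) are legitimate currents since $\phi,\psi,\tau,\gamma$ are smooth and $R^{\mathcal{E}},R^{\mathcal{F}},M^{\psi}$ are pseudomeromorphic, by Proposition~\ref{prop: pseudomeromorphic currents closed under multiplication}.

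Next I would substitute $\psi\phi=\id_{E^{\bullet}}+A^{E^{\bullet}\prime\prime}(\tau)$ from \eqref{eq: homotopic inverse}. Using the relation $A^{E^{\bullet}\prime\prime}(R^{\mathcal{E}})=0$ (noted just after Definition~\ref{defi: residue current of E}) and the Leibniz rule once more, $R^{\mathcal{E}}A^{E^{\bullet}\prime\prime}(\tau)=A^{E^{\bullet}\prime\prime}(R^{\mathcal{E}}\tau)$, so that $R^{\mathcal{E}}\psi\phi=R^{\mathcal{E}}+A^{E^{\bullet}\prime\prime}(R^{\mathcal{E}}\tau)$. Combining this with the displayed identity and rearranging yields
$$R^{\mathcal{E}}-\psi R^{\mathcal{F}}\phi=A^{E^{\bullet}\prime\prime}(M^{\psi}\phi)-A^{E^{\bullet}\prime\prime}(R^{\mathcal{E}}\tau)=A^{E^{\bullet}\prime\prime}(M^{\psi}\phi-R^{\mathcal{E}}\tau),$$
which is the first line of \eqref{eq: homotopic comparison formula}. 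The second line follows by the symmetric argument, applying Theorem~\ref{thm: comparison formula} to $\phi\colon\mathcal{E}\to\mathcal{F}$ and exchanging the roles of $(\mathcal{E},\phi,\tau)$ and $(\mathcal{F},\psi,\gamma)$.

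Since Theorem~\ref{thm: comparison formula} carries all the analytic content, there is no genuine obstacle here; the only thing requiring care is sign bookkeeping in the graded Leibniz rule, and the distinction between the operator $A^{E^{\bullet}\prime\prime}(\cdot)=D^{\mathcal{E},\mathcal{E}}(\cdot)$ acting as a graded commutator on $\End(E^{\bullet})$-valued currents and plain left multiplication by $A^{E^{\bullet}\prime\prime}$. One should also read $\tau$ and $\gamma$ in \eqref{eq: homotopic inverse} as degree $-1$ endomorphisms of $\mathcal{E}$ and $\mathcal{F}$ respectively, so that $A^{E^{\bullet}\prime\prime}(\tau)=A^{E^{\bullet}\prime\prime}\tau+\tau A^{E^{\bullet}\prime\prime}$.
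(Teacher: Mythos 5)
Your proof is correct and is essentially the paper's own argument spelled out in full: the paper's proof is the single sentence "It is a direct consequence of Theorem \ref{thm: comparison formula} and \eqref{eq: homotopic inverse}," and your right-composition with $\phi$, the identity $D^{\mathcal{F},\mathcal{E}}(M^{\psi})\phi=D^{\mathcal{E},\mathcal{E}}(M^{\psi}\phi)$ for closed degree-$0$ $\phi$, and the use of $A^{E^{\bullet}\prime\prime}(R^{\mathcal{E}})=0$ to move $\tau$ inside the differential are exactly the omitted details. The sign bookkeeping and the justification of well-definedness of the products via Proposition \ref{prop: pseudomeromorphic currents closed under multiplication} are both handled correctly.
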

\begin{proof}
It is a direct consequence of Theorem \ref{thm: comparison formula} and \eqref{eq: homotopic inverse}.
\end{proof}

\begin{rmk}
See \cite[Theorem 1.3]{Johansson2023residue} for a similar result in the framework of twisting cochains.
\end{rmk}

\begin{rmk}
Theorem \ref{thm: equiv of cats} implies that if $\mathcal{E}$ and $\mathcal{F}$ are two Hermitian cohesive modules on $X$ which are cohesive resolutions of the same object in $D^{\gb}_{\coh}(X)$, then the morphisms $\phi$, $\psi$, $\tau$, and $\gamma$ in \eqref{eq: homotopic inverse} exists. Corollary \ref{coro: homotopic comparison formula} tells us that in this case the residue currents $R^{\mathcal{E}}$ and  $R^{\mathcal{F}}$ are essentially the same.
\end{rmk}

\subsection{Vanishing of $M^{\phi}$}
We have the following results on the vanishing of $R(U^{\mathcal{F}}\phi U^{\mathcal{E}})$, $\tilde{M}^{\phi}$, and $M^{\phi}$.

\begin{prop}\label{prop: vanishing of R(UphiU)}
Let $\phi: \mathcal{E}\to \mathcal{F}$ be a closed degree $0$ morphism between Hermitian cohesive modules on $X$ and $R(U^{\mathcal{F}}\phi U^{\mathcal{E}})$ be as in \eqref{eq: R(UF phi UE)}. For any $k\geq q-1$ we have
\begin{equation}\label{eq: vanishing of R(UphiU) trivial}
R(U^{\mathcal{F}}\phi U^{\mathcal{E}})_{q \to k}=0.
\end{equation}

Moreover if there exists a pair of integers $l$, $q$ such that $l\leq q-2$ and the subvarieties $Z^{\mathcal{E}}_i$'s and $Z^{\mathcal{F}}_i$'s satisfy
\begin{equation}
\begin{split}
&\codim Z^{\mathcal{E}}_m\geq q-m+1, \text{ for } l+1\leq m\leq q-1, \text{ and}\\
&\codim Z^{\mathcal{F}}_m\geq q-m, \text{ for } l\leq m \leq  q-2.
\end{split}
\end{equation}
Then  for any $p\geq 0$ and $k\geq l$ we have
\begin{equation}\label{eq: vanishing of R(UphiU)}
R(U^{\mathcal{F}}\phi U^{\mathcal{E}})_{q \to k}=0.
\end{equation}
\end{prop}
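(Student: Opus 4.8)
The strategy is to deduce the proposition from the vanishing of $R(U^{\mathcal{E}})$ and $R(U^{\mathcal{F}})$ already established in Proposition \ref{prop: vanishing of R(U)}, by way of an algebraic identity for $R(U^{\mathcal{F}}\phi U^{\mathcal{E}})$. First, the trivial statement \eqref{eq: vanishing of R(UphiU) trivial} is a degree count: $\sigma^{\mathcal{E}}$ lowers the $E^{\bullet}$-degree and $\sigma^{\mathcal{F}}$ the $F^{\bullet}$-degree by exactly $1$, the operators $A^{E^{\bullet}\prime\prime}_{\geq 1}$ and $A^{F^{\bullet}\prime\prime}_{\geq 1}$ never raise these degrees, and $\phi$ has degree $0$, so in the $(q\to k)$-component of $U^{\mathcal{F}}\phi U^{\mathcal{E}}$ one has $k-q\leq -2$; since the smooth components $v_i,u_i$ ($i\neq 1$) of $A^{E^{\bullet}\prime\prime},A^{F^{\bullet}\prime\prime}$ commute with $\mathbf{1}_{X\backslash Z}$, the residue $R(U^{\mathcal{F}}\phi U^{\mathcal{E}})$ involves only the degree-preserving parts $\nabla^{\prime\prime}$ of $D^{\mathcal{E},\mathcal{F}}$, so its $(q\to k)$-component also satisfies $k-q\leq -2$, which is \eqref{eq: vanishing of R(UphiU) trivial}. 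For the nontrivial statement I would record the identity
\begin{equation}\label{eq: RUphiU identity}
R(U^{\mathcal{F}}\phi U^{\mathcal{E}})=R(U^{\mathcal{F}})\,\phi\,U^{\mathcal{E}}-U^{\mathcal{F}}\,\phi\,R(U^{\mathcal{E}}),
\end{equation}
all products being well defined by Proposition \ref{prop: pseudomeromorphic currents closed under multiplication} since $\phi$ is smooth and $U^{\mathcal{E}},U^{\mathcal{F}}$ are locally almost semimeromorphic. To obtain \eqref{eq: RUphiU identity}, combine \eqref{eq: D of U phi U}, which writes $D^{\mathcal{E},\mathcal{F}}(U^{\mathcal{F}}\phi U^{\mathcal{E}})$ as $\phi U^{\mathcal{E}}-U^{\mathcal{F}}\phi-R^{\mathcal{F}}\phi U^{\mathcal{E}}+U^{\mathcal{F}}\phi R^{\mathcal{E}}$, with the fact established in the proof of Theorem \ref{thm: comparison formula} — using \eqref{eq: R and R tilde equal} and the uniqueness of locally almost semimeromorphic extensions (Lemma \ref{lemma: uniqueness of LASM extension}) — that $\mathbf{1}_{X\backslash Z}D^{\mathcal{E},\mathcal{F}}(U^{\mathcal{F}}\phi U^{\mathcal{E}})$ equals the same expression with each $R^{\bullet}$ replaced by $\tilde R^{\bullet}$; subtracting, and using \eqref{eq: R(UF phi UE)} together with $\tilde R^{\bullet}-R^{\bullet}=R(U^{\bullet})$ from \eqref{eq: tilde R and R}, gives \eqref{eq: RUphiU identity}.

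It then remains to show, under the stated codimension hypotheses, that both terms on the right of \eqref{eq: RUphiU identity}, restricted to the $(q\to k)$-component, vanish for $k\geq l$. Decomposing over intermediate degrees, a nonzero summand of $[R(U^{\mathcal{F}})\phi U^{\mathcal{E}}]_{q\to k}$ factors as $E^q\to E^{q'}\to F^{q''}\to F^k$ with $q'\leq q-1$ (as $U^{\mathcal{E}}$ drops degree by $\geq 1$), $q''\leq q'$ (as $\phi$ does not raise it), and $k\leq q''-1$ (as $R(U^{\mathcal{F}})$ drops the $F^{\bullet}$-degree by $\geq 1$); hence $l\leq k\leq q''-1\leq q-2$, and Proposition \ref{prop: vanishing of R(U)} applies to $\mathcal{F}$ with the pair $(l,q'')$: its hypotheses hold because $l\leq q''-1$ and because $\codim Z^{\mathcal{F}}_m\geq q-m\geq q''-m+1$ for $m\in\{l,\dots,q''-1\}\subseteq\{l,\dots,q-2\}$ (using $q''\leq q-1$), so $R(U^{\mathcal{F}})_{q''\to k}=0$. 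Likewise a nonzero summand of $[U^{\mathcal{F}}\phi R(U^{\mathcal{E}})]_{q\to k}$ factors as $E^q\to E^{q'}\to F^{q''}\to F^k$ with $q'\leq q-1$, $q''\leq q'$, $k\leq q''-1$, so $q'\geq k+1\geq l+1$, and Proposition \ref{prop: vanishing of R(U)} applies to $\mathcal{E}$ with the pair $(l+1,q)$ — whose hypotheses $l+1\leq q-1$ and $\codim Z^{\mathcal{E}}_m\geq q-m+1$ for $l+1\leq m\leq q-1$ are exactly those assumed — so $R(U^{\mathcal{E}})_{q\to q'}=0$ for every $q'\geq l+1$. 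Both terms thus vanish and \eqref{eq: vanishing of R(UphiU)} follows.

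The one delicate point is the identity \eqref{eq: RUphiU identity}: one must check that the triple products $R(U^{\mathcal{F}})\phi U^{\mathcal{E}}$ and $U^{\mathcal{F}}\phi R(U^{\mathcal{E}})$ are unambiguous and that the locally-almost-semimeromorphic uniqueness argument of Theorem \ref{thm: comparison formula} is run with $Z$ the union of the degeneracy loci of $\mathcal{E}$ and $\mathcal{F}$; everything after that is degree-and-codimension bookkeeping. One could also argue directly in the manner of the proof of Proposition \ref{prop: vanishing of R(U)}, expanding $U^{\mathcal{F}}\phi U^{\mathcal{E}}$ into products of $\sigma^{\mathcal{E}}$'s, $\sigma^{\mathcal{F}}$'s, $A_{\geq 1}(\sigma)$'s and components of $\phi$, bounding the support of each term's residue by the relevant $Z^{\mathcal{E}}_m$'s and $Z^{\mathcal{F}}_m$'s, and running the same downward induction on $k$; but the identity-based route is considerably shorter.
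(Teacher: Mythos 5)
Your proof is correct, and it is a genuinely different — and arguably cleaner — route than the one the paper intends. The paper simply says the result follows ``by an argument similar to that of Proposition~\ref{prop: vanishing of R(U)}'', i.e.\ it expects the reader to redo the downward induction on $k$, expanding $U^{\mathcal{F}}\phi U^{\mathcal{E}}$ into products of $\sigma^{\mathcal{E}}$'s, $\sigma^{\mathcal{F}}$'s, $A_{\geq 1}(\sigma)$'s and components of $\phi$, bounding the support of each summand's residue by the relevant degeneracy loci $Z^{\mathcal{E}}_m$, $Z^{\mathcal{F}}_m$, and then invoking the dimension principle term by term. You bypass all of that by isolating the Leibniz-type identity
\[
R(U^{\mathcal{F}}\phi U^{\mathcal{E}})=R(U^{\mathcal{F}})\,\phi\,U^{\mathcal{E}}-U^{\mathcal{F}}\,\phi\,R(U^{\mathcal{E}}),
\]
which is a one-line consequence of the two displays already computed in the proof of Theorem~\ref{thm: comparison formula} together with \eqref{eq: tilde R and R}, and then reading off the vanishing degree-by-degree from Proposition~\ref{prop: vanishing of R(U)} applied separately to $\mathcal{E}$ (with the pair $(l+1,q)$) and to $\mathcal{F}$ (with the pairs $(l,q'')$ for the relevant intermediate $q''$). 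I checked the degree bookkeeping: in $R(U^{\mathcal{F}})\phi U^{\mathcal{E}}$ the intermediate $F$-degree $q''$ satisfies $l+1\le q''\le q-1$, and for $m\in\{l,\dots,q''-1\}$ the hypothesis $\codim Z^{\mathcal{F}}_m\geq q-m\geq q''-m+1$ indeed delivers what Proposition~\ref{prop: vanishing of R(U)} needs; in $U^{\mathcal{F}}\phi R(U^{\mathcal{E}})$ the intermediate $E$-degree $q'\geq l+1$, and the stated bound $\codim Z^{\mathcal{E}}_m\geq q-m+1$ on $l+1\le m\le q-1$ is exactly the hypothesis for $(l+1,q)$. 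The trivial case $k\geq q-1$ also follows directly from the identity (each factor already forces $k\leq q-2$), so your separate degree-count argument there, while correct, is redundant once the identity is in hand. The one small point worth flagging is that the residues $R(U^{\mathcal{E}})$ and $R(U^{\mathcal{F}})$ appearing in your identity are computed relative to the combined degeneracy locus $Z=Z^{\mathcal{E}}\cup Z^{\mathcal{F}}$ coming from the comparison-formula setup, whereas Proposition~\ref{prop: vanishing of R(U)} defines them relative to $Z^{\mathcal{E}}$ and $Z^{\mathcal{F}}$ individually; these agree because the latter are already LASM and hence have the SEP, so enlarging $Z$ does not change them, but it would be worth one sentence to say so. The identity route buys brevity and makes the structural dependence on the single-module vanishing result transparent; the paper's direct induction, while longer, would produce the result without ever invoking Theorem~\ref{thm: comparison formula}, so it keeps Section~5 logically prior to Section~6. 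Either is a valid proof.
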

\begin{proof}
Since $\phi$ is of degree $0$, it does not increase the degree on $E^{\bullet}$. Hence the proof is similar to that of Proposition \ref{prop: vanishing of R(U)} and is left to the readers.
\end{proof}

\begin{rmk}
Proposition \ref{prop: vanishing of R(UphiU)} is a generalization of \cite[Proposition 3.6]{larkang2019comparison}. See \cite[Proposition 5.2]{Johansson2023residue}  for a similar result in the framework of twisting cochains.
\end{rmk}

\begin{coro}\label{coro: vanishing of R(UphiU) sheaf case}
Let $\phi: \mathcal{E}\to \mathcal{F}$ be a closed degree $0$ morphism between Hermitian cohesive modules on $X$ and $R(U^{\mathcal{F}}\phi U^{\mathcal{E}})$ be as in \eqref{eq: R(UF phi UE)}. We consider complexes of sheaves $\underline{F}_X(\mathcal{E})=(\mathfrak{E}^{\bullet},d)$ and $\underline{F}_X(\mathcal{F})=(\mathfrak{F}^{\bullet},d)$.
\begin{enumerate}
\item If $(\mathfrak{E}^{\bullet},d)$ has cohomologies concentrated in degrees $\leq n_0+1$ and  $(\mathfrak{F}^{\bullet},d)$ has cohomologies concentrated in degrees $\leq n_0$, then for any $q$ and any $k\geq n_0$, we have
\begin{equation}\label{eq: vanishing of R(UphiU) sheaf case 1}
R(U^{\mathcal{F}}\phi U^{\mathcal{E}})_{q \to k}=0.
\end{equation}
\item If $(\mathfrak{E}^{\bullet},d)$ has cohomologies concentrated in degrees $\geq n_0$ and  $(\mathfrak{F}^{\bullet},d)$ has cohomologies concentrated in degrees $\geq n_0-1$, then for any $q\leq n_0-1$ and any $k$, we have
\begin{equation}\label{eq: vanishing of R(UphiU) sheaf case 2}
R(U^{\mathcal{F}}\phi U^{\mathcal{E}})_{q \to k}=0.
\end{equation}
\item If there exist integers $m_0\geq 1$ and $n_0$ such that for any $q\leq n_0$ we have $\mathfrak{H}^{q}(\mathfrak{E}^{\bullet},d)$ either vanishes or satisfies
\begin{equation}\label{eq: codim of cohomology of suppE geq m0}
\codim(\supp \mathfrak{H}^{q}(\mathfrak{E}^{\bullet},d))\geq m_0,
\end{equation} 
and $ \mathfrak{H}^{q-1}(\mathfrak{F}^{\bullet},d)$ either vanishes or satisfies 
\begin{equation}\label{eq: codim of cohomology of suppF geq m0}
\codim(\supp \mathfrak{H}^{q-1}(\mathfrak{F}^{\bullet},d))\geq m_0,
\end{equation}
then for any $q\leq n_0$ and any $k\geq q-m_0$ we have
\begin{equation}\label{eq: vanishing of R(UphiU) sheaf case 3}
R(U^{\mathcal{F}}\phi U^{\mathcal{E}})_{q \to k}=0.
\end{equation}
In particular if \eqref{eq: codim of cohomology of suppE geq m0} and \eqref{eq: codim of cohomology of suppF geq m0} hold for any $q$, then \eqref{eq: vanishing of R(UphiU) sheaf case 3} holds for any $q$ and any $k\geq q-m_0$.
\end{enumerate}
\end{coro}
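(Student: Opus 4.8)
The plan is to follow the same pattern as the proof of Corollary \ref{coro: vanishing of R(U) sheaf case}: the statement is a formal consequence of Proposition \ref{prop: vanishing of R(UphiU)} once one translates the cohomological hypotheses into codimension estimates for the subvarieties $Z^{\mathcal{E}}_m$ and $Z^{\mathcal{F}}_m$. In all three parts the trivial vanishing \eqref{eq: vanishing of R(UphiU) trivial} disposes of the range $k\geq q-1$, so one may assume $k\leq q-2$ and invoke the ``moreover'' clause of Proposition \ref{prop: vanishing of R(UphiU)} with $l=k$; it then remains to check that, for the indices $m$ occurring there, $\codim Z^{\mathcal{E}}_m\geq q-m+1$ for $k+1\leq m\leq q-1$ and $\codim Z^{\mathcal{F}}_m\geq q-m$ for $k\leq m\leq q-2$.

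For Part 1 I would apply Proposition \ref{prop: codim of Zm in sheaf case}(1) separately to $\mathcal{E}$ and $\mathcal{F}$: the cohomology of $(\mathfrak{E}^{\bullet},d)$ being concentrated in degrees $\leq n_0+1$ forces $Z^{\mathcal{E}}_m=\emptyset$ for $m\geq n_0+1$, and likewise $Z^{\mathcal{F}}_m=\emptyset$ for $m\geq n_0$. When $k\geq n_0$ and $k\leq q-2$, every $m$ in the first range satisfies $m\geq k+1\geq n_0+1$ and every $m$ in the second satisfies $m\geq k\geq n_0$, so the relevant subvarieties are empty and the codimension inequalities hold vacuously. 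For Part 2 I would instead use Proposition \ref{prop: codim of Zm in sheaf case}(2), which gives $\codim Z^{\mathcal{E}}_m\geq n_0-m$ for $m\leq n_0-1$ and $\codim Z^{\mathcal{F}}_m\geq(n_0-1)-m$ for $m\leq n_0-2$; since $q\leq n_0-1$ and the indices $m$ in play are at most $q-1\leq n_0-2$, the required inequalities $n_0-m\geq q-m+1$ and $(n_0-1)-m\geq q-m$ both reduce to $q\leq n_0-1$ and hence hold.

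Part 3 is the only one that genuinely calls on Proposition \ref{prop: Z contained support F}. From the inclusions $Z^{\mathcal{E}}_m\subseteq\supp \mathfrak{H}^{m+1}(\mathfrak{E}^{\bullet},d)$ and $Z^{\mathcal{F}}_m\subseteq\supp \mathfrak{H}^{m+1}(\mathfrak{F}^{\bullet},d)$ together with the assumed codimension bounds on the cohomology sheaves, one obtains $\codim Z^{\mathcal{E}}_m\geq m_0$ for $m\leq n_0-1$ and $\codim Z^{\mathcal{F}}_m\geq m_0$ for $m\leq n_0-2$ (with the convention $\codim\emptyset=\infty$ covering the vanishing cases). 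For $q\leq n_0$, $k\geq q-m_0$ and $k\leq q-2$ the indices satisfy $m\leq q-1\leq n_0-1$ and $m\leq q-2\leq n_0-2$ respectively, while $q-m+1\leq m_0$ follows from $m\geq k+1\geq q-m_0+1$ and $q-m\leq m_0$ follows from $m\geq k\geq q-m_0$; thus Proposition \ref{prop: vanishing of R(UphiU)} applies and gives \eqref{eq: vanishing of R(UphiU) sheaf case 3}, and the final clause is the special case where the bounds hold for all $q$.

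I do not expect a real obstacle: the content is entirely the bookkeeping of these inequalities. The one point that warrants care is the systematic one-unit gap between the $\mathcal{E}$-side and $\mathcal{F}$-side hypotheses in Proposition \ref{prop: vanishing of R(UphiU)} (and correspondingly in the present statement, where $\mathfrak{H}^{q}(\mathfrak{E}^{\bullet},d)$ is paired with $\mathfrak{H}^{q-1}(\mathfrak{F}^{\bullet},d)$), which reflects that a term contributing to $R(U^{\mathcal{F}}\phi U^{\mathcal{E}})_{q\to k}$ passes through an $\mathcal{F}$-step and an $\mathcal{E}$-step and so drops $E^{\bullet}$-degree by at least two; aligning the ranges of $m$ with this shift is what makes all three parts come out.
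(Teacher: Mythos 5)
Your proof is correct and matches the paper's, which simply states the corollary is a ``direct consequence'' of Propositions~\ref{prop: vanishing of R(UphiU)}, \ref{prop: codim of Zm in sheaf case}, and \ref{prop: Z contained support F}; you supply precisely the bookkeeping (choosing $l=k$ in the ``moreover'' clause, translating the cohomological hypotheses into codimension bounds on $Z^{\mathcal{E}}_m$ and $Z^{\mathcal{F}}_m$, and checking the one-unit index shift between the two bundles) that the paper leaves implicit.
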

\begin{proof}
They are direct consequences of Proposition \ref{prop: vanishing of R(UphiU)}, Proposition \ref{prop: codim of Zm in sheaf case}, and Proposition \ref{prop: Z contained support F}.
\end{proof}

\begin{prop}\label{prop: vanishing of tildeM}
Let $\phi: \mathcal{E}\to \mathcal{F}$ be a closed degree $0$ morphism between Hermitian cohesive modules on $X$ and $R(U^{\mathcal{F}}\phi U^{\mathcal{E}})$ be as in \eqref{eq: R(UF phi UE)}. 
 If there exists an integer $n_0$ such that for any $q\leq n_0$ we have $\mathfrak{H}^{q}(\mathfrak{E}^{\bullet},d)$ either vanishes or satisfies
\begin{equation}\label{eq: codim of cohomology of suppE geq 1}
\codim(\supp \mathfrak{H}^{q}(\mathfrak{E}^{\bullet},d))\geq 1,
\end{equation} 
and $ \mathfrak{H}^{q-1}(\mathfrak{F}^{\bullet},d)$ either vanishes or satisfies 
\begin{equation}\label{eq: codim of cohomology of suppF geq 1}
\codim(\supp \mathfrak{H}^{q-1}(\mathfrak{F}^{\bullet},d))\geq 1,
\end{equation}
then for any $q\leq n_0$ and any $k$ we have
\begin{equation}
\tilde{M}^{\phi}_{q \to k}=0.
\end{equation}
In particular if we have
\begin{equation}
\begin{split}
&\codim ( \supp \mathfrak{H}^{q}(\mathfrak{E}^{\bullet},d))\geq 1\text{, and }\\
&\codim ( \supp \mathfrak{H}^{q}(\mathfrak{F}^{\bullet},d))\geq 1,
\end{split}
\end{equation}
for any $q$, then $\tilde{M}^{\phi}=0$.
\end{prop}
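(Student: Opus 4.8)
The plan is to reduce the statement to the complement $X\backslash Z$, where all four currents involved become the smooth forms of Section~\ref{subsection: residue current of a cohesive module definition}, and then to feed the hypotheses into Proposition~\ref{prop: Au=id}. First, note that $\tilde{M}^{\phi}$ is a LASM current on $X$ (as observed right after \eqref{eq: tilde M phi}), hence so is every component $\tilde{M}^{\phi}_{q\to k}$; by Proposition~\ref{prop: LASM has SEP} it has the SEP, and therefore by Lemma~\ref{lemma: uniqueness of LASM extension} it is the unique LASM extension of its own restriction $\tilde{M}^{\phi}_{q\to k}|_{X\backslash Z}$. Consequently it suffices to prove that $\tilde{M}^{\phi}_{q\to k}|_{X\backslash Z}=0$ for all $q\leq n_0$ and all $k$. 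This is the same reduction already used for $\tilde{R}^{\mathcal{E}}$ in the proof of Corollary~\ref{coro: tilde R vanish}.

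On $X\backslash Z$ we have, by Lemma~\ref{lemma: tilde R is LASM} and \eqref{eq:asmExtension} (or by \eqref{eq: R and R tilde equal} and \eqref{eq: RE}), the identities $\tilde{R}^{\mathcal{E}}|_{X\backslash Z}=\id_{E^{\bullet}}-A^{E^{\bullet}\prime\prime}(u^{\mathcal{E}})$, $U^{\mathcal{E}}|_{X\backslash Z}=u^{\mathcal{E}}$, and their analogues for $\mathcal{F}$. The hypothesis \eqref{eq: codim of cohomology of suppE geq 1} on $\mathfrak{H}^{q}(\mathfrak{E}^{\bullet},d)$ for $q\leq n_0$ lets us apply Proposition~\ref{prop: Au=id} to $\mathcal{E}$ with this $n_0$, yielding $\big(\id_{E^{\bullet}}-A^{E^{\bullet}\prime\prime}(u^{\mathcal{E}})\big)_{q\to l}=0$ on $X\backslash Z$ for all $q\leq n_0$ and all $l$; while \eqref{eq: codim of cohomology of suppF geq 1}, which is a condition on $\mathfrak{H}^{q-1}(\mathfrak{F}^{\bullet},d)$ for $q\leq n_0$, that is, on $\mathfrak{H}^{q'}(\mathfrak{F}^{\bullet},d)$ for $q'\leq n_0-1$, lets us apply Proposition~\ref{prop: Au=id} to $\mathcal{F}$ with $n_0-1$ in place of $n_0$, yielding $\big(\id_{F^{\bullet}}-A^{F^{\bullet}\prime\prime}(u^{\mathcal{F}})\big)_{q'\to l}=0$ on $X\backslash Z$ for all $q'\leq n_0-1$ and all $l$.

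It then remains to read off the vanishing of $\tilde{M}^{\phi}=\tilde{R}^{\mathcal{F}}\phi U^{\mathcal{E}}-U^{\mathcal{F}}\phi\tilde{R}^{\mathcal{E}}$ on $X\backslash Z$ by a bookkeeping of the $\mathbb{Z}$-grading. Fix $q\leq n_0$. In the term $U^{\mathcal{F}}\phi\tilde{R}^{\mathcal{E}}$, the factor $\tilde{R}^{\mathcal{E}}$ is applied first to the $E^{q}$-part, and its relevant components $\tilde{R}^{\mathcal{E}}_{q\to\bullet}|_{X\backslash Z}$ vanish by the first identity of the previous paragraph, so this term contributes $0$ on $X\backslash Z$. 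In the term $\tilde{R}^{\mathcal{F}}\phi U^{\mathcal{E}}$, the factor $U^{\mathcal{E}}=u^{\mathcal{E}}=\sigma(\id_{E^{\bullet}}+A^{E^{\bullet}\prime\prime}_{\geq 1}(\sigma))^{-1}$ is applied first; since $\sigma$ lowers the $E^{\bullet}$-grading by $1$ and $A^{E^{\bullet}\prime\prime}_{\geq 1}(\sigma)$ does not raise it, $u^{\mathcal{E}}$ lowers the grading by at least $1$, and the degree-$0$ morphism $\phi$ does not raise it either, so $\tilde{R}^{\mathcal{F}}$ is evaluated only in source degrees $\leq q-1\leq n_0-1$, where $\tilde{R}^{\mathcal{F}}|_{X\backslash Z}$ vanishes by the second identity. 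Hence $\tilde{M}^{\phi}_{q\to k}|_{X\backslash Z}=0$, and the first paragraph finishes the proof. The ``in particular'' assertion follows by letting $n_0$ run over all integers. The one delicate point is exactly this last grading count, which is also what forces the asymmetry between the hypotheses on $\mathfrak{E}^{\bullet}$ (degrees $\leq n_0$) and on $\mathfrak{F}^{\bullet}$ (degrees $\leq n_0-1$); everything else is the routine SEP and uniqueness-of-LASM-extension argument.
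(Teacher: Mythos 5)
Your proof is correct and takes essentially the same approach as the paper's. The paper's version is terser: it cites Corollary~\ref{coro: tilde R vanish} directly (which already packages the reduction to $X\backslash Z$ plus Proposition~\ref{prop: Au=id}), observes that $U^{\mathcal{E}}$ lowers and $\phi$ does not raise the $E^{\bullet}$-grading, and stops. You have unpacked that corollary back into its ingredients — SEP/uniqueness of LASM extension and Proposition~\ref{prop: Au=id} — and carried out the grading bookkeeping explicitly. That expansion is harmless, and the added remark that the degree shift by $u^{\mathcal{E}}$ and $\phi$ is precisely what produces the asymmetric hypotheses on $\mathfrak{H}^{q}(\mathfrak{E}^{\bullet},d)$ versus $\mathfrak{H}^{q-1}(\mathfrak{F}^{\bullet},d)$ is a useful observation the paper leaves implicit.
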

\begin{proof}
By \eqref{eq: tilde M phi}, $
\tilde{M}^{\phi}=\tilde{R}^{\mathcal{F}}\phi U^{\mathcal{E}}-U^{\mathcal{F}}\phi \tilde{R}^{\mathcal{E}}$.
Notice that $U^{\mathcal{E}}$ lowers the $E^{\bullet}$ degree and $\phi$ does not increase the $E^{\bullet}$ degree.
Now the claims are consequences of 
Corollary \ref{coro: tilde R vanish}.
\end{proof}

The following corollary, which is the main result in this subsection, gives the vanishing result for $M^{\phi}$.

\begin{coro}\label{coro: vanishing of M}
Let $\phi: \mathcal{E}\to \mathcal{F}$ be a closed degree $0$ morphism between Hermitian cohesive modules on $X$ and $R(U^{\mathcal{F}}\phi U^{\mathcal{E}})$ be as in \eqref{eq: R(UF phi UE)}.

\begin{enumerate} 
\item If $(\mathfrak{E}^{\bullet},d)$ has cohomologies concentrated in degrees $\geq n_0$ and  $(\mathfrak{F}^{\bullet},d)$ has cohomologies concentrated in degrees $\geq n_0-1$,  then for any $q\leq n_0-1$ and any $k$, we have
\begin{equation}\label{eq: vanishing of R(UphiU) sheaf case 1}
M^{\phi}_{q \to k}=0.
\end{equation}
\item  If there exist integers $m_0\geq 1$ and $n_0$ such that for any $q\leq n_0$ we have $\mathfrak{H}^{q}(\mathfrak{E}^{\bullet},d)$ either vanishes or satisfies
\begin{equation}\label{eq: codim of cohomology of suppE geq m0 again}
\codim(\supp \mathfrak{H}^{q}(\mathfrak{E}^{\bullet},d))\geq m_0,
\end{equation} 
and $ \mathfrak{H}^{q-1}(\mathfrak{F}^{\bullet},d)$ either vanishes or satisfies 
\begin{equation}\label{eq: codim of cohomology of suppF geq m0 again}
\codim(\supp \mathfrak{H}^{q-1}(\mathfrak{F}^{\bullet},d))\geq m_0,
\end{equation}
then for any $q\leq n_0$ and any $k\geq q-m_0$ we have
\begin{equation}\label{eq: vanishing of R(UphiU) sheaf case 2}
M^{\phi}_{q \to k}=0.
\end{equation}
In particular if \eqref{eq: codim of cohomology of suppE geq m0 again} and \eqref{eq: codim of cohomology of suppF geq m0 again} hold for any $q$, then \eqref{eq: vanishing of R(UphiU) sheaf case 2} holds for any $q$ and any $k\geq q-m_0$.
\end{enumerate}
\end{coro}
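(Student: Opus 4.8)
The plan is to deduce both statements directly from the decomposition $M^{\phi}=\tilde{M}^{\phi}+R(U^{\mathcal{F}}\phi U^{\mathcal{E}})$ of \eqref{eq: M phi}, estimating the two summands separately with the results already proved in this subsection. Concretely, the $\tilde{M}^{\phi}$ part will be controlled by Proposition \ref{prop: vanishing of tildeM} and the $R(U^{\mathcal{F}}\phi U^{\mathcal{E}})$ part by Corollary \ref{coro: vanishing of R(UphiU) sheaf case}; the only real work is to check that the hypotheses of each part of the corollary feed correctly into these two statements, after the appropriate index shift.

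For Part 1, suppose $(\mathfrak{E}^{\bullet},d)$ has cohomology concentrated in degrees $\geq n_0$ and $(\mathfrak{F}^{\bullet},d)$ in degrees $\geq n_0-1$. Then for every $q\leq n_0-1$ one has $\mathfrak{H}^{q}(\mathfrak{E}^{\bullet},d)=0$ and $\mathfrak{H}^{q-1}(\mathfrak{F}^{\bullet},d)=0$, so applying Proposition \ref{prop: vanishing of tildeM} with its integer ``$n_0$'' taken to be $n_0-1$ (its codimension hypotheses hold vacuously) gives $\tilde{M}^{\phi}_{q\to k}=0$ for all $q\leq n_0-1$ and all $k$. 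On the other hand Corollary \ref{coro: vanishing of R(UphiU) sheaf case} Part 2 applies verbatim and yields $R(U^{\mathcal{F}}\phi U^{\mathcal{E}})_{q\to k}=0$ for $q\leq n_0-1$ and all $k$. Adding the two terms proves \eqref{eq: vanishing of R(UphiU) sheaf case 1}.

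For Part 2, the codimension bound $m_0$ enters. Since $m_0\geq 1$, the hypotheses \eqref{eq: codim of cohomology of suppE geq m0 again}--\eqref{eq: codim of cohomology of suppF geq m0 again} imply in particular the codimension-$\geq 1$ hypotheses of Proposition \ref{prop: vanishing of tildeM} (with the same $n_0$), so $\tilde{M}^{\phi}_{q\to k}=0$ for all $q\leq n_0$ and all $k$. For the residue term, the same hypotheses are exactly those of Corollary \ref{coro: vanishing of R(UphiU) sheaf case} Part 3, which gives $R(U^{\mathcal{F}}\phi U^{\mathcal{E}})_{q\to k}=0$ for $q\leq n_0$ and $k\geq q-m_0$. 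Summing the two summands gives $M^{\phi}_{q\to k}=0$ on this range, and the ``in particular'' clause follows by letting $n_0\to\infty$.

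I do not expect a genuine obstacle: the argument is pure bookkeeping of degree ranges against codimension conditions. The one point needing a moment's care is that the $\tilde{M}^{\phi}$ contribution vanishes on the full range ($q\leq n_0$, all $k$) using only the weak codimension-$\geq 1$ input, so it is the residue term $R(U^{\mathcal{F}}\phi U^{\mathcal{E}})$ alone that is responsible for the restriction $k\geq q-m_0$ in Part 2; and the one-step shift between the hypotheses on $\mathfrak{E}^{\bullet}$ and on $\mathfrak{F}^{\bullet}$ (degrees $\geq n_0$ versus $\geq n_0-1$, and $\mathfrak{H}^{q}$ versus $\mathfrak{H}^{q-1}$) is precisely the shift already absorbed into Propositions \ref{prop: vanishing of R(UphiU)} and \ref{prop: vanishing of tildeM} coming from $\phi$ having degree $0$ while $U^{\mathcal{E}}$, $U^{\mathcal{F}}$ lower the grading on $E^{\bullet}$, $F^{\bullet}$.
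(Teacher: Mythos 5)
Your proof is correct and follows exactly the paper's route: decompose $M^{\phi}=\tilde{M}^{\phi}+R(U^{\mathcal{F}}\phi U^{\mathcal{E}})$ as in \eqref{eq: M phi}, kill $\tilde{M}^{\phi}$ by Proposition \ref{prop: vanishing of tildeM} (using the weak codimension-$\geq 1$ hypothesis, with the index shift to $n_0-1$ in Part 1), and kill $R(U^{\mathcal{F}}\phi U^{\mathcal{E}})$ on the stated range by Corollary \ref{coro: vanishing of R(UphiU) sheaf case} Parts 2 and 3. The paper simply cites these two results as a ``direct consequence,'' and your write-up supplies precisely the bookkeeping they had in mind.
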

\begin{proof}
It is a direct consequence of Corollary \ref{coro: vanishing of R(UphiU) sheaf case} and Proposition \ref{prop: vanishing of tildeM}.
\end{proof}

\section{A generalized Poincar\'{e}-Lelong formula}\label{section: Poincare-Lelong}
\subsection{Some definitions and notations}
\subsubsection{Cycles}
For a coherent sheaf $\mathfrak{F}$ on $X$, the \emph{cycle} of $\mathfrak{F}$ is defined to be the current
\begin{equation}\label{eq: cycle of a coherent sheaf}
[\mathfrak{F}]:=\sum_i m_i[Z_i],
\end{equation}
where the $Z_i$'s are the irreducible components of $\supp \mathfrak{F}$, and $m_i$ is the \emph{multiplicity} of $Z_i$ in $\mathfrak{F}$. See \cite[\href{https://stacks.math.columbia.edu/tag/02QV}{Tag 02QV}]{stacks-project} for details. 

We say that a coherent sheaf $\mathfrak{F}$ has \emph{pure codimension} $p$ if $\supp\mathfrak{F}$ has pure codimension $p$, i.e. every irreducible component of $\supp\mathfrak{F}$ has the same codimension $p$. If $\mathfrak{F}$ is not pure, let $[\mathfrak{F}]_p$ denote the sum of codimension $p$ components of $[\mathfrak{F}]$.

Now let $(\mathfrak{F}^{\bullet},d)$ be a bounded complex of $\mathcal{O}_X$-modules with coherent cohomologies. We define the \emph{cycle} of $(\mathfrak{F}^{\bullet},d)$ to be the current
\begin{equation}\label{eq: cycle of a cochain complex}
[(\mathfrak{F}^{\bullet},d)]:=\sum_l(-1)^l[\mathfrak{H}^l(\mathfrak{F}^{\bullet},d)].
\end{equation}
It is clear that $[(\mathfrak{F}^{\bullet},d)]=[(\tilde{\mathfrak{F}}^{\bullet},\tilde{d})]$ if $(\mathfrak{F}^{\bullet},d)$ and $(\tilde{\mathfrak{F}}^{\bullet},\tilde{d})$ are quasi-isomorphic.

For a Hermitian cohesive module $\mathcal{E}= (E^{\bullet}, A^{E^{\bullet}\prime\prime})$ on $X$, we know $\underline{F}_X(\mathcal{E})$ is a bounded complex on $X$ with coherent cohomologies. We then define the \emph{cycle} of $\mathcal{E}$ to be the current
\begin{equation} \label{eq: cycle of a Hermitian cohesive module}
[\mathcal{E}]:=[\underline{F}_X(\mathcal{E})].
\end{equation}

\subsubsection{Supertraces}
Let $E^{\bullet}$ be a bounded $\mathbb{Z}$-graded vector space over the base field $\mathbb{K}$. The \emph{supertrace}  is a map $\str:\End(E^{\bullet})\to \mathbb{K}$ defined by
\begin{equation}\label{eq: supertrace scalar}
\str(\phi):=\sum_l(-1)^l\text{Tr}(\phi|_{E^l}).
\end{equation}

Now let  $E^{\bullet}$ be a bounded $\mathbb{Z}$-graded complex vector bundle over a complex manifold $X$. We can extend the supertrace in \eqref{eq: supertrace scalar} to a map
\begin{equation}
\str: \Omega^{\bullet,\bullet}\otimes \End(E^{\bullet})\to  \Omega^{\bullet,\bullet}
\end{equation}
given by
\begin{equation}
\str(\omega\otimes \phi):=\omega \otimes \str(\phi).
\end{equation}

It is clear that $\str$ vanishes on supercommutators and is invariant under conjugations. See \cite[Section 4.2]{bismut2023coherent} for some details.

\subsubsection{$\dpar$-connections}
Let  $E^{\bullet}$ be a  $\mathbb{Z}$-graded complex vector bundle over a complex manifold $X$.  Recall that we have  $T_{\mathbb{C}}X=TX\bigoplus \overline{TX}$ where $TX$ and $\overline{TX}$ are the holomorphic and antiholomorphic tangent bundle, respectively. 

A \emph{$\dpar$-connection} on $E^{\bullet}$ is a map
\begin{equation}
\nabla^{E^{\bullet}\prime}: E^{\bullet}\to T^*X\times E^{\bullet}
\end{equation}
such that
\begin{equation}
\nabla^{E^{\bullet}\prime}(fe)=\dpar(f)e+f\nabla^{E^{\bullet}\prime}(e).
\end{equation}

If we also have a $\dbar$-connection $\nabla^{E^{\bullet}\prime\prime}$ on $E^{\bullet}$, then we can form the connection $\nabla^{E^{\bullet}}$ as
\begin{equation}\label{eq: nabla together}
\nabla^{E^{\bullet}}:=\nabla^{E^{\bullet}\prime}+\nabla^{E^{\bullet}\prime\prime}
\end{equation}

In general we do not impose any compatibility condition on $\nabla^{E^{\bullet}\prime}$ and $\nabla^{E^{\bullet}\prime\prime}$.

\subsection{A review of the main results  in \cite{larkang2021residue}}
Let us review the main results in \cite{larkang2021residue}

\begin{thm}[\cite{larkang2021residue} Theorem 1.1]\label{thm: larkang1.1}
Let 
\begin{equation}
(E^{\bullet}, v)=0\to E^{-N}\overset{v_{-N}}{\longrightarrow}\ldots \overset{v_{-1}}{\longrightarrow} E^0\to 0
\end{equation} be a bounded complex of Hermitian holomorphic vector bundles on $X$. If all its cohomologies $\mathfrak{H}^l(E^{\bullet}, v)$ have pure codimension $p\geq 1$ or vanish, and let $\nabla^{E^{\bullet}}$ be the connection on $\End(E^{\bullet})$  induced by an arbitrary $\dpar$-connection $\nabla^{E^{\bullet}\prime}$ and the known $\dbar$-connection $\nabla^{E^{\bullet}\prime\prime}$. Then we have the following equality of currents:
\begin{equation}\label{eq: Poincare-Lelong in Larkang1}
\frac{1}{(2\pi i)^p p!}\sum_{l=0}^{N-p}(-1)^l\text{Tr}(\nabla^{E^{\bullet}}(v_{-l-1})\ldots \nabla^{E^{\bullet}}(v_{-l-p}) R^E_{-l\to -l-p})=[(E^{\bullet}, v)],
\end{equation}
where $R^E$ is the residue current of $(E^{\bullet}, v)$.
\end{thm}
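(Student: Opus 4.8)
The plan is to reduce the global current identity \eqref{eq: Poincare-Lelong in Larkang1} to a pointwise statement on a Zariski-dense open subset of the support of the cycle, and then conclude by the dimension principle (Proposition~\ref{prop:dimPrinciple}). Write $L$ for the current on the left of \eqref{eq: Poincare-Lelong in Larkang1} and set $D := L - [(E^\bullet,v)]$. First I would record that both terms are pseudomeromorphic $(p,p)$-currents: the cycle because each $\mathfrak{H}^l(E^\bullet,v)$ has pure codimension $p$, so $[\mathfrak{H}^l(E^\bullet,v)]$ is a pseudomeromorphic $(p,p)$-current by Proposition~\ref{prop: [Z] is pseudomeromorphic}; and $L$ because $v_{-j}$ is holomorphic, so $\nabla^{E^\bullet}(v_{-j}) = \nabla^{E^\bullet\prime}(v_{-j})$ is a smooth $(1,0)$-form valued in $\Hom(E^{-j},E^{-j+1})$, while $R^E_{-l\to -l-p}$ is a pseudomeromorphic $(0,p)$-current, and products of pseudomeromorphic currents with smooth forms remain pseudomeromorphic by Proposition~\ref{prop: pseudomeromorphic currents closed under multiplication}.

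Second, I would localize the support of $D$. On the open set where $(E^\bullet,v)$ is pointwise exact the minimal inverses $\sigma$ are smooth and one checks $A^{E^\bullet\prime\prime}(U^E)=\id$, hence $R^E=0$ there; therefore $\supp D \subseteq Z := \bigcup_l \supp\mathfrak{H}^l(E^\bullet,v)$, which has pure codimension $p$. Writing $Z = \bigcup_i Z_i$ for the decomposition into irreducible components, put
\[
W := \Bigl(\bigcup_i \mathrm{Sing}\,Z_i\Bigr) \cup \Bigl(\bigcup_{i\ne i'} Z_i\cap Z_{i'}\Bigr),
\]
so $\codim W \ge p+1$. If I can show $D$ vanishes on $X\setminus W$, then $\supp D\subseteq W$, and since $D$ is a pseudomeromorphic $(*,p)$-current supported on a subvariety of codimension $\ge p+1$, the dimension principle forces $D=0$, which proves the theorem.

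Third — the substantive step — I would verify $D=0$ near a generic point $x_0$ of a component $Z_{i_0}$, i.e.\ at $x_0\in Z_{i_0}^{\mathrm{reg}}\setminus\bigcup_{i\ne i_0}Z_i$. Over the regular local ring $\mathcal{O}_{X,x_0}$ the bounded complex of free modules splits as $F_{\min}^\bullet\oplus G^\bullet$, where $G^\bullet$ is a direct sum of trivial complexes of the form $0\to F\xrightarrow{\id}F\to 0$ and $F_{\min}^\bullet$ is minimal; this splitting is realized by a holomorphic isomorphism of complexes on a neighbourhood of $x_0$. The summand $G^\bullet$ contributes nothing ($R^{G^\bullet}=0$ and $[G^\bullet]=0$), and since $R^E$ and $\nabla^{E^\bullet}$ are additive over direct sums of complexes, the identity near $x_0$ reduces to the one for $F_{\min}^\bullet$. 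Its cohomology is supported on the smooth germ $Z_{i_0}$; using the truncation distinguished triangles together with the comparison formula (Theorem~\ref{thm: comparison formula}) and the additivity of the cycle, one reduces by induction on the number of nonzero cohomology sheaves to the case of a single pure-codimension-$p$ cohomology sheaf $\mathfrak{H}^{-j_0}$. In that case \eqref{eq: Poincare-Lelong in Larkang1} collapses to its $l=j_0$ term, which — after reindexing $F_{\min}^\bullet$ so that $\mathfrak{H}^{-j_0}$ sits in degree $0$ — is exactly the pure-codimension-$p$ Poincar\'e--Lelong formula \eqref{eq: Poincare-Lelong in Larkang2 introduction} for $\mathfrak{H}^{-j_0}$, times the sign $(-1)^{j_0}$ produced by the shift. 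Since $[(E^\bullet,v)] = (-1)^{j_0}[\mathfrak{H}^{-j_0}]$ near $x_0$ by \eqref{eq: cycle of a cochain complex}, this gives $D=0$ near $x_0$, hence on $X\setminus W$.

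I expect two main obstacles. The more serious one is keeping the Hermitian data under control: $R^E$ genuinely depends on the chosen metrics and on the auxiliary $\dpar$-connection, whereas $[(E^\bullet,v)]$ does not, so after the holomorphic change of basis above — which does not respect the given metrics — one must know that the particular combination $\sum_l(-1)^l\text{Tr}(\nabla^{E^\bullet}(v_{-l-1})\cdots\nabla^{E^\bullet}(v_{-l-p})R^E_{-l\to -l-p})$ is unchanged. This requires a comparison argument for the dependence of $R^E$ on the metric and connection (analogous in spirit to Theorem~\ref{thm: comparison formula} and Corollary~\ref{coro: homotopic comparison formula}, although a metric change is not a morphism of complexes), combined with the remark that this combination is $\dbar$-closed of bidegree $(p,p)$, so that the difference for two metrics is a $\dbar$-exact pseudomeromorphic $(p,p)$-current supported in codimension $p$ and hence vanishes by the dimension principle. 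The second, more routine obstacle is to make the third step precise — establishing the behaviour of $R^E$ under the truncation triangles well enough to run the induction, and the sign and index bookkeeping in the reindexing $R^E_{0\to -p}\rightsquigarrow R^E_{-j_0\to -j_0-p}$.
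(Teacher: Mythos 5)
The paper does not prove Theorem~\ref{thm: larkang1.1}; it is quoted from L\"ark\"ang--Wulcan \cite{larkang2021residue} and used as a black box in the proof of Theorem~\ref{thm: Poincare-Lelong for cohesive modules}, so there is no internal proof to compare against.

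Evaluating your sketch on its own terms: the overall strategy (localize on the regular part of the cycle, split off the split-exact summand, reduce to a single pure-codimension cohomology sheaf, finish with the dimension principle) is a reasonable plan, but the two obstacles you flag are more serious than the sketch allows. The metric-independence argument as written is incorrect: the difference of the left-hand sides for two choices of Hermitian metric or of $\nabla^{E^\bullet\prime}$ is a pseudomeromorphic $(p,p)$-current supported in codimension $p$, not $\geq p+1$, so Proposition~\ref{prop:dimPrinciple} does not apply; and $\dbar$-exactness is no help, since $\dbar$-exact pseudomeromorphic $(p,p)$-currents supported in codimension $p$ are routinely nonzero (Coleff--Herrera and Poincar\'e--Lelong currents are of this type). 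Establishing that the specific alternating-trace combination is independent of the metric and of $\nabla^{E^\bullet\prime}$ is a substantive step and is one of the main points of \cite{larkang2021residue}. The truncation step is also not routine: truncations of a complex of vector bundles are generally not complexes of vector bundles, so Theorem~\ref{thm: comparison formula} does not apply directly; one would have to take local free resolutions of the truncations, run the comparison formula, and then show --- via the $\End$-degree bookkeeping analogous to the paper's own proof of Theorem~\ref{thm: Poincare-Lelong for cohesive modules} --- that the coboundary error $D^{\mathcal{E},\mathcal{F}}(M^{\phi})$ drops out of the alternating trace. Finally, you invoke \eqref{eq: Poincare-Lelong in Larkang2 introduction} (Theorem~\ref{thm: larkang1.2}) as input; that is only legitimate if it is proved in \cite{larkang2021residue} independently of Theorem~1.1, which you would need to check to avoid circularity.
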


\begin{thm}[\cite{larkang2021residue} Theorem 1.2]\label{thm: larkang1.2}
Let $\mathfrak{F}$ be a coherent sheaf on $X$ of pure codimension $p$. Let $(E^{\bullet},v)$ be a Hermitian locally free resolution of $\mathfrak{F}$, and let $\nabla^{E^{\bullet}}$ be the connection on $\End(E^{\bullet})$  induced by an arbitrary $\dpar$-connection $\nabla^{E^{\bullet}\prime}$ and the known $\dbar$-connection $\nabla^{E^{\bullet}\prime\prime}$. Then we have the following equality of currents:
\begin{equation}\label{eq: Poincare-Lelong in Larkang2}
\frac{1}{(2\pi i)^p p!}\text{Tr}(\nabla^{E^{\bullet}}(v_{-1})\ldots \nabla^{E^{\bullet}}(v_{-p}) R^E_{0\to -p})=[\mathfrak{F}].
\end{equation}
\end{thm}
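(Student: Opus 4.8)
The plan is to obtain Theorem~\ref{thm: larkang1.2} as a special case of Theorem~\ref{thm: larkang1.1}: when the complex of Hermitian holomorphic vector bundles is a locally free resolution, the sum in \eqref{eq: Poincare-Lelong in Larkang1} collapses to a single term.

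First I would note that a Hermitian locally free resolution $(E^{\bullet},v)=0\to E^{-N}\to\cdots\to E^0\to 0$ of $\mathfrak{F}$ is in particular a bounded complex of Hermitian holomorphic vector bundles whose cohomology sheaves are $\mathfrak{H}^0(E^{\bullet},v)=\mathfrak{F}$ and $\mathfrak{H}^l(E^{\bullet},v)=0$ for $l\neq 0$. Since $\mathfrak{F}$ has pure codimension $p\geq 1$, every nonzero cohomology sheaf has pure codimension $p$, so the hypotheses of Theorem~\ref{thm: larkang1.1} are satisfied and \eqref{eq: Poincare-Lelong in Larkang1} holds for this complex. On its right-hand side, the definition \eqref{eq: cycle of a cochain complex} gives $[(E^{\bullet},v)]=\sum_l(-1)^l[\mathfrak{H}^l(E^{\bullet},v)]=[\mathfrak{F}]$, only the term $l=0$ contributing.

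It remains to show that every summand with $l\geq 1$ on the left-hand side of \eqref{eq: Poincare-Lelong in Larkang1} vanishes, i.e.\ that $R^E_{-l\to -l-p}=0$ for all $l\geq 1$. This is where the resolution hypothesis is genuinely used, as opposed to mere generic exactness: since $(E^{\bullet},v)$ resolves $\mathfrak{F}$, the associated complex of $\mathcal{O}_X$-modules is exact at every degree $r\leq -1$, so $\underline{F}_X(E^{\bullet},v)$ has cohomology concentrated in degree $n_0=0$. By Corollary~\ref{coro: R vanish} Part~1 --- equivalently, by the Andersson--Wulcan duality principle \cite[Proposition~2.3]{andersson2007residue} --- the residue current $R^E$ then has no component mapping out of $E^{q}$ for $q\leq -1$; in particular $R^E_{-l\to -l-p}=0$ for every $l\geq 1$. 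Substituting this into \eqref{eq: Poincare-Lelong in Larkang1} leaves only the $l=0$ term, $\frac{1}{(2\pi i)^p p!}\text{Tr}\big(\nabla^{E^{\bullet}}(v_{-1})\cdots\nabla^{E^{\bullet}}(v_{-p})R^E_{0\to -p}\big)$, which together with $[(E^{\bullet},v)]=[\mathfrak{F}]$ is precisely \eqref{eq: Poincare-Lelong in Larkang2}.

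The only nontrivial ingredient is the vanishing $R^E_{-l\to -l-p}=0$ for $l\geq 1$; the rest is bookkeeping, and the real difficulty is of course internal to Theorem~\ref{thm: larkang1.1} itself, whose proof rests on a local analysis of residue currents via resolutions of singularities together with the identification of the cycle. I should also remark that independence of the formula from the auxiliary $\dpar$-connection $\nabla^{E^{\bullet}\prime}$ is already built into Theorem~\ref{thm: larkang1.1}, so no separate argument is needed for that point.
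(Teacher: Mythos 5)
The paper does not prove this statement: Theorem~\ref{thm: larkang1.2} is cited verbatim from \cite{larkang2021residue} (Theorem~1.2 there) and is used as a black box in the proof of Theorem~\ref{thm: Poincare-Lelong for cohesive modules}, so there is no in-paper proof to compare against.

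That said, your derivation of Theorem~\ref{thm: larkang1.2} from Theorem~\ref{thm: larkang1.1} is correct. Both ingredients check out. First, for a Hermitian locally free resolution $(E^{\bullet},v)$ of $\mathfrak{F}$ the cohomology sheaves are $\mathfrak{H}^0=\mathfrak{F}$ (pure codimension $p$) and $\mathfrak{H}^l=0$ for $l\neq0$, so the hypotheses of Theorem~\ref{thm: larkang1.1} hold and \eqref{eq: cycle of a cochain complex} gives $[(E^{\bullet},v)]=[\mathfrak{F}]$. Second, the vanishing $R^E_{-l\to -l-p}=0$ for $l\geq1$ is exactly the instance $n_0=0$ of Corollary~\ref{coro: R vanish} Part~1 (equivalently the Andersson--Wulcan duality principle \cite[Proposition~2.3]{andersson2007residue} applied to the exactness of $\mathcal{O}(E^{\bullet})$ at degrees $\leq -1$), and it kills every term in the sum \eqref{eq: Poincare-Lelong in Larkang1} except $l=0$, leaving precisely \eqref{eq: Poincare-Lelong in Larkang2}. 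You correctly identify that the heavy lifting lives inside Theorem~\ref{thm: larkang1.1} itself and that independence from $\nabla^{E^{\bullet}\prime}$ needs no extra argument. No gaps.
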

For the relation between \eqref{eq: Poincare-Lelong in Larkang2} and the classical Poincar\'{e}-Lelong formula
\begin{equation}
\frac{1}{2\pi i}\dbar\dpar \log|f|^2=[Z_f],
\end{equation}
 see \cite[Introduction]{larkang2021residue}.

Using the notation of supertrace, we can reformulate \eqref{eq: Poincare-Lelong in Larkang1}  as
\begin{equation}\label{eq: Poincare-Lelong in Larkang1 reformulate}
\frac{1}{(2\pi i)^p p!}\str((\nabla^{E^{\bullet}}(v))^p R^E)=[(E^{\bullet}, v)]
\end{equation}
and reformulate \eqref{eq: Poincare-Lelong in Larkang2}  as
\begin{equation}\label{eq: Poincare-Lelong in Larkang2 reformulate}
\frac{1}{(2\pi i)^p p!}\str((\nabla^{E^{\bullet}}(v))^p R^E)=[\mathfrak{F}]
\end{equation}
where $(\nabla^{E^{\bullet}}(v))^p$ denotes the composition of $\nabla^{E^{\bullet}}(v)$ for $p$ times.

Actually by Part 2 of Corollary \ref{coro: R vanish}, the only non-zero components on the left hand side of \eqref{eq: Poincare-Lelong in Larkang1 reformulate} are
$$
\text{Tr}(\nabla^{E^{\bullet}}(v_{-l-1})\ldots \nabla^{E^{\bullet}}(v_{-l-p}) R^E_{-l\to -l-p}) \text{, } 0\leq l \leq N-p,
$$
and the only non-zero component on the left hand side of \eqref{eq: Poincare-Lelong in Larkang2 reformulate} is 
$$
\text{Tr}(\nabla^{E^{\bullet}}(v_{-1})\ldots \nabla^{E^{\bullet}}(v_{-p}) R^E_{0\to -p}).
$$

\subsection{A generalized Poincar\'{e}-Lelong formula for cohesive modules}
In this subsection we state and prove the following theorem.

\begin{thm}\label{thm: Poincare-Lelong for cohesive modules}
Let $\mathcal{E}= (E^{\bullet}, A^{E^{\bullet}\prime\prime})$  be a Hermitian cohesive module on $X$ with
$$
A^{E^{\bullet}\prime\prime}=v_0+\nabla^{E^{\bullet}\prime\prime}+v_2+\ldots
$$
Let $R^{\mathcal{E}}$ be the residue current as in Definition \ref{defi: residue current of E}. Let $\nabla^{E^{\bullet}}$ be the connection on $\End(E^{\bullet})$  induced by an arbitrary $\dpar$-connection $\nabla^{E^{\bullet}\prime}$ and the known $\dbar$-connection $\nabla^{E^{\bullet}\prime\prime}$.

Let $\underline{F}_X(\mathcal{E})=(\mathfrak{E}^{\bullet},d)$ be the sheafification as Defined in Section \ref{subsection: cohesive and coherent}. If all its cohomologies $\mathfrak{H}^l(\mathfrak{E}^{\bullet},d)$ has pure codimension $p\geq 1$ or vanish, then we have the following equality of currents:
\begin{equation}\label{eq: Poincare-Lelong for cohesive modules 1}
\frac{1}{(2\pi i)^p p!}\str((\nabla^{E^{\bullet}}(v_0))^p R^{\mathcal{E}})=[\mathcal{E}]
\end{equation}
where $[\mathcal{E}]$ is given in \eqref{eq: cycle of a Hermitian cohesive module}.

In particular if $\mathfrak{F}$ is a coherent sheaf on $X$ with pure codimension $p\geq 1$. Let $\mathcal{E}= (E^{\bullet}, A^{E^{\bullet}\prime\prime})$  be a cohesive resolution of $\mathfrak{F}$ equipped with a Hermitian metric. Let  $R^{\mathcal{E}}$ and $\nabla^{E^{\bullet}}$ be as before, then we have the following equality of currents:
\begin{equation}\label{eq: Poincare-Lelong for cohesive modules 2}
\frac{1}{(2\pi i)^p p!}\str((\nabla^{E^{\bullet}}(v_0))^p R^{\mathcal{E}})=[\mathfrak{F}].
\end{equation}
\end{thm}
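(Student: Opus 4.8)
The plan is to reduce the main identity \eqref{eq: Poincare-Lelong for cohesive modules 1} to the complex‑of‑holomorphic‑bundles case, Theorem \ref{thm: larkang1.1}, by locality together with the local gauge equivalence of Proposition \ref{prop: gauge equivalence to flat} and the comparison formula of Theorem \ref{thm: comparison formula}; the second assertion \eqref{eq: Poincare-Lelong for cohesive modules 2} then comes for free. Indeed, if $\mathcal{E}$ is a cohesive resolution of a pure‑codimension‑$p$ sheaf $\mathfrak{F}$, then $\underline{F}_X(\mathcal{E})$ has a single nonzero cohomology sheaf $\mathfrak{H}^0\cong\mathfrak{F}$, which is of pure codimension $p$, so \eqref{eq: Poincare-Lelong for cohesive modules 1} applies and gives $\tfrac{1}{(2\pi i)^p p!}\str((\nabla^{E^{\bullet}}(v_0))^p R^{\mathcal{E}})=[\mathcal{E}]=[\mathfrak{H}^0]=[\mathfrak{F}]$.

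For \eqref{eq: Poincare-Lelong for cohesive modules 1} itself, I would first record the structure of both sides. The relation $\nabla^{E^{\bullet}\prime\prime}(v_0)=0$ from \eqref{eq: A flat decomposed} gives $\nabla^{E^{\bullet}}(v_0)=\nabla^{E^{\bullet}\prime}(v_0)$, a smooth $(1,0)$‑form valued in $\End^1(E^{\bullet})$; hence for degree reasons only the components $R^{\mathcal{E}}_{q\to q-p}$ survive under $\str((\nabla^{E^{\bullet}}(v_0))^p\,\cdot\,)$, and the left side of \eqref{eq: Poincare-Lelong for cohesive modules 1} is a $(p,p)$‑current. Since the $\mathfrak{H}^l$ have pure codimension $p\ge 1$ or vanish, Corollary \ref{coro: tilde R vanish} gives $\tilde R^{\mathcal{E}}=0$, so $R^{\mathcal{E}}=-R(U^{\mathcal{E}})$, which by Lemma \ref{lemma: support contained in Z} and Proposition \ref{prop: Z contained support F} is supported in $W:=\bigcup_l\supp\mathfrak{H}^l$, a set of pure codimension $p$. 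Thus both sides of \eqref{eq: Poincare-Lelong for cohesive modules 1} are pseudomeromorphic $(p,p)$‑currents supported on $W$; for such a current the part supported on $W_{\mathrm{sing}}$ is a $(*,p)$‑current on a set of codimension $\ge p+1$, hence zero by the dimension principle (Proposition \ref{prop:dimPrinciple}), so the current is determined by its restriction to $X\setminus W_{\mathrm{sing}}$. It therefore suffices to verify \eqref{eq: Poincare-Lelong for cohesive modules 1} in a neighbourhood of each point of the smooth locus of $W$ (it holds trivially on $X\setminus W$).

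Near such a point I would pass, via Proposition \ref{prop: gauge equivalence to flat}, to a small $V$ carrying a gauge equivalence $J\colon\mathcal{E}|_V\overset{\sim}{\to}\mathcal{E}'$, where $\mathcal{E}'=(E^{\bullet}|_V,\,v_0+\overline\nabla^{E^{\bullet}|_V\prime\prime})$ is a bounded complex of holomorphic vector bundles and $J$ leaves $v_0$ fixed; I equip $\mathcal{E}'$ with the same Hermitian metrics and $\dpar$‑connection. As in the proof of Proposition \ref{prop: codim of Zm in sheaf case} the cohomology sheaves of $\mathcal{E}'$ agree with those of $\mathcal{E}$ over $V$, so $[\mathcal{E}']|_V=[\mathcal{E}]|_V$ and they are of pure codimension $p$; since $\overline\nabla^{E^{\bullet}|_V\prime\prime}(v_0)=0$ too, $\nabla^{E^{\bullet}}(v_0)$ is literally the same form for $\mathcal{E}|_V$ and $\mathcal{E}'$, and the residue current of $\mathcal{E}'$ is the Andersson--Wulcan current (Remark after Definition \ref{defi: residue current of E}). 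Theorem \ref{thm: larkang1.1}, in the form \eqref{eq: Poincare-Lelong in Larkang1 reformulate}, then gives $\tfrac{1}{(2\pi i)^p p!}\str((\nabla^{E^{\bullet}}(v_0))^pR^{\mathcal{E}'})=[\mathcal{E}]|_V$, so it remains to identify $\str(WR^{\mathcal{E}|_V})$ with $\str(WR^{\mathcal{E}'})$, where $W:=(\nabla^{E^{\bullet}}(v_0))^p$. Here I would invoke Theorem \ref{thm: comparison formula}: $R^{\mathcal{E}'}=JR^{\mathcal{E}|_V}J^{-1}+D^{\mathcal{E}',\mathcal{E}'}(M^{J}J^{-1})$. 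The coboundary term contributes nothing: by Corollary \ref{coro: vanishing of M} (with $m_0=p$) the component $(M^{J}J^{-1})_{q\to q-p}$ vanishes, so the part of $D^{\mathcal{E}',\mathcal{E}'}(M^{J}J^{-1})$ relevant to $\str(W\,\cdot\,)$ is the anticommutator $\{v_0,M^{J}J^{-1}\}$, and $\str(W\{v_0,M^{J}J^{-1}\})=\str(\{v_0,WM^{J}J^{-1}\})=0$ because $v_0$ commutes with $W$ (from $v_0^2=0$ and Leibniz) and $\str$ kills graded commutators. The conjugation term is handled by cyclicity of $\str$: since the gauge equivalence of Proposition \ref{prop: gauge equivalence to flat} may be taken of the form $J=\id+(\text{antiholomorphic degree}\ge 1)$, the form $J^{-1}WJ-W$ shifts the $E^{\bullet}$‑grading by at most $p-1$, whereas $R^{\mathcal{E}|_V}_{q\to k}=0$ whenever $q-k\le p-1$ by Corollary \ref{coro: R vanish} (with $m_0=p$); hence $\str((J^{-1}WJ-W)R^{\mathcal{E}|_V})=0$ and $\str(WR^{\mathcal{E}|_V})=\str(WR^{\mathcal{E}'})$. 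This proves \eqref{eq: Poincare-Lelong for cohesive modules 1} on $V$, hence everywhere.

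The hard part is this last step: showing $\str((\nabla^{E^{\bullet}}(v_0))^pR^{\mathcal{E}})$ is unaffected by the local gauge equivalence. It rests on matching, degree by degree, the vanishing of the off‑diagonal components of both $R^{\mathcal{E}}$ (Corollary \ref{coro: R vanish}) and the comparison current (Corollary \ref{coro: vanishing of M}) against the $E^{\bullet}$‑grading shifts produced by $J$ and $J^{-1}$, together with the identity $\str([v_0,\,\cdot\,]_{\mathrm{gr}})=0$. Should the gauge equivalence fail to have trivial degree‑zero part, one would instead run this comparison as a short induction on the antiholomorphic degree, gauging away $\nabla^{E^{\bullet}\prime\prime}-\overline\nabla^{E^{\bullet}|_V\prime\prime}$ and $v_2,v_3,\dots$ one degree at a time and applying Theorem \ref{thm: comparison formula} at each stage.
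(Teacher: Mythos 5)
Your overall strategy is the same as the paper's: reduce locally via Proposition \ref{prop: gauge equivalence to flat} to a complex of holomorphic bundles, apply the L\"ark\"ang--Wulcan formula \eqref{eq: Poincare-Lelong in Larkang1 reformulate}, and use the comparison formula together with degree counting (Corollary \ref{coro: R vanish}, Corollary \ref{coro: vanishing of M}) and the identity ``$\str$ vanishes on graded commutators'' to discard the error terms. Your handling of the coboundary term $D^{\mathcal{E}',\mathcal{E}'}(M^J J^{-1})$ --- splitting off the $\{v_0,\cdot\}$ part and killing it as a supercommutator of $v_0$ with $WM^JJ^{-1}$ --- is essentially identical to the paper's \eqref{eq: conjugate by J two term}--\eqref{eq: conjugate by J one term}.

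There are two places where you deviate. First, the reduction to the smooth locus of $W=\bigcup_l\supp\mathfrak{H}^l$ is unnecessary: the local gauge equivalence of Proposition \ref{prop: gauge equivalence to flat} is available near \emph{every} point of $X$, and the paper simply verifies the identity on each such chart $V$, with no need to invoke the dimension principle to discard $W_{\mathrm{sing}}$. Your detour is not wrong, just extra work.

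Second --- and this is the one real loose end --- you assume the gauge equivalence has trivial degree-zero part, $J_0=\id$, in order to conclude that $J^{-1}WJ-W$ drops the $\End(E^{\bullet})$-degree by at least one and so pairs to zero against $R^{\mathcal{E}|_V}$ via Corollary \ref{coro: R vanish}. Proposition \ref{prop: gauge equivalence to flat} and the remark after it only assert that $J$ fixes $v_0$; they do not say $J_0=\id$. One can indeed reduce to $J_0=\id$ by absorbing $J_0$ into the choice of $\overline\nabla^{E^{\bullet}|_V\prime\prime}$ (conjugating the flat $\dbar$-connection by the fibrewise automorphism $J_0$, which still annihilates $v_0$), but this should be said. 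Your ``escape route'' --- a short induction on antiholomorphic degree gauging away $\nabla''-\overline\nabla''$ and $v_2,v_3,\ldots$ --- does not address this point, since $J_0$ lives in antiholomorphic degree $0$; that induction only explains why $J_{\ge 1}$ has the shape it has, not why $J_0$ may be taken to be $\id$. The paper sidesteps this entirely by \emph{not} assuming $J_0=\id$: in Lemma \ref{lemma: Jnabla v J -1 to p} it introduces the new $\dpar$-connection $\overline\nabla^{E^{\bullet}\prime}:=J_0\circ\nabla^{E^{\bullet}\prime}\circ J_0^{-1}$, and since Theorem \ref{thm: larkang1.1} holds for an arbitrary $\dpar$-connection, the conjugated formula matches automatically. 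This is a slightly more robust route, at the cost of the extra bookkeeping in that lemma; your version is shorter but leans on an unstated (though true) normalization of $J$.

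The derivation of \eqref{eq: Poincare-Lelong for cohesive modules 2} from \eqref{eq: Poincare-Lelong for cohesive modules 1} is correct and matches the paper.
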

\begin{proof}
The strategy of the proof is to reduce to \eqref{eq: Poincare-Lelong in Larkang1 reformulate} via gauge equivalence and the comparison formula.

By Proposition \ref{prop: gauge equivalence to flat}, for any $x\in X$, there exists a small neighborhood $V$ of $x$ and a gauge equivalence
\begin{equation}
J: (E^{\bullet}, A^{E^{\bullet}\prime\prime})|_V\overset{\sim}{\to} (E^{\bullet}|_V, v_0+ \overline{\nabla}^{E^{\bullet}|_V\prime\prime})
\end{equation}
where $ (E^{\bullet}|_V, v_0+ \overline{\nabla}^{E^{\bullet}|_V\prime\prime})$ is a complex of holomorphic vector bundles with the same $v_0$.

Since the supertrace is invariant under conjugations, we know that
\begin{equation}\label{eq: conjugate by J}
\str((\nabla^{E^{\bullet}}(v_0))^p R^{\mathcal{E}}|_V)=\str\big((J\circ (\nabla^{E^{\bullet}}(v_0))\circ J^{-1})^p (J\circ R^{\mathcal{E}}|_V\circ J^{-1})\big)
\end{equation}

Let $R^{\overline{\mathcal{E}}|_V}$ be the residue current associated with the complex of holomorphic vector bundles $(E^{\bullet}|_V, v_0+ \overline{\nabla}^{E^{\bullet}|_V\prime\prime})$. By Corollary \ref{coro: homotopic comparison formula} we get
\begin{equation}\label{eq: JR J-1 first}
\begin{split}
J\circ &R^{\mathcal{E}}|_V\circ J^{-1}=R^{\overline{\mathcal{E}}|_V}+(v_0+ \overline{\nabla}^{E^{\bullet}|_V\prime\prime})(M^J \circ J^{-1})\\
&=R^{\overline{\mathcal{E}}|_V}+v_0(M^J \circ J^{-1})+\overline{\nabla}^{E^{\bullet}|_V\prime\prime}(M^J \circ J^{-1})
\end{split}
\end{equation}
where $M^J$ is the current associated with $J$ as in \eqref{eq: M phi}. Notice here the homotopy operator $\gamma$ in \eqref{eq: homotopic comparison formula} vanishes as $J\circ J^{-1}=\id$.

Since the cohomologies of $(E^{\bullet}|_V, v_0+ \overline{\nabla}^{E^{\bullet}|_V\prime\prime})$ have codimension $p$, by Corollary \ref{coro: R vanish} Part 2 and Corollary \ref{coro: vanishing of M} Part 2 we have
\begin{equation}
R^{\overline{\mathcal{E}}|_V}_{q\to k}=0 \text{ for } k\geq q-p+1
\end{equation}
and
\begin{equation}
M^J_{q\to k}=0 \text{ for } k\geq q-p.
\end{equation}
In other words
\begin{equation}
R^{\overline{\mathcal{E}}|_V}\in \Gamma(V,\mathcal{D}^{\bullet,\bullet}_X  \hat{\otimes}  \End^{\leq -p}( E^{\bullet}))
\end{equation}
and
\begin{equation}
M^J\in \Gamma(V,\mathcal{D}^{\bullet,\bullet}_X  \hat{\otimes}  \End^{\leq -p-1}( E^{\bullet})).
\end{equation}
Since $J^{-1}: (E^{\bullet}|_V, v_0+ \overline{\nabla}^{E^{\bullet}|_V\prime\prime})\to (E^{\bullet}, A^{E^{\bullet}\prime\prime})|_V$ is a degree $0$ morphism, its components preserve or lower the $E^{\bullet}$ degree. Hence 
\begin{equation}
M^J \circ J^{-1}\in \Gamma(V,\mathcal{D}^{\bullet,\bullet}_X  \hat{\otimes}  \End^{\leq -p-1}( E^{\bullet})).
\end{equation}
Since $v_0$ increases the $E^{\bullet}$ degree by $1$, and $\overline{\nabla}^{E^{\bullet}|_V\prime\prime}$ preserves the $E^{\bullet}$ degree,   we have
\begin{equation}
v_0(M^J \circ J^{-1})\in  \Gamma(V,\mathcal{D}^{\bullet,\bullet}_X  \hat{\otimes}  \End^{\leq -p}( E^{\bullet}))
\end{equation}
and
\begin{equation}
\overline{\nabla}^{E^{\bullet}|_V\prime\prime}(M^J \circ J^{-1})\in  \Gamma(V,\mathcal{D}^{\bullet,\bullet}_X  \hat{\otimes}  \End^{\leq -p-1}( E^{\bullet})).
\end{equation}
To simplify the notation, let us denote $\overline{\nabla}^{E^{\bullet}|_V\prime\prime}(M^J \circ J^{-1})$ by $\alpha$. Then \eqref{eq: JR J-1 first} becomes
\begin{equation}\label{eq: JR J-1 second}
J\circ R^{\mathcal{E}}|_V\circ J^{-1}=R^{\overline{\mathcal{E}}|_V}+v_0(M^J \circ J^{-1})+\alpha
\end{equation}
where 
\begin{equation}\label{eq: degree of alpha}
\begin{split}
R^{\overline{\mathcal{E}}|_V}&\in \Gamma(V,\mathcal{D}^{\bullet,\bullet}_X  \hat{\otimes}  \End^{\leq -p}( E^{\bullet})),\\
v_0(M^J \circ J^{-1})&\in \Gamma(V,\mathcal{D}^{\bullet,\bullet}_X  \hat{\otimes}  \End^{\leq -p}( E^{\bullet})),\\
\alpha &\in \Gamma(V,\mathcal{D}^{\bullet,\bullet}_X  \hat{\otimes}  \End^{\leq -p-1}( E^{\bullet})).
\end{split}
\end{equation}

Next we prove the following lemma on the term $(J\circ (\nabla^{E^{\bullet}}(v_0))\circ J^{-1})^p$.

\begin{lemma}\label{lemma: Jnabla v J -1 to p}
There exists another $\dpar$-connection $\overline{\nabla}^{E^{\bullet}\prime}$  hence  a connection $\overline{\nabla}^{E^{\bullet}}=\overline{\nabla}^{E^{\bullet}\prime}+\nabla^{E^{\bullet}\prime\prime}$ such that
\begin{equation}\label{eq: Jnabla v J -1 to p}
(J\circ (\nabla^{E^{\bullet}}(v_0))\circ J^{-1})^p=(\overline{\nabla}^{E^{\bullet}}(v_0))^p+\beta
\end{equation}
where 
\begin{equation}\label{eq: degree of beta}
\beta\in \Gamma(V,\Omega^{\bullet,\bullet}_X  \hat{\otimes}  \End^{\leq p-1}( E^{\bullet})).
\end{equation}
\end{lemma}
\begin{proof}[Proof of Lemma \ref{lemma: Jnabla v J -1 to p}]
By \eqref{eq: A flat decomposed} we have $\nabla^{E^{\bullet}\prime\prime}(v_0)=0$ hence 
\begin{equation}\label{eq: J nabla v J-1 first}
J\circ(\nabla^{E^{\bullet}}(v_0))\circ J^{-1}=J\circ (\nabla^{E^{\bullet}\prime}(v_0))\circ J^{-1}=(J\circ \nabla^{E^{\bullet}\prime}\circ J^{-1})(J\circ v_0\circ J^{-1}).
\end{equation}

As in \eqref{eq: decomposition of morphism} we decompose $J$ into
\begin{equation}
J=J_0+J_1+\ldots
\end{equation}
where
$$
J_i\in \Gamma(V,\Omega^{0,i}_X \hat{\otimes}  \End^{-i}( E^{\bullet})).
$$
In particular $J_0\in \Gamma(V, \End^{0}( E^{\bullet}))$ is invertible.
Similarly  we decompose $J^{-1}$ into
\begin{equation}
J^{-1}=(J_0)^{-1}+(J^{-1})_1+\ldots
\end{equation}
Notice that the $0$th term of $J^{-1}$ is $(J_0)^{-1}$.

Therefore we have
\begin{equation}\label{eq: J nabla J-1 first}
\begin{split}
J\circ \nabla^{E^{\bullet}\prime}&\circ J^{-1}=(J_0+J_{\geq 1})\circ \nabla^{E^{\bullet}\prime}\circ (J^{-1}_0+(J^{-1})_{\geq 1})\\
&=J_0\circ \nabla^{E^{\bullet}\prime}\circ J^{-1}_0 +J_{\geq 1}\circ\nabla^{E^{\bullet}\prime}\circ (J^{-1}_0+(J^{-1})_{\geq 1})\\
&+(J_0+J_{\geq 1})\circ\nabla^{E^{\bullet}\prime}\circ(J^{-1})_{\geq 1}
\end{split}
\end{equation}

$J_0\circ \nabla^{E^{\bullet}\prime}\circ J^{-1}_0$ is again a $\dpar$-connection, which we denote by $\overline{\nabla}^{E^{\bullet}\prime}$. Moreover the term
\begin{equation}
\begin{split}
&J_{\geq 1}\circ\nabla^{E^{\bullet}\prime}\circ (J^{-1}_0+(J^{-1})_{\geq 1})
+(J_0+J_{\geq 1})\circ\nabla^{E^{\bullet}\prime}\circ(J^{-1})_{\geq 1}\\
&\in \Gamma(V,\Omega^{\bullet,\bullet}_X  \hat{\otimes}  \End^{\leq -1}( E^{\bullet}))
\end{split}
\end{equation}
which we denote by $\beta_1$. Hence \eqref{eq: J nabla J-1 first} becomes
\begin{equation}\label{eq: J nabla J-1 second}
J\circ \nabla^{E^{\bullet}\prime}\circ J^{-1}=\overline{\nabla}^{E^{\bullet}\prime}+\beta_1
\end{equation}

On the other hand since $v_0$ is unchanged under conjugation by $J$, we know that
\begin{equation}\label{eq: J v J-1}
J\circ v_0\circ J^{-1}=v_0+\beta_2
\end{equation}
where $\beta_2\in \Gamma(V,\Omega^{\bullet,\bullet}_X  \hat{\otimes}  \End^{\leq 0}( E^{\bullet}))$. 

Combine \eqref{eq: J nabla v J-1 first}, \eqref{eq: J nabla J-1 second}, and \eqref{eq: J v J-1} we get
\begin{equation}\label{eq: J nabla v J-1 second}
\begin{split}
J\circ&(\nabla^{E^{\bullet}}(v_0))\circ J^{-1}=(\overline{\nabla}^{E^{\bullet}\prime}+\beta_1)(v_0+\beta_2)\\
&=\overline{\nabla}^{E^{\bullet}\prime}(v_0)+\beta_1(v_0)+\overline{\nabla}^{E^{\bullet}\prime}(\beta_1)+\beta_1(\beta_2).
\end{split}
\end{equation}
We know $\beta_1(v_0)+\overline{\nabla}^{E^{\bullet}\prime}(\beta_1)+\beta_1(\beta_1)\in \Gamma(V,\Omega^{\bullet,\bullet}_X  \hat{\otimes}  \End^{\leq 0}( E^{\bullet}))$, which we denote by $\beta_3$. 

\eqref{eq: J nabla v J-1 second} gives
\begin{equation}\label{eq: Jnabla vJ^-1 to p first}
(J\circ (\nabla^{E^{\bullet}}(v_0))\circ J^{-1})^p=(\overline{\nabla}^{E^{\bullet}\prime}(v_0)+\beta_3)^p=(\overline{\nabla}^{E^{\bullet}}(v_0)+\beta_3)^p.
\end{equation}
Since $\overline{\nabla}^{E^{\bullet}}(v_0)\in \Gamma(V,\Omega^{\bullet,\bullet}_X  \hat{\otimes}  \End^{\leq 1}( E^{\bullet}))$ and $\beta_3 \in \Gamma(V,\Omega^{\bullet,\bullet}_X  \hat{\otimes}  \End^{\leq 0}( E^{\bullet}))$, the expansion of the right hand side of \eqref{eq: Jnabla vJ^-1 to p first} gives \eqref{eq: Jnabla v J -1 to p}. We finish the proof of Lemma \ref{lemma: Jnabla v J -1 to p}.
\end{proof}

By \eqref{eq: conjugate by J}, \eqref{eq: JR J-1 second}, and \eqref{eq: Jnabla v J -1 to p} we have 
\begin{equation}\label{eq: conjugate by J expand}
\begin{split}
\str((\nabla^{E^{\bullet}}(v_0))^p R^{\mathcal{E}}|_V)&=\str\big( (\overline{\nabla}^{E^{\bullet}}(v_0))^p+\beta)(R^{\overline{\mathcal{E}}|_V}+v_0(M^J \circ J^{-1})+\alpha)\big)\\
&=\str( (\overline{\nabla}^{E^{\bullet}}(v_0))^p R^{\overline{\mathcal{E}}|_V})+\str\big( (\overline{\nabla}^{E^{\bullet}}(v_0))^p \big(v_0(M^J \circ J^{-1})\big)\big)\\
&+\str\big(\beta(R^{\overline{\mathcal{E}}|_V}+v_0(M^J \circ J^{-1})+\alpha)+(\overline{\nabla}^{E^{\bullet}}(v_0))^p\alpha\big).
\end{split}
\end{equation}
By \eqref{eq: degree of alpha} and \eqref{eq: degree of beta}, and the fact that 
$$
(\overline{\nabla}^{E^{\bullet}}(v_0))^p\in  C^{\infty}(V,\Omega^{\bullet,\bullet}_X  \hat{\otimes}  \End^{p}( E^{\bullet})),
$$
we know that 
\begin{equation}
\beta(R^{\overline{\mathcal{E}}|_V}+v_0(M^J \circ J^{-1})+\alpha)+(\overline{\nabla}^{E^{\bullet}}(v_0))^p\alpha \in  \Gamma(V,\mathcal{D}^{\bullet,\bullet}_X  \hat{\otimes}  \End^{\leq -1}( E^{\bullet}))
\end{equation}
hence its supertrace vanishes by degree reason. Therefore \eqref{eq: conjugate by J expand} gives
\begin{equation}\label{eq: conjugate by J two term}
\str((\nabla^{E^{\bullet}}(v_0))^p R^{\mathcal{E}}|_V)=\str( (\overline{\nabla}^{E^{\bullet}}(v_0))^p R^{\overline{\mathcal{E}}|_V})+\str\big( (\overline{\nabla}^{E^{\bullet}}(v_0))^p \big(v_0(M^J \circ J^{-1})\big)\big).
\end{equation}

We can prove that $\str\big( (\overline{\nabla}^{E^{\bullet}}(v_0))^p \big(v_0(M^J \circ J^{-1})\big)\big)$ also vanishes. Actually by definition 
\begin{equation}
v_0(M^J \circ J^{-1})=[v_0, M^J \circ J^{-1}]=v_0\circ M^J\circ J^{-1}+M^J\circ J^{-1}\circ v_0,
\end{equation}
where $[\cdot, \cdot]$ denotes the supercommutator.
By the same argument as in the proof of Lemma \ref{lemma: A sigma sigma=sigma A sigma} we have
\begin{equation}
(\overline{\nabla}^{E^{\bullet}}(v_0))^p\circ v_0=v_0\circ (\overline{\nabla}^{E^{\bullet}}(v_0))^p.
\end{equation}
Therefore
\begin{equation}
\begin{split}
(\overline{\nabla}^{E^{\bullet}}&(v_0))^p \big(v_0(M^J \circ J^{-1})\big)\\
=&(\overline{\nabla}^{E^{\bullet}}(v_0))^p \circ v_0\circ (M^J \circ J^{-1})
+(\overline{\nabla}^{E^{\bullet}}(v_0))^p \circ (M^J \circ J^{-1})\circ v_0\\
=&v_0\circ (\overline{\nabla}^{E^{\bullet}}(v_0))^p \circ (M^J \circ J^{-1})+(\overline{\nabla}^{E^{\bullet}}(v_0))^p \circ (M^J \circ J^{-1})\circ v_0\\
=&[v_0, (\overline{\nabla}^{E^{\bullet}}(v_0))^p \circ (M^J \circ J^{-1})]
\end{split}
\end{equation}
whose supertrace vanishes since supertrace vanishes on supercommutators. Therefore \eqref{eq: conjugate by J two term} gives
\begin{equation}\label{eq: conjugate by J one term}
\str((\nabla^{E^{\bullet}}(v_0))^p R^{\mathcal{E}}|_V)=\str( (\overline{\nabla}^{E^{\bullet}}(v_0))^p R^{\overline{\mathcal{E}}|_V}).
\end{equation}
Since  $(E^{\bullet}|_V, v_0+ \overline{\nabla}^{E^{\bullet}|_V\prime\prime})$ is a complex of holomorphic vector bundles on $V$,  by \eqref{eq: Poincare-Lelong in Larkang1 reformulate} we have 
\begin{equation}\label{eq: local Poincare-Lelong bundle case}
\frac{1}{(2\pi i)^p p!}\str( (\overline{\nabla}^{E^{\bullet}}(v_0))^p R^{\overline{\mathcal{E}}|_V})=[(E^{\bullet}|_V, v_0+ \overline{\nabla}^{E^{\bullet}|_V\prime\prime})].
\end{equation}
We know that 
\begin{equation}
[\mathcal{E}]\cap V=[(E^{\bullet}|_V, v_0+ \overline{\nabla}^{E^{\bullet}|_V\prime\prime})]
\end{equation}
since $J$ induces a quasi-isomorphism on the complex of sheaves. From \eqref{eq: conjugate by J one term} and \eqref{eq: local Poincare-Lelong bundle case} we know
\begin{equation}
\frac{1}{(2\pi i)^p p!}\str( (\overline{\nabla}^{E^{\bullet}}(v_0))^p R^{\overline{\mathcal{E}}|_V})=[\mathcal{E}]\cap V
\end{equation}
for any sufficiently small open neighborhood $V$ of $x\in X$. We thus get \eqref{eq: Poincare-Lelong for cohesive modules 1}. The proof of \eqref{eq: Poincare-Lelong for cohesive modules 1} is the same.
\end{proof}

We have the following result on the non-pure codimension case.

\begin{coro}\label{coro: Poincare-Lelong non-pure}
Let $\mathcal{E}= (E^{\bullet}, A^{E^{\bullet}\prime\prime})$  be a Hermitian cohesive module on $X$. Let $\underline{F}_X(\mathcal{E})=(\mathfrak{E}^{\bullet},d)$ be the sheafification as Defined in Section \ref{subsection: cohesive and coherent}. If all its cohomologies $\mathfrak{H}^l(\mathfrak{E}^{\bullet},d)$ has  codimension $p\geq 1$ or vanish, then we have the following equality of currents:
\begin{equation}\label{eq: Poincare-Lelong for cohesive modules nonpure 1}
\frac{1}{(2\pi i)^p p!}\str((\nabla^{E^{\bullet}}(v_0))^p R^{\mathcal{E}})=[\mathcal{E}]_p
\end{equation}
where $[\mathcal{E}]_p$ is the sum over codimension $p$ components of $[\mathcal{E}]$.

In particular if $\mathfrak{F}$ is a coherent sheaf  with  codimension $p\geq 1$. Let $\mathcal{E}= (E^{\bullet}, A^{E^{\bullet}\prime\prime})$  be a cohesive resolution of $\mathfrak{F}$ equipped with a Hermitian metric. Then we have the following equality of currents:
\begin{equation}\label{eq: Poincare-Lelong for cohesive modules nonpure 2}
\frac{1}{(2\pi i)^p p!}\str((\nabla^{E^{\bullet}}(v_0))^p R^{\mathcal{E}})=[\mathfrak{F}]_p.
\end{equation}
\end{coro}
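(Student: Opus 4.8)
The statement to prove is Corollary~\ref{coro: Poincare-Lelong non-pure}, the non-pure-codimension analogue of Theorem~\ref{thm: Poincare-Lelong for cohesive modules}. The plan is to reduce to the pure case that has already been established. The key observation is that the identity \eqref{eq: Poincare-Lelong for cohesive modules 1} is local on $X$: both sides are currents, so it suffices to verify the claimed equality on arbitrarily small open neighborhoods $V$ of an arbitrary point $x \in X$, exactly as in the proof of Theorem~\ref{thm: Poincare-Lelong for cohesive modules}. The catch is that the word ``codimension $p$'' in the hypothesis of the corollary means ``codimension $\geq p$ with at least one component of codimension exactly $p$,'' so the cohomology sheaves $\mathfrak{H}^l(\mathfrak{E}^\bullet, d)$ need not be pure --- they may have components of codimension strictly greater than $p$.

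First I would run through the entire argument of Theorem~\ref{thm: Poincare-Lelong for cohesive modules} verbatim: by Proposition~\ref{prop: gauge equivalence to flat}, shrink $V$ so that there is a gauge equivalence $J$ from $\mathcal{E}|_V$ to a genuine complex of holomorphic vector bundles $(E^\bullet|_V, v_0 + \overline{\nabla}^{E^\bullet|_V\prime\prime})$ with the same $v_0$. Invariance of $\str$ under conjugation, the comparison formula (Corollary~\ref{coro: homotopic comparison formula}), and the vanishing results (Corollary~\ref{coro: R vanish} Part~2, Corollary~\ref{coro: vanishing of M} Part~2) apply word-for-word, \emph{provided} one checks that these vanishing results remain valid when the cohomology has codimension $\geq p$ rather than pure codimension $p$. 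In fact Corollary~\ref{coro: R vanish} Part~2 and Corollary~\ref{coro: vanishing of M} Part~2 are stated with the hypothesis $\codim(\supp \mathfrak{H}^q) \geq m_0$ --- i.e. they already only require a lower bound, not purity --- so taking $m_0 = p$ there is no obstruction. Thus \eqref{eq: conjugate by J one term} still holds: $\str((\nabla^{E^\bullet}(v_0))^p R^{\mathcal{E}}|_V) = \str((\overline{\nabla}^{E^\bullet}(v_0))^p R^{\overline{\mathcal{E}}|_V})$.

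Second, the remaining input needs to be upgraded from \eqref{eq: Poincare-Lelong in Larkang1 reformulate} to its non-pure version. Here I would invoke the non-pure form of the Lärkäng--Wulcan result: for a bounded complex of Hermitian holomorphic vector bundles whose cohomology sheaves have codimension $\geq p$ (not necessarily pure), one has
\[
\frac{1}{(2\pi i)^p p!}\str((\overline{\nabla}^{E^\bullet}(v_0))^p R^{\overline{\mathcal{E}}|_V}) = [(E^\bullet|_V, v_0 + \overline{\nabla}^{E^\bullet|_V\prime\prime})]_p,
\]
the codimension-$p$ part of the cycle. This is the content of \cite[Theorem~1.1]{larkang2021residue} combined with the observation that the left-hand side, by the degree/dimension considerations already used, is automatically supported on a variety of pure codimension $p$ and, being pseudomeromorphic of the right bidegree, can only see the codimension-$p$ components of the cohomology; the higher-codimension components contribute zero by the dimension principle (Proposition~\ref{prop:dimPrinciple}). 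Since $J$ induces a quasi-isomorphism of complexes of sheaves, $[(E^\bullet|_V, v_0 + \overline{\nabla}^{E^\bullet|_V\prime\prime})]_p = [\mathcal{E}]_p \cap V$. Combining the two displays gives \eqref{eq: Poincare-Lelong for cohesive modules nonpure 1} on $V$, hence globally. The specialization \eqref{eq: Poincare-Lelong for cohesive modules nonpure 2} follows by taking $\mathcal{E}$ to be a cohesive resolution of $\mathfrak{F}$, so that $[\mathcal{E}]_p = [\underline{F}_X(\mathcal{E})]_p = [\mathfrak{F}]_p$.

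The main obstacle is making precise why the left-hand side ``sees only codimension $p$,'' i.e. the reduction from the full cycle $[(E^\bullet,v)]$ to its codimension-$p$ truncation $[(E^\bullet,v)]_p$ in the holomorphic-bundle case. I expect this to follow cleanly from the support bound $\supp R^{\overline{\mathcal{E}}|_V}_{q \to q-p} \subseteq \bigcup Z_m$ together with Proposition~\ref{prop: codim of Zm in sheaf case} and the dimension principle: the nonzero components of $\str((\overline{\nabla}^{E^\bullet}(v_0))^p R^{\overline{\mathcal{E}}|_V})$ are $(p,p)$-currents supported where the cohomology is nonzero, so their support is a union of those irreducible components of $\supp \mathfrak{H}^\bullet$ having codimension exactly $p$ (components of higher codimension force vanishing by Proposition~\ref{prop:dimPrinciple}). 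If \cite{larkang2021residue} does not state the non-pure case explicitly, one either cites the generality in which its proof is actually carried out, or notes that the pure case applied locally component-by-component already yields the statement, since near a generic point of a codimension-$p$ component the complex looks like one with pure codimension-$p$ cohomology.
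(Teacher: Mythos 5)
Your proof is correct in its conclusion but takes a considerably more elaborate route than the paper's, and the one step you yourself flag as an ``obstacle'' is exactly where the cleaner argument lives. You rerun the whole local gauge-equivalence machinery from the proof of Theorem~\ref{thm: Poincare-Lelong for cohesive modules} (which is fine---the vanishing results, Corollary~\ref{coro: R vanish} Part~2 and Corollary~\ref{coro: vanishing of M} Part~2, indeed only require a codimension lower bound, not purity), and then you need a \emph{non-pure} version of the L\"ark\"ang--Wulcan bundle-level formula as an input. You gesture at how to get it (``apply the pure case locally, higher-codimension components contribute zero by the dimension principle''), but this is imprecise: the dimension principle says a pseudomeromorphic $(p,p)$-current supported on a set of codimension $\geq p+1$ vanishes, it does not by itself let you discard ``high-codimension components'' of a current supported on a mixed-codimension set. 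You have to first restrict to the complement of the high-codimension locus, establish equality there, and only then observe the difference is supported in the high-codimension locus.

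The paper does precisely this, but globally and at the cohesive level rather than the bundle level, which avoids any appeal to a non-pure version of \cite[Theorem~1.1]{larkang2021residue}. Let $W$ be the union of the components of $\supp\mathcal{E}$ of codimension $\geq p+1$, so $\codim W \geq p+1$. On $X\setminus W$ the cohomology is genuinely pure of codimension $p$, so the already-proved Theorem~\ref{thm: Poincare-Lelong for cohesive modules} gives \eqref{eq: Poincare-Lelong for cohesive modules nonpure 1} on $X\setminus W$. The difference of the two sides is then a $(p,p)$-pseudomeromorphic current on $X$ (both sides are PM by Proposition~\ref{prop: pseudomeromorphic currents closed under multiplication}, Proposition~\ref{prop: [Z] is pseudomeromorphic}, and Definition~\ref{defi: residue current of E}) supported in $W$, hence vanishes by Proposition~\ref{prop:dimPrinciple}. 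Your route buys nothing extra and costs you a re-derivation of the pure case plus a bundle-level reduction; the paper's route uses the pure cohesive theorem as a black box and closes in one paragraph. If you keep your approach, the fix is to make the removal of $W$ explicit rather than waving at ``components contribute zero.''
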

\begin{proof}
Let $W$ be the union of the components of $\supp\mathcal{E}$ with codimension $\geq p+1$. Then $W$ is a subvariety of codimension $\geq p+1$ in $X$. 

Since $\mathcal{E}|_{X\backslash W}$ has pure codimension $p$, we can  apply Theorem \ref{thm: Poincare-Lelong for cohesive modules} to $X\backslash W$ and get
\begin{equation}
\frac{1}{(2\pi i)^p p!}\str((\nabla^{E^{\bullet}}(v_0))^p R^{\mathcal{E}})|_{X\backslash W}=[\mathcal{E}]_p\cap  (X\backslash W).
\end{equation}

By Proposition \ref{prop: pseudomeromorphic currents closed under multiplication}, Proposition \ref{prop: [Z] is pseudomeromorphic}, and Definition \ref{defi: residue current of E}, both $\str((\nabla^{E^{\bullet}}(v_0))^p R^{\mathcal{E}})$ and  $[\mathcal{E}]_p$ are $(p,p)$-\emph{pseudomeromorphic} current on $X$. As $W$ has codimension $\geq p+1$, we have \eqref{eq: Poincare-Lelong for cohesive modules nonpure 1} by the \emph{dimension principle} given in Proposition \ref{prop:dimPrinciple}.  The proof of \eqref{eq: Poincare-Lelong for cohesive modules nonpure 2} is the same.
\end{proof}

\bibliography{residue}{}
\bibliographystyle{alpha}

\end{document}